\def\@seccntformat#1{\csname the#1\endcsname.\hspace{2ex}}
 \renewcommand{\subsection}%
  {\@startsection{subsection}%
  {2}%
  {\z@}%
  {2ex}
  {0ex}
  {\reset@font\normalsize\bfseries}}%
 \newcommand{\nsection}{\@startsection{section}{1}{\z@}%
     {-5ex}
     {1ex}
     {\reset@font\center\large\sc}}
 \renewenvironment{thebibliography}[1]
 {\nsection*{\refname\@mkboth{\refname}{\refname}}%
   \list{\@biblabel{\@arabic\c@enumiv}}%
   {\settowidth
   \labelwidth{\@biblabel{#1}}%
   \leftmargin
	\labelwidth
        \advance
	 \leftmargin
	 \labelsep
         \@openbib@code
         \usecounter{enumiv}%
         \let\p@enumiv\@empty
	 \parskip=0pt
	 \itemsep=1pt
	 \parsep=1pt
	 \itemindent=\z@
         \renewcommand\theenumiv{\@arabic\c@enumiv}}%
   	 \sloppy
   	 \clubpenalty4000
   	 \@clubpenalty\clubpenalty
   	 \widowpenalty4000%
   	 \footnotesize
   	 \sfcode`\.\@m}
  	 {\def\@noitemerr
    	 {\@latex@warning{Empty `thebibliography' environment}}%
   	 \endlist}
\newtheoremstyle{thm}
 {1em}
 {3pt}
 {\itshape}
 {}
 {\bf}
 {. ---}
 {0.5em}
 {}
\newtheoremstyle{dfn}
 {1em}
 {3pt}
 {}
 {}
 {\bf}
 {. {---}}
 {0.5em}
 {}
\theoremstyle{thm}
\newtheorem{thm}[subsection]{Theorem}
\newtheorem{lem}[subsection]{Lemma}
\newtheorem*{lem*}{Lemma}
\newtheorem{cor}[subsection]{Corollary}
\newtheorem*{cor*}{Corollary}
\newtheorem{prop}[subsection]{Proposition}
\newtheorem*{prop*}{Proposition}
\newtheorem*{conj*}{Conjecture}
\newtheorem*{thm*}{Theorem}
\theoremstyle{dfn}
\newtheorem{dfn}[subsection]{Definition}
\newtheorem*{dfn*}{Definition}
\newtheorem*{ex*}{Example}
\newtheorem*{rem*}{Remark}
\newenvironment{meta}{
\noindent \color{red}
\sffamily[}{\upshape]}
\definecolor{shadecolor}{gray}{0.80}
       \noindent\textcolor{red}{{\large\bf Personal comment}}\\
\newsavebox{\circlebox}
\savebox{\circlebox}{\fontencoding{OMS}\selectfont\char13}
\newlength{\circleboxwdht}
\newcommand{\ccirc}[1]{
  \setlength{\circleboxwdht}{\wd\circlebox}
  \addtolength{\circleboxwdht}{\dp\circlebox}
  \raisebox{0.2\dp\circlebox}{
    \parbox[][\circleboxwdht][c]{\wd\circlebox}
    {\centering\scriptsize #1}}
  \llap{\usebox{\circlebox}}
}
\renewcommand{\H}{\mc{H}^*}
\newcommand{\rH}{\mr{H}^*}
\newcommand{\cH}{\mc{H}_{\mr{c}}^*}
\newcommand{\ul}[1]{\underline{#1}}
\newcommand{\Hbm}{\mr{H}^{\mr{BM}}}
\newcommand{\Cat}{\mc{C}\mr{at}}
\newcommand{\Shv}{\mc{S}\mr{hv}}
\newcommand{\Spc}{\mc{S}\mr{pc}}
\newcommand{\PShv}{\mc{P}}
\newcommand{\sSet}{\mc{S}\mr{et}_{\Delta}}
\newcommand{\Map}{\mr{Map}}
\newcommand{\Mor}{\mr{Mor}}
\newcommand{\sMor}{\ul{\mr{Mor}}}
\newcommand{\Mdf}{\mr{Mdf}}
\newcommand{\Mod}{\mr{Mod}}
\newcommand{\Hcoe}{\ul{\mr{H}}^{\mr{BM}}}
\newcommand{\LinCat}{\mc{L}\mr{in}\mc{C}\mr{at}}
\newcommand{\PrL}{\mc{P}\mr{r}^{\mr{L}}}
\newcommand{\Corr}{\mc{C}\mr{orr}}
\newcommand{\Comp}{\mr{Fl\acute{E}t}}
\newcommand{\rk}{d}
\newcommand{\Sys}{\mc{S}\mr{ys}}
\newcommand{\RTop}{\mc{R}\mc{T}\mr{op}}
\newcommand{\LTop}{\mc{L}\mc{T}\mr{op}}
\newcommand{\Fun}{\underline{\mr{Fun}}}
\newcommand{\site}{\mc{S}\mr{ite}}
\newcommand{\Eff}{\ms{N}}
\newcommand{\til}{\mr{fat}}
\newcommand{\Tot}{T}
\begin{document}
\title{Ramification theory from homotopical point of view, I}
\author{Tomoyuki Abe}
\date{}
\maketitle

\section*{Introduction}
In the theory of $\ms{D}$-modules for complex varieties, the characteristic cycle plays an important role,
both in establishing the theory of $\ms{D}$-modules itself and applications.
In the context of the $\ell$-adic cohomology theory, similarities with the ramification theory were observed by experts including P. Deligne and G. Laumon.
Since then, establishing the theory of characteristic cycles in the \'{e}tale setting had been an important challenge in the ramification theory of \'{e}tale sheaves.
A breakthrough was brought by A. Beilinson and T. Saito in around 2015.
Beilinson defined the singular support for \'{e}tale sheaves, and based on this work, Saito finally succeeded in defining the characteristic cycle.
Saito also established several functorial properties of the characteristic cycle, but the compatibility with proper pushforward,
which could be seen as a vast generalization of Grothendieck-Ogg-Shafarevich formula, remained as a conjecture \cite[Conjecture 1]{S2}.
One of our main goals of this paper is to solves the conjecture of Saito up to $p$-torsion.
Here is one of the main results of this paper (cf.\ Remark \ref{compwithsaitocor}):
\begin{thm*}
 Let $k$ be a perfect field of characteristic $p>0$,
 and let $\Lambda$ be a finite local ring whose residue field is of characteristic different from that of $k$.

 Now, let $h\colon X\rightarrow Y$ be a morphism between separated\footnote{
 The theory of characteristic cycles works without separatedness assumption, and it is not too hard to lift this assumption by using Zariski descent.
 However, to avoid unnecessary complication, we leave the details to the interested reader.}
 equidimensional smooth $k$-schemes of finite type.
 Then we have the diagram $T^*X\xleftarrow{g}T^*Y\times_Y X\xrightarrow{h'}T^*Y$.
 For a closed subset $C$ of $T^*X$, we put $h_\circ(C):=h'g^{-1}(C)$.
 Let $\mc{F}$ be a constructible finite Tor-dimensional complex of $\Lambda$-sheaves such that the morphism
 $\mr{Supp}(\mc{F})\rightarrow Y$ is proper.
 Then we have the identity
 \begin{equation*}
  \mr{CC}(\mr{R}h_*\mc{F})=h'_*g^!\bigl(\mr{CC}(\mc{F})\bigr)
 \end{equation*}
 in $\mr{CH}_{\dim(Y)}(h_{\circ}\mr{SS}(\mc{F}))[1/p]$.
\end{thm*}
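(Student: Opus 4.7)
The strategy is to lift Saito's characteristic cycle to a categorical invariant in a six-functor formalism, where compatibility with proper pushforward becomes a formal consequence of the cyclicity of traces, and then to identify this lift with Saito's construction via the local Milnor/index formula.

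First I would build a Chow-valued characteristic class $\widetilde{\mr{CC}}(\mc{F})$ by applying the categorical trace of the identity endomorphism of $\mc{F}$, regarded as a dualizable object in the constructible $\Lambda$-linear six-functor formalism over $X$. The output naturally lives in $\Hbm$ of a cotangent-type space and refines a cycle class on $T^*X$ supported on $\mr{SS}(\mc{F})$. The key formal point is that when $\mr{Supp}(\mc{F})\to Y$ is proper, the identity on $\mr{R}h_*\mc{F}$ is the image under $h$ of the identity on $\mc{F}$; applying the trace, together with proper base change and the cyclic structure, yields the identity
\[
 \widetilde{\mr{CC}}(\mr{R}h_*\mc{F}) = h'_* g^!\bigl(\widetilde{\mr{CC}}(\mc{F})\bigr)
\]
by a diagram chase in the correspondence/$\mc{C}\mr{orr}$-category of cotangent bundles. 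Support and dimension bookkeeping ensure the class lies in $\mr{CH}_{\dim Y}(h_\circ \mr{SS}(\mc{F}))[1/p]$ in the required degree.

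Next I would compare $\widetilde{\mr{CC}}$ with Saito's $\mr{CC}$. After inverting $p$, Saito's characteristic cycle is characterized by the Milnor formula: its intersection number with the section $df$ at an isolated characteristic point equals the total dimension of vanishing cycles. So it suffices to verify this local identity for $\widetilde{\mr{CC}}$. The plan is to interpret the local contribution of the categorical trace at an isolated characteristic point via nearby/vanishing cycles, and to recognize it as a categorical incarnation of the index theorem. To reduce to this local situation I would exploit devissage to the case of an $\mc{F}$ which is a sheaf concentrated in one degree, and factor $h$ through a closed immersion followed by a smooth proper morphism, treating each case by the general trace identity combined with Saito's functorial properties that are already known.

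The main obstacle is the local index identity for $\widetilde{\mr{CC}}$: establishing that the categorical trace, evaluated against a test function with an isolated singularity, recovers the total dimension of vanishing cycles. This is the homotopical counterpart of the hard local input in Saito's theory. It is precisely at this step that the restriction to residue characteristic different from $p$ enters, via the rational purity and absolute purity features of the six-functor formalism needed to identify the categorical trace with a cycle-theoretic invariant after $[1/p]$-localization. Once this Milnor-type identity is in hand, the uniqueness part of Saito's characterization upgrades $\widetilde{\mr{CC}}=\mr{CC}$ in $\mr{CH}_{\dim}(\cdot)[1/p]$ and the theorem follows from the formal pushforward identity above.
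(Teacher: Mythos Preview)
Your proposal takes a genuinely different route from the paper, and it has a real gap at its foundation.

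The trace of $\mr{id}_{\mc{F}}$ for $\mc{F}$ dualizable in the constructible $\Lambda$-linear world lands in $\mr{H}^0(X,\mc{K}_X;\Lambda)$ (\'etale Verdier pairing), not in a Chow group and certainly not on $T^*X$. You assert that ``the output naturally lives in $\Hbm$ of a cotangent-type space,'' but no mechanism is given: there is no realization functor from the $\Lambda$-linear category into the motivic one that would carry the categorical trace to a Chow class, and the microlocal lift to $T^*X$ supported on $\mr{SS}(\mc{F})$ is not a formal consequence of dualizability. Cyclicity of traces does yield a pushforward formula for the \emph{characteristic class} in \'etale cohomology; it does not, by itself, produce the identity in $\mr{CH}_{\dim(Y)}(h_\circ\mr{SS}(\mc{F}))[1/p]$. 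This is precisely the hard point: when $\dim(h_\circ\mr{SS}(\mc{F}))>\dim(Y)$ the target is a Chow group rather than a cycle group, and one must manufacture an actual rational equivalence between the two cycles, not just a numerical identity.

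The paper does not take traces of $\mc{F}$ at all. It builds an abelian presheaf $\Comp_X$ of ``flat \'etale systems'' on $\mr{Sch}_{/k}$ (whose global sections are $\mr{K}_0\mr{Cons}(X)$) and proves an abstract extension theorem: the restriction map $\Map(\Comp_X,\Hcoe_X)\to\Map(\Comp_{X,0},\Hcoe_X)$ is an equivalence, where $\Hcoe_X$ is the motivic Borel--Moore presheaf. The extension mechanism is Laumon's deformation $F\mapsto F\otimes\mc{L}_\psi(ft)$, which after iterated specializations localizes any class to dimension~$0$; the presheaf $\Hcoe_X$ is shown to admit a ``rewind structure'' (built from relative cycle groups and cdh descent) that undoes the modifications introduced by the deformation. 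The pushforward formula then reduces to the $0$-dimensional case, where it is the compatibility of rank with pushforward. Microlocalization to $T^*X$ is carried out \emph{afterwards} by an explicit support-structure argument \`a la Beilinson, and the comparison with Saito's $\mr{CC}$ uses Laumon's local Fourier transform to identify $\dim\Psi_\infty(\mc{F}\otimes\mc{L}(ft))$ with $-\mr{totdim}\,\Phi_f(\mc{F})$ at isolated characteristic points. None of these ingredients is supplied by the trace formalism you sketch.
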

The case where $Y=\mr{Spec}(k)$ and $h$ is projective, the theorem has been proven in the fundamental paper \cite{S}.
In the case where $h$ is projective, and $\dim(h_\circ\mr{SS}(\mc{F}))\leq\dim(Y)$ (and slightly more conditions), the theorem has been proven in \cite{S3}.
Unfortunately, contrary to the characteristic $0$ situation, this inequality is frequently not satisfied ({\em e.g.}\ when $h$ is the Frobenius endomorphism),
which demonstrates the complexity of the ramification theory for positive characteristic situation.
The biggest difference from this theorem of Saito is that, under the dimension assumption, we have
$\mr{CH}_{\dim(Y)}(h_\circ\mr{SS}(\mc{F}))=\mr{Z}_{\dim(Y)}(h_\circ\mr{SS}(\mc{F}))$,
and the equality is an equality of actual cycles, whereas in the general case,
we must {\em construct} a homotopy between two cycles.
Very roughly speaking, we locally construct such a homotopy using the deformation argument of Laumon,
and we glue them together using the $\infty$-categorical machinery.
For simplicity, in this introduction, we assume $k$ to be algebraically closed and $\Lambda$ to be a field.

\subsection{}
Let us briefly recall the theory of Beilinson and Saito.
Let $X$ be a smooth scheme of dimension $d$ over $k$ and $\mc{F}$ be a bounded constructible $\Lambda$-complex.
Then the characteristic cycle $\mr{CC}(\mc{F})$ is the unique cycle $\sum n_i[C_i]$, where $C_i$ is an integral subscheme of $T^*X$ of dimension $d$ such that
for any ``isolated characteristic function'' $f\colon U\rightarrow\mb{A}^1$ for an open subscheme $U\subset X$, we have
\begin{equation}
 \label{intMilnorformu}
 -\mr{totdim}_u\Phi_f(\mc{F}|_U)=\bigl(\sum n_i[C_i],\mr{d}f\bigr)_{T^*U,u}
\end{equation}
for any point $u\in U$.
This is called the {\em Milnor formula}.
Here $\mr{totdim}$ is the total dimension, and roughly speaking, an isolated characteristic function is a function $f$ such that
the formal sum $\sum_{u\in U}(\mr{totdim}_u\Phi_f(\mc{F}))\cdot[u]$ defines a $0$-cycle on $U$.
Beilinson's theorem on the existence of singular support implies that we have ``enough supply'' of isolated characteristic functions
so that we can determine characteristic cycle uniquely.
A difficulty of proving pushforward formula is that if we are given a proper morphism $h\colon X\rightarrow Y$,
even if we are given an isolated characteristic function $f$ on $Y$ with respect to $\mr{R}h_*\mc{F}$,
the pullback $h^*f$ may fail to be isolated charateristic with respect to $\mc{F}$.
We revisit the construction of characteristic cycle, and define it without using Beilinson's theorem and in a wider context.

\subsection{}
Let us explain our method.
The pullback $0^*\mr{CC}(\mc{F})$ in $\mr{CH}_0(X)$, where $0\colon X\rightarrow T^*X$ is the zero-section, is called the
{\em non-microlocalized} characteristic cycle\footnote{
It is more common to call it {\em characteristic class}.
}.
Following Beilinson \cite{B}, we reverse the order of the construction, and we construct the non-microlocalized one first.
We use homotopical argument for the microlocalization, but since the microlocalization process is essentially parallel to \cite{B},
we do not go into the details in this introduction.

To explain the construction of non-microlocalized characteristic cycle,
let us introduce a (pre)sheaf $\Comp_X$ on $\mr{Sch}_{/k}$ of {\em flat \'{e}tale systems}.
The definition of this sheaf is modeled on the definition of Suslin-Voevodsky's relative cycle group in \cite{SV},
and we make use of the theory of nearby cycles over general base.
Let $X$ be a $k$-scheme separated of finite type.
In Definition \ref{neadimboun} (see also Remark \ref{neadimboun}), we will introduce a presheaf $\Comp_X$ having the following properties:
\begin{enumerate}
 \item The global section is $\mr{K}_0\mr{Cons}(X)$,
       the Grothendieck group of the category of constructible $\Lambda$-modules on $X$.

 \item For a $k$-scheme $T$, $\Comp_X(T)$ is isomorphic to the subgroup of $\prod_{u\in\mr{geom}(T)}\mr{K}_0\mr{Cons}(X_{u})$,
       where $\mr{geom}(T)$ is the set of isomorphism classes of geometric points of $T$, consisting of $\{F_u\}_{u\in\mr{geom}(T)}$
       such that $F_u$ are ``good'' and for any specialization map $u\rightsquigarrow t$, we have $\Psi_{t\leftarrow u}(F_u)=F_t$.
\end{enumerate}
On the other hand, consider the six functor formalism of motivic cohomology developed by Voevodsky, Ayoub, Cisinski-D\'{e}glise.
It associates to each $T\in\mr{Sch}_{/k}$ an $R$-linear symmetric monoidal triangulated category $D_T$ and they admit six fundamental operations.
Here $R$ is a commutative ring such that $\mr{char}(k)^{-1}\in R$.
This triangulated category has an $\infty$-enhancement, which we denote by $\mc{D}_T$.
Using this formalism, we have the presheaf $\pi_0\Hcoe_X$ and the $\infty$-presheaf $\Hcoe_X$ sending $T\in\mr{Sch}_{/k}$ to
\begin{equation*}
 \pi_0\Hbm(X_T/T)\simeq\mr{Hom}_{D_T}\bigl(h_{T!}h_T^*\mbf{1}_T,\mbf{1}_T\bigr),
  \qquad
  \Hbm(X_T/T)\simeq\Map_{\mc{D}_T}\bigl(h_{T!}h_T^*\mbf{1}_T,\mbf{1}_T\bigr),
\end{equation*}
where $\mbf{1}_T$ is the unit object of $\mc{D}_T$.
In other words, these are relative Borel-Moore homologies.
Note that $\pi_0\Hcoe_X(k)\cong\mr{CH}_0(X;R)$.
Using these presheaves, our goal is to construct a (sensible) homomorphism $\mr{CC}\colon\Comp_X(k)\rightarrow\pi_0\Hcoe_X(k)$ between the global sections.
We construct not only the morphism between the global sections, but a morphism of presheaves $\Comp_X\rightarrow\pi_0\Hcoe_X$.
Let $\Comp_{X,0}\subset\Comp_X$ be the subpresheaf consisting of objects whose support is of dimension $0$.
Then by taking the rank, we may construct a morphism $\tau\colon\Comp_{X,0}\rightarrow\pi_0\Hcoe_X$ using the trace map.
Since the characteristic cycle of a constructible sheaf with $0$-dimensional support should merely be the zero cycle associated to the rank,
we should have $\mr{CC}|_{\Comp_{X,0}}=\tau$.
Now, we may ask ourselves how many extensions $\mr{CC}$ of $\tau$ there can be.
Surprisingly, if we replace $\pi_0\Hcoe_X$ with the ($\infty$-)presheaf $\Hcoe_X$, we have the following extension theorem:
\begin{thm*}
 The restriction map
 \begin{equation*}
  \mr{Hom}_{\mr{h}\PShv(\mr{Sch}_{/k})}(\Comp_X,\Hcoe_X)
   \rightarrow\mr{Hom}_{\mr{h}\PShv(\mr{Sch}_{/k})}(\Comp_{X,0},\Hcoe_X)
 \end{equation*}
 is an isomorphism, where $\mr{h}\PShv$ denotes the category of {\normalfont(}$\mb{Z}$-valued{\normalfont)} $\infty$-presheaves.
\end{thm*}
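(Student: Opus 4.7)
The plan is to proceed by induction on the dimension of support. Let $\Comp_{X,\leq d}\subset\Comp_X$ denote the subpresheaf of systems $\{F_u\}$ such that each $F_u$ has support of dimension at most $d$, so $\Comp_{X,0}=\Comp_{X,\leq 0}$ and $\Comp_X=\mathop{\mr{colim}}_d\Comp_{X,\leq d}$. It then suffices to establish, for each $d\geq 1$, that the restriction
\[
 \Hom_{\mr{h}\PShv}(\Comp_{X,\leq d},\Hcoe_X)\xrightarrow{\sim}\Hom_{\mr{h}\PShv}(\Comp_{X,\leq d-1},\Hcoe_X)
\]
is an isomorphism.

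For existence, I would extend a given $\tau\colon\Comp_{X,\leq d-1}\to\Hcoe_X$ to $\Comp_{X,\leq d}$ by a Laumon-type deformation. Given $F\in\Comp_{X,\leq d}(T)$ supported on a closed subscheme $Z\subset X_T$ whose fibres over $T$ are generically $d$-dimensional, choose locally on $T$ a generically finite projection $p\colon Z\to\mathbb{A}^d_T$. Passing to the generic fibre of $p$, or equivalently base-changing along a sufficiently generic pencil of hyperplane sections, one obtains an element of $\Comp_{X,\leq d-1}$ over a base modification $T'\to T$ to which $\tau$ applies. Pushing the resulting Borel--Moore class forward along the slicing parameter yields the required element of $\Hbm(X_T/T)$, and the specialization compatibility built into $\Comp_X$ forces the outcome to propagate consistently across components of $T$.

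Uniqueness comes from the same deformation: any two extensions $\phi$ and $\psi$ have difference factoring through the cofibre $\Comp_{X,\leq d}/\Comp_{X,\leq d-1}$, and applied to any $F\in\Comp_{X,\leq d}(T)$ the above reduction lands in $\Comp_{X,\leq d-1}$, where $\phi$ and $\psi$ agree by hypothesis. In the $\infty$-enhancement, the parallel argument trivializes all higher coherences using the same parameter space of deformations.

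The principal obstacle is the $\infty$-categorical gluing: the class produced from a choice of projection $p$ is canonical only up to homotopy, and different admissible projections must yield homotopic classes, with these homotopies themselves identified coherently across iterated choices. This will require showing that the space of admissible projections (those generically finite on $\mr{Supp}(F)$ and well-behaved under all specializations of $T$) is suitably contractible, and that this contractibility is functorial in $T$. It is precisely here that the $\infty$-enhancement of $\Hcoe_X$ is indispensable --- on $\pi_0\Hcoe_X$ alone the higher coherences are invisible and the extension is not uniquely determined, whereas retaining the full homotopical data lets the rigidity of the zero-dimensional case propagate through every stratum of the filtration.
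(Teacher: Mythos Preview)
Your outline has the right shape---induction on dimension via a deformation---but two of the three load-bearing steps are not addressed, and the third is misidentified.

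First, the deformation you describe (a generic projection $Z\to\mathbb{A}^d_T$, or a Lefschetz-pencil slice) is not the mechanism the paper uses, and it is unclear it would work. The paper's deformation is the Artin--Schreier twist $\mc{E}(F;f)=F\otimes\mc{L}(ft)$ for a single function $f$, parametrized by $t\in\Box$; one twist drops the support dimension by one only at $t=\infty$ and only for suitable $f$, so the procedure must be \emph{iterated}, and proving that some finite sequence of functions always localizes $F$ to dimension $0$ is the hard theorem deferred to Part~II. A single generic projection to $\mathbb{A}^d_T$ has no evident analogue of this finiteness, and more importantly the space of such projections carries no visible contraction: the paper's contractibility argument works precisely because the sheaf of functions $\mu$ has an $\mathbb{A}^1$-contraction $f\mapsto sf$, which feeds into the Suslin complex to make $\mr{Adm}'$ contractible (Lemma~\ref{contradmpri}).

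Second, and more seriously, you skip over what the paper calls the \emph{rewind} problem. After deforming, $\mc{E}(F;f)$ is not a flat \'etale system over $T\times\Box$; it only becomes one over some modification $V\to T\times\Box$, so applying $\tau$ yields a class in $\Hbm(X_{V_\infty}/V_\infty)$ rather than in $\Hbm(X_T/T)$. Your sentence ``pushing the resulting Borel--Moore class forward along the slicing parameter'' hides exactly this: $V_\infty\to T$ is neither an isomorphism nor proper of relative dimension zero in general, and there is no naive pushforward. The paper devotes all of \S\ref{sect2} to constructing a section $\mr{rw}_\infty\colon\partial_\infty\Hcoe_X\to\Hcoe_X$, which requires cdh-descent for $\Hcoe_X$ (this, not merely the coherence of higher homotopies, is why the $\infty$-enhancement is indispensable---$\pi_0\Hcoe_X$ is not a cdh sheaf), Gruson--Raynaud flattening to reduce to the flat case, and the trace map on relative cycles. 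Without a concrete construction of this rewind, your inductive extension does not produce a class over $T$ at all.
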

Let us explain the rough idea of the proof.
Assume that for each $F\in\Comp_X(T)$, we can find $F'\in\Comp_X(T\times\Box)$, where $\Box:=\mb{P}^1\setminus\{1\}$,
such that $F'|_{T\times\{0\}}=F$ and $F'|_{T\times\{\infty\}}\in\Comp_{X,0}(T)$.
Consider $\mr{CC}(F')\in\Hcoe_X(T\times\Box)$.
Since $\Hcoe_X$ is $\mb{A}^1$-invariant, we have the diagram
\begin{equation*}
 \xymatrix{
  &\Hcoe_X(T)\ar[d]^{q^*}\ar@/^10pt/@{=}[dr]\ar@/_10pt/@{=}[dl]&\\
 \Hcoe_X(T)&
  \Hcoe_X(T\times\Box)
  \ar[r]^-{i_{\infty}^*}_-{\sim}\ar[l]_-{i_0^*}^-{\sim}&
  \Hcoe_X(T),}
\end{equation*}
where $i_x\colon T\times\{x\}\hookrightarrow T\times\Box$ is the closed immersion, and $q\colon T\times\Box\rightarrow T$ is the projection.
The equivalence on the right implies that $\mr{CC}(F')$ is equal to $(i_\infty^*)^{-1}\tau(i_\infty^*F')$.
Thus, the diagram implies that $\mr{CC}(F)=\tau(i_\infty^*F')$,
and the value of $\mr{CC}(F)$ is determined once we fix $F'$.
The element $F'$ can be regarded as a ``deformation'' of $F$, and this deformation ``localizes'' $F$ to an element of $\Comp_{X,0}$ at $\infty$.
We can summarize the above observation by saying that if we have enough supply of deformations, then the extension will be determined uniquely.

Now, how can we construct meaningful deformations?
In the celebrated work of Laumon \cite{Lau}, he considered interesting deformations of $\ell$-adic sheaves inspired by works of E. Witten.
Following this idea, we will take $F'$ as $\mc{E}(F;f):=F\otimes\mc{L}(ft)$ where $f$ is a (local) function on $X$ parametrized by $T$,
$t$ is the coordinate of $\Box$, and $\mc{L}$ is the Artin-Schreier sheaf associated with a non-trivial additive character.
We remark that if we take $f$ to be an isolated characteristic function with respect to $F\in\Comp_X(k)$,
then $\tau i_\infty^*\mc{E}(F;f)$ coincides with the $0$-cycle in (\ref{intMilnorformu}).
This comparison is a (rather straightforward) consequence of Laumon's computation in \cite{Lau}.

\subsection{}
\label{propofsys}
The actual situation is more complicated in various points, but in this introduction, we focus on the following two obstacles:
\begin{itemize}
 \item Unfortunately, the deformation $\mc{E}(F;f)$ is not an element of $\Comp_X(T\times\Box)$.
       What we can say is that there exists a modification $T'\rightarrow T\times\Box$ over which $F\otimes\mc{L}(ft)$ becomes an element of $\Comp_X$.
       This deficit reflects the philosophy of the theory of nearby cycles over general base
       that the situation is ``good'' only after taking a suitable modification on the base.

 \item In order to kill the arbitrariness of the function of deformation, we need to consider iterative deformation,
       and must show that this process terminates in finite steps.
\end{itemize}
To treat the first point, we introduce an $\infty$-presheaf $\partial\mc{F}$ for any ($\infty$-)presheaf $\mc{F}$ on $\mr{Sch}_{/k}$.
Intuitively, $\partial\mc{F}$ is the sheafification of $\mc{F}$ with respect to a variant of cdp-topology.
Roughly speaking, a section of $\partial\mc{F}(T)$ (more or less) corresponds to a section of $\mc{F}(V\times_{\Box}\{\infty\})$ for some modification $V\rightarrow T\times\Box$.
Then, even though our $F'$ does not belong to $\Comp_X(T\times\Box)$, it {\em does} belong to $\Comp_X(V)$ for some $V$ and $i_{\infty}^*F'$ is well-defined in $\partial\Comp_X(T)$.
Using this operation, we may formulate the deformation cleanly; however the above strategy breaks down because the resulting morphism becomes
$\Comp_X\rightarrow\partial\Hcoe_X$ and $\partial\Hcoe_X$ is no longer $\mb{A}^1$-invariant.
If we consider the abelian presheaf $\pi_0\Hcoe_X$ in place of $\Hcoe_X$, we are unable to proceed further.
However, if we consider the $\infty$-presheaf $\Hcoe_X$,
we may find a section to the canonical map $\Hcoe_X\rightarrow\partial\Hcoe_X$, which we call the {\em rewind morphism}.
A crucial difference between $\pi_0\Hcoe_X$ and $\Hcoe_X$ in the construction of rewind morphism lies in the fact that $\Hcoe_X$ admits cdh-descent.
The idea of construction of rewind morphism will be explained shortly.

For the second point, let us first explain why we need multiple deformations.
As we have seen, deformations are not unique, but we consider deformations parameterized by a space of functions.
Assume we are given $F\in\Comp_X(T)$, and two functions $f_1$, $f_2$ on $X$ parameterized by $T$.
Then we may consider deformations $\mc{E}(F;f_1)$, $\mc{E}(F;f_2)$.
Assume we are trying to extend $\tau$ to $\mr{CC}$, and assume we are in a good situation where $\mc{E}(F;f_1)$, $\mc{E}(F;f_2)$ belong to
$\Comp_X(T\times\Box)$ (not just $\partial\Comp_X(T)$) and even nicer $f_i$ localizes $F$ to an element of $\Comp_{X,0}$,
namely $\mc{E}_{\infty}(F;f_i):=i^*_{\infty}\mc{E}(F;f_i)\in\Comp_{X,0}(T\times\{\infty\})$.
Then the observation above implies that $\mr{CC}(F)$ should be equal to $\tau(\mc{E}_{\infty}(F;f_i))$.
However, we have two choices, $i=1,2$.
This implies that to define the map $\mr{CC}$ in this manner, we at least have
$\tau(\mc{E}_{\infty}(F;f_1))=\tau(\mc{E}_\infty(F;f_2))$.
A benefit of considering presheaves on $\mr{Sch}_{/k}$ is that the required information to construct the desired equality is already contained!
Indeed, we may take a function $g$ on $X$ parameterized by $T\times\mb{A}^1$ such that the restriction to $T\times\{i\}$ is equal to $f_i$.
Let $\mr{pr}\colon T\times\mb{A}^1\rightarrow T$ be the projection, and we may consider the deformation $\mc{E}(\mr{pr}^*F;g)$.
Assume for simplicity that we are in a lucky situation where $\mc{E}(\mr{pr}^*F;g)\in\Comp_X(T\times\mb{A}^1)$.
If $\mc{E}_\infty(\mr{pr}^*F;g)$ belongs to $\Comp_{X,0}(T\times\mb{A}^1)$, then
$\tau\mc{E}_\infty(\mr{pr}^*F;g)$ yields the ``homotopy'' which gives the desired equality.
However, $\mc{E}_\infty(\mr{pr}^*F;g)$ often does not belong to $\Comp_{X,0}$.
Nevertheless, we do not need to be desperate.
We may take a secondary function $g_2$, and try to deform $\mc{E}_\infty(\mr{pr}^*F;g)$ using that function.
More precisely, we may consider $\mc{E}_\infty(\mc{E}_\infty(\mr{pr}^*F;g);g_2)$ and see if this belongs to $\Comp_{X,0}$ or not.
If this one does not belong to $\Comp_{X,0}$, we take a tertiary function and we proceed this process.
In order for this strategy to work, one finds immediately that there should exist a sequence of functions of {\em finite length} $g_1,\dots,g_n$
such that the iterated deformation $\mc{E}_\infty(\dots\mc{E}_{\infty}(\mr{pr}^*F;g_1)\dots;g_n)$ belongs to $\Comp_{X,0}$.
One of the hardest fact to show in this work is that, in fact, we can always find such a finite sequence.
For the proof of this fact, we need new ideas, and the fact will be shown in Part II of this paper.
For more details, we refer to the introduction of [Part II].

Looking this construction closely, we notice that the following properties of
$\Comp_X$ and $\Hcoe_X$ are needed for the uniqueness and existence of the extension:
\begin{enumerate}
 \item\label{propofsys-1}
      For any element of $F\in\Comp_X(T)$ and for any function $f$ on $X$ parameterized by $T$, we have $\mc{E}(F;f)\in\partial\Comp_X(T)$
      (functorially with respect to $F$).
      Furthermore, we can always find a sequence $g_1,\dots,g_n$ so that $\mc{E}_\infty(F;g_1,\dots,g_n)$ belongs to $\Comp_{X,0}$.

 \item\label{propofsys-2}
      The sheaf $\Hcoe_X$ is $\mb{A}^1$-invariant and admits a rewind structure.
\end{enumerate}
A pair of presheaf $\ms{S}_0\subset\ms{S}$ which has properties \ref{propofsys-1} is called a {\em localizing pair},
and a ($\infty$-)presheaf having properties \ref{propofsys-2} is called a {\em reversible coefficient} (cf.\ \ref{mainconscc}, \ref{defofcoefsys}).
The above argument implies that given a localizing pair $\ms{S}_0\subset\ms{S}$ and a reversible coefficient $\mc{H}$,
any morphism $\ms{S}_0\rightarrow\mc{H}$ extends (essentially) uniquely to a morphism $\ms{S}\rightarrow\mc{H}$ up to canonical homotopy.

\subsection{}
We have reduced the construction of the morphism $\Comp_X\rightarrow\Hcoe_X$ to
showing that $\Comp_{X,0}\subset\Comp_X$ is a localizing pair and to constructing a rewind structure on $\Hcoe_X$.
As we mentioned, the proof that $\Comp_{X,0}\subset\Comp_X$ is a localizing pair will be given in Part II.
Let us explain how we endow $\Hcoe_X$ with rewind structure.
Note that, up till now, we have only used $1$-categorical argument, but this is one of the points that requires the use of $\infty$-categorical arguments,
since we need ``gluing of cohomological elements''.

Recall that, for an $\infty$-sheaf $\mc{F}$, a section of $\partial\mc{F}(T)$ corresponds (more or less) to a section of
$\mc{F}(V_\infty)$ ($V_\infty:=V\times_{\Box}\{\infty\}$) for some modification $V\rightarrow T\times\Box$.
Thus, we wish to construct an element of $\Hcoe_X(T)$ out of an element $F\in\Hcoe_X(V_\infty)$.
Let $\alpha\colon X_{T\times\Box}\rightarrow T\times\Box$, and put $\cH:=\alpha_!\alpha^*\mbf{1}_{T\times\Box}$.
Let $\eta$ be the generic point of $\Box$.
By using cdh-descent (or cdp-descent), we can assume that $V\rightarrow T\times\Box$ is an isomorphism over $T\times\eta$.
For a closed point $x$ of $\Box$, let $i_{x}\colon T\times\{x\}\hookrightarrow T\times\Box$ and $j_{}\colon T\times\Box'\hookrightarrow T\times\Box$
where $\Box':=\Box\setminus\{\infty\}$ be the immersions.
Put $i:=i_\infty$.
Since we have the morphism $\Hcoe_X(V_{\infty})\rightarrow\Map(\cH,i_{*}i_{}^*j_{*}\mbf{1}_{T\times\Box'})$,
it suffices to construct a retract of
$\Hcoe_X(T)\simeq\Map(\cH,i_{*}i_{}^*\mbf{1}_{T\times\Box})\rightarrow\Map(\cH,i_{*}i_{}^*j_*\mbf{1}_{T\times\Box'})$.
In view of the localization exact triangle
$\mbf{1}_{T\times\Box}\rightarrow j_{*}\mbf{1}_{T\times\Box'}\rightarrow i_{*}i^!_{}\mbf{1}_{T\times\Box}[1]$,
this is equivalent to giving a {\em section} of
$\Map(\cH,i_{*}i_{}^*j_{*}\mbf{1}_{T\times\Box'})\rightarrow\Map(\cH,i_{*}i^!_{}\mbf{1}_{T\times\Box}[1])$.
Using the localization sequence again, constructing the section
is equivalent to constructing a homotopy between the morphism $\Map(\cH,i_{*}i^!_{}\mbf{1}_{T\times\Box}[1])\rightarrow\Map(\cH,\mbf{1}_{T\times\Box}[1])$ and $0$.
Let $y\in|\Box|$.
Applying the functor $i_{y*}i^*_y$ to the morphism $i_{*}i^!_{}\mbf{1}_{T\times\Box}\rightarrow\mbf{1}_{T\times\Box}$,
we have the homotopy commutative diagram
\begin{equation*}
 \xymatrix@C=50pt{
  \Map(\cH,i_{*}i^!_{}\mbf{1}_{T\times\Box}[1])\ar[d]\ar[r]&\Map(\cH,\mbf{1}_{T\times\Box}[1])\ar[d]^{\sim}\\
 \Map(\cH,i_{y*}i_y^*i_{*}i^!_{}\mbf{1}_{T\times\Box}[1])\ar[r]&\Map(\cH,i_{y*}i_y^*\mbf{1}_{T\times\Box}[1]).
  }
\end{equation*}
The right vertical morphism is an equivalence since the theory is $\mb{A}^1$-invariant, and if we take $y\neq\infty$,
the left bottom object is $0$, which yields the desired homotopy.

\subsection{}
After these discussions, we are in the position to prove the non-microlocalized version of the pushforward formula.
Assume we are given a proper morphism $h\colon X\rightarrow Y$.
The commutation of the nearby cycle functor and $\mr{R}h_*$ induces the morphism $\Comp_X\rightarrow\Comp_Y$.
By the functoriality of Borel-Moore homology, we also have the pushforward $\Hcoe_X\rightarrow\Hcoe_Y$.
For the pushforward formula, we need to construct the following (homotopy) commutative diagram of $\infty$-sheaves
\begin{equation*}
 \xymatrix{
  \Comp_X\ar[r]^-{\mr{CC}_X}\ar[d]_{h_*}&\Hcoe_X\ar[d]^{h_*}\\
 \Comp_Y\ar[r]^-{\mr{CC}_Y}&\Hcoe_Y.
  }
\end{equation*}
Since $\Comp_{X,0}\subset\Comp_X$ is a localizing pair and $\Hcoe_Y$ is a reversible coefficient,
if we fix a morphism $\Comp_{X,0}\rightarrow\Hcoe_Y$, extensions are all homotopic.
Thus, in order to construct the commutative diagram above, we only need to construct a commutative diagram for $\Comp_{X,0}$.
This is just a property of the trace map, and we have the pushforward formula.

There are also other interesting consequences of our construction.
First, non-microlocalized version of characteristic cycle can be defined for any $k$-scheme $X$ separated of finite type.
In the case where $X$ can be embedded into a smooth scheme, this can be retrieved from Saito's characteristic cycle, but in general, the construction seems to be new.
Another consequence is that the non-microlocalized characteristic cycle is compatible with nearby cycle functor.
Note that the singular support is {\em not} compatible with nearby cycle in general \cite[2.3.2]{S3}.
This implies that the microlocalized version cannot be expected, unless we change the definition of the (microlocalized) characteristic cycle.
See Remark \ref{summarymain} for some explanations.
\bigskip

To conclude the introduction, let us outline the structure of this paper.
In \S\ref{sect1}, we formulate an abstract formalism of localization.
We introduce the notion of a {\em deformation system}, which describes the deformation of sheaves abstractly.
We say that a deformation system is {\em effaceable} if it satisfies a property similar to \ref{propofsys}.\ref{propofsys-1}.
Using these notions, we define localizing pairs and prove an extension theorem in this abstract setting.
In \S\ref{sect2}, we construct a rewind morphism on $\Hcoe_X$, and show that $\Hcoe_X$ is, in fact, a reversible coefficient.
In \S\ref{sect3}, we introduce an abstract formalism for microlocalization, inspired by Beilinson.
In \S\ref{sect4}, we define the abelian presheaf $\Comp_X$.
For this, we need to use the theory of nearby cycles over general base, and we start by reviewing the theory.
In \S\ref{sect5}, we show that $\Comp_{X,0}\subset\Comp_X$ is a localizing pair.
Here, we assume the main result of Part II.
Summing up these results, we construct the characteristic cycle and establish the pushforward formula for our characteristic cycle.
In the final section, \S\ref{sect6}, we compare our characteristic cycle and the one defined by Saito.
In addition, we have three sections in the appendix, proving some technical results on $\infty$-categories.

We use the theory of $\infty$-categories, more precisely quasi-categories, for the most part of this paper.
In \S\S\ref{sect1},\ref{sect2},\ref{sect3} in particular, we use $\infty$-category theory heavily;
however, a basic knowledge of \cite{HTT} is sufficient for these sections, as more technical results are treated in the appendix.

\subsection*{Acknowledgments}\mbox{}\\
This paper grew out from a project with Deepam Patel on understanding geometric $\varepsilon$-factors in the $\ell$-adic setting,
and this works would have never appeared without it.
The author greatly appreciates Deepam.
He thanks Takeshi Saito for his continuous supports and encouragements: The influence of his mathematics to this work is tremendous and apparent.
He thanks Offer Gabber for allowing the author to use his results ({\it i.e.}\ Theorem \ref{Gabbthm}),
which made the construction much cleaner, and various comments on the manuscript.
A referee suggested a drastic simplification of the proof of the extension theorem and the construction of rewind morphism from the previous version of the paper.
Especially, the argument in \S\ref{sect2} is essentially due to the referee, and the author wishes to thank the referee heartily.
He also thanks H\'{e}l\`{e}ne Esnault, Shuji Saito, Enlin Yang for helps and showing their interest.
Last but not least, the author thanks Alexander Beilinson for feedbacks and inspiring discussions after this paper was written.

This work is supported by JSPS KAKENHI Grant Numbers 16H05993, 18H03667, 20H01790.

\subsection*{Changes from Version 1}\mbox{}\\
As we mentioned in the Acknowledgments, several proofs and constructions have been significantly simplified since Version 1, following the suggestions of the referee.
We would like to express our gratitude to the referee once again for these insights.
Consequently, some terminology and definitions have been updated; we list the most notable changes below for the convenience of the reader.

\begin{itemize}
 \item The term ``pre-localization system'' used in the previous version has been replaced with ``deformation system''.
       While the original definition of a pre-localization system required a grading of the sheaf as a part of the information,
       we do not consider a grading in the definition of deformation system.

 \item The term ``localization system'' is now replaced with ``effaceable deformation system''.
       As with the change above, we no longer consider a grading.

 \item The functor $\partial$ in this version corresponds to $\partial_{\infty}$ in the previous version.
       However, the definitions are somewhat different, and we do know if they are equivalent.

 \item The definition of reversible coefficients is simplified thanks to the simplification of the proof of Theorem \ref{mainconscc} suggested by the referee.

 \item The construction of rewind morphism in \S\ref{sect2} is now significantly simplified, and the resulting statement is stronger.
       The construction is new, which is due to the referee as well.
\end{itemize}

\subsection*{Notations and conventions}

\begin{itemize}
 \item We fix a field $k$ of positive characteristic.
       We put $\Box:=\mb{P}^1_k\setminus\{1\}\,(\cong\mb{A}^1_k)$.

 \item A morphism $f\colon X\rightarrow Y$ is said to be {\em birational} if it is proper and there exists an open dense subscheme $V\subset Y$
       such that $f^{-1}(V)\rightarrow V$ is an isomorphism.
       The morphism is said to be a {\em modification} if it is birational and each generic point of $X$ is sent to a generic point of $Y$.

 \item We denote by $\mr{Sch}_{/S}$ the category of separated $S$-schemes of finite type, and by $\mr{SCH}$ the category of finite Krull dimensional noetherian $k$-schemes.

 \item In principle, we follow \cite{HTT} for the terminologies and notations of $\infty$-categories.
       Especially, by $\infty$-category we always mean quasi-category, and by category we always mean (ordinary) $1$-category.
       Here are some exceptions.
       We often identify (ordinary) categories with their nerves.
       We denote by $\Spc$ the category of spaces.
       We denote isomorphisms by $\cong$ and equivalences in $\infty$-categories by $\simeq$.

 \item Assume we are given topoi $\mc{T}$, $\mc{T}'$.
       By {\em geometric morphism} from $\mc{T}$ to $\mc{T}'$, we mean a pair of adjoint functors $L\colon\mc{T}'\rightarrow\mc{T}\colon R$
       such that $L$ is left exact.
       If we denote by $F\colon\mc{T}\rightarrow\mc{T}'$ the geometric morphism, $(L,R)$ are denoted by $(F^*,F_*)$.
       Let $\mc{C}$ be a stable $\infty$-category equipped with t-structure.
       As in \cite{HA}, we use homological notation.
       For example $X\in\mc{C}$, we denote $\tau_{\geq n}\tau_{\leq n}(X)\in\mc{C}^{\heartsuit}$,
       where $\mc{C}^{\heartsuit}$ denotes the heart of the t-structure, by $\pi_n X$.
       This is the same as $\mc{H}^{-n}X$ in the cohomological notation.

 \item Let $\mc{C}$ be an ($\infty$-)category admitting fiber products, and let $X\rightarrow Y$ be a morphism.
       If we are given another morphism $g\colon Y'\rightarrow Y$,
       we often denote by $X_{Y'}$ or by $g^*X$ the base change $X\times_Y Y'$,
       if no confusion may arise.

 \item For a stable $\infty$-category, recall from \cite[1.1.1.4]{HA} that a cofiber sequence is a pushout diagram,
       equivalently a pullback diagram, of the form
       \begin{equation*}
	\xymatrix{
	 X\ar[r]\ar[d]&Y\ar[d]\\
	0\ar[r]&Z,}
       \end{equation*}
       where $0$ is a zero object of $\mc{C}$.
       We also denote this cofiber sequence simply by $X\rightarrow Y\rightarrow Z$ if no confusion may arise.
\end{itemize}

\section{Abstract formalism}
\label{sect1}

\subsection{}
Recall that a morphism $f\colon X\rightarrow Y$ is said to be a {\em cdp-morphism} if it is proper and for any $y\in Y$ there exists a point $x\in X$
such that $f(x)=y$ and the induced homomorphism of residue fields $k(y)\rightarrow k(x)$ is an isomorphism.
We first define a Grothendieck topology, which plays an essential role in our construction.

\begin{dfn*}
 Let $S$ be a separated noetherian $k$-scheme (not necessary of finite type).
 For $T\in\mr{Sch}_{/S\times\Box}$, we define $\mr{Cov}(T)$ to be the set of families $\{T_i\rightarrow T\}$
 such that $\coprod T_i\rightarrow T$ is proper and $(\coprod T_i)\times_{\Box}U\rightarrow T\times_{\Box}U$
 is a cdp-morphism for some non-empty open subset $U$ of $\Box$.
 These data form a pretopology on $\mr{Sch}_{/S}$, and the induced Grothendieck topology is called the {\em $\eta$-cdp topology}.
\end{dfn*}

\begin{rem*}
 The cdp-topology is coarser than h-topology.
 However, $\eta$-cdp topology is not.
 Indeed, for an $\eta$-cdp covering $\{T_i\rightarrow T\}$, the morphism $\coprod T_i\rightarrow T$ need {\em not} be surjective.
\end{rem*}

\subsection{}
\label{basicsitediag}
We denote by $\mr{pr}\colon S\times\Box\rightarrow S$ the projection and by $i_{\infty}\colon S\cong S\times\{\infty\}\hookrightarrow S\times\Box$ the closed immersion.
We have the following diagram on the left of {\em cocontinuous} functors of ($1$-)sites (cf.\ [SGA 4, Exp.\ III, 2.1] or \ref{cocontmorlem}):
\begin{equation}
 \label{topoidiagsite}
 \xymatrix{
  \mr{Sch}_{/S\times\Box,\eta\mbox{-}\mr{cdp}}\ar[r]^-{\mr{id}}&
  \mr{Sch}_{/S\times\Box}\ar[r]^-{\mr{pr}\circ}&
  \mr{Sch}_{/S}\\
 &\mr{Sch}_{/S},\ar[u]^{i_{\infty}\circ}\ar@{=}[ur]&
  }\quad
  \xymatrix{
  \Shv(\mr{Sch}_{/S\times\Box,\eta\mbox{-}\mr{cdp}})\ar[r]^-{L}&
  \PShv(S\times\Box)\ar[r]^-{q}&
  \PShv(S)\\
 &\PShv(S).\ar[u]^{i_{\infty}}\ar@{=}[ur]&
  }
\end{equation}
We put $\PShv(S):=\PShv(\mr{Sch}_{/S})$, the $\infty$-category of $\infty$-presheaves on $\mr{Sch}_{/S}$.
This induces the diagram of geometric morphisms of $\infty$-topoi on the right above.
We define an endo-functor on $\PShv(S)$ as
\begin{equation*}
 \partial:=q_*\circ L_*\circ L^*\circ i_{\infty,*}.
\end{equation*}
The adjunction morphism $\mr{id}\rightarrow L_*\circ L^*$ induces the morphism of endo-functors $I\colon\mr{id}\rightarrow\partial$.
For $\mc{F}(S)\in\PShv$, the morphism $I(\mc{F})\colon\mc{F}\rightarrow\partial\mc{F}$ is also denoted by $I_{\mc{F}}$.

\begin{ex*}
 Let $V\rightarrow\Box$ be a morphism.
 Let $\mc{I}\subset\mc{O}_V$ be the ideal generated by elements which are supported on non-flat irreducible components of $V$ over $\Box$.
 We put $V_{\mr{fl}}:=\mr{Spec}(\mc{O}_V/\mc{I})$.
 Note that as a topological space $V_{\mr{fl}}=(V\times_{\Box}\eta)^{-}$, where the closure is taken in $V$.
 Now, let $\mc{F}$ be a $1$-presheaf on $\mr{Sch}_{/S}$, and let $\{T_i\rightarrow T\times\Box\}$ be a cdp-covering.
 Put $T_{ij}:=T_i\times_{T\times\Box}T_j$.
 Assume we are given elements $g_i\in\Gamma(T_{i,\mr{fl}},i_{x,*}\mc{F})\cong\Gamma(T_{i,\mr{fl}}\times_{\Box}\{x\},\mc{F})$
 for any $i$ such that $g_i|_{T_{ij,\mr{fl}}\times_{\Box}\{x\}}=g_{j}|_{T_{ij,\mr{fl}}\times_{\Box}\{x\}}$ for any $i,j$.
 Then the family $\{g_i\}$ gives rise to an element of $\pi_0\Gamma(T,\partial\mc{F})$ since $\partial$ is left t-exact.
\end{ex*}

\subsection{}
\label{preramdef}
Any $\infty$-category admitting colimits is naturally tensored over the $\infty$-category of spaces.
For a category $\mc{C}$ with colimits, $X\in\mc{C}$, and a space $K$, we denote this tensor by $X\otimes K$.
When $K$ is a set, we have $X\otimes I\simeq\coprod_{i\in I}X$.

\begin{dfn*}
 A {\em deformation system over $S$} is a collection of data $(\ms{S},\mu,\mc{E})$ where:
 \begin{itemize}
  \item $\ms{S}$ is an abelian presheaf on $\mr{Sch}_{/S}$;

  \item $\mu$ is a {\em pointed set-valued} presheaf on $\mr{Sch}_{/S}$, whose basepoint is denoted by $0$.
	This presheaf $\mu$ admits a {\em geometric contraction} (which is not a part of the data), namely a morphism
	$c_{\mu}\colon\mu\rightarrow p_*p^*\mu$ ($p\colon\mb{A}^1_S\rightarrow S$ is the projection)
	such that the composite $\mu\rightarrow p_*p^*\mu\rightarrow p_*i_{x*}i_x^*p^*\mu\simeq\mu$
	($i_x\colon S\hookrightarrow\mb{A}^1_S$ is the closed immersion for $x\in\{0,1\}$) is the identity for $x=1$
	and the constant morphism values at $0$ for $x=0$;

  \item $\mc{E}$ is a homomorphism $\ms{S}\otimes\mu\rightarrow\partial\ms{S}$ of $\infty$-presheaves on $\mr{Sch}_{/S}$.
	Here, $\otimes$ is taken as the underlying {\em set} rather than pointed set, namely,
	$\ms{S}\otimes\mu\cong\bigoplus_{f\in\mu}\ms{S}$.
 \end{itemize}
 This data is subject to the following compatibility condition:
 \begin{itemize}
  \renewcommand{\labelitemi}{$\blacktriangleright$}
  \item The following diagram commutes:
	\begin{equation}
	 \label{comaxpresys}
	 \xymatrix@C=70pt{
	  \ms{S}
	  \ar[d]_{\mr{id}\otimes\{0\}}\ar[rd]^{I_{\ms{S}}}&\\
	 \ms{S}\otimes\mu
	  \ar[r]_-{\mc{E}}&
	  \partial\ms{S}.
	  }
	\end{equation}
 \end{itemize}
\end{dfn*}

\begin{rem*}
 Note that, since the cohomological operations are used in the $\infty$-categorical setting,
 the functor $\partial$ is ``derived'' by default.
 However, since $\partial$ is left t-exact, $\pi_i\partial\ms{S}=0$ for $i>0$, in other words, the higher homotopies vanish.
 Thus, giving $\mc{E}$ is equivalent to giving a homomorphism $\ms{S}\rightarrow\pi_0\partial\ms{S}$.
 Namely, deformation system is purely a $1$-categorical notion.
 This indicates that deformation systems are not too hard to handle, at least compared to $\infty$-categorical data.
\end{rem*}

Let us introduce some notations.
For $\mbf{f}=(f_1,\dots,f_n)\in\mu^{n}$ and $\mbf{g}=(g_1,\dots,g_{n'})\in\mu^{n'}$,
we define the concatenation $\mbf{f}\vee\mbf{g}:=(f_1,\dots,f_n,g_1,\dots,g_{n'})\in\mu^{n+n'}$.
Using the base point of $\mu$, we have a map $z_m\colon\mu^{m}\xrightarrow{\mr{id}\times\{0\}}\mu^{m}\times\mu\cong\mu^{m+1}$
where the first $m$-components are identities.
The colimit over $m\in\mb{N}$ is denoted by $\mu^{\infty}$.
Note that the operation $\vee$ is {\em not} defined on the level of $\mu^{\infty}$.
For an abelian presheaf $\mc{F}$ and sections $F\in\Gamma(T,\mc{F})$, $\mbf{f}\in\Gamma(T,\mu^\infty)$ for $T\in\mr{Sch}_{/S}$,
we denote by $(F;\mbf{f})$ the element of $\Gamma(T,\mc{F}\otimes\mu^{\infty})\cong\bigoplus_{\Gamma(T,\mu^{\infty})}\Gamma(T,\mc{F})$ whose
$\mbf{f}$-component is $F$ and the other components are $0$.

\subsection{}
\label{iteconst}
Let $\mc{C}$ be an $\infty$-category and assume we are given an endo-functor $F\colon\mc{C}\rightarrow\mc{C}$.
Let $X\in\mc{C}$ and $\alpha\colon X\rightarrow F(X)$ be a morphism.
Let $I^{\mb{N}}:=\Delta^{\{0,1\}}\cup\Delta^{\{1,2\}}\cup\dots\subset\Delta^{\mb{N}}$.
Then the edge $\alpha$ induces the diagram $I^{\mb{N}}\rightarrow\mc{C}$ defined by $\alpha\vee F(\alpha)\vee F^2(\alpha)\vee\dots$.
Since $I^{\mb{N}}\hookrightarrow\mr{N}(\mb{N})$ is an inner anodyne, this diagram extends to a diagram
$D\colon\mr{N}(\mb{N})\rightarrow\mc{C}$.
We put $F^{\infty}_\alpha(X):=\indlim(D)$, if it exists.

\subsection{}
\label{dfnramsyscons}
Let $\mc{F}$ be an $\Mod_{\mb{Z}}$-valued $\infty$-presheaf, and $\mu$ be an $\infty$-presheaf.
In view of Lemma \ref{pullbackmonoidal}, pullback commutes with the tensor product.
Thus, we have the natural morphism $\theta\colon\partial(\mc{F})\otimes\mu\rightarrow\partial(\mc{F}\otimes\mu)$.
We have the homomorphism
\begin{equation*}
 \mc{E}_m\colon 
  \ms{S}\otimes\mu^{m+1}
  \cong
  (\ms{S}\otimes\mu)\otimes\mu^m
  \xrightarrow{\mc{E}\times\mr{id}}
  \partial(\ms{S})\otimes\mu^m
  \xrightarrow{\theta^m}
  \partial(\ms{S}\otimes\mu^m).
\end{equation*}
Informally, this sends $F\vee f\vee\mbf{g}$ to $\mc{E}(F;f)\vee\mbf{g}$.
We iteratively compose this morphism, and get
\begin{equation*}
 \mc{E}^m\colon
 \ms{S}\otimes\mu^m
  \xrightarrow{\mc{E}_{m-1}}
  \partial(\ms{S}\otimes\mu^{m-1})
  \xrightarrow{\partial(\mc{E}_{m-2})}
  \partial^2(\ms{S}\otimes\mu^{m-2})
  \rightarrow
  \dots
  \xrightarrow{\partial^{m-1}(\mc{E}_0)}
  \partial^m\ms{S}.
\end{equation*}
We have the following homotopy commutative diagram:
\begin{equation*}
 \xymatrix@C=35pt{
  \ms{S}\otimes\mu^{m+1}\ar@{-}[r]^-{\cong}&
  (\ms{S}\otimes\mu^m)\otimes\mu\ar[r]^-{\mc{E}^m\otimes\mr{id}}&
  \partial^m\ms{S}\otimes\mu\ar[r]^-{\theta^m}&
  \partial^m(\ms{S}\otimes\mu)\ar[r]^-{\partial^m(\mc{E})}&
  \partial^{m+1}\ms{S}\\
 \ms{S}\otimes\mu^{m}\ar@{=}[r]\ar[u]^{\mr{id}\otimes z_m}
 &\ms{S}\otimes\mu^m\ar[r]^-{\mc{E}^m}\ar[u]^-{\mr{id}\otimes\{0\}}&
  \partial^m\ms{S}\ar@{=}[r]\ar[u]^-{\mr{id}\otimes\{0\}}&
  \partial^m\ms{S}\ar[u]^-{\partial^m(\mr{id}\otimes\{0\})}\ar@{=}[r]&
  \partial^m\ms{S}.\ar[u]_-{\partial^m(I_{\ms{S}})}&
  }
\end{equation*}
Taking the colimit of the diagram above over $m$, we have the morphism $\mc{E}^{\infty}\colon\ms{S}\otimes\mu^{\infty}\rightarrow\partial^{\infty}_I\ms{S}$.
We put $\Eff_{(\ms{S},\mu,\mc{E})}:=\mr{Ker}(\mc{E}^{\infty})$.
Here is one of the main definitions of this paper:

\begin{dfn*}
 \label{dfnramsys}
 Let $\Delta^n_{\mr{geom}}:=S\times\mr{Spec}(\mb{Z}[x_0,\dots,x_n]/(1-\sum x_i))$.
 A deformation system $\mb{S}:=(\ms{S},\mu,\mc{E})$ is said to be {\em effaceable} if it satisfies the following condition:
 \begin{itemize}
  \renewcommand{\labelitemi}{$\blacktriangleright$}
  \item For $F\in\ms{S}(T)$ and $\mbf{f}\in\mu^{l}(T\times\Delta^m_{\mr{geom}})$ for some $T$ and $l,m\geq0$,
	we can find a cdh-covering $\{T_i\}\rightarrow T$ and $\mbf{g}_i\in\mu^{l'}(T_i)$ for some $l'$
	such that $(F;\mbf{f}\vee\mbf{g}_i)\in\Eff_{\mb{S}}(T_i\times\Delta^m_{\mr{geom}})$.
 \end{itemize}
\end{dfn*}

\begin{rem*}
 \begin{enumerate}
  \item Since $\partial$ is left t-exact and filtered colimit is t-exact,
	$\pi_n\bigl(\partial^{\infty}_I(\ms{S}\otimes\mu^{\infty})\bigr)=0$ for $n>0$.
	This implies that for a subsheaf $A\subset\ms{S}\otimes\mu^{\infty}$, the condition
	$\mc{E}^{\infty}|_A=0$ is equivalent to $\pi_0\mc{E}^{\infty}|_A=0$.
	Thus, the condition can be expressed entirely in terms of ($1$-)sheaves.

  \item \label{dfnramsys-rem-2}
	When we are given an effaceable deformation system, the following slightly stronger condition holds:
	For a finite set $\{F_k\}_{1\leq k\leq n}$ of elements in $\ms{S}(T)$
	and $\{\mbf{f}_k\}_{1\leq k\leq n}$ in $\mu^{l}(T\times\Delta^m_{\mr{geom}})$ for some $l,m\geq0$,
	we can find a cdh-covering $\{T_i\}\rightarrow T$ and $\mbf{g}_i\in\mu^{l'}(T_i)$ for some $l'$
	(not depending on $1\leq k\leq n$)
	such that $(F_k;\mbf{f}_k\vee\mbf{g}_i)\in\Eff_{\mb{S}}(T_i\times\Delta^m_{\mr{geom}})$.
	Indeed, we use the induction on $n$.
	Assume the condition holds for $n<N$.
	For $\{F_k\}_{1\leq k\leq N-1}$, we may choose a finite cdh-covering $\{T_i\}_{i\in I}$ and $\mbf{g}_i$.
	By definition of localization system, for $F\in\ms{S}(T_i)$ and $\mbf{f}_N\vee\mbf{g}_i\in\mu^{l+l'}(T_i\times\Delta^m_{\mr{geom}})$,
	we can take a finite cdh-covering $\{T_{i,j}\}_{j\in J_i}$ of $T_i$ and $\mbf{g}_{i,j}$
	such that $(F;\mbf{f}_{N}\vee\mbf{g}_i\vee\mbf{g}_{i,j})\in\Eff_{\mb{S}}(T_{i,j}\times\Delta^m_{\mr{geom}})$.
	Thus the cdh-covering $\{T_{i,j}\}_{i\in I,j\in J_i}$ and $\mbf{g}_i\vee\mbf{g}_{i,j}$ meet our need.
 \end{enumerate}
\end{rem*}

\subsection{}
\label{defofcoefsys}
For an $\mb{E}_{\infty}$-ring $R$, let $\Mod_R$ be the $\infty$-category of $R$-module objects in $\mr{Sp}$.
We only consider the case where $R$ is a discrete $\mb{E}_{\infty}$-ring, namely a commutative ring in the usual sense.
Recall that, in this case, the homotopy category $\mr{h}\Mod_R$ is equivalent to the usual derived category $D(R)$ (cf.\ \cite[7.1.2.13]{HA}).

\begin{dfn*}
 Let $\mc{H}$ be a $\Mod_R$-valued presheaf on $\mr{Sch}_{/S}$, an object of $\PShv_{\Mod_R}(S):=\PShv_{\Mod_R}(\mr{Sch}_{/S})$.
 The presheaf $\mc{H}$ is said to be {\em reversible} if the following conditions hold:
 \begin{itemize}
  \renewcommand{\labelitemi}{$\blacktriangleright$}
  \item It is {\em $\mb{A}^1$-invariant}, namely the canonical morphism $\mc{H}\rightarrow p_*p^*\mc{H}$,
	where $p\colon\mb{A}^1_S\rightarrow S$ is the projection, is an equivalence;

  \item It is a $\Mod_R$-valued $\infty$-sheaf with respect to cdh-topology on $\mr{Sch}_{/S}$;

  \item There exists a {\em rewind morphism}, namely a retract diagram
	$\mr{rw}\in\mr{Fun}\bigl(\Delta^2/\Delta^{\{0,2\}},\PShv_{\Mod_R}(S)\bigr)$ (cf.\ \cite[4.4.5]{HTT})
	such that $\mr{rw}|_{\Delta^{\{0,1\}}}=I\colon\mc{H}\rightarrow\partial\mc{H}$.
	Abusing notations, we often denote the retract $\mr{rw}|_{\Delta^{\{1,2\}}}$ of $I$ simply by $\mr{rw}$.
 \end{itemize}
\end{dfn*}

\subsection{}
Now, we are ready to state the main result of this section.
We say that a morphism $i\colon\ms{S}'\rightarrow\ms{S}$ of an abelian presheaves on $S$ is said to be {\em localizing} if there exists an increasing positive filtration
$\{\ms{S}_d\}_{d\geq0}$ of $\ms{S}$ such that $\mr{gr}_d\ms{S}_{\bullet}$ can be endowed with a structure of effaceable deformation system for any $d>0$ and the inclusion
$\ms{S}_0\subset\ms{S}$ is equivalent to $i$.

\begin{thm*}
 \label{mainconscc}
 Let $\ms{S}_0\rightarrow\ms{S}$ be a localizing morphism of abelian presheaves and $\mc{H}$ be a reversible presheaf.
 Then the restriction morphism $\Map_{\PShv(S)}(\ms{S},\mc{H})\rightarrow\Map_{\PShv(S)}(\ms{S}_0,\mc{H})$ is an equivalence of spaces.
\end{thm*}
A large part of the rest of this section is devoted to the proof of this theorem.
We start the preparation of the proof, and the proof will be given in \ref{proofofextthm}.

\subsection{}
We start with some preparations of general non-sense.
The following lemma is standard, and we leave the verification to the reader.

\begin{lem*}
\label{functnattran}
 Let $\mc{C}$, $\mc{D}$ be $\infty$-categories, and assume we are given a morphism $\phi\colon F\rightarrow G$ in $\mr{Fun}(\mc{C},\mc{D})$.
 For $X\in\mc{C}$, we denote by $\phi_X\colon F(X)\rightarrow G(X)$ the induced morphism in $\mc{D}$.
 Let $X,Y$ be objects of $\mc{C}$.
 Then, we have the following homotopy commutative diagram of spaces
 \begin{equation*}
  \xymatrix@C=50pt{
   \Map_{\mc{C}}(X,Y)\ar[r]^-{F}\ar[d]_{G}&\Map_{\mc{D}}\bigl(F(X),F(Y)\bigr)\ar[d]^{\phi_Y\circ}\\
  \Map_{\mc{C}}\bigl(G(X),G(Y)\bigr)\ar[r]^-{\circ\phi_X}&\Map_{\mc{D}}\bigl(F(X),G(Y)\bigr).
   }
 \end{equation*}
\end{lem*}

\begin{lem}
 \label{retractconslem}
 Recall the notation of {\normalfont\ref{iteconst}}.
 Let $\mc{C}$ be an $\infty$-category, and $F$ be an endo-functor of $\mc{C}$.
 For $X\in\mc{C}$, assume we are given a retract diagram $X\xrightarrow{i}F(X)\xrightarrow{r}X$.
 Then this induces the retract diagram $X\xrightarrow{i}F_i^{\infty}(X)\xrightarrow{r}X$, if $F_i^{\infty}(X)$ exists.
\end{lem}
\begin{proof}
 Following \cite[4.4.5]{HTT}, let $\mr{Ret}:=\Delta^2/\Delta^{\{0,2\}}$.
 We construct diagrams $R_n\colon\mr{Ret}\rightarrow\mc{C}$ and $D_n\colon\mr{Ret}\times\Delta^1\rightarrow\mc{C}$ for $n>0$ inductively.
 We put $R_1$ to be the given retract diagram.
 Assume we are given a retract diagram $X\xrightarrow{i_n} F^n(X)\xrightarrow{r_n} X$, denoted by $R_n$.
 Let us construct $D_n$.
 We have the map $(\Delta^2\times\{0\})\coprod_{\partial\Delta^2\times\{0\}}(\partial\Delta^2\times\Delta^1)\rightarrow\mc{C}$
 sending $\Delta^2\times\{0\}$ to the retract diagram $X\rightarrow F^n(X)\rightarrow X$,
 and $\Delta^{\{0,2\}}\times\Delta^1$ is sent to the point $\{X\}$,
 and $\Delta^{\{0,1\}}\times\Delta^1$, $\Delta^{\{1,2\}}\times\Delta^1$ are any homotopy commutative diagrams of the form below:
 \begin{equation*}
  \xymatrix{
   X\ar[r]^-{i_n}\ar@{=}[d]&F^n(X)\ar[d]^{F^n(i)}\\
  X\ar[r]&F^{n+1}(X),
   }\qquad
   \xymatrix@C=50pt{
   F^n(X)\ar[r]^-{r_n}\ar[d]_{F^n(i)}&X\ar@{=}[d]\\
  F^{n+1}(X)\ar[r]^-{r_n\circ F^n(r)}&X.
   }
 \end{equation*}
 By (the dual version of) \cite[3.2.1.8]{HTT}, the diagram extends to a diagram
 $\Delta^2\times\Delta^1\rightarrow\mc{C}$.
 By construction, this map factors through $\mr{Ret}\times\Delta^1$, and the induced map is
 $D_n\colon\mr{Ret}\times\Delta^1\rightarrow\mc{C}$.
 This diagram can be depicted as
 \begin{equation*}
  \xymatrix{
   X\ar[r]^-{i_n}\ar@{=}[d]&F^n(X)\ar[r]^-{r_n}\ar[d]&X\ar@{=}[d]\\
  X\ar[r]&F^{n+1}(X)\ar[r]&X,
   }
 \end{equation*}
 and the rows are retract diagrams.
 Now, let $R_{n+1}:=D_n|_{\mr{Ret}\times\{1\}}$.
 This inductive construction yields, for $n>0$, a diagram $D_n\colon\mr{Ret}\times\Delta^1\rightarrow\mc{C}$ such that
 $D_n|_{\mr{Ret}\times\{0\}}=R_n$ and $D_n|_{\mr{Ret}\times\{1\}}=R_{n+1}$.
 By concatenating, we have $\mr{Ret}\times I^{\mb{N}}\rightarrow\mc{C}$ such that $\mr{Ret}\times\{i\}=R_i$.
 Since $I\hookrightarrow\mr{N}(\mb{N})$ is an inner anodyne, the diagram extends to $\mr{Ret}\times\mr{N}(\mb{N})\rightarrow\mc{C}$ by
 \cite[2.3.2.4]{HTT}.
 Taking the adjoint, we have the diagram $\mr{N}(\mb{N})\rightarrow\mr{Fun}(\mr{Ret},\mc{C})$, and take a colimit of this functor.
 Since colimits can be computed pointwise by \cite[5.1.2.3]{HTT}, the colimit diagram is the one we are seeking.
\end{proof}

\subsection{}
In this paragraph, we introduce the Suslin complex.
The construction is well-known to experts.
Recall $\Delta^n_{\mr{geom}}:=S\times\mr{Spec}(\mb{Z}[x_0,\dots,x_n]/(1-\sum x_i))$.
For $\phi\colon[m]\rightarrow[n]$ in $\mbf{\Delta}$,
we have the homomorphism $\mc{O}_S[x_0,\dots,x_n]/(1-\sum x_i)\rightarrow\mc{O}_S[y_0,\dots,y_m]/(1-\sum y_j)$ sending $x_i$ to $\sum_{j\in\phi^{-1}(i)}y_j$.
This gives us a cosimplicial scheme $\Delta^{\bullet}_{\mr{geom}}\colon\mbf{\Delta}\rightarrow\mr{Sch}_{/S}$.
Furthermore, we have the functor $\gamma\colon\mbf{\Delta}\times\mr{Sch}_{/S}\rightarrow\mr{Sch}_{/S}$ sending $([n],T)$ to $\Delta^n_{\mr{geom}}\times_S T$.
Using this, we define the {\em Suslin complex functor} by the composite
\begin{equation*}
 \mr{Sus}^{\mc{D}}_\bullet\colon
  \PShv_{\mc{D}}(S)
  \cong
  \mr{Fun}(\mr{Sch}_{/S}^{\mr{op}},\mc{D})
  \xrightarrow{\circ\gamma^{\mr{op}}}
  \mr{Fun}\bigl(\mbf{\Delta}^{\mr{op}}\times\mr{Sch}_{/S}^{\mr{op}},\mc{D}\bigr)
  \cong
  \mr{Fun}(\mbf{\Delta}^{\mr{op}},\PShv_{\mc{D}}(S)).
\end{equation*}
The geometric realization of $\mr{Sus}^{\mc{D}}_{\bullet}$ (namely the colimit over $\mbf{\Delta}^{\mr{op}}$) is denoted by $\mr{Sus}^{\mc{D}}$.
When $\mc{D}=\Mod_{\mb{Z}}$, we simply denote $\mr{Sus}^{\Mod_{\mb{Z}}}_{(\bullet)}$ by $\mr{Sus}_{(\bullet)}$.

\begin{rem*}
 \begin{enumerate}
  \item Let $\mc{F}$ be an {\em abelian} presheaf, namely $\mc{F}\in\Mod_{\mb{Z}}^{\heartsuit}$.
	Since for any morphism $f\colon X\rightarrow Y$, the pushforward $f_*\colon\PShv_{\Mod_{\mb{Z}}}(X)\rightarrow\PShv_{\Mod_{\mb{Z}}}(Y)$
	is t-exact, $\mr{Sus}^{\Mod_{\mb{Z}}}_m(\mc{F})$ remains to be an abelian presheaf,
	and equivalent to $\mr{Sus}^{\mr{Ab}}_m(\mc{F})$ considered as an object of $\PShv_{\Mod_{\mb{Z}}}(S)$.
	Note however that $\mr{Sus}^{\Mod_{\mb{Z}}}(\mc{F})$ and $\mr{Sus}^{\mr{Ab}}(\mc{F})$
	viewed as objects of $\PShv_{\Mod_{\mb{Z}}}(S)$ are not equivalent in general.
	By the convention, $\mr{Sus}(\mc{F})$ means $\mr{Sus}^{\Mod_{\mb{Z}}}(\mc{F})$, not $\mr{Sus}^{\mr{Ab}}(\mc{F})$.

  \item Let $\mc{F}$ be an $\mb{A}^1$-invariant $\mc{D}$-valued presheaf. Then the canonical morphism $\mc{F}\rightarrow\mr{Sus}^{\mc{D}}(\mc{F})$
	is an equivalence.
	Indeed, for any edge $e$ of $\mbf{\Delta}^{\mr{op}}$, the map $\mr{Sus}^{\mc{D}}_{\bullet}(e)(\mc{F})$ is an equivalence.
	Thus, by \cite[4.4.4.10]{HTT} (as well as \cite[5.5.8.4, 5.5.8.7]{HTT} to see that $\mbf{\Delta}^{\mr{op}}$ is weakly contractible),
	$\mc{F}\simeq\mr{Sus}^{\mc{D}}(\mc{F})$.

  \item Even though we do not use this, we may check that $\mr{Sus}^{\mc{D}}(\mc{F})$ is an $\mb{A}^1$-invariant $\mc{D}$-valued presheaf.
	Moreover, a $\mc{D}$-valued presheaf is $\mb{A}^1$-invariant if and only if the canonical morphism $\mc{F}\rightarrow\mr{Sus}(\mc{F})$ is an equivalence.
 \end{enumerate}
\end{rem*}

\subsection{}
\label{pspcontlem}
The following lemma is standard.

\begin{lem*}
 Let $\mc{F}$ be a pointed set-valued presheaf on $\mr{Sch}_{/S}$ which admits a geometric contraction morphism
 {\normalfont(}cf.\ Definition {\normalfont\ref{dfnramsys}}{\normalfont)}.
 The morphism of simplicial presheaves $\mr{Sus}_{\bullet}(\mc{F})\rightarrow\Delta^0$ is simplicially homotopic.
\end{lem*}
\begin{proof}
 The base point of $\mc{F}$ induces the morphism $\Delta^0\rightarrow\mr{Sus}_{\bullet}(\mc{F})$ which we denote by $0$.
 We will construct to simplicial homotopy $\mr{Sus}_{\bullet}(\mc{F})\times\Delta^1\rightarrow\mr{Sus}_{\bullet}(\mc{F})$ between $\mr{id}$ and $0$.
 To construct a simplicial homotopy for a simplicial set $K$, we must construct a family of maps
 $h_m^j\colon K_m\rightarrow K_{m+1}$ for $m\geq0$, $m\geq j\geq0$ satisfying the usual compatibility conditions with degeneracy and face maps.
 The $k$-th vertex of $\Delta^m_{\mr{geom}}$ is denoted by  $v^m_k$.
 We define the map $h_m^j$ as the composite
 \begin{equation*}
  \mr{Sus}_m(\mc{F})(T)
   \cong
   \mc{F}(T\times\Delta^m_{\mr{geom}})
   \xrightarrow{c}
   \mc{F}(T\times\Delta^m_{\mr{geom}}\times\mb{A}^1)
   \xrightarrow[\sim]{(a^j_m)^*}
   \mc{F}(T\times\Delta^{m+1}_{\mr{geom}})
   \cong
   \mr{Sus}_{m+1}(\mc{F})(T),
 \end{equation*}
 where $a^j_m$ is the affine morphism $\Delta^{m+1}_{\mr{geom}}\rightarrow\Delta^m_{\mr{geom}}\times\mb{A}^1$
 sending $v^{m+1}_k$ to $(v^m_k,0)$ if $k\leq j$ and to $(v^m_{k-1},1)$ if $k>j$.
 It is not hard to check that this is the desired homotopy, and the claim follows.
\end{proof}

\subsection{}
Let $\mc{X}^{\Delta}_{\bullet}$ be a simplicial set valued presheaf on $\mr{Sch}_{/S}$ and $\mc{F}$ is an object of $\PShv_{\Mod_{\mb{Z}}}(S)$.
Then the tensor $\mc{F}\otimes\mc{X}^{\Delta}_{\bullet}$ is defined in $\mr{Fun}(\mbf{\Delta}^{\mr{op}},\PShv_{\Mod_{\mb{Z}}}(S))$ in an evident manner.
Informally, this is the functor sending $[n]\in\mbf{\Delta}^{\mr{op}}$ to $\mc{F}\otimes\mc{X}^{\Delta}_n$.

Let $\mb{S}:=(\ms{S},\mu,\mc{E})$ be a deformation system.
Recall that we have a subsheaf $\Eff_{\mb{S}}\subset\ms{S}\otimes\mu^{\infty}$ from \ref{dfnramsys}.
For any $\infty$-presheaf $\mc{F}$, we have the morphism $\mc{F}\rightarrow\mr{Sus}_m(\mc{F})$
induced by the projection $\Delta^m_{\mr{geom}}\rightarrow S$, which admits a retraction defined by any rational point of $\Delta^{m}_{\mr{geom}}$.
In particular, we have $\ms{S}\hookrightarrow\mr{Sus}_m(\ms{S})$.
This enables us to define
\begin{equation*}
 \Eff^{\Delta}_{\mb{S},\bullet}:=\mr{Sus}^{\Delta}_{\bullet}(\Eff_{\mb{S}})\cap(\ms{S}\otimes\mr{Sus}^{\Delta}_{\bullet}(\mu^{\infty})),
\end{equation*}
where the intersection is taken in $\mr{Sus}^{\Delta}_{\bullet}(\ms{S}\otimes\mu^{\infty})$,
and which belongs to the abelian category $\mr{Fun}(\mbf{\Delta}^{\mr{op}},\PShv_{\mr{Ab}}(\mr{Sch}_{/S}))$.
More concretely, for $T\in\mr{Sch}_{/S}$,
\begin{equation*}
 \Eff^{\Delta}_{\mb{S},m}(T)=\Eff_{\mb{S}}(T\times\Delta^m_{\mr{geom}})
  \cap
  \bigl(\ms{S}(T)\otimes\mu^{\infty}(T\times\Delta^m_{\mr{geom}})\bigr).
\end{equation*}
The map $\ms{S}\otimes\mr{Sus}^{\Delta}_{\bullet}(\mu^{\infty})\rightarrow\ms{S}\otimes\Delta^0$, induced by the unique map $\mr{Sus}_{\bullet}(\mu^\infty)\rightarrow\Delta^0$,
induces the morphism $\Eff^{\Delta}_{\mb{S},\bullet}\rightarrow\ms{S}\otimes\Delta^0$.
The geometric realization of $\Eff^{\Delta}_{\mb{S},\bullet}$,
namely $\indlim_{\mbf{\Delta}^{\mr{op}}}\Eff^{\Delta}_{\mb{S},\bullet}$ defined in $\PShv_{\Mod_{\mb{Z}}}(\mr{Sch}_{/S})$,
is denoted by $|\Eff^{\Delta}_{\mb{S},\bullet}|$.
The following contractibility of $|\Eff^{\Delta}_{\mb{S},\bullet}|$ is crucial for us.

\begin{lem*}
 Let $L_{\mr{cdh}}\colon\PShv_{\Mod_{\mb{Z}}}(S)\rightarrow\PShv_{\Mod_{\mb{Z}}}(S)$ be the localization functor given by the cdh-sheafification.
 If the deformation system $\mb{S}$ is effaceable,
 the canonical morphism $L_{\mr{cdh}}|\Eff^{\Delta}_{\mb{S},\bullet}|\rightarrow L_{\mr{cdh}}\ms{S}$ is an equivalence in $\PShv_{\Mod_{\mb{Z}}}(S)$.
\end{lem*}
\begin{proof}
 We denote $\Eff_{\mb{S}}$ by $\Eff$.
 Since $\Shv(S_{\mr{cdh}})$ has enough points, it suffices to show that the morphism of stalks
 $|\Eff^{\Delta}_{\bullet}|_z\rightarrow\ms{S}_z$ is an equivalence in $\Mod_{\mb{Z}}$.
 Since the geometric realization commutes with colimits, this is equivalent to showing that $|\Eff^{\Delta}_{\bullet,z}|\rightarrow\ms{S}_z$ is an equivalence in $\Mod_{\mb{Z}}$.
 Since the forgetful functor $\Mod_{\mb{Z}}\rightarrow\mr{Sp}$ taking the underlying spectra is conservative and commutes with colimits (cf.\ \cite[4.2.3.2, 4.2.3.7]{HA}),
 it suffices to show the equivalence as the underlying spectra.
 It suffices to check the equivalence of $\Omega^{\infty}|\Eff^{\Delta}_{\bullet}|_z\rightarrow\Omega^{\infty}(\ms{S}_z)$ (cf.\ \cite[1.4.3.8]{HA}).
 Since both sides are connective, by \cite[1.4.3.9]{HA},
 this morphism is equivalent to the morphism $|\Omega^{\infty}\Eff^{\Delta}_{\bullet}|_z\rightarrow(\Omega^{\infty}\ms{S})_z$.
 For an abelian group $M$, $\Omega^{\infty}M$ is equivalent to the underlying discrete simplicial set of $M$,
 it suffices to show that the morphism of spaces $|\Eff^{\Delta}_{\bullet}|_z\rightarrow(\ms{S}\otimes\Delta^0)_z$ is a homotopy equivalence.
 Thus, it suffices to show that $(\Eff^{\Delta}_{\bullet})_z\rightarrow(\ms{S}\otimes\Delta^0)_z$ is a trivial fibration of simplicial sets.

 Since $\ms{S}_z$ is a discrete simplicial set, it suffices to show that each fiber in $(\Eff^{\Delta}_{\bullet})_z$ is Kan contractible.
 Since $(\Eff^{\Delta}_{\bullet})_z$ is an abelian simplicial set, it is a Kan complex.
 Let $A_{\bullet}$ be the fiber over a vertex $F\in\ms{S}_z$.
 Since $\ms{S}_z$ is discrete, $A_{\bullet}$ is a retract of the Kan compex $(\Eff^{\Delta}_{\bullet})_z$,
 so $A_{\bullet}$ is a Kan complex as well.
 We must solve the following lifting problem for $n\geq0$:
 \begin{equation}
  \label{contrlemcr-1}\tag{$\star$}
  \xymatrix{
   \partial\Delta^n\ar[r]^-{a}\ar@{^{(}->}[d]&A_{\bullet}\ar[d]\\
  \Delta^n\ar[r]\ar@{-->}[ur]&\Delta^0.
   }
 \end{equation}
 The case $n=0$ is evident from the effaceability, so we assume $n>0$.
 There exists $T\in\mr{Sch}_{/S}$ around $z$ so that the map $a$ corresponds to $n+1$ elements of
 $\ms{S}(T)\otimes\mu^l(T\times\Delta^{n-1}_{\mr{geom}})$ for large enough $l$.
 Since $\mu$ admits a contraction morphism $c_\mu$, the Kan fibration
 $\ms{S}(T)\otimes\mr{Sus}_{\bullet}(\mu^l)(T)\rightarrow\ms{S}(T)$ is a trivial fibration by Lemma \ref{pspcontlem}.
 In particular, we can find a map $\Delta^n\rightarrow\ms{S}(T)\otimes\mr{Sus}_{\bullet}(\mu^l)(T)$ whose boundary is equal to $a$.
 Let $\sum(F_i;\mbf{f}_i)$ be the corresponding element of $\ms{S}(T)\otimes\mu^l(T\times\Delta^n_{\mr{geom}})$.
 Note that $\sum F_i=F$.
 By Remark \ref{dfnramsys}.\ref{dfnramsys-rem-2}, by shrinking $T$ around $z$, we may find $\mbf{g}\in\mu^{l'}(T)$ such that
 $(F_i;\mbf{f}_i\vee\mbf{g})$ belongs to $\Eff(T\times\Delta^n_{\mr{geom}})$ for any $i$.
 This element can be identified with a map $\widetilde{b}\colon\Delta^n\rightarrow A_{\bullet}$.
 Now, let $B_k:=\Delta^{[n]\setminus\{k\}}\hookrightarrow \Delta^n$ be the face,
 and $\phi_k\colon\Delta^{n-1}_{\mr{geom}}\hookrightarrow\Delta^n_{\mr{geom}}$ be the closed immersion corresponding to $B_k$.
 We may write the element corresponding to $a|_{B_k}$ as $\sum(G^{(k)}_j;\mbf{h}^{(k)}_j)$
 so that $(G^{(k)}_j;\mbf{h}^{(k)}_j)\in\Eff^{\Delta}_{n-1}$.
 By the choice of $\sum(F_i;\mbf{f}_i)$, we have $\sum(F_i;\phi_k^*\mbf{f}_i)=\sum(G^{(k)}_j;\mbf{h}^{(k)}_j)$.
 The map $\widetilde{b}|_{B_k}$ corresponds to
 \begin{equation*}
  \label{contrlemcr-2}\tag{$\star\star$}
  \sum \phi_k^*(F_i;\mbf{f}_i\vee\mbf{g})
   =
   \sum(F_i;\phi_k^*(\mbf{f}_i)\vee\mbf{g})
   =
   \sum(G^{(k)}_j;\mbf{h}^{(k)}_j\vee\mbf{g}),
 \end{equation*}
 where the first equality holds since $\mbf{g}$ is in $\mu^{\infty}(T)$ instead of $\mu^{\infty}(T\times\Delta^n_{\mr{geom}})$.
 Now, the contraction morphism induces the map $\Delta^1\times B_k\rightarrow\ms{S}(T)\otimes\mr{Sus}_{\bullet}(\mu^{\infty})(T)$
 defined by $\sum(G^{(k)}_j;\mbf{h}^{(k)}_j\vee c_\mu(\mbf{g}))$ (where $c_\mu$ is the contraction morphism).
 Since $(G^{(k)}_j,\mbf{h}^{(k)}_j)$ already belongs to $\Eff_d$, this map factors through $A_\bullet$.
 By construction, this map yields $c\colon\Delta^1\times\partial\Delta^n\rightarrow A_{\bullet}$.
 Finally, consider the following diagram:
 \begin{equation*}
  \xymatrix@C=50pt{
  \Delta^1\times\partial\Delta^n\coprod_{\{1\}\times\partial\Delta^n}\{1\}\times\Delta^n
  \ar[r]^-{c\coprod\widetilde{b}}\ar@{^{(}->}[d]&
  A_{\bullet}\ar[d]\\
  \Delta^1\times\Delta^n\ar[r]&
   \Delta^0,
  }
 \end{equation*}
 where $c\coprod\widetilde{b}$ is well-defined by (\ref{contrlemcr-2}).
 Since the left vertical inclusion is anodyne by \cite[2.1.2.7]{HTT} and $A_\bullet$ is a Kan complex,
 the lifting problem has a solution.
 The restriction to $\{0\}\times\Delta^n$ yields the solution to the lifting problem (\ref{contrlemcr-1}).
\end{proof}

\subsection{}
\label{zerohomcore}
For an $\infty$-category $\mc{C}$, we put $\mr{End}(\mc{C}):=\mr{Fun}(\mc{C},\mc{C})$.
We have the adjunction morphism $\mr{id}\rightarrow L_{\mr{cdh}}$ in $\mr{End}(\PShv_{\Mod_{\mb{Z}}}(S))$.

\begin{cor*}
 Let $(\ms{S},\mu,\mc{E})$ be an effaceable deformation system on $\mr{Sch}_{/S}$.
 The composite $\ms{S}\xrightarrow{I_{\ms{S}}^{\infty}}\partial^{\infty}_I\ms{S}\rightarrow(L_{\mr{cdh}}\circ\mr{Sus})(\partial^{\infty}_I\ms{S})$,
 where the 2nd morphism is the canonical morphism, is homotopic to $0$.
\end{cor*}

\begin{proof}
 We consider the following homotopy commutative diagram in $\mr{Fun}(\mbf{\Delta}^{\mr{op}},\PShv_{\Mod_{\mb{Z}}}(S))$:
 \begin{equation*}
  \xymatrix@C=50pt{
   &\ms{S}\otimes\Delta^0\ar[d]^{\mr{id}\otimes\{0\}}\ar@/^10pt/[rd]^-{I^{\infty}}&\\
  \Eff^{\Delta}_{\bullet}\ar[r]^-{\ccirc{3}}\ar[rd]_-{\ccirc{2}}&
   \ms{S}\otimes\mr{Sus}^{\Delta}_{\bullet}(\mu^{\otimes\infty})\ar[r]^-{\mc{E}^{\infty}}\ar[d]^{\ccirc{1}}&
   \mr{Sus}^{\Delta}_{\bullet}(\partial_I^{\infty}\ms{S}).\\
  &\ms{S}\otimes\Delta^0&}
 \end{equation*}
 By Lemma \ref{pspcontlem}, $\ccirc{1}$ is simplicially homotopic, and thus equivalence if we take the geometric realization.
 Since the composite of the vertical morphisms is an equivalence, $\mr{id}\otimes\{0\}$ is an equivalence as well.
 Thus, we must show $L_{\mr{cdh}}|\mc{E}^{\infty}|$ is homotopic to $0$.
 Since $L_{\mr{cdh}}(|\ccirc{2}|)$ is an equivalence by the previous lemma, $L_{\mr{cdh}}(|\ccirc{3}|)$ is an equivalence as well.
 Thus, we are reduced to showing that the composite $L_{\mr{cdh}}(|\mc{E}^{\infty}\circ\ccirc{3}|)$ is homotopic to $0$.
 However, the map of simplicial sets $\mc{E}^{\infty}\circ\ccirc{3}$ is already $0$ by construction of $\Eff^{\Delta}_{\bullet}$.
\end{proof}

\subsection{Proof of Theorem \ref{mainconscc}}
\label{proofofextthm}\mbox{}\\
First, assume $\ms{S}$ can be endowed with a structure of effaceable deformation system.
Then we wish to show that $\mr{Map}_{\mc{P}}(\ms{S},\mc{H})$ is contractible.
Let $L:=L_{\mr{cdh}}\circ\mr{Sus}\colon\PShv_{\Mod_{\mb{Z}}}(S)\rightarrow\PShv_{\Mod_{\mb{Z}}}(S)$.
Recall that we have the morphisms $\mr{id}\rightarrow L_{\mr{cdh}}$, $\mr{id}\rightarrow\mr{Sus}$, $\mr{id}\rightarrow\partial^{\infty}_I$
in $\mr{End}(\PShv_{\Mod_{\mb{Z}}}(S))$.
Thus, we have the morphism $J\colon\mr{id}\rightarrow L\rightarrow L\circ\partial_I^{\infty}$ of endo-functors.
By Lemma \ref{retractconslem}, the morphism $I^{\infty}_{\mc{H}}$ admits a retract $\mr{rw}_{\infty}$.
Using Lemma \ref{functnattran} for the natural transform $J$ and the notations therein, we have the following homotopy commutative diagram
\begin{equation*}
 \xymatrix{
  \mr{Map}(\ms{S},\mc{H})\ar[r]^-{J_{\mc{H}}\circ}\ar[rd]_-{L\circ\partial^{\infty}_I}&
  \mr{Map}\bigl(\ms{S},L\partial^{\infty}_I\mc{H}\bigr)\ar[r]^-{\mr{rw}_{\infty}\circ}&
  \mr{Map}(\ms{S},L\mc{H})\\
 &\mr{Map}\bigl(L\partial^{\infty}_I\ms{S},L\partial^{\infty}_I\mc{H}\bigr)\ar[u]_{\circ J_{\ms{S}}}&
  }
\end{equation*}
The composite of the horizontal morphisms is an equivalence since $\mc{H}$ is an $\mb{A}^1$-invariant cdh-sheaf.
By Corollary \ref{zerohomcore}, the vertical morphism is homotopic to $0$.
This implies that the identity morphism of $\mr{Map}(\ms{S},\mc{H})$ is homotopic to $0$.
This readily implies that any morphism $0\rightarrow\mr{Map}(\ms{S},\mc{H})$ is an equivalence in $\Spc$, and thus $\mr{Map}(\ms{S},\mc{H})$ is contractible.

Now, let us consider the situation of the theorem.
For $n>0$, we have the cofiber sequence
\begin{equation*}
 \mr{Map}\bigl(\mr{gr}_n(\ms{S}),\mc{H}\bigr)
  \rightarrow
  \mr{Map}(\ms{S}_n,\mc{H})
  \xrightarrow{r_n}
  \mr{Map}(\ms{S}_{n-1},\mc{H}).
\end{equation*}
By what we have shown above, $\mr{Map}\bigl(\mr{gr}_n(\ms{S}),\mc{H}\bigr)$ is contractible.
This implies that  $r_n$ is an equivalence.
Let $\{\ms{S}_\bullet\}$ denotes the functor $\mr{N}(\mb{N})\rightarrow\mr{Ab}$ sending $i\in\mb{N}$ to $\ms{S}_i$ and $\{\ms{S}_0\}$ the constant functor values at $\ms{S}_0$.
Then we have the morphism $\{\ms{S}_0\}\rightarrow\{\ms{S}_\bullet\}$ in $\mr{Fun}(\mr{N}(\mb{N}),\PShv_{\mr{Ab}}(S))$ induced by the canonical inclusion $\ms{S}_0\subset\ms{S}_i$.
This yields the following diagram
\begin{equation*}
 \xymatrix{
  \mr{Map}(\ms{S},\mc{H})\ar[r]^-{\sim}&
\mr{Map}\bigl(\indlim\{\ms{S}_{\bullet}\},\mc{H}\bigr)\ar[r]\ar[d]_{\sim}&
  \mr{Map}\bigl(\indlim\{\ms{S}_0\},\mc{H}\bigr)\ar[d]^{\sim}\ar[r]^-{\sim}&
  \mr{Map}(\ms{S}_0,\mc{H})\\
 &\invlim\mr{Map}(\{\ms{S}_{\bullet}\},\mc{H})\ar[r]^-{\star}&
  \invlim\mr{Map}(\{\ms{S}_0\},\mc{H})&
  }
\end{equation*}
The morphism $\mr{Map}(\{\ms{S}_{\bullet}\},\mc{H})\rightarrow\mr{Map}(\{\ms{S}_0\},\mc{H})$ in $\mr{Fun}(\mr{N}(\mb{N}),\PShv_{\Mod_{\mb{Z}}}(S))$ is an equivalence
since $r_n$ is an equivalence and the equivalence of morphism of functors can be checked pointwise by \cite[5.1.2.3]{HTT} (applied to $K=\Delta^0$).
Thus the morphism $\star$ is an equivalence, and the theorem follows.
\qed

\subsection{Variant}\hspace{1.7ex}
\label{varpartdef}
There is a variant of the construction of the previous paragraphs, and will be used in the microlocalization in \S\ref{sect3}.
Let $p\colon\mc{C}\rightarrow\mr{Sch}_{/S}$ be a Cartesian fibration of ($1$-)categories.
Then by applying the construction of Lemma \ref{carttopo}, $\mc{C}$ is endowed with the {\em $p$-Cartesian topology}.
Furthermore, by Lemma \ref{commurestpush}, we get a diagram of geometric morphisms of $\infty$-topoi similar to (\ref{topoidiagsite}) on the right by replacing
$\Shv(\mr{Sch}_{/S\times\Box,\eta\mbox{-}\mr{cdp}})$ with $\Shv(\mc{C}\times_{p,\mr{Sch}_{/S}}\mr{Sch}_{/S\times\Box,\eta\mbox{-}\mr{cdp}})$.
By performing similar constructions, we also obtain the endo-functor $\partial^{\mc{C}}$ on $\PShv(\mc{C})$.
If no confusion may arise, we often omit the superscripts $(-)^{\mc{C}}$.
Let $T\in\mr{Sch}_{/S}$, and assume we are given a functor $a\colon\mr{Sch}_{/T}\rightarrow\mc{C}$ which preserves Cartesian edges over $\mr{Sch}_{/S}$.
Evidently, all functors appearing in the diagram (\ref{topoidiagsite}) admit right adjoints.
Thus, by Lemma \ref{commurestpush}, we have an equivalence of functors $a^*\partial^{\mc{C}}\simeq\partial^{T}a^*$,
where $\partial^T:=\partial^{\mr{Sch}_{/T}}$ is the functor defined in \ref{basicsitediag} for $S$ replaced with $T$.
For each object $c\in\mc{C}$, we have a functor $a_c\colon\mr{Sch}_{/p(c)}\rightarrow\mc{C}$
which preserves Cartesian edges and such that $a_c(p(c))=c$ in an essentially unique manner.
We may define a slight generalization of deformation system as follows:

\begin{dfn*}
 A {\em $p$-deformation system} is a triple $(\ms{S},\mu,\mc{E})$ where $\ms{S}$ is an abelian presheaf on $\mc{C}$,
 $\mu$ is a pointed set-valued presheaf on $\mc{C}$ such that $a_c^*\mu$ admits a geometric contraction for any $c\in\mc{C}$,
 and $\mc{E}$ is a morphism $\ms{S}\otimes\mu\rightarrow\partial^{\mc{C}}\ms{S}$,
 which are subject to the condition that the diagram (\ref{comaxpresys}) is commutative if we replace $\partial\ms{S}$ with $\partial^{\mc{C}}\ms{S}$.
\end{dfn*}

\subsection{}
We prepare two results for the later use.

\begin{lem*}
 \label{homcommdiagdefsys}
 Let $(\ms{S},\mu,\mc{E})$ be a $p$-deformation system, and let $\mc{H}\in\PShv_{\Mod_R}(\mc{C})$ such that $a_c^*\mc{H}$ is $\mb{A}^1$-invariant for any $c\in\mc{C}$.
 Assume we are given a retract $\mr{rw}\colon\partial^{\mc{C}}\mc{H}\rightarrow\mc{H}$ of $I_{\mc{H}}$ and a morphism of presheaves $\mr{C}\colon\ms{S}\rightarrow\mc{H}$.
 Then we have a homotopy commutative diagram in $\PShv_{\Mod_R}(\mc{C})$ as follows:
 \begin{equation*}
  \xymatrix@C=40pt{
   \ms{S}\otimes\mu\ar[r]^-{\mc{E}}\ar[d]_{\mr{C}\circ\Sigma}&\partial^{\mc{C}}\ms{S}\ar[d]^{\mr{rw}\circ\partial\mr{C}}\\
  \mc{H}\ar@{=}[r]&\mc{H}.
   }
 \end{equation*}
\end{lem*}
\begin{proof}
 For $\mc{F}\in\PShv_{\Mod_\mb{Z}}(\mc{C})$, we have a $\Mod_{\mb{Z}}$-valued presheaf $\mr{Sus}(\mc{F})$ such that $a^*_c\mr{Sus}(\mc{F})\simeq\mr{Sus}(a_c^*\mc{F})$,
 whose construction is left to the reader.
 We have the following homotopy commutative diagram (of solid arrows):
 \begin{equation*}
  \xymatrix@C=30pt{
   \ms{S}\ar@{=}[r]\ar[d]&\ms{S}\ar@{=}[r]\ar[d]&\ms{S}\ar[r]^-{\mr{C}}\ar[d]&\mc{H}\ar[d]^{I_{\mc{H}}}\ar@/^2ex/[rd]^-{\mr{id}}&\\
  \ms{S}\otimes\mu\ar@{=}[r]\ar@{=}[d]&\ms{S}\otimes\mu\ar[r]^-{\mc{E}}\ar[d]&\partial\ms{S}\ar[r]^-{\partial\mr{C}}\ar[d]&\partial\mc{H}\ar[d]\ar[r]^-{\mr{rw}}&\mc{H}\ar[d]^{\sim}\\
  \ms{S}\otimes\mu\ar[r]^-{\alpha}\ar@/^5ex/@{.>}[uu]^{\Sigma}&
   \ms{S}\otimes\mr{Sus}(\mu)\ar[r]^-{\mr{Sus}(\mc{E})}\ar@/^5ex/@{.>}[uu]^(.7){\mr{Sus}(\Sigma)}|(.5)\hole&
   \mr{Sus}(\partial\ms{S})\ar[r]^-{\mr{Sus}(\partial\mr{C})}&\mr{Sus}(\partial\mc{H})\ar[r]^-{\mr{Sus}(\mr{rw})}&\mr{Sus}(\mc{H}).
   }
 \end{equation*}
 The square diagram consisting with the morphisms $\{\Sigma,\alpha,\mr{Sus}(\Sigma),\mr{id}_{\ms{S}}\}$ is also homotopy commutative.
 Using the fact that $\mr{Sus}(\Sigma)$ is a quasi-inverse to the composite of the second horizontal morphisms,
 we get the desired homotopy commutative diagram by chasing the diagram.
\end{proof}

\begin{cor}[of Theorem \ref{mainconscc}]
 \label{familyextthm}
 Assume we are given a morphism $i\colon\ms{S}_0\rightarrow\ms{S}$ of abelian presheaves on $\mc{C}$ and $\mc{H}\in\PShv_{\Mod_{\mb{Z}}}(\mc{C})$.
 Furthermore, we assume:
 \begin{itemize}
  \item The morphism $a_c^*\ms{S}_0\xrightarrow{i}a_c^*\ms{S}$ is localizing on $p(c)$ for each $c\in\mc{C}$;

  \item The presheaf $a_c^*\mc{H}$ is reversible on $p(c)$ for each $c\in\mc{C}$.
 \end{itemize}
 Then the morphism $\mr{Map}_{\PShv(\mc{C})}(\ms{S},\mc{H})\xrightarrow{\circ i}\mr{Map}_{\PShv(\mc{C})}(\ms{S}_0,\mc{H})$ is an equivalence.
\end{cor}
\begin{proof}
 Recall from \cite[3.2.2.13]{HTT} that, for a Cartesian fibration $\mc{D}\rightarrow\mc{E}$ and a coCartesian fibration $\mc{F}\rightarrow\mc{E}$ of $\infty$-categories,
 we have a coCartesian fibration $\mc{X}\rightarrow\mc{E}$ such that $\mr{Fun}_{\mc{E}}(K,\mc{X})\cong\mr{Fun}_{\mc{E}}(K\times_{\mc{E}}\mc{D},\mc{F})$.
 The $\infty$-category $\mc{X}$ is denoted by $\Fun_{\mc{E}}(\mc{D},\mc{F})$.
 The construction $\Fun$ is compatible with base change of $\mc{E}$.
 In particular, the fiber over $e\in\mc{E}$ is $\mr{Fun}(\mc{D}_e,\mc{F}_e)$.

 We put $\mr{Sch}:=\mr{Sch}_{/S}$.
 We have the Cartesian fibration $\alpha\colon\mr{Ar}(\mr{Sch}^{\mr{op}}):=\mr{Fun}(\Delta^1,\mr{Sch}^{\mr{op}})\rightarrow\mr{Sch}^{\mr{op}}$
 given by taking the value at $0\in\Delta^1$.
 By taking a suitable left Kan extension, we have a functor
 $\beta\colon\mc{C}^{\mr{op}}\times_{\mr{Sch}^{\mr{op}},\alpha}\mr{Fun}(\Delta^1,\mr{Sch}^{\mr{op}})\rightarrow\mc{C}^{\mr{op}}$,
 informally given by sending $(c,p(c)\leftarrow T\colon f)$, where $f$ is viewed as a morphism in $\mr{Sch}$, to the Cartesian pullback $f^*(c)$.
 We have the functor
 \begin{align*}
  \til\colon\mr{Fun}(\mc{C}^{\mr{op}},\Mod_{\mb{Z}})
   \xrightarrow{\circ\beta}
   &\mr{Fun}\bigl(\mc{C}^{\mr{op}}\times_{\mr{Sch}^{\mr{op}},\alpha}\mr{Ar}(\mr{Sch}^{\mr{op}}),\Mod_{\mb{Z}}\bigr)\\
   \cong
   &\mr{Fun}_{\mr{Sch}^{\mr{op}}}\bigl(\mc{C}^{\mr{op}},\Fun_{\mr{Sch}^{\mr{op}}}(\mr{Ar}(\mr{Sch}^{\mr{op}}),(\Mod_{\mb{Z}})_{\mr{Sch}^{\mr{op}}})\bigr).
 \end{align*}
 Unwinding the definition, for $c\in\mc{C}^{\mr{op}}$ and $F\colon\mc{C}^{\mr{op}}\rightarrow\Mod_{\mb{Z}}$,
 the object $\til(F)(c)$ is equivalent to $F\circ a_c$.
 Now, consider the functor $\mr{iden}\colon\mr{Sch}^{\mr{op}}\rightarrow\mr{Ar}(\mr{Sch}^{\mr{op}})$ sending $X$ to the identity morphism $X\xrightarrow{\mr{id}}X$.
 The ``precomposition'' $\circ\mr{iden}$ yields a functor
 \begin{equation*}
  \mc{P}:=\Fun_{\mr{Sch}^{\mr{op}}}(\mr{Ar}(\mr{Sch}^{\mr{op}}),(\Mod_{\mb{Z}})_{\mr{Sch}^{\mr{op}}})
   \rightarrow
   \Fun_{\mr{Sch}^{\mr{op}}}(\mr{Sch}^{\mr{op}},(\Mod_{\mb{Z}})_{\mr{Sch}^{\mr{op}}})
   \cong
   (\Mod_{\mb{Z}})_{\mr{Sch}^{\mr{op}}}.
 \end{equation*}
 This induces a retract of $\til$.
 For $T\in\mr{Sch}_{/S}$, the fiber $\mc{P}_T$ is equivalent to $\mr{Fun}(\mr{Sch}_{/T}^{\mr{op}},\Mod_{\mb{Z}})$.

 We must show that the restriction map $\Map(\ms{S},\mc{H})\rightarrow\Map({\ms{S}}_0,\mc{H})$ is an equivalence.
 Since $\til$ admits a retract, it suffices to show that the induced map
 $\Map(\til(\ms{S}),\til(\mc{H}))\rightarrow\Map(\til(\ms{S}_0),\til(\mc{H}))$ is an equivalence.
 In view of Lemma \ref{easycorofGHN}, it suffices to show that for any morphism $x$, $y$ in $\mc{C}^{\mr{op}}$
 the induced morphism
 \begin{equation*}
  \Map_{\mc{P}}\bigl(\til(\ms{S})(x),\til(\mc{H})(y)\bigr)
   \rightarrow
   \Map_{\mc{P}}\bigl(\til(\ms{S}_0)(x),\til(\mc{H})(y)\bigr)
 \end{equation*}
 is an equivalence.
 Since the induced map $\mr{Fun}(\Delta^1,\mc{P})\rightarrow\mr{Fun}(\Delta^1,\mr{Sch}^{\mr{op}})$ is a coCartesian fibration by \cite[3.1.2.1]{HTT},
 it suffices to show the equivalence for maps over $p(x)\leftarrow p(y)\colon f$ in $\mr{Sch}$.
 The coCartesian pushforward of $\til(\ms{S}_{(0)})(x)$ along $f$ is equivalent to $\ms{S}_{(0)}\circ a_{f^*(x)}$, the claim readily follows from the theorem.
\end{proof}

\section{Construction of reversible coefficient}
\label{sect2}
Let $X\rightarrow S$ be a morphism of finite type between noetherian separated schemes,
and assume we are given a cohomology theory with ``suitable'' six functor formalism.
Then we have the ``Borel-Moore homology functor'' $\Hcoe_{X/S}\colon\mr{Sch}_{/S}^{\mr{op}}\rightarrow\Mod_R$ for some commutative ring $R$;
informally, the functor sending $T\rightarrow S$ to $\Hbm(X_T/T)$ (see Introduction for the notation).
When the six functor formalism satisfies some properties, it is not hard to check that this functor is $\mb{A}^1$-invariant and a $\Mod_R$-valued cdh-sheaf.
Furthermore, it admits a rewind morphism, whose construction is the main theme of this section.
We construct the rewind morphism under the setting of \ref{varpartdef} for use in microlocalization later on.

\subsection{}
\label{recalfixsixthe}
Now, we assume we are in the situation of \ref{introsixfuncform}: we are given a cohomology theory with 6-functor formalism such that each stable
$\infty$-category is compactly generated over a noetherian separated scheme $S$ of finite Krull dimension and the theory is weakly $\tau$-continuous.
For a detailed explanation and references of the terminologies, we refer to \S\ref{EnCoOp}.
The main example we are having in mind is the $\infty$-enhancement of Voevodsky's theory of motives (cf.\ Example \ref{introsixfuncform}),
but we do not need to require trace formalism (cf.\ \ref{tracemapdfn}) for the moment.
Given such a theory we have the coCartesian fibration $\mr{Mot}\colon\widehat{\mc{D}}\rightarrow\mr{SCH}^{\mr{op}}$ whose fiber over $T$ is $\mc{D}(T)$,
and the functor associated to a morphism $f\colon X\rightarrow Y$ in $\mr{SCH}$ is the ``ordinary pullback'' $f^*\colon\mc{D}(Y)\rightarrow\mc{D}(X)$.
This map is Cartesian as well, since $f^*$ admits a right adjoint $f_*$.
Let $S$ be a finite dimensional noetherian $k$-scheme.
Then we have the functor $\mr{Sch}_{/S}\rightarrow\mr{SCH}$ sending $S'\rightarrow S$ to $S'$.
We put $F_S\colon\mc{D}_S:=\widehat{\mc{D}}\times_{\mr{SCH}^{\mr{op}}}\mr{Sch}_{/S}^{\mr{op}}\rightarrow\mr{Sch}_{/S}^{\mr{op}}$.
Note that the projection induces the functor $\mc{D}_S\rightarrow\widehat{\mc{D}}$.
An important feature of the theory is that $\mc{D}$ is left-tensored over $\Mod_R$,
and we may apply the constructions of \ref{morphobjstat} or \S\ref{sec-mor} (cf.\ \ref{introsixfuncform}).

\begin{rem*}
 In fact, we may remove the assumption of $\tau$-continuity in the following argument.
 The $\tau$-continuity is used only to construct the localization sequence in the proof of Theorem \ref{revcons}.
 In order to circumvent the use of continuity, we {\em define} $\mc{D}(T_\eta)$ for $T\in\mr{Sch}_{/S\times\Box}$ as $\indlim_{U\subset\Box}\mc{D}(T\times_{\Box}U)$,
 and $j_*\colon\mc{D}(T_\eta)\rightarrow\mc{D}(T)$ by $\indlim_Uj_{U*}$ where $j_U\colon T\times_{\Box}U\hookrightarrow T$.
 To do this in a homotopy coherent manner, we need tedious general nonsense.
 To make the presentation shorter, we decided to assume $\tau$-continuity.
\end{rem*}

\subsection{}
Assume we are given a morphism $f\colon T\rightarrow S$ of finite dimensional noetherian schemes.
Consider the functor $\pi_f\colon\mr{Sch}_{/S}\rightarrow\mr{Sch}_{/T}$ sending $S'\rightarrow S$ to $S'\times_ST\rightarrow T$.
We have the functor $\mr{Sch}_{/S}^{\mr{op}}\rightarrow\mr{Fun}(\Delta^1,\mr{SCH}^{\mr{op}})$ sending $S'\rightarrow S$ to the edge
$(S'\times_ST\rightarrow S')$ considered as a morphism in $\mr{SCH}^{\mr{op}}$.
The adjoint $\mr{Sch}_{/S}^{\mr{op}}\times\Delta^1\rightarrow\mr{SCH}^{\mr{op}}$ is denoted by $\alpha$.
For example, we have $\alpha(S',0)=S'$, $\alpha(S',1)=S'\times_ST$.
Consider the following diagram:
\begin{equation*}
 \xymatrix{
  \pi_f^*\mc{D}_T\times\{1\}\ar[r]^-{\mr{pr}}\ar@{^{(}->}[d]\ar[r]&\mc{D}_T\ar[r]&\widehat{\mc{D}}\ar[d]^{\mr{Mot}}\\
 \pi_f^*\mc{D}_T\times\Delta^1\ar[r]\ar@{-->}[urr]&\mr{Sch}_{/S}^{\mr{op}}\times\Delta^1\ar[r]^-{\alpha}&\mr{SCH}^{\mr{op}}.
  }
\end{equation*}
By taking the $\mr{Mot}$-right Kan extension, we get a functor $\pi_f^*\mc{D}_T\times\Delta^1\rightarrow\widehat{\mc{D}}$, by restricting to $\pi_f^*\mc{D}_T\times\{0\}$,
we get a functor $f_*\colon\pi_f^*\mc{D}_T\rightarrow\mc{D}_S$ of Cartesian and coCartesian fibrations over $\mr{Sch}_{/S}^{\mr{op}}$.
Informally, for $S'\in\mr{Sch}_{/S}$, the induced functor $(f_*)_{S'}$ on the fibers over $S'$
is the functor sending $\mc{M}\in\mc{D}(T\times_SS')\simeq(\pi_f^*\mc{D}_T)_{S'}$ to $f_{S',*}(\mc{M})\in\mc{D}(S')\simeq(\mc{D}_S)_{S'}$.
This functor preserves Cartesian edges.
By \cite[7.3.2.6]{HA}, $f_*$ admits a left adjoint relative to $\mc{D}_S$.
This is denoted by $f^*\colon\mc{D}_S\rightarrow\pi_f^*\mc{D}_T$.
Informally, for $S'\in\mr{Sch}_{/S}$, $f^*$ is the functor sending $\mc{N}\in\mc{D}(S')\simeq(\mc{D}_S)_{S'}$ to $f^*_{S'}(\mc{N})\in\mc{D}(T\times_SS')\simeq(\pi_f^*\mc{D}_T)_{S'}$.
The functor $f^*$ preserves coCartesian edges.
On the other hand, assume that $f$ is {\em proper}.
In this case, $f_*$ preserves coCartesian edges as well by the proper base change property \cite[6.2]{ABiv}.
By (the dual version of) \cite[7.3.2.6]{HA}, $f_*$ admits a right adjoint, denoted by $f^!$.

\subsection{}
\label{constfuncsys}
Assume we are given a Cartesian fibration of $\infty$-categories $p\colon\mc{C}\rightarrow\mr{Sch}_{/S}$.
Let $f\colon T\rightarrow S$ be a morphism in $\mr{SCH}$.
We define the $\infty$-category of {\em $f$-systems} to be
\begin{equation*}
 \Sys^{\mc{C}}_f=\Sys^{\mc{C}}_{T/S}:=\mr{Fun}_{\mr{Sch}_{/S}^{\mr{op}}}\bigl(\mc{C}^{\mr{op}},\pi_f^*\mc{D}_T\bigr).
\end{equation*}
When $\mc{C}=\mr{Sch}_{/S}$, we denote $\Sys_f^{\mc{C}}$ simply by $\Sys_f$.
By \cite[1.1.4.5]{HA}, $\Sys^{\mc{C}}_{T/S}$ is a stable $\infty$-category.
Informally, $\mc{M}\in\Sys^{\mc{C}}_{T/S}$ is an association $\mc{M}_{c}\in\mc{D}(p(c)\times_ST)$ to $c\in\mc{C}$ and,
if we are given a morphism $g\colon c\rightarrow c'$ in $\mc{C}$,
we have the map $\phi_g\colon p(g)_T^*\mc{M}_{c'}\rightarrow\mc{M}_{c}$, where $p(g)_T:=p(g)\times_ST$.
A system $\mc{M}$ is said to be {\em strict} if it preserves coCartesian edges viewed as a functor between coCartesian fibrations over $\mr{Sch}_{/S}^{\mr{op}}$.
The system $\mc{M}$ is strict if and only if $\phi_g$ is an equivalence for any $g$.
We note that $\mc{M}\in\Sys^{\mc{C}}_{T/S}$ is equivalent to $0$ if and only if $\mc{M}_{c}\simeq0$ for any $c\in\mc{C}$.
This follows by \cite[5.1.2.2]{HTT} applied to any map $0\rightarrow\mc{M}$.

Now, assume we are given a morphism $D\colon f'\rightarrow f$ in $\mr{Fun}(\Delta^1,\mr{SCH})$ depicted as
\begin{equation}
 \label{diagSCH}
 \xymatrix{
  T'\ar[r]^-{g'}\ar[d]_{f}\ar@{}[rd]|{D}&T\ar[d]^{f}\\
 S'\ar[r]^-{g}&S.
  }
\end{equation}
Let $\iota_g\colon\mr{Sch}_{/S'}\rightarrow\mr{Sch}_{/S}$ the functor sending $Z\rightarrow S'$ to $Z\rightarrow S'\xrightarrow{g}S$.
We put $\mc{C}_{S'}:=\iota^*_g\mc{C}$ (in other words $\mc{C}\times_{\mr{Sch}_{/S},\iota_g}\mr{Sch}_{/S'}$).
Let $h\colon T'\rightarrow T_{S'}:=T\times_SS'$ be the induced morphism.
By taking the base change of the functor $h^*\colon\mc{D}_{T_{S'}}\rightarrow \pi_h^*\mc{D}_{T'}$ over $\mr{Sch}_{/T_{S'}}$ by $\pi_{f_{S'}}$,
we get the functor $\pi^*_{f_{S'}}\mc{D}_{T_{S'}}\rightarrow\pi^*_{f'}\mc{D}_{T'}$ also denoted by $h^*$.
We define $D^*\colon\Sys^{\mc{C}}_f\rightarrow\Sys^{\mc{C}_{S'}}_{f'}$ to be the composite
\begin{equation*}
 \mr{Fun}(\mc{C}^{\mr{op}},\pi^*_f\mc{D}_T)
  \xrightarrow{\iota^*_g(-)}
  \mr{Fun}\bigl(\iota_g^*\mc{C}^{\mr{op}},\iota_g^*\pi^*_f\mc{D}_T\bigr)
  \cong
  \mr{Fun}\bigl(\mc{C}_{S'}^{\mr{op}},\pi_{f_{S'}}^*\mc{D}_{T_{S'}}\bigr)
  \xrightarrow{h^*\circ}
  \mr{Fun}\bigl(\mc{C}_{S'}^{\mr{op}},\pi_{f'}^*\mc{D}_{T'}\bigr).
\end{equation*}
If we are given morphisms $f\xrightarrow{D}f'\xrightarrow{D'}f''$ in $\mr{Fun}(\Delta^1,\mr{SCH})$, we may easily construct an equivalence $D'^*\circ D^*\simeq(D\circ D')^*$.
The morphism $D$ is said to be {\em proper} if the induced morphism $T'\rightarrow T_{S'}$ is proper.
In this case, $D_*$ preserves strictness.

The functor $D^*$ admits a right adjoint $D_*$.
Indeed, a right adjoint of the 1st functor $\iota^*(-)$ can be constructed by taking the $F_S$-right Kan extension of functors
$\iota_g^*\mc{C}^{\mr{op}}\rightarrow\pi_f^*\mc{D}_{T}$ along $\mc{C}_{S'}^{\mr{op}}\rightarrow\mc{C}^{\mr{op}}$.
For the 2nd functor, we just compose with $h_*$.
We will need the following properties of the functors:

\begin{lem*}
 \label{propfuncSys}
 Let $p\colon T\rightarrow S$ be a morphism of schemes which admits a section $i\colon S\rightarrow T$.
 Consider the morphisms $P:=(\mr{id},p)\colon(i\colon S\rightarrow T)\rightarrow(\mr{id}\colon S\rightarrow S)$
 and $I:=(\mr{id},i)\colon(\mr{id}\colon S\rightarrow S)\rightarrow(i\colon S\rightarrow T)$.
 Then the functors $P^*\colon\Sys_{S}^{\mc{C}}\leftrightarrows\Sys_{i}^{\mc{C}_T}\colon I^*$ is quasi-inverse to each other and yield an equivalence.
 In particular, we have equivalences $I_*\simeq P^*$ and $P_*\cong I^*$.
\end{lem*}
\begin{proof}
 Since $P\circ I\cong\mr{id}$, we only need to show that $P^*\circ I^*\simeq\mr{id}$, which is equivalent to showing that $(I\circ P)^*\simeq\mr{id}$.
 In our situation, we have an isomorphism of functors $\pi_i\circ\iota_{i\circ p}\cong\pi_i$.
 This implies that the composite
 \begin{equation*}
  \mr{Fun}(\mc{C}^{\mr{op}}_T,\pi^*_i\mc{D}_T)
  \xrightarrow{\iota^*_{i\circ p}(-)}
  \mr{Fun}\bigl(\iota_{i\circ p}^*\mc{C}^{\mr{op}}_T,\iota_{i\circ p}^*\pi^*_i\mc{D}_T\bigr)
  \cong
  \mr{Fun}(\mc{C}^{\mr{op}}_T,\pi^*_i\mc{D}_T)
 \end{equation*}
 is equivalent to the identity, and the claim follows by the construction of $(I\circ P)^*$.
\end{proof}

Consider the case where $g=\mr{id}$.
In this case, the functor $D_*$ (resp.\ $D^*$) is simply the composition functor $g'_*\circ$ (resp.\ $g'^*\circ$).
The morphism $D$ being proper is equivalent to $g'$ being proper.
When $g'$ is proper, recall that $g'_*$ admits a right adjoint $g'^!$.
Thus, {\em when $D$ is proper}, $D_*$ admits a right adjoint $D^!$, which is simply the composition functor $g'^!\circ$.
We denote the functors $D^*, D_*, D^!$ simply by $g'^*, g'_*, g'^!$ respectively.
The notation is slightly abusive, but we believe this will not cause any confusion.

\subsection{}
\label{morphobjstat}
Assume we are given $\infty$-categories $\mc{A}$, $\mc{M}$ and a functor $t\colon\mc{A}\times\mc{M}\rightarrow\mc{M}$.
We wish to consider the morphism object, but unlike \cite[\S4.2.1]{HA}, we do not need to consider sophisticated operad structure in this paper.
For a pair $m,m'\in\mc{M}$, the morphism object is a pair of $a\in\mc{A}$ and a morphism $\eta\colon a\otimes m\rightarrow m'$ having the property that the composite morphism
$\Map_{\mc{A}}(a',a)\rightarrow\Map_{\mc{M}}(a'\otimes m,a\otimes m)\xrightarrow{\eta\circ}\Map_{\mc{M}}(a'\otimes m,m')$ is an equivalence for any $a'\in\mc{A}$.
If $\mc{M}$ and $\mc{A}$ are presentable and $t$ commutes with colimits in each variable,
any pair of objects admit a morphism object by the adjoint functor theorem as in the proof of \cite[4.2.1.33]{HA}.
The morphism object is denoted by $\Mor_{\mc{M}}(-,-)$.
The assignment can be made in a homotopy coherent manner using the construction of \cite[4.2.1.31]{HA}.
In our setting, we may consider the morphism object $\Mor_{\mc{D}(T)}(\mc{F},\mc{G})\in\Mod_R$ for any $T\in\mr{SCH}$ and $\mc{F}$, $\mc{G}$ in $\mc{D}(T)$.
Since we also need to express that $\Mor_{\mc{D}(T)}$ behaves functorially when $T$ varies, we need a family version of this construction.

To do this, let $\mc{B}$ be an $\infty$-category, and let $\mc{A}\rightarrow\mc{B}$, $p\colon\mc{M}\rightarrow\mc{B}$ be coCartesian fibrations.
Assume we are given a functor $t_{\mc{M}}\colon\mc{A}\times_{\mc{B}}\mc{M}\rightarrow\mc{M}$ over $\mc{B}$ which preserves coCartesian edges.
The functor $t_{\mc{M}}$ is called the {\em left-tensor functor relative to $\mc{B}$}.
We further assume that the coCartesian fibrations $\mc{A},\mc{M}\rightarrow\mc{B}$ are presentable fibrations,
and the left-tensor commutes with colimits in each variable fiberwise.
Let $\mc{E}\rightarrow\mc{B}$ be a coCartesian fibration and $\mc{F}\rightarrow\mc{B}$ be a Cartesian fibration.
We have the dual coCartesian fibration $\widetilde{\mc{E}}\rightarrow\mc{B}$ (resp.\ dual Cartesian fibration $\widehat{\mc{F}}\rightarrow\mc{B}$)
whose fiber over $b\in\mc{B}$ is $\mc{E}_b^{\mr{op}}$ (resp.\ $\mc{F}_b^{\mr{op}}$) (cf.\ \ref{dualfibint}).
Assume we are given functor $G\colon\mc{E}\rightarrow\mc{M}$ over $\mc{B}$ and a section $m\colon\mc{B}\rightarrow\mc{M}$.
We will construct $M_G(m)\in\mr{Fun}_{\mc{B}}(\widetilde{\mc{E}},\mc{A})$ in \ref{consadjmorcot}.
This object possesses the following property, whose proof will be given in \ref{proofofprop}.

\begin{prop*}
 \label{concdescM}
 We further assume that $G$ preserves coCartesian edges.
 \begin{enumerate}
  \item\label{concdescM-1}
       For an object $e\in\widetilde{\mc{E}}$ over $b\in\mc{B}$, we have an equivalence $M_G(m)(e)\simeq\Mor_{\mc{M}_{b}}(G(e),m(b))$.

  \item\label{concdescM-2}
       Assume we are given a morphism $e_0\rightarrow e_1$ in $\widetilde{\mc{E}}$ over $f\colon b_0\rightarrow b_1$ in $\mc{B}$.
       Let $f_\mc{M}\colon\mc{M}_{b_0}\rightarrow\mc{M}_{b_1}$ be an associated functor.
       Then the morphism $M_G(m)(e_0)\rightarrow M_G(m)(e_1)$ is equivalent via the equivalence given in \ref{concdescM-1} to the composite morphism
	 \begin{align*}
	  \Mor_{\mc{M}_{b_0}}(G(e_0),m(b_0))\xrightarrow{f_{\mc{M}}}
	  \Mor_{\mc{M}_{b_1}}\bigl(f_{\mc{M}}G(e_0),f_{\mc{M}}(m(b_0))\bigr)
	  &\xleftarrow{\sim}
	  \Mor_{\mc{M}_{b_1}}\bigl(G(e_1),f_{\mc{M}}m(b_0)\bigr)\\
	  &\rightarrow
	  \Mor_{\mc{M}_{b_1}}(G(e_1),m(b_1)).
	 \end{align*}

  \item\label{concdescM-3}
       The functor $M_G(-)$ is compatible with base change of $\mc{B}$.
       More precisely, let $h\colon\mc{B}'\rightarrow\mc{B}$ be a functor, and let $\mc{M}'$, $\mc{A}'$, $\mc{E}'$ be the base changes to $\mc{B}'$.
       We also denote by $G'$, $m'$ the base changes of $G$, $m$.
       Then we have a canonical equivalence $M_G(m)\times_{\mc{B}}\mc{B}'\simeq M_{G'}(m')$.
 \end{enumerate}
\end{prop*}

\subsection{}
\label{Homfuncconstapp}
As we explained in \ref{recalfixsixthe}, we may apply the previous construction to our setting.
Let the notations be as in \ref{constfuncsys}.
Let $\mc{M}\in\Sys^{\mc{C}}_f$ be a {\em strict} system.
Then we have the functor
\begin{equation*}
 M_{\mc{M}}\colon\Sys_f=\mr{Fun}_{\mr{Sch}^{\mr{op}}_{/S}}(\mr{Sch}_{/S}^{\mr{op}},\pi_f^*\mc{D}_T)
  \rightarrow
  \mr{Fun}((\mc{C}^{\mr{op}})^{\sim},\Mod_R)\simeq\PShv_{\Mod_R}(\widehat{\mc{C}})
\end{equation*}
by \ref{morphobjstat}.
Since $M_{\mc{M}}$ is a right adjoint, $M_{\mc{M}}$ is an exact functor of stable $\infty$-categories.
For $\mc{N}\in\Sys_S$, we often denote $M_{\mc{M}}(\mc{N})$ by $\sMor_f(\mc{M},\mc{N})$, which is an object of $\PShv_{\Mod_R}(\widehat{\mc{C}})$.
We often abbreviate $\sMor_{S/S}$ by $\sMor_S$.
The functor $\sMor_f(\mc{M},\mc{N})$ sends $c\in\mc{C}$ to $\Mor_{\mc{D}(p(c)\times_ST)}\bigl(\mc{M}(c),\mc{N}(c)\bigr)$.
The following lemma is the only place where we need to go back to the definition of the functor $M_{\mc{M}}$ in this section.

\begin{lem*}
 \label{Moradjointlem}
 Assume we are given $D$ as in {\normalfont(\ref{diagSCH})}.
 We have the functor $\widehat{g}\colon\widehat{\mc{C}}_T\rightarrow\widehat{\mc{C}}$ induced by $\iota_g$.
 Let $\mc{M}\in\Sys^{\mc{C}}_f$, $\mc{N}\in\Sys_{f}$, $\mc{N}'\in\Sys_{f'}$, and assume $\mc{M}$ to be strict.
 We have the canonical morphism
 \begin{equation*}
  r_{\mc{M},\mc{N}}\colon
   \widehat{g}^*\sMor_f(\mc{M},\mc{N})\rightarrow\sMor_{f'}(D^*\mc{M},D^*\mc{N}).
 \end{equation*}
 Let $c\in\mc{C}_{S'}$ and let $h\colon p_{S'}(c)\times_{S'}T'\rightarrow p_{S'}(c)\times_ST$ be the canonical morphism of schemes.
 Then $r_{\mc{M},\mc{N}}(c)$ is homotopic to the morphism induced by $h^*$
 \begin{equation*}
  \Mor_{\mc{D}(p_{S'}(c)\times_ST)}\bigl(\mc{M}(c),\mc{N}(p_{S'}(c))\bigr)
   \rightarrow
   \Mor_{\mc{D}(p_{S'}(c)\times_{S'}T')}\bigl(h^*\mc{M}(c),h^*\mc{N}(p_{S'}(c))\bigr).
 \end{equation*}
 Furthermore, the composite
 \begin{equation*}
  \sMor_f(\mc{M},D_*\mc{N}')
   \xrightarrow{r_{\mc{M},D^*\mc{N}'}}
  \widehat{g}_*\,\sMor_{f'}(D^*\mc{M},D^*D_*\mc{N}')
  \rightarrow
  \widehat{g}_*\,\sMor_{f'}(D^*\mc{M},\mc{N}'),
 \end{equation*}
 where the 2nd morphism is induced by the adjunction $\mc{N}'\rightarrow D_*D^*\mc{N}'$, is an equivalence.
\end{lem*}
\begin{proof}
 For $\mc{F}\in\PShv_{\Mod_R}(\widehat{\mc{C}})$ and $\mc{M}$ as above, we have $\left<\mc{F},\mc{M}\right>_{\mr{Sch}_{/S}^{\mr{op}}}\in\Sys_f$ as introduced in \S\ref{sec-mor}.
 Recall that the functor $\sMor_f(\mc{M},\mc{N})$ can be characterized by the equivalence
 \begin{equation*}
  \Map\bigl(\mc{F},\sMor_f(\mc{M},\mc{N})\bigr)\simeq
   \Map\bigl(\left<\mc{F},\mc{M}\right>_{\mr{Sch}_{/S}^{\mr{op}}},\mc{N}\bigr)
 \end{equation*}
 contravariant in $\mc{F}$.
 Since the functor $h^*$ commutes with colimits, by Lemma \ref{pairbasicprop}, we have an equivalence
 $\left<\widehat{g}^*\mc{F},D^*\mc{M}\right>_{\mr{Sch}_{/S'}^{\mr{op}}}\simeq D^*\left<\mc{F},\mc{M}\right>_{\mr{Sch}_{/S}^{\mr{op}}}$
 Using this compatibility, we have
 \begin{equation*}
  \bigl<\widehat{g}^*\sMor_f(\mc{M},\mc{N}),D^*\mc{M}\bigr>
   \simeq
   D^*\bigl<\sMor_f(\mc{M},\mc{N}),\mc{M}\bigr>
   \rightarrow D^*\mc{N}.
 \end{equation*}
 We define $r_{\mc{M},\mc{N}}$ by taking the adjoint of this morphism.
 We get the concrete description by construction.
 To check the last claim, it suffices to show the equivalence after applying the functor $\Map(\mc{F},-)$ to the morphism for any $\mc{F}\in\PShv_{\Mod_R}(\widehat{\mc{C}})$.
 The verification is standard using the compatibility of pullback and adjunctions.
\end{proof}

\subsection{}
Recall that $\eta$ is the generic point of $\Box$, and let $j\colon S\times\eta\rightarrow S\times\Box$ be the morphism in $\mr{SCH}$.
Assume we are given a fiberwise exact functor $\Gamma\colon\pi_j^*\mc{D}_{S\times\eta}\rightarrow\mc{D}_{S\times\Box}$ of Cartesian fibrations over $\mr{Sch}_{/S\times\Box}$.
Then the composition with $\Gamma$ induces the functor $\Sys_{S\times\eta/S\times\Box}\rightarrow\Sys_{S\times\Box/S\times\Box}$,
which we denote by $\Gamma_{\Sys}$.

\begin{lem*}
 \label{gcdpshfchck}
 Let $p\colon\mc{C}\rightarrow\mr{Sch}_{/S\times\Box}$ be a Cartesian fibration of categories.
 Assume that $\Gamma$ preserves Cartesian edges {\em over proper morphisms in $\mr{Sch}_{/S\times\Box}$}.
 For $\mc{M}\in\Sys^{\mc{C}}_{S\times\Box/S\times\Box}$, $\mc{N}\in\Sys_{S\times\eta/S\times\Box}$ which are {\em strict},
 the presheaf $\sMor_{S\times\Box}\bigl(\mc{M},\Gamma_{\Sys}(\mc{N})\bigr)$ on $\widehat{\mc{C}}$ is a $\eta$-cdp sheaf, more precisely,
 a $\widehat{p}$-Cartesian sheaf on $\mr{Sch}_{/S\times\Box}$ endowed with $\eta$-cdp topology {\normalfont(}cf.\ {\normalfont\ref{varpartdef}}{\normalfont)}.
\end{lem*}
\begin{proof}
 Let $c\in\mc{C}$, and recall the geometric morphism of topoi $r_c\colon\PShv(\mr{Sch}_{/p(c)})\rightarrow\PShv(\widehat{\mc{C}})$ from \ref{carttopo}.
 By Lemma \ref{carttopo}, we must show that $r_c^*\,\sMor\bigl(\mc{M},\Gamma_{\Sys}(\mc{N})\bigr)$ is an $\eta$-cdp sheaf.
 Recall that the functor $a_c\colon\mr{Sch}_{/p(c)}\rightarrow\mc{C}$ sends $T\rightarrow p(c)$ to the Cartesian pullback of $c$ by $T\rightarrow p(c)$.
 We have
 \begin{align*}
  r_c^*\,\sMor\bigl(\mc{M},\Gamma_{\Sys}(\mc{N})\bigr)\simeq
  \sMor\bigl(\mc{M},\Gamma_{\Sys}(\mc{N})\bigr)\circ a_c
  \simeq
  \sMor\bigl(\mc{M}\circ a_c,\Gamma_{\Sys}(\mc{N}|_{\mr{Sch}^{\mr{op}}_{/p(c)}})\bigr),
 \end{align*}
 where the 1st equivalence follows by \ref{carttopo}, the 2nd follows by Proposition \ref{concdescM}.\ref{concdescM-3}
 applied to the base change by $\mr{Sch}_{/p(c)}\rightarrow\mr{Sch}_{/S\times\Box}$.
 Since $a_c$ preserves Cartesian edges, $\mc{M}\circ a_c$, $\mc{N}|_{\mr{Sch}^{\mr{op}}_{/p(c)}}$ are strict systems.
 Thus, we may assume that $\mc{C}=\mr{Sch}_{/S\times\Box}$.

 For $S'$ in $\mr{Sch}_{/S\times\Box}$, we denote by $M_{S'}$ (resp.\ $N_{S'}$) the value,
 which is an object of $\mc{D}(S')$ (resp.\ $\mc{D}(S'_{\eta})$), of $\mc{M}$ (resp.\ $\mc{N}$) at $S'$.
 Similarly, we put $\Gamma_{S'}\colon\mc{D}(S'_\eta)\rightarrow\mc{D}(S')$ the fiber of $\Gamma$ over $S'$.
 Consider an $\eta$-blowup square (in the sense of \ref{bgpropofetacdp}) of the form (\ref{bgpropQ}).
 By Lemma \ref{bgprop} and \ref{bgpropofetacdp},
 we must show that the induced map $\Delta^1\times\Delta^1\rightarrow\Mod_R$ depicted as the diagram below is Cartesian.
 \begin{equation*}
  \xymatrix@C=10pt{
   \Mor(M_B,\Gamma_B(N_{B}))\ar[r]\ar[d]&\Mor(M_Y,\Gamma_Y(N_{Y}))\ar[d]\\
  \Mor(M_A,\Gamma_A(N_{A}))\ar[r]&\Mor(M_X,\Gamma_X(N_{X}))
   }
 \end{equation*}
 Note that any morphism in (\ref{bgpropQ}) is proper.
 Since $\Gamma$ preserves Cartesian edges, $\Gamma_-$ commutes with proper pushforward.
 Since the morphisms $e$, $p$ are proper and $\mc{M}$ is strict,
 it suffices to show the left below diagram, where $q\colon B\rightarrow X$ is the composite, is Cartesian:
 \begin{equation*}
  \xymatrix@C=10pt{
   \Mor(M_X,\Gamma_X(q_*N_{B}))\ar[r]\ar[d]&\Mor(M_X,\Gamma_X(p_*N_{Y}))\ar[d]\\
  \Mor(M_X,\Gamma_X(e_*N_{A}))\ar[r]&\Mor(M_X,\Gamma_X(N_{X})),
   }
   \qquad
  \xymatrix@C=10pt{
   q_*N_{B}\ar[r]\ar[d]&p_*N_{Y}\ar[d]\\
  e_*N_{A}\ar[r]&N_{X}.
   }
 \end{equation*}
 Since $\Gamma_X$ and $\Mor(M_X,-)$ are exact, we must show that the right above diagram in $\mc{D}(X_{\eta})$ is Cartesian.
 Since (\ref{bgpropQ}) is a an $\eta$-blowup square, the diagram $\mbox{(\ref{bgpropQ})}\times_{(S\times\Box)}(S\times\eta)$ is a blowup square,
 and since $\mc{N}$ is strict, the verification of the right above diagram being Cartesian is standard.
\end{proof}

\begin{thm}
 \label{revcons}
 Let $\mc{M}\in\Sys^{\mc{C}}_{S/S}$, $\mc{N}\in\Sys_{S/S}$ be strict systems, and put $\mc{H}:=\sMor_S(\mc{M},\mc{N})$ in $\PShv_{\Mod_R}(\widehat{\mc{C}})$.
 Then the canonical morphism $\mc{H}\rightarrow\partial^{\widehat{\mc{C}}}\mc{H}$
 {\normalfont(}using the notation of {\normalfont\ref{varpartdef}}{\normalfont)} admits a retract.
\end{thm}
\begin{proof}
 The proof is divided into several steps.
 We put $x:=\infty$.
 We denote by $j\colon S\times\eta\hookrightarrow S\times\Box$ the canonical inclusion.
 For a closed point $y$ of $\Box$, we denote by $i_y\colon S\otimes k(y)\hookrightarrow S\times\Box$ the canonical morphism.
 We consider the following commutative diagram
 \begin{equation*}
  \xymatrix@C=50pt{
   S\ar[d]\ar[r]^-{\sim}\ar@{}[rd]|{I}&
   S\times\{x\}\ar@{^{(}->}[r]^-{i_x}\ar[d]&
   S\times\Box\ar[r]\ar[d]\ar@{}[rd]|{P}&
   S\ar[d]\\
  S\ar@{^{(}->}[r]^-{i_x}&S\times\Box\ar@{=}[r]&S\times\Box\ar[r]^-{\mr{pr}}&S,
   }
 \end{equation*}
 and put $\mc{M}_{\Box}:=P^*\mc{M}$, $\mc{N}_{\Box}:=P^*\mc{N}$.
 \medskip

 \noindent{\bf Step 1)}\,
 We claim that $\sMor_{S\times\Box}\bigl(\mc{M}_{\Box},i_{x*}i^*_{x}j_*j^*\mc{N}_{\Box}\bigr)$ is an $\eta$-cdp sheaf.
 Indeed, for $T\in\mr{Sch}_{/S\times\Box}$, let $j_T\colon T\times_{\Box}\eta\hookrightarrow T$ and $i_{Tx}\colon T\times_{\Box}\{x\}\hookrightarrow T$ be the canonical morphisms.
 By Lemma \ref{gcdpshfchck}, it suffices to check that the functors $j_{T*},\,i_{Tx*}i_{Tx}^*j_{T*}\colon\mc{D}(T_\eta)\rightarrow\mc{D}(T_\Box)$ commute with proper pushforward.
 The case for $j_{T*}$ is evident.
 Let $f\colon T\rightarrow T'$ be a proper morphism in $\mr{Sch}_{/S\times\Box}$, and let $f_\eta:=f\times_{\Box}\eta$.
 Unwinding the definition, we must show that the canonical morphisms
 \begin{equation*}
   (i_{T'x*}i_{T'x}^*j_{T'*})\circ f_{\eta*}\rightarrow f_*\circ (i_{Tx*}i_{Tx}^*j_{T*})
 \end{equation*}
 is equivalent.
 This follows by proper base change property.

 Now, we have the following homotopy commutative diagram
 \begin{equation*}
  \xymatrix@C=10pt{
   \mc{H}\ar@{-}[r]^-{\sim}&
   \sMor(\mc{M},P_*i_{x*}I_*\mc{N})\ar@{-}[r]^-{\sim}_-{\star}&
   \mr{pr}_*\sMor(\mc{M}_{\Box},i_{x*}i_{x}^*\mc{N}_{\Box})\ar[d]_{\ccirc{2}}\ar[r]^-{\ccirc{1}}&
   \mr{pr}_*L_{\eta}^*\sMor(\mc{M}_{\Box},i_{x*}i_{x}^*\mc{N}_{\Box})=:\partial_x\mc{H}\ar[d]\\
  &&
  \mr{pr}_*\sMor(\mc{M}_{\Box},i_{x*}i_{x}^*j_*j^*\mc{N}_{\Box})\ar[r]_-{\sim}&
  \mr{pr}_* L_{\eta}^*\sMor(\mc{M}_{\Box},i_{x*}i_{x}^*j_*j^*\mc{N}_{\Box}).
   }
 \end{equation*}
 Here, we used the notations of \ref{basicsitediag}, and $\star$ is an equivalence by Lemma \ref{propfuncSys} and Lemma \ref{Moradjointlem}.
 We must construct a retract of $\ccirc{1}$.
 For this, it suffices to construct a retract of $\ccirc{2}$, which we will do in the following steps.
 \medskip

 \noindent{\bf Step 2)}\,
 We have the following diagram of cofiber sequences in $\Sys_{S\times\Box}$ induced by the adjunction morphism $\mr{id}\rightarrow i_{y*}i_y^*$.
 \begin{equation}
  \label{loclfibseqpro}\tag{$\star$}
  \xymatrix{
   \mc{N}_{\Box}\ar[r]\ar[d]&
   j_*\mc{N}_\eta\ar[r]\ar[d]&
   \bigoplus_{z\in|\Box|} i_{z*}i_z^!\mc{N}_{\Box}[1]\ar[d]\\
  i_{y*}i_y^*\mc{N}_{\Box}\ar[r]&
   i_{y*}i_y^*j_*\mc{N}_{\eta}\ar[r]&
   i_{y*}i_y^!\mc{N}_{\Box}[1]
   }
 \end{equation}
 Indeed, considering the localization sequence, we only need to check that $\phi\colon\indlim_U k_{U*}k_U^*\mc{N}_{\Box}\rightarrow j_*\mc{N}_{\eta}$ is an equivalence,
 where $U$ runs over open subschemes of $\Box$ and $k_U\colon U\hookrightarrow\Box$ is the open immersion.
 The verification is pointwise, and it suffices to check for each $c\in\mc{C}$.
 Since $\mc{D}(p(c)\times\Box)$ is compactly generated, we only need to show that $\Map(X,\phi(c))$
 is an equivalence for each compact object $X$ of $\mc{D}(p(c)\times\Box)$.
 Since the theory is assumed weakly $\tau$-continuous, it suffices to show that $X$ is $\tau$-constructible.
 Since the full subcategory spanned by $\tau$-compact objects form a stable $\infty$-category by \cite[1.1.3.3]{HA},
 we have an equivalence $a\colon X\xrightarrow{\sim}\indlim_{\alpha\in I} C_\alpha$ where $I$ is filtered and $C_\alpha$ is $\tau$-constructible by
 \cite[1.3.17]{CD} and \cite[1.1.3.6]{HA}.
 Since $X$ is compact, this equivalence factors through $b\colon X\rightarrow C_{\alpha_0}$ for some $\alpha_0$.
 The morphism $C_{\alpha_0}\rightarrow\indlim C_\alpha\xrightarrow{a^{-1}}X$ is a retract of $b$.
 Since $\tau$-constructible objects are stable under retract, $X$ is $\tau$-constructible as required.
 \medskip

 \noindent{\bf Step 3)}\,
 Pick some {\em $k$-rational} point $y\in(\Box\setminus\{x\})(k)$.
 The localization cofiber sequences of the previous step induces the following diagram (of solid arrows) in $\PShv(\widehat{\mc{C}})$:
 \begin{equation*}
  \label{bigdiagMor}\tag{$\star\star$}
  \xymatrix{
   &\mr{pr}_*\sMor(\mc{M}_{\Box},i_{x*}i_x^!\mc{N}_{\Box}[1])
   \ar[d]^{\mr{inc}_x}\ar@{.>}[ld]_{\sigma_x^y}&\\
   \mr{pr}_*\sMor(\mc{M}_{\Box},j_*\mc{N}_{\eta})
    \ar[r]^-{\gamma}&
   \bigoplus_z\mr{pr}_*\sMor(\mc{M}_{\Box},i_{z*}i_z^!\mc{N}_{\Box}[1])
   \ar[r]^-{b}\ar[d]^{c_y}&
   \mr{pr}_*\sMor(\mc{M}_{\Box},\mc{N}_{\Box}[1])
   \ar[d]^{\heartsuit}\\
  &
   \mr{pr}_*\sMor(\mc{M}_{\Box},i_{y*}i_y^!\mc{N}_{\Box}[1])
   \ar[r]^-{d}&
   \mr{pr}_*\sMor(\mc{M}_{\Box},i_{y*}i_y^*\mc{N}_{\Box}[1]).
   }
 \end{equation*}
 Here, $c_x$ is the morphism taking the ``$x$-component'', $\mr{inc}_x$ is the morphism whose $x$-component is the identity and the other components are $0$,
 the middle sequence is a cofiber sequence induced by (\ref{loclfibseqpro}).
 The morphism $\heartsuit$ is an equivalence.
 Let us admit this for a while and construct the dotted arrow first.
 Since $x\neq y$, the morphism $c_y\circ\mr{inc}_x$ is homotopic to $0$.
 Using the equivalence of $\heartsuit$, this homotopy determines a homotopy $h_x^y$ between $b\circ\mr{inc}_x$ and $0$ up to a contractible space of choices.
 This homotopy $h_x^y$ induces a morphism $\sigma_x^y$ such that $\gamma\circ\sigma^y_x$ is homotopic to $\mr{inc}_x$.

 Let us show that $\heartsuit$ is an equivalence.
 Consider homotopy commutative diagram
 \begin{equation*}
  \xymatrix{
   \mr{pr}_*\sMor(\mc{M}_{\Box},\mc{N}_{\Box})\ar[r]^-{s}\ar[rd]_{\heartsuit}&
   \mr{pr}_*\sMor\bigl(i_y^*\mc{M}_{\Box},i_y^*\mc{N}_{\Box}\bigr)\ar[d]^{\sim}\\
  &\mr{pr}_*\sMor\bigl(\mc{M}_{\Box},i_{y*}i_y^*\mc{N}_{\Box}\bigr).
   }
 \end{equation*}
 The vertical morphism is an equivalence by the last claim of Lemma \ref{Moradjointlem}.
 We only need to show that $s$ is an equivalence.
 Since the verification is pointwise, it suffices to show that $s(c)$ is an equivalence for any $c\in\mc{C}$.
 By Lemma \ref{Moradjointlem}, $s(c)$ is homotopic to the pullback morphism
 \begin{equation*}
  \Mor_{\mc{D}(p(c)\times\Box)}\bigl(\mr{pr}'^*\mc{M}(c),\mr{pr}'^*\mc{N}(p(c))\bigr)
   \rightarrow
   \Mor_{\mc{D}(p(c)\times\{y\})}\bigl(i'^*_y\mr{pr}'^*\mc{M}(c),i'^*_y\mr{pr}'^*\mc{N}(p(c))\bigr)
 \end{equation*}
 where $p(c)\xrightarrow{i'_y} p(c)\times\Box\xrightarrow{\mr{pr}'}p(c)$ are the closed immersion defined by $y$ and the projection.
 This is an equivalence by $\mb{A}^1$-invariance, thus $\heartsuit$ is an equivalence.
 \medskip

 \noindent{\bf Step 4)}\,
 Now, let us finish the proof.
 Let us consider, again, the following homotopy commutative diagram (of solid arrows) induced by the diagram of Step 2):
 \begin{equation*}
  \xymatrix{
   &
   \mr{pr}_*\sMor(\mc{M}_{\Box},j_*\mc{N}_{\eta})
    \ar[r]^-{\gamma}\ar[d]_{r_x}&
   \bigoplus_z\mr{pr}_*\sMor(\mc{M}_{\Box},i_{z*}i_z^!\mc{N}_{\Box}[1])
   \ar[d]_{c_x}
   \\
  \mr{pr}_*\sMor(\mc{M}_{\Box},i_{x*}i_x^*\mc{N}_{\Box})\ar[r]_-{\ccirc{2}}&
   \mr{pr}_*\sMor(\mc{M}_{\Box},i_{x*}i_x^*j_*j^*\mc{N}_{\Box})\ar[r]_-{\gamma_x}&
   \mr{pr}_*\sMor(\mc{M}_{\Box},i_{x*}i_x^!\mc{N}_{\Box}[1])
   \ar@{.>}[ul]^-{\sigma^y_x}\ar@/_15pt/@{.>}[u]_-{\mr{inc}_x}.
   }
 \end{equation*}
 The composite $r_x\circ\sigma^y_x$ is a section of $\gamma_x$.
 Indeed, we have the sequence of homotopies
 \begin{equation*}
  \gamma_x\circ r_x\circ\sigma^y_x
   \sim
   c_x\circ\gamma\circ\sigma^y_x
   \sim
   \mr{inc}_x\circ c_x
   \sim
   \mr{id}.
 \end{equation*}
 Since the 2nd row of the diagram is a cofiber sequence, $r_x\circ\sigma^y_x$ yields a retract of $\ccirc{2}$, which is what we have been looking for.
\end{proof}

\begin{dfn}
 Let $f\colon X\rightarrow S$ be a separated morphism of finite type.
 We use the notations of \ref{tracemapdfn}.
 Consider the composite $\Hcoe_{X/S}\colon\mr{Sch}_{/S}^{\mr{op}}\rightarrow(\widehat{\mr{Ar}^{\mr{prop}}_S})^{\mr{op}}\xrightarrow{\Hbm}\Mod_R$,
 where the 1st functor sends $T$ to $X_T\rightarrow T$.
 
\end{dfn}

\begin{cor}
 \label{coefsys}
 The presheaf $\Hcoe_{X/S}$ on $\mr{Sch}_{/S}$ is a reversible coefficient.
\end{cor}
\begin{proof}
 It is a cdh-sheaf by Lemma \ref{cdhdsvalshv}, and $\mb{A}^1$-invariant
 since the functor is an $\infty$-enhancement of the Borel-Moore homology theory.
 Finally, we have just constructed a rewind morphism.
\end{proof}

\section{Microlocalization}
\label{sect3}
In this section, we develop a general theory to ``microlocalize'' a map $\ms{S}\rightarrow\Hcoe_{X/S}$ where $\ms{S}$
is an abelian sheaf on $\mr{Sch}_{/S}$ and $X\rightarrow S$ is a morphism of schemes.
This section is inspired strongly by the work of Beilinson in \cite{B}.
We fix a base scheme $S$, which is noetherian separated and of finite Krull dimension.

\subsection{}
\label{dfnbasedata}
Let $f\colon X\rightarrow S$ be a morphism in $\mr{Sch}_{/S}$.
Let $(X/S)_{\mr{zar}}$ be the full subcategory of $\mr{Fun}(\Delta^1,\mr{Sch}_{/S})_{/f}$
spanned by morphisms $V\rightarrow T$ over $X\rightarrow S$ such that the induced morphism $V\rightarrow X\times_ST$ is an {\em open immersion}.
The object $V\rightarrow T$ is often abbreviated as $V/T$.
We denote by $\mc{O}_S$ the abelian presheaf on $(X/S)_{\mr{zar}}$ which associates $\Gamma(T,\mc{O}_T)$ to $V/T$.
For a vector bundle $E$ over $X$, we denote by $\mc{O}(E)$ the $\mc{O}_S$-module associating $\mr{Hom}_V(V,E\times_XV)$ to $V/T$.
Thus, if we are given $f\in\Gamma(V/T,\mc{O}(E))$, we have the associated morphism $V\rightarrow E_V:=E\times_X V$.
We denote $\mc{O}(X\times\mb{A}^1)$ by $\mc{O}_{X/S}$, in other words, the presheaf associating $\Gamma(V,\mc{O}_V)$ to $V/T$.

\begin{dfn*}
 We say that $(E,\nu,\mr{d}_{\nu})$ is a {\em base data of microlocalization over $X$} if
 \begin{enumerate}
  \item $E$ is a vector bundle over $X$;

  \item $\nu$ is an $\mc{O}_S$-module on $(X/S)_{\mr{zar}}$ equipped with homomorphism $\mr{d}_{\nu}\colon\nu\rightarrow\mc{O}(E)$
	of $\mc{O}_S$-modules.
 \end{enumerate}
 A morphism $(E,\nu,\mr{d}_{\nu})\rightarrow(E',\nu',\mr{d}_{\nu'})$ of base data consists of
 morphisms $f\colon E\rightarrow E'$ and $g\colon\nu\rightarrow\nu'$ such that $\mr{d}_{\nu'}\circ g=f\circ\mr{d}_{\nu}$.
 For $V/T\in(X/S)_{\mr{zar}}$ and $\mbf{f}\in\Gamma(V/T,\nu)$,
 we also denote by $\mr{d}_{\nu}\mbf{f}\colon V\rightarrow E_V$ the morphism associated to $\mr{d}_{\nu}\mbf{f}\in\Gamma(V/T,\mc{O}(E))$.
 We often denote $\mr{d}_{\nu}$ by $\mr{d}$.
\end{dfn*}

\begin{ex*}
 For a smooth scheme $X$ over $S$, $(T^*(X/S),\mc{O}_{X/S},\mr{d}_{X/S})$, where
 \begin{equation*}
  \mr{d}_{X/S}(V/T)\colon\mc{O}_{X/S}(V/T)\cong\Gamma(V,\mc{O}_V)\xrightarrow{\mr{d}}\Gamma(V,\Omega_V)\cong\mc{O}(T^*(X/S))(V/T)
 \end{equation*}
 is the total derivative, is a base data of microlocalization over $X$ relative to $S$.
\end{ex*}

\begin{dfn}
 Let $\mr{Sect}$ be the category whose objects consist of triples $(U,A,F)$ where $U\subset X$ is an open subscheme,
 $A$ is a finite dimensional affine space on $S$ ({\it i.e.}\ there exists $n\geq0$ such that $A\cong\mb{A}^n_S$),
 and a section $F\in \Gamma(U\times_SA/A,\nu)$.
 A morphism $(U,A,F)\rightarrow(U',A',F')$ is an inclusion $U\subset U'$ and an affine morphism $A\rightarrow A'$ such that
 the restriction morphism $\Gamma(U'\times_SA'/A',\nu)\rightarrow\Gamma(U\times_SA/A,\nu)$ sends $F'$ to $F$.
 Let $X_{\mr{zar}}$ be the category of (Zariski) open subsets of $X$.
\end{dfn}

For a scheme $T$, let $\mr{Sm}_T$ be the full subcategory of $\mr{Sch}_{/T}$ spanned by smooth morphisms $Y\rightarrow T$.
Let $C\subset E$ be a closed subscheme.
We have the functor $D_C\colon\mr{Sect}\rightarrow\mr{Fun}(\Delta^1\times\Delta^1,\mr{Sm}_X)$, often denoted simply by $D$,
sending $(U,A,F)$ to the Cartesian diagram in $\mr{Sm}_X$
\begin{equation}
 \label{dfnofsectcat}
 \xymatrix@C=50pt{
  U_{F}\ar[r]\ar@{^{(}->}[d]\ar@{}[rd]|\square&E_U\setminus C_U\ar@{^{(}->}[d]\\
 U\times_S A\ar[r]^-{\mr{pr}\circ\mr{d}F}&E_U,
  }
\end{equation}
where $\mr{pr}$ is the projection $E_U\times_SA\rightarrow E_U$.

\subsection{}
For a finite set $I$, we have the free commutative algebra $\mb{Z}[I]$, which is nothing but the polynomial algebra $\mb{Z}[x_i;i\in I]$.
We denote by $\mb{A}^I_{\mb{Z}}:=\mr{Spec}(\mb{Z}[I])$.
If we are given a map of finite sets $\phi\colon I\rightarrow J$, we have the morphism $\mb{A}^I_{\mb{Z}}\rightarrow\mb{A}^J_{\mb{Z}}$ sending
$(a_i)_{i\in I}$ to $\bigl(\sum_{i\in\phi^{-1}(j)} a_i\bigr)_{j\in J}$.
For a scheme $Z$, we put $\mb{A}^I_Z:=\mb{A}^I\times_{\mr{Spec}(\mb{Z})}Z$.

\begin{dfn*}
 For an open subscheme $U\subset X$, an {\em $U$-frame} is a finite linearly ordered set $\mbf{f}$ consisting of elements of $\Gamma(U/S,\nu)$
 such that the induced morphism $\mr{d}\mbf{f}:=\sum_{f\in\mbf{f}}x_f\cdot\mr{d}f\colon\mb{A}^{\mbf{f}}_U\rightarrow E_U:=E\times_XU$ is a {\em surjective} morphism of vector bundles.
 We denote by $\mr{Frm}_U$ the set of $U$-frames.
 We define a (1-)category $\mbf{\Delta}_{\mr{Frm}}$ as follows.
 The object of $\mr{Frm}$ is a tuple $(U;\mbf{f}_0,\dots,\mbf{f}_n)$ where $n\geq0$ is an integer and $\mbf{f}_i\in\mr{Frm}_U$.
 A morphism $(U;\mbf{f}_0,\dots,\mbf{f}_n)\rightarrow(U';\mbf{f}'_0,\dots,\mbf{f}'_m)$ is a order-preserving function $\phi\colon[n]\rightarrow[m]$ such that $U\subset U'$ and
 $\mbf{f}_i=\mbf{f}'_{\phi(i)}|_U$.
\end{dfn*}

Let us construct a functor $\Tot\colon\mbf{\Delta}_{\mr{Frm}}\rightarrow\mr{Sect}$.
Assume we are given an object $(U;\mbf{f}_0,\dots,\mbf{f}_n)\in\mr{Frm}$.
Put $\mbf{f}_{\mr{tot}}:=\mbf{f}_0\sqcup\dots\sqcup\mbf{f}_n$.
Using the $\mc{O}_S$-module structure of $\nu$,
we may define $\Tot(U;\mbf{f}_0,\dots,\mbf{f}_n):=\bigl(U,\mb{A}_S^{\mbf{f}_{\mr{tot}}},F_{\mbf{f}}:=\sum_{f\in\mbf{f}_{\mr{tot}}} x_f\cdot f\bigr)$.
Assume we are given a morphism $(U;\mbf{f}_0,\dots,\mbf{f}_n)\rightarrow(U';\mbf{f}'_0,\dots,\mbf{f}'_m)$.
We have the map $\mbf{f}_{\mr{tot}}\rightarrow\mbf{f}'_{\mr{tot}}$ sending $\mbf{f}_i$ to $\mbf{f}'_{\phi(i)}$.
This map induces the morphism $\alpha\colon\mb{A}^{\mbf{f}_{\mr{tot}}}_S\rightarrow\mb{A}^{\mbf{f}'_{\mr{tot}}}_S$.
It is straightforward to check that the pair $(U\subset U',\alpha)$ induces a morphism
$\Tot(U;\mbf{f}_0,\dots,\mbf{f}_n)\rightarrow\Tot(U';\mbf{f}'_0,\dots,\mbf{f}'_m)$ in $\mr{Sect}$.
We may check easily that the composite $U_{\mb{A}}\xrightarrow{\mr{d}F_{\mbf{f}}}(E_{U})_{\mb{A}}\xrightarrow{\mr{pr}}E_U$, where $\mb{A}=\mb{A}^{\mbf{f}_{\mr{tot}}}$,
is equal to $\mr{d}\mbf{f}_{\mr{tot}}$.

\subsection{}
Now, consider the composite
$\mbf{\Delta}_{\mr{Frm}}\xrightarrow{\Tot}\mr{Sect}\xrightarrow{D_C}\mr{Fun}(\Delta^1\times\Delta^1,\mr{Sm}_X)$.
We may write this diagram $\Delta^1\times\Delta^1\rightarrow\mr{Fun}(\mbf{\Delta}_{\mr{Frm}},\mr{Sm}_X)$ as:
\begin{equation*}
 \xymatrix{
  \mc{U}_{\bullet}\ar[r]\ar@{^{(}->}[d]\ar@{}[rd]|\square&(E\setminus C)_{\mc{U}}\ar@{^{(}->}[d]\\
 \mb{A}^{\bullet}_{\mc{U}}\ar[r]^-{\mr{d}(\bullet)}&E_{\mc{U}},
  }
\end{equation*}
where the vertices correspond to those of the diagram (\ref{dfnofsectcat}).
For example, the value of the functor $\mc{U}_{\bullet}$ at $(U,\mbf{f}_0,\dots,\mbf{f}_n)\in\mbf{\Delta}_{\mr{Frm}}$ is $U_{F_{\mbf{f}}}$.
The following easy fact is a key to our construction:

\begin{lem*}
 \label{affbudllem}
 The evaluation of the map $\mr{d}(\bullet)\colon\mb{A}^{\bullet}_{\mc{U}}\rightarrow E_{\mc{U}}$
 at any object of $\mbf{\Delta}_{\mr{Frm}}$ is an affine bundle.
\end{lem*}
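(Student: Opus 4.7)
The plan is to unwind the functor $F\colon\mr{Sd}(\mr{Frm}/X_{\mr{zar}})\rightarrow\mr{Sect}$ constructed just above the statement at the chosen vertex $(U,\mbf{f}_0,\dots,\mbf{f}_n)$, and then reduce the claim to the elementary fact that a surjection of vector bundles is an affine bundle over its target. By construction, $F$ sends this vertex to $(U,\mb{A}^{r(n+1)}_S,\mbf{g})$ with $\mbf{g}=\sum_{i=0}^n (p^n_i)^*\mbf{f}_i$, so the evaluation of $\mb{A}_{\mc{U}}\rightarrow E_{\mc{U}}$ is precisely the morphism $\mr{d}\mbf{g}\colon U\times_S\mb{A}^{r(n+1)}_S\rightarrow E_U$ over $U$.

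The first step is to use that $\mr{d}_\nu$ is a homomorphism of $\mc{O}_S$-modules (together with its compatibility with sums recorded right after Definition \ref{dfnbasedata}) to obtain the identity
\begin{equation*}
 \mr{d}\mbf{g}=\sum_{i=0}^n (p^n_i)^*\mr{d}\mbf{f}_i
\end{equation*}
as morphisms $U\times_S\mb{A}^{r(n+1)}_S\rightarrow E_U$ over $U$. Concretely, on $S$-points this reads $\mr{d}\mbf{g}(\mbf{s}_0,\dots,\mbf{s}_n)=\sum_i \mr{d}\mbf{f}_i(\mbf{s}_i)$.

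Next, I invoke the definition of frame: each $\mr{d}\mbf{f}_i\colon U\times_S\mb{A}^r_S\xrightarrow{\sim}E_U$ is an isomorphism of vector bundles over $U$, hence in particular $\mc{O}_U$-linear in the $\mb{A}^r_S$-coordinates. Combined with the additivity identity of the previous step, this shows that $\mr{d}\mbf{g}$ is an $\mc{O}_U$-linear surjective morphism between vector bundles over $U$: the $i=0$ summand alone already realizes surjectivity, and the total ranks match $r(n+1)\twoheadrightarrow r$.

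Finally, a surjection $\phi\colon V\twoheadrightarrow W$ between vector bundles over $U$ is Zariski-locally on $U$ split, so $\phi$ is Zariski-locally on $W$ isomorphic to the projection $W\times_U\mb{A}^{rn}\rightarrow W$; thus $\phi$ is an affine bundle (indeed a vector bundle of rank $rn$) over $W$. Applying this to $V=U\times_S\mb{A}^{r(n+1)}_S$ and $W=E_U$ yields that $\mr{d}\mbf{g}$ is an affine bundle over $E_U$, finishing the proof. There is no real obstacle here; the content of the lemma lies in the bookkeeping that the functor $F$ converts a vertex of $\mr{Sd}(\mr{Frm}/X_{\mr{zar}})$ into a tuple of frames whose sum trivializes $E_U$ in a linear way, after which the conclusion is immediate from linear algebra.
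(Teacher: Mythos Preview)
Your proof is correct and follows essentially the same approach as the paper: both identify $\mr{d}\mbf{g}$ with the composition $\mb{A}^{r(n+1)}\cong(\mb{A}^r)^{\times(n+1)}\xrightarrow[\sim]{(\mr{d}\mbf{f}_0,\dots,\mr{d}\mbf{f}_n)}E^{\times(n+1)}\xrightarrow{\sum}E$ and observe that the summation map is an $\mb{A}^{rn}$-bundle. The paper simply states this composition directly, while you spell out the linearity and surjectivity more explicitly, but the content is the same.
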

\begin{proof}
 Let $(U,\mbf{f}_0,\dots,\mbf{f}_n)\in\mbf{\Delta}_{\mr{Frm}}$.
 We must show that the morphism $\mr{d}\mbf{f}\colon\mb{A}^{\mbf{f}_{\mr{tot}}}_U\rightarrow E_U$ is an affine bundle.
 The morphism $\mr{d}\mbf{f}$ is equal to the composite
 \begin{equation*}
  \mb{A}^{\mbf{f}_{\mr{tot}}}_U\cong(\mb{A}_U^{\mbf{f}_0})\times\dots\times(\mb{A}_U^{\mbf{f}_n})
   \xrightarrow{(\mr{d}\mbf{f}_0,\dots,\mr{d}\mbf{f}_n)} E^{\times(n+1)}_U\xrightarrow{\sum}E_U.
 \end{equation*}
 The first is an $\mb{A}^{\#\mbf{f}_{\mr{tot}}-(n+1)\mr{rk}(E)}$-bundle and the second is an $\mb{A}^{n\mr{rk}(E)}$-bundle.
\end{proof}

\subsection{}
\label{framediagH}
Now, assume we are in the situation of \ref{recalfixsixthe}.
Let $f\colon X\rightarrow S$ be the structural morphism.
We put $\omega_{X/S}:=f^!R_S$ in $\mc{D}_X$.
Recall from \ref{compbmcohcoh} that we have the functor $\rH(-,\omega_{X/S})\colon\mr{Sm}_X^{\mr{op}}\rightarrow\Mod_R$.
Informally, this is the functor sending $g\colon Y\rightarrow X$ to $\rH(Y/X,\omega_{X/S})\simeq\Mor_{\mc{D}_Y}(R_Y,g^*\omega_{X/S})$.
We finally consider the following composite
\begin{equation*}
 \mbf{\Delta}_{\mr{Frm}}^{\mr{op}}
  \rightarrow
  \mr{Fun}(\Delta^1\times\Delta^1,\mr{Sm}_X)^{\mr{op}}
  \xrightarrow{\rH(,\omega_{X/S})}
  \mr{Fun}(\Delta^1\times\Delta^1,\Mod_R).
\end{equation*}
This functor can be depicted as
\begin{equation*}
 \xymatrix@C=50pt{
  \rH(\mc{U}_{\bullet}/X,\omega_{X/S})\ar@{}[rd]|{\ccirc{$\star$}}&
  \rH((E\setminus C)_{\mc{U}}/X,\omega_{X/S})\ar[l]^-{\sim}\\
 \rH(\mb{A}^{\bullet}_{\mc{U}}/X,\omega_{X/S})\ar[u]&
  \rH(E_{\mc{U}}/X,\omega_{X/S}).
  \ar[u]\ar[l]_{\mr{d}(\bullet)^*}^-{\sim}
  }
\end{equation*}
Here the equivalences of the horizontal edges follow by Lemma \ref{affbudllem}.
We often omit $\omega_{X/S}$ from $\rH(-,\omega_{X/S})$ in what follows.

\begin{dfn*}
 The base data is said to {\em have enough frames} if the sieve	$\{U\in X_{\mr{zar}}\mid\mr{Frm}_U\neq\emptyset\}$ is a Zariski covering sieve of $X$.
 The full subcategory of $X_{\mr{zar}}$ spanned by the above set is denoted by $X_{\mr{zar}}^{\mr{fr}}$.
 Then we have the functor $\mbf{\Delta}_{\mr{Frm}}\rightarrow X_{\mr{zar}}^{\mr{fr}}$ sending $(U;\mbf{f}_0,\dots,\mbf{f}_n)$ to $U$.
 Passing to the opposite category, this defines a functor $p\colon\mbf{\Delta}_{\mr{Frm}}^{\mr{op}}\rightarrow(X_{\mr{zar}}^{\mr{fr}})^{\mr{op}}$.
\end{dfn*}

\begin{lem*}
 \label{desclemeas}
 Consider the functor
 $\rH(\mc{U}_{\bullet}/X)\colon\mbf{\Delta}_{\mr{Frm}}^{\mr{op}}\rightarrow\Mod_R$, and the left Kan extension functor
 \begin{equation*}
  p_!\colon
   \mr{Fun}\bigl(\mbf{\Delta}_{\mr{Frm}}^{\mr{op}},\Mod_R\bigr)
   \rightarrow
   \mr{Fun}\bigl((X_{\mr{zar}}^{\mr{fr}})^{\mr{op}},\Mod_R\bigr).
 \end{equation*}
 If the base data have enough frames, then we have
 $\invlim_{(X_{\mr{zar}}^{\mr{fr}})^{\mr{op}}}p_!\rH(\mc{U}_{\bullet}/X)\simeq\rH((E\setminus C)/X)$.
\end{lem*}
\begin{proof}
 By definition, the functor $\rH((E\setminus C)_{\mc{U}}/X)\colon\mbf{\Delta}_{\mr{Frm}}^{\mr{op}}\rightarrow\Mod_R$ factors through $p$.
 The functor $(X_{\mr{zar}}^{\mr{fr}})^{\mr{op}}\rightarrow\Mod_R$ sending $U$ to $\rH((E\setminus C)_U/X)$ is denoted by $\rH$.
 We have the morphisms
 \begin{equation*}
  p_!\rH(\mc{U}_{\bullet}/X)
   \xleftarrow{\sim}
   p_!\rH((E\setminus C)_{\mc{U}}/X)
   \simeq
   p_!(\rH\circ p)
   \rightarrow
   \rH.
 \end{equation*}
 Let us show that the third morphism is an equivalence.
 Since $p$ is a coCartesian fibration, $p_!$ can be computed fiberwise.
 Thus, by \cite[4.4.4.10]{HTT}, it suffices to show that the fiber of $p$ over any object of $X_{\mr{zar}}^{\mr{fr}}$
 is weakly contractible.
 For $U\in X^{\mr{fr}}_{\mr{zar}}$, the category $\mbf{\Delta}_{\mr{Frm}}^{\mr{op}}\times_{X^{\mr{op}}_{\mr{zar}}}\{U\}$ is a filtered category.
 Thus, (the nerve of) $\mbf{\Delta}_{\mr{Frm}}\times_{X_{\mr{zar}}}\{U\}$ is weakly contractible by \cite[5.3.1.15, 5.3.1.20]{HTT}.
 Now, the claim follows by Zariski descent of $\rH(,\omega_{X/S})$.
\end{proof}

\subsection{}
\label{suppstrdfn}
With the preparation so far, let us return to the theory of microlocalizations.
Let $(X,E,\nu,\mr{d}_{\nu})$ be a base data of microlocalization,
and assume we are given a morphism $\ms{S}\rightarrow\mc{H}$ of presheaves on $\mr{Sch}_{/S}$.
For $\mc{F}\in\Gamma(S,\ms{S})$, we wish to makes sense of the notion of ``microsupport of $\mc{F}$ on $E$''.
In order to do this, we need extra data.
Let $p\colon(X/S)_{\mr{zar}}\rightarrow\mr{Sch}_{/S}$ be the functor sending $V/T$ to $T$.
Then $p$ is a Cartesian fibration.

\begin{dfn*}
 Let $(X,E,\nu,\mr{d}_\nu)$ be a base data of microlocalization, and let $(\ms{S},\nu,\mc{E}')$ be a $p$-deformation system
 (in the sense of Definition \ref{varpartdef}).
 Let $C$ be a conic ({\it i.e.}\ a closed subset stable under the $\mb{G}_{\mr{m}}$-action) in $E$.
 An element $\mc{F}\in\Gamma(X/S,\ms{S})$ is said to be {\em microsupported on $C$} if for any $V\rightarrow T$ in $(X/S)_{\mr{zar}}$
 such that $T\rightarrow S$ is {\em smooth}
 and any $f\in\Gamma(V/T,\nu)$, the restriction of $\mc{E}'(\mc{F},f)\in\Gamma(V/T,\partial\ms{S})$
 to $V\setminus(\mr{d}_\nu f)^{-1}(C)\rightarrow T$ is $0$.
 We denote by $\Gamma(X/S,\ms{S})_C$, or more precisely $\Gamma(X/S,\ms{S})_{\mc{E}',C}$,
 the subgroup of $\Gamma(X/S,\ms{S})$ consisting of elements which are microsupported on $C$.
 These form a subsheaf of $\ms{S}$ denoted by $\ms{S}_C$.
\end{dfn*}

Recall from \ref{introopenbm} that we have the functor ${}^{\mr{open}}\cH\colon\widehat{(X/S)}\vphantom{X}_{\mr{zar}}^{\mr{op}}\rightarrow\mc{D}_S$
which preserves coCartesian edges over $\mr{Sch}_{/S}^{\mr{op}}$.
Consider the functor $\sMor_S({}^{\mr{open}}\cH,R)\colon(X/S)^{\mr{op}}_{\mr{zar}}\rightarrow\Mod_R$ using the notation of Theorem \ref{revcons},
which coincides with ${}^{\mr{open}}\Hbm$ in \ref{introopenbm}.
Informally, this is the functor sending $V/T$ to $\Mor_{\mc{D}(T)}(f_{V!}f^*_VR_{T},R_T)$ where $f_V\colon V\rightarrow T$.
We denote $\sMor_S({}^{\mr{open}}\cH,R)$ by $\mc{H}$, which we view as a $\Mod_R$-valued presheaf on $(X/S)_{\mr{zar}}$.

\subsection{Microlocalization}\mbox{}\\
\label{consmicccabs}
We fix the following data:
\begin{quote}
 a base data of microlocalization {\em with enough frames} $(X,E,\nu,\mr{d}_\nu)$;
 a $p$-deformation system $(\ms{S},\nu,\mc{E}')$ (cf.\ \ref{varpartdef});
 a morphism $\mr{C}\colon\ms{S}\rightarrow\mc{H}$;
 a retract $\mr{rw}\colon\partial\mc{H}\rightarrow\mc{H}$ of the morphism $\mc{H}\rightarrow\partial\mc{H}$.
\end{quote}
For a conic $C\subset E$, we will construct a homomorphism $\mr{C}^{\mu}\colon\ms{S}_C(X)\rightarrow\rH_{C}(E,\omega_{X/S})$ called the {\em microlocalization of $\mr{C}$}
such that the following diagrams are homotopy commutative:
\begin{equation*}
 \xymatrix@C=30pt{
  \ms{S}(X/S)_C\ar[r]^-{\mr{C}^{\mu}}\ar@{^{(}->}[d]&\rH_{C}(E,\omega_{X/S})\ar[d]\\
 \ms{S}(X/S)\ar[r]^-{\pi^*\circ\mr{C}}&\rH(E,\omega_{X/S}),
  }
  \qquad
  \xymatrix{
  \ms{S}(X/S)_C\ar[d]_{\mr{C}^{\mu}}\ar[r]^-{\mc{E}'(-,f)}&\partial\ms{S}(U/S)\ar[r]^-{\mr{rw}\circ\partial\mr{C}}&\Hbm(U)\\
 \rH_C(E,\omega_{X/S})\ar[rr]^-{(\mr{d}_{\nu}f)^*}&&\Hbm(Z_f).
  \ar[u]
  }
\end{equation*}
Here, $U\subset X$ is an open subscheme, $f\in\nu(U/S)$, $Z_f$ is the closed subscheme $(\mr{d}_{\nu}f)^{-1}(C)$ of $U$,
and $\pi\colon E\rightarrow X$ is the structural morphism.
The construction is given in the next paragraph.

\subsection{}
Let us start the construction of $\mr{C}^{\mu}$.
By Lemma \ref{homcommdiagdefsys}, we have the following homotopy commutative diagram $D\colon\Delta^1\times\Delta^1\rightarrow\PShv_{\Mod_R}((X/S)_{\mr{zar}})$ on the left:
\begin{equation*}
 \xymatrix@C=40pt{
  \ms{S}\otimes\nu\ar[r]^-{\mc{E}'}\ar[d]_{\mr{C}\circ\Sigma}\ar@{}[rd]|{D}&
  \partial\ms{S}\ar[d]^{\mr{rw}\circ\partial\mr{C}}\\
 \mc{H}\ar@{-}[r]^-{\sim}&\mc{H},
  }\qquad
 \xymatrix{
  X\ar[d]&U\times_SA\ar[l]\ar[d]&U_{\mbf{f}}\ar[l]\ar[d]\\
 S&A\ar[l]&A.\ar@{=}[l]
  }
\end{equation*}
Let $\mr{Sect}\rightarrow\mr{Fun}(\Delta^2,(X/S)^{\mr{op}}_{\mr{zar}})$ be the functor sending
$(U,A,\mbf{f})$ to the above diagram (in $(X/S)_{\mr{zar}}$) on the right.
By composing these two functors and taking an appropriate simplicial subset, we obtain the functor $\mr{Sect}\rightarrow\mr{Fun}(\Delta^1\times\Delta^1,\Mod_R)$
which sends $(U,A,\mbf{f})\in\mr{Sect}$ to the homotopy commutative diagram on the left side below:
\begin{equation*}
 \xymatrix@C=30pt{
  \Gamma(X/S,\ms{S})\ar[r]^-{\mc{E}'(-,\mbf{f})}\ar[d]_{\mr{C}}\ar@{}[rd]|{\ccirc{$\star\star$}}&
  \Gamma(U_{\mbf{f}}/A,\partial\ms{S})\ar[d]^{\mr{rw}\circ\partial\mr{C}}\\
 \Gamma(U_A/A,\mc{H})\ar[r]^-{\mr{res}}&\Gamma(U_{\mbf{f}}/A,\mc{H}),
  }
  \quad
  \xymatrix@C=15pt{
  \Gamma(U_{\mbf{f}}/A,\partial\ms{S})\ar[r]\ar@{}[rd]|{\ccirc{$\star\star$}}&
  \Gamma(U_{\mbf{f}}/A,\mc{H})\ar@{-}[r]^-{\sim}&
  \rH(U_{\mbf{f}}/X,\omega_{X/S})\\
 \Gamma(X/S,\ms{S})\ar[r]\ar[u]^{\mc{E}'(-,\mbf{f})}&
  \Gamma(U_A/A,\mc{H})\ar[u]\ar@{-}[r]^-{\sim}&
  \rH(U_A/X,\omega_{X/S}).\ar[u]
  }
\end{equation*}
Note that for any object of $(U,A,\mbf{f})\in\mr{Sect}$, the morphism $A\rightarrow S$ is smooth.
Thus, by applying Lemma \ref{compbmcohcoh}, we get the diagram on the right above.
Picking up the outer square diagram, we get a diagram $\mr{Sect}\rightarrow\mr{Fun}(\Delta^1\times\Delta^1,\Mod_R)$.
Combining this with $\ccirc{$\star$}$ of \ref{framediagH}, we get the functor
\begin{equation*}
 \mbf{\Delta}_{\mr{Frm}}
  \xrightarrow{\Tot}
  \mr{Sect}
  \rightarrow
  \mr{Fun}(\Lambda^2_2\times\Delta^1,\Mod_R).
\end{equation*}
This functor can be depicted as
\begin{equation*}
 \xymatrix{
  \Gamma(\mc{U}_{\bullet},\partial\ms{S})\ar[r]&
  \rH(\mc{U}_{\bullet}/X,\omega_{X/S})&
  \rH((E\setminus C)_{\mc{U}}/X,\omega_{X/S})\ar[l]_-{\sim}\ar@{}[ld]|{\ccirc{$\star$}}\\
 \Gamma(X,\ms{S})\ar[r]\ar[u]^{\mc{E}'(-,-)}&
  \rH(\mb{A}^{\bullet}_{\mc{U}}/X,\omega_{X/S})\ar[u]&
  \rH(E_{\mc{U}}/X,\omega_{X/S}).\ar[l]_-{\sim}\ar[u]
  }
\end{equation*}
Now, consider the composite
\begin{equation*}
 \varphi_C\colon
 \Gamma(X,\ms{S})_C
  \hookrightarrow
  \Gamma(X,\ms{S})
  \xrightarrow{\mc{E}'(-,-)}
  \Gamma(\mc{U}_{\bullet},\partial\ms{S})
\end{equation*}
in $\mr{Fun}\bigl(\mbf{\Delta}_{\mr{Frm}},\Mod_R\bigr)$.
For each vertex $v$ of $\mbf{\Delta}_{\mr{Frm}}$, the morphism $\varphi_C(v)$ in $\Mod_R$ is homotopic to $0$ by definition.
This implies that $\pi_0\varphi_C$ is $0$.
Moreover, since $\Gamma(X,\ms{S})_C$ is in $\Mod_R^{\heartsuit}$ and $\Gamma(\mc{U}_{\bullet},\partial\ms{S})$ is in $(\Mod_R)_{\leq0}$,
$\varphi_C$ is homotopic to the composite
\begin{equation*}
 \Gamma(X,\ms{S})_C
  \xrightarrow{\pi_0\varphi_C}\tau_{\geq0}
  \Gamma(\mc{U}_{\bullet},\partial\ms{S})
  \rightarrow
  \Gamma(\mc{U}_{\bullet},\partial\ms{S}).
\end{equation*}
Thus, $\varphi_C$ is homotopic to $0$ in $\mr{Fun}\bigl(\mbf{\Delta}_{\mr{Frm}},\Mod_R\bigr)$.
This homotopy yields a homotopy between the composite
$\Gamma(X,\ms{S})_C\rightarrow\Gamma(X,\ms{S})\rightarrow\rH((E\setminus C)_{\mc{U}}/X,\omega_{X/S})$ and $0$.
Furthermore, by Lemma \ref{desclemeas}, this induces the diagram
\begin{equation*}
 \xymatrix{
  &\Gamma(X,\ms{S})_C\ar[d]^{\pi^*\circ\mr{C}}\ar[rd]^-{0}\ar@{-->}[dl]_-{\mr{C}^{\mu}}&\\
 \rH_C(E,\omega_{X/S})\ar[r]&\rH(E,\omega_{X/S})\ar[r]&\rH(E\setminus C,\omega_{X/S}),
  }
\end{equation*}
where the triangle is a diagram $\Delta^2\rightarrow\Mod_R$, and the row is a cofiber sequence.
Thus, we obtain the desired morphism $\mr{C}^{\mu}$ lifting $\pi^*\mr{C}$.

\subsection{Functorialities}\mbox{}\\
\label{functofmicccabs}
Finally, we discuss some functorialities of the microlocalized map.
We consider 4 types of functorialities: the functoriality when we change the base data, when we change the support systems,
when we change the base, and when we change the schemes.
\medskip

\noindent
{\bf \underline{Change of base data:}}\\
Assume we are given a morphism $m\colon(X,E',\nu',\mr{d}_{\nu'})\rightarrow(X,E,\nu,\mr{d}_{\nu})$
of base data with enough frames, and a presheaf with support structure $(\ms{S},\mc{E}')$ with repsect to $(X,E,\nu,\mr{d}_\nu)$.
Abusing the notation, we also denote by $m\colon E'\rightarrow E$ the associated morphism.
We define $\ms{S}\otimes\nu'\rightarrow\ms{S}$ by the composite
\begin{equation*}
 m^*\mc{E}'\colon
  \ms{S}\otimes\nu'
  \rightarrow
  \ms{S}\otimes\nu\xrightarrow{\mc{E}'}
  \partial\ms{S}.
\end{equation*}
This defines a support structure.
Let $C\subset E$ be a conic.
By construction, we have the homomorphism $\Gamma(X,\ms{S})_{\mc{E}',C}\rightarrow\Gamma(X,\ms{S})_{m^*\mc{E}',m^{-1}(C)}$.
Also by construction, we have the following diagram $\Delta^2\times\Delta^1\rightarrow\Mod_R$ depicted as
\begin{equation*}
 \xymatrix@R=10pt{
  \Gamma(X,\ms{S})_{\mc{E}',C}\ar[dd]\ar[rd]\ar@/^15pt/[rrd]^(.7){0}&&\\
 &\rH(E,\omega_{X/S})\ar[dd]\ar[r]&\rH(E\setminus C,\omega_{X/S})\ar[dd]\\
 \Gamma(X,\ms{S})_{m^*\mc{E}',m^{-1}(C)}\ar[rd]\ar@/^15pt/[rrd]|(.47){\hole}^(.7){0}&&\\
 &\rH(E',\omega_{X/S})\ar[r]&\rH(E'\setminus m^{-1}(C),\omega_{X/S})
  }
\end{equation*}
This diagram induces the homotopy commutative diagram
\begin{equation*}
 \xymatrix{
  \Gamma(X,\ms{S})_{\mc{E}',C}\ar[r]\ar[d]&\rH_{C}(E,\omega_{X/S})\ar[d]\\
 \Gamma(X,\ms{S})_{m^*\mc{E}',m^{-1}(C)}\ar[r]&\rH_{m^{-1}(C)}(E',\omega_{X/S}).
  }
\end{equation*}
\medskip

\noindent
{\bf \underline{Change of presheaves:}}\\
Let us fix base data $(X,E,\nu,\mr{d}_\nu)$, and assume we are given a homomorphism of presheaves
$\phi\colon\ms{S}_0\rightarrow\ms{S}_1$ endowed with support structures $\mc{E}'_0$, $\mc{E}'_1$
such that the following diagram on the left commutes.
Similarly to the previous functoriality, we may construct the following homotopy commutative diagram on the right below.
The detail is left to the reader.
\begin{equation*}
 \xymatrix{
  \ms{S}_0\otimes\nu\ar[r]^-{\mc{E}'_0}\ar[d]_{\phi\otimes\mr{id}}&\partial\ms{S}_0\ar[d]^{\partial\phi}\\
 \ms{S}_1\otimes\nu\ar[r]^-{\mc{E}'_1}&\partial\ms{S}_1,
  }\qquad
 \xymatrix{
  \Gamma(X,\ms{S}_0)_{C}\ar[r]\ar[d]&\rH_{C}(E,\omega_{X/S})\ar@{=}[d]\\
 \Gamma(X,\ms{S}_1)_{C}\ar[r]&\rH_{C}(E,\omega_{X/S}).
  }
\end{equation*}
\medskip

\noindent
{\bf \underline{Change of base:}}\\
Assume we are given a {\em smooth} morphism $g\colon S'\rightarrow S$.
Since any smooth scheme over $S'$ is smooth over $S$ as well, we have the homomorphism
$\Gamma(X,\ms{S})_{C}\rightarrow\Gamma(X_{S'},g^*\ms{S})_{C_{S'}}$ for any conic $C\subset E$,
and we have the following homotopy commutative diagram, whose detailed construction is left to the reader:
\begin{equation*}
 \xymatrix{
  \Gamma(X,\ms{S})_{C}\ar[r]\ar[d]&\rH_{C}(E,\omega_{X/S})\ar[d]\\
 \Gamma(X_{S'},g^*\ms{S})_{C_{S'}}\ar[r]&\rH_{C_{S'}}(E_{S'},\omega_{X_{S'}/S'}).
  } 
\end{equation*}
\medskip

\noindent
{\bf \underline{Change of schemes:}}\\
For the last functoriality, assume we are given a proper morphism $h\colon X\rightarrow Y$ over $S$.
We assume given base data $(Y,E,\nu,\mr{d}_\nu)$.
This defines a base data $(X,E\times_Y X,h^{-1}\nu,\mr{d}_{h^{-1}\nu})$, and let $(\ms{S},\mc{E}')$
be a presheaf with support structure with respect to this base data.
Let $g\colon E\times_Y X\rightarrow E$ be the induced proper morphism.
The morphism
\begin{equation*}
 h_*\mc{E}'\colon
 h_*\ms{S}\otimes\nu
 \cong
  h_*(\ms{S}\otimes h^{-1}\nu)\rightarrow h_*\partial\ms{S}
  \cong
  \partial h_*\ms{S}
\end{equation*}
defines a presheaf with support structure $(h_*\ms{S},h_*\mc{E}')$.
For $V\rightarrow T$ in $(Y/S)_{\mr{zar}}$, by definition, we have the following commutative diagram:
\begin{equation*}
 \xymatrix@C=40pt{
  \Gamma(V\times_Y X/T,\ms{S}\otimes h^{-1}\nu)\ar[r]^-{\mc{E}'}\ar@{-}[d]_{\sim}&
  \Gamma(V\times_Y X/T,\partial\ms{S})\ar@{-}[d]^-{\sim}\\
 \Gamma(V/T,h_*\ms{S}\otimes\nu)\ar[r]^-{h_*\mc{E}'}&\Gamma(V/T,\partial h_*\ms{S}).\\
  }
\end{equation*}
Let $C\subset E\times_Y X$ be a conic.
This commutative diagram defines a homomorphism $\Gamma(X,\ms{S})_C\rightarrow\Gamma(Y,h_*\ms{S})_{g(C)}$.
Using this, we have the following homotopy commutative diagram, whose detailed construction is left to the reader
\begin{equation*}
 \xymatrix{
  \Gamma(X,\ms{S})_{C}\ar[r]\ar[d]&\rH_{C}(E\times_YX,\omega_{X/S})\ar[d]\\
 \Gamma(Y,h_*\ms{S})_{g(C)}\ar[r]&\rH_{g(C)}(E,\omega_{Y/S}).
  }
\end{equation*}

\section{Preliminary --- Nearby cycles and flat \'{e}tale systems}
\label{sect4}
The rest of this paper is devoted mainly to constructing a deformation system.
Our deformation system is constructed from \'{e}tale sheaves.
In this section, all the schemes are over the field $k$ of positive characteristic,
and $\Lambda$ be a commutative finite local ring in which $\mr{char}(k)\neq0$ is invertible.
For a scheme $X$, the derived ($1$-)category of \'{e}tale $\Lambda$-modules is denoted by $D(X,\Lambda)$ or $D(X)$.
We denote by $D_{\mr{c}}(X)$ (resp.\ $D_{\mr{ctf}}(X)$) be the full subcategory of $D(X)$
consisting of constructible (resp.\ constructible and finite Tor-dimensional) complexes.
When $X$ is quasi-compact, note that $D_{\mr{ctf}}(X)=D^{\mr{b}}_{\mr{ctf}}(X)$.

\subsection{}
\label{setuptermncy}
A {\em geometric point} $s$ of a scheme $S$ is a morphism $p\colon s\rightarrow S$ where $s$ is a spectrum of a separably closed field $k(s)$.
We abbreviate the geometric point $p$ by $s$ if no confusion may arise.
If we are given another geometric point $p'\colon s'\rightarrow S$, a morphism of geometric points $p\rightarrow p'$ is nothing but an $S$-morphism $s\rightarrow s'$.
The geometric point $s$ is said to be {\em algebraic} if the extension of fields $k(s)/k(p(s))$ is algebraic.
We denote by $S_{(s)}$, or more precisely $S_{(p)}$, the strict henselization of $S$ at $s$.
A {\em specialization map} $\phi$ of geometric points, denoted by $\phi\colon t\rightsquigarrow s$, is a morphism of $S$-schemes $t\rightarrow S_{(s)}$.
Let $f\colon S\rightarrow T$ be a morphism.
We denote the geometric point $s\rightarrow S\xrightarrow{f}T$ of $T$ by $f(s)$.
A specialization map $t\rightsquigarrow s$ on $S$ induces the specialization map $f(t)\rightsquigarrow f(s)$ on $T$.

\subsection{}
Let $f\colon X\rightarrow S$ be a morphism of finite type between noetherian schemes.
For a geometric point $s\rightarrow S$, we put $X_{(s)}:=X\times_S S_{(s)}$.
Assume we are given a specialization map $\phi\colon t\rightsquigarrow s$ of geometric points on $S$.
Consider the Cartesian diagram
\begin{equation*}
 \xymatrix{
  X_s\ar[r]^-{i}\ar[d]\ar@{}[rd]|\square&
  X_{(s)}\ar[d]\ar@{}[rd]|\square&
  X_{t}\ar[l]_-{j}\ar[d]\\
 s\ar[r]&S_{(s)}&t\ar[l]_-{\phi}
  }
\end{equation*}
Then we define
\begin{equation*}
 \Psi_{f,\phi}:=i^*\circ\mr{R}j_*
  \colon
  D^+(X_{t})\rightarrow D^+(X_s).
\end{equation*}
We often denote $\Psi_{f,\phi}$ simply by $\Psi_\phi$ or $\Psi_{s\leftarrow t}$ if no confusion may arise.

\begin{rem*}
 \label{insensseprem}
 \begin{enumerate}
  \item\label{insensseprem-1}
        The functor $\Psi_{f,\phi}$ induces a functor $D^{\mr{b}}(X_{t},\Lambda)\rightarrow D^{\mr{b}}(X_s,\Lambda)$.
	Indeed, since $S$ is quasi-compact, the dimension of the fibers of $f$ is bounded by [EGA IV, 13.1.7].
	If $t$ is algebraic, then $\Psi_{\phi}$ preserves the boundedness by \cite[Prop 3.1]{O1} (or its evident analogue for our $\Psi_{\phi}$).
	Let $\alpha\colon s\rightarrow s'$ be a morphism of geometric points of $S$.
	To treat the general case, it suffices to show that for $\mc{F}\in D^{\mr{b}}(X_s)$, the pushforward $\mr{R}\alpha_*\mc{F}$ is bounded.
	We may assume that $\mc{F}$ is a sheaf.
	We have $\dim(X_s)\leq N$ since $X_s$ is of finite type over $s$.
	For an open affine subscheme $U\subset X_{s'}$, $U\times_{X_{s'}}X_s$ is affine as well, and [SGA 4, Exp.\ XIV, Cor 3.2] implies that
	$\mr{H}^i(U\times_{X_{s'}}X_s,\mc{F})=0$ for any $i>N$. Thus $\mr{R}^i\alpha_*\mc{F}=0$ for $i>N$.
	Furthermore, by the same argument as \cite[Rem 8.3]{O1} using [SGA 4, Exp.\ XVII, 5.2.11],
	$\Psi_{\phi}$ preserves finite Tor-dimensional complexes as well.

  \item Let $x$ be a geometric point of $X_s$.
	This yields a point $(\phi,x)$ of the topos $X\overleftarrow{\times}_SS$.
	For $\mc{F}\in D^+(X)$, we may consider the nearby cycle functor over general base
	$\mr{R}\Psi_f(\mc{F})\in D^+(X\overleftarrow{\times}_SS)$ (cf.\ \cite[1.2]{I}).
	If $t$ is a geometric generic point, then we have $\mr{R}\Psi_f(\mc{F})_{(\phi,x)}\xrightarrow{\sim}\Psi_{f,\phi}(\mc{F}_t)_x$.
	In general, these are different since $\mr{R}\Psi_f(\mc{F})_{(\phi,x)}$ is the cohomology of ``Milnor tube''
	whereas $\Psi_{f,\phi}(\mc{F})_x$ is that of ``Milnor fiber''.
	However, if $(f,\mc{F})$ is $\Psi$-good in the sense of Illusie \cite[1.5]{I}\footnote{
	The notion first appeared in \cite{O1} as far as the author knows, even though Orgogozo did not give a name.},
	these coincide for any geometric point $t$.
 \end{enumerate}
\end{rem*}

\begin{dfn}
 \label{examplegood}
 A complex $\mc{F}\in D^+(X_t)$ is said to be {\em $f$-good {\normalfont(}at $t${\normalfont)}}
 if for any morphism $g\colon S'\rightarrow S$, any specialization map
 $\phi'\colon t'\rightsquigarrow s'$ on $S'$, and a morphism of geometric points $h\colon g(t')\rightarrow t$, the canonical morphism
 \begin{equation*}
  \Psi_{f,g(\phi')}\bigl(h^*\mc{F}\bigr)
   \rightarrow
   \Psi_{f',\phi'}\bigl(h^*\mc{F}\bigr),
 \end{equation*}
 where $f'$ is the base change $f_{S'}=f\times_S S'$,
 is an isomorphism in $D^+(X_{s'})$.
 A complex $\mc{G}\in D^+(X)$ is said to be {\em $f$-good} if for any geometric point $t$ of $S$,
 the pullback $\mc{G}_t\in D^+(X_t)$ is $f$-good.
\end{dfn}

\begin{ex*}
 For $\mc{F}\in D^+(X)$, assume that $(f,\mc{F})$ is $\Psi$-good in the sense of Illusie.
 Then $\mc{F}$ is $f$-good in the sense above.
 Thus, we have the following examples of $f$-good complexes:
 \begin{enumerate}
  \item If $\mc{F}$ is universally locally acyclic over $S$, then $\mc{F}$ is $f$-good by \cite[1.7 (b)]{I}.

  \item\label{examplegood-2}
       Let $\mc{F}\in D^{\mr{b}}_{\mr{c}}(X,\Lambda)$, and assume that there exists a subscheme $Z\subset X$ such that
       the composite $Z\rightarrow S$ is quasi-finite and $\mc{F}|_{X\setminus Z}$ is universally locally acyclic relative to $f'$,
       where $f'\colon X\setminus Z\rightarrow S$.
       Then $\mc{F}$ is $f$-good by \cite[1.7 (c)]{I}.
 \end{enumerate}
 In fact, Gabber showed that $f$-good complexes are $\Psi$-good in the sense of Illusie,
 but we do not use this result in this paper.
\end{ex*}

\subsection{}
The following corollary of Orgogozo's theorem is fundamental to us:

\begin{cor*}[of Orgogozo's theorem]
 \label{vgofnbcycgen}
 Let $f\colon X\rightarrow S$ be a morphism of finite type between noetherian schemes,
 $u$ be a geometric point of $S$, $\phi\colon u\rightsquigarrow t$ be a specialization map on $S$,
 and $\mc{F}$ be an object of $D^{\mr{b}}_{\mr{c}}(X_u)$.
 \begin{enumerate}
  \item Assume that $u$ is {\em algebraic}.
	There exists a modification $g\colon S'\rightarrow S$ such that, for any geometric point $u'$ of $S'$ with $g(u')=u$,
	the complex $\mc{F}$ considered as an object of $D^+(X_{u'})$ is $f_{S'}$-good.

  \item If $\mc{F}$ is $f$-good and belongs to $D^+_{\mr{c}}(X_u)$ {\normalfont(}resp.\ to $D_{\mr{ctf}}(X_u)${\normalfont)},
	the complex $\Psi_{f,\phi}(\mc{F})$ belongs to $D^+_{\mr{c}}(X_t)$ {\normalfont(}resp.\ to $D_{\mr{ctf}}(X_t)${\normalfont)}.
 \end{enumerate}
\end{cor*}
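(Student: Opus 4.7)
The plan is to derive both statements from Orgogozo's theorem \cite{O1}: for any constructible $\Lambda$-complex $\mc{G}$ on the total space of a finite-type morphism $f\colon X \to S$ of noetherian schemes, there exists a modification $g\colon S' \to S$ such that the pullback of $\mc{G}$ to $X_{S'}$ is $\Psi$-good in Illusie's sense, hence $f_{S'}$-good in the sense of Definition~\ref{examplegood}.

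For part (1), I first spread $\mc{F}$ out using the algebraicity of $u$. Since $k(u)/k(p(u))$ is algebraic, a filtered limit argument descends the constructible complex $\mc{F}$ to some $\mc{F}_K \in D^{\mr{b}}_{\mr{c}}(X \otimes_S K, \Lambda)$ over a finite subextension $K \subset k(u)$. Let $Z$ be the normalization of the reduced closure $\overline{\{p(u)\}} \subset S$ inside the field $K$; then $Z \to S$ is quasi-finite and $u$ factors through the generic point of $Z$. After possibly shrinking $Z$, the complex $\mc{F}_K$ spreads out to $\mc{F}_Z \in D^{\mr{b}}_{\mr{c}}(X_Z, \Lambda)$ restricting back to $\mc{F}$ on $X_u$. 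Applying Orgogozo's theorem to $(f_Z, \mc{F}_Z)$ yields a modification $h\colon Z' \to Z$ over which the pullback of $\mc{F}_Z$ becomes $\Psi$-good.

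The next step propagates $h$ to a modification of $S$. Any modification of a noetherian scheme is dominated by a blow-up (a standard consequence of Raynaud--Gruson platification), so one may assume $h$ itself is the blow-up of $Z$ along a coherent ideal $J \subset \mc{O}_Z$. Lifting $J$ along the closed immersion $Z \hookrightarrow S$ (which holds at least over an open neighbourhood of $p(u)$, sufficient for assertions at geometric points lying over $u$) to an ideal $\widetilde{J} \subset \mc{O}_S$, I set $g\colon S' := \mr{Bl}_{\widetilde{J}}(S) \to S$; this is a modification of $S$, and the strict transform of $Z$ inside $S'$ dominates $Z'$. Any geometric point $u'$ of $S'$ with $g(u') = u$ is then forced to factor through this strict transform, and $f_{S'}$-goodness at $u'$ reduces, via the tower of base changes $S'' \to S' \supset \text{(strict transform)} \to Z'$, to the $\Psi$-goodness on $X_{Z'}$ supplied by Orgogozo.

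For part (2), the hypothesis of $f$-goodness lets one compute $\Psi_{f,\phi}(\mc{F})$ via any suitable base change matching $\phi$. By the valuative criterion applied to the morphism $u \to S_{(t)}$ encoded in $\phi$, there exist a strictly henselian discrete valuation ring $R$ and a morphism $T := \mr{Spec}(R) \to S$ whose generic (resp.\ closed) geometric point maps to $u$ (resp.\ $t$); $f$-goodness yields an isomorphism $\Psi_{f,\phi}(\mc{F}) \simeq \Psi_{f_T, \phi_T}(\mc{F}_u)$, reducing constructibility of the left-hand side to the classical constructibility of nearby cycles over a strictly local trait (SGA~7, Exp.~XIII). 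The preservation of $D_{\mr{ctf}}$ follows from the same reduction together with the preservation of finite Tor-dimension recalled in Remark~\ref{insensseprem}. The main obstacle is the extension step in part~(1): engineering a modification of $S$ inside which the modification of the auxiliary scheme $Z$ supplied by Orgogozo is visible above $u$; the blow-up construction accomplishes this by forcing every geometric point of $S'$ over $u$ through the strict transform of $Z$.
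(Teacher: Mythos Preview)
Your overall strategy for part (1) --- spread out $\mc{F}$ to a scheme over $S$, apply Orgogozo, then descend the modification --- matches the paper's. The execution of the descent step, however, does not work as written. The scheme $Z$ you build is the normalization of $\overline{\{p(u)\}}$ in a finite extension $K$ of $k(p(u))$; the map $Z\to S$ is then a (quasi-)finite morphism onto its image, \emph{not} a closed immersion, and this does not improve over any neighbourhood of $p(u)$ (the generic fibre has degree $[K:k(p(u))]>1$). There is therefore no way to ``lift'' the ideal $J\subset\mc{O}_Z$ to an ideal of $\mc{O}_S$ and no sense in which $Z$ has a strict transform in your blow-up $S'$. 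The paper handles exactly this point by quoting \cite[Lem 3.2]{O1}, which produces, from the modification $S'_0\to S_0$ and the finite surjection $S_0\to S$, a commutative square with $T\to S'$ finite surjective and $S'\to S$ a modification; one then lifts the specialization along the finite surjection and uses that nearby cycles are insensitive to finite base change.

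For part (2), your trait reduction is a different route from the paper and has its own gap. The valuative criterion produces a valuation ring dominating $\mc{O}_{S_{(t)}}$, but not a \emph{discrete} one when the specialization has length $\geq 2$; and even if you manufacture a DVR $R$ with generic point over $p(u)$ and closed point at $t$, you still need a morphism of geometric points $g(\bar\eta_T)\to u$, i.e.\ an embedding $k(u)\hookrightarrow\mr{Frac}(R)^{\mathrm{sep}}$, to invoke $f$-goodness --- and this can fail when $u$ is not algebraic (think of $S=\mr{Spec}\,\mb{Z}$, $u=\mr{Spec}\,\overline{\mb{Q}}$, where no DVR of $\overline{\mb{Q}}$ exists). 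The paper avoids this by spreading $\mc{F}$ out to some $\mc{G}$ on $X_{S_0}$ for a proper surjection $S_0\to S$, then invoking \cite[Thm 8.1]{O1} to find a modification $T\to S_0$ over which the global nearby cycle $\mr{R}\Psi_{f_T}(\mc{G}_T)$ is constructible on $X_T\overleftarrow{\times}_T T$; one then lifts $\phi$ along the proper surjection $T\to S$ and restricts.
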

\begin{proof}
 Let us show the first claim.
 By \cite[Lem 4.3]{O1}, we may assume that $u$ is a geometric {\em generic} point.
 Since $\mc{F}$ is bounded, we may assume that $\mc{F}$ is a $\Lambda$-module.
 Since $u$ is algebraic and $\mc{F}$ is bounded and constructible,
 there exists a finite surjective morphism $S_0\rightarrow S$ and a constructible $\Lambda$-module $\mc{G}$
 on $X_{S_0}$ whose pullback $\mc{G}_u$ coincides with $\mc{F}$.
 There exists a modification $S'_0\rightarrow S_0$ over which $\mc{G}$ becomes good by Orgogozo's theorem \cite[Thm 2.1]{O1}.
 By \cite[Lem 3.2]{O1}, we can take a following diagram
 \begin{equation*}
  \xymatrix@C=50pt{
   T\ar[r]\ar[d]_{\mr{fin\ surj}}&S'_0\ar[d]^{\mr{alt}}\\
  S'\ar[r]^-{\mr{modif}}&S.}
 \end{equation*}
 Then $S'$ is what we need.
 Indeed, any specialization map $u\rightsquigarrow s'$ on $S'$ can be lifted to that on $T$ since the morphism $T\rightarrow S'$ is finite surjective,
 and we have $\Psi_{f_T,s'\leftarrow u}\cong\Psi_{f_{S'},s'\leftarrow u}$ since the morphism $T\rightarrow S'$ is finite.

 Let us show the second claim.
 We may assume that $u$ is a geometric generic point.
 Even if $u$ is not algebraic, we can still take a proper surjection $S_0\rightarrow S$ and $\mc{G}$ as above.
 By \cite[Thm 8.1]{O1}, we can take a modification $T\rightarrow S_0$ such that $\mc{G}_T$ is $f_T$-good and
 $\mr{R}\Psi_{f_T}(\mc{G}_T)\in D^{\mr{b}}(X_T\overleftarrow{\times}_TT)$ is constructible.
 Since $T\rightarrow S$ is proper surjective, we can take a lifting of $\phi$ on $T$.
 Using this lifting, $\phi$ can be viewed as a point of the topos $T\overleftarrow{\times}_TT$,
 and yields a morphism $\sigma_{\phi}\colon X_t\rightarrow X_T\times_T(T\overleftarrow{\times}_T T)\cong X_T\overleftarrow{\times}_TT$,
 where the product is taken in the category of topoi.
 Since $\mc{F}$ is assumed $f$-good, $\sigma_{\phi}^*\mr{R}\Psi_{f_T}(\mc{G}_T)$ is isomorphic to $\Psi_{f,\phi}(\mc{F})$,
 and the constructibility follows.
 The claim for the finite Tor-dimensionality follows by Remark \ref{insensseprem}.\ref{insensseprem-1}.
\end{proof}

\subsection{}
\label{Gabbthm}
A certain transitivity of nearby cycle functor is a key to Saito's construction of characteristic cycle,
and this appears rather implicitly in \cite[Prop 2.8]{S}.
This is also the case for our construction, and we use the following properties of nearby cycle functor due to Gabber:

\begin{thm*}[Gabber]
 Let $f\colon X\rightarrow S$ be a morphism of finite type between noetherian schemes,
 $\eta$ be a geometric point of $S$, and $\mc{F}\in D^+(X_{\eta})$.
 Let $\phi\colon\eta\rightsquigarrow\xi$, $\psi\colon\xi\rightsquigarrow s$ be specialization maps.
 If $\mc{F}$ is $f$-good, the following hold.
 \begin{enumerate}
  \item The complex $\Psi_{f,\phi}(\mc{F})$ in $D^+(X_\xi)$ is $f$-good.

  \item The canonical homomorphism $\Psi_{f,\psi\circ\phi}(\mc{F})\rightarrow(\Psi_{f,\psi}\circ\Psi_{f,\phi})(\mc{F})$ is an isomorphism.
 \end{enumerate}
\end{thm*}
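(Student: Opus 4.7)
The plan is to establish (2) first and deduce (1) from it together with the $f$-goodness of $\mc{F}$. For (2), the geometric setup runs as follows. The specialization $\psi\colon\xi\to S_{(s)}$ lifts, since its target is strictly henselian, to a morphism of schemes $\tilde\psi\colon S_{(\xi)}\to S_{(s)}$. Pulling this back along $f$ yields $\tilde\psi_X\colon X_{(\xi)}\to X_{(s)}$, and if $i_\xi\colon X_\xi\hookrightarrow X_{(\xi)}$ and $i_s\colon X_s\hookrightarrow X_{(s)}$ denote the closed fibres and $j\colon X_\eta\to X_{(\xi)}$ the fibre over $\eta$, then $\tilde\psi_X\circ i_\xi$ coincides with the map $k\colon X_\xi\to X_{(s)}$ associated with $\psi$. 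Unwinding the definitions, $\Psi_{f,\psi\circ\phi}(\mc{F})\simeq i_s^*\mr{R}\tilde\psi_{X*}\mr{R}j_*\mc{F}$ and $\Psi_{f,\psi}\Psi_{f,\phi}(\mc{F})\simeq i_s^*\mr{R}\tilde\psi_{X*}\mr{R}(i_\xi)_*i_\xi^*\mr{R}j_*\mc{F}$, so the canonical comparison map in (2) is obtained by applying $i_s^*\mr{R}\tilde\psi_{X*}$ to the adjunction unit $\mr{R}j_*\mc{F}\to\mr{R}(i_\xi)_*i_\xi^*\mr{R}j_*\mc{F}$.

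To verify this map is an isomorphism, I would reduce to Illusie's global nearby cycle formalism on the oriented product $X\overleftarrow{\times}_SS$ of \cite{I}. By Corollary \ref{vgofnbcycgen} applied at an algebraic geometric point above $\eta$, there is a modification $S_1\to S$ over which the pullback of $\mc{F}$ becomes $\Psi$-good in Illusie's sense; on $S_1$ the global nearby cycle computes every pointwise nearby cycle and satisfies the transitivity statement tautologically, as a composition of points of the oriented topos. The hypothesis that $\mc{F}$ is $f$-good on $S$ is exactly the compatibility needed to descend the resulting isomorphism from $S_1$ back to $S$: the pointwise nearby cycles along $\phi$, $\psi$, and $\psi\circ\phi$ computed on $S$ agree with the corresponding ones pulled back from $S_1$, so the transitivity isomorphism on $S_1$ transfers to the desired transitivity on $S$.

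For (1), given data $(g\colon S'\to S,\,\phi'\colon t'\rightsquigarrow s',\,h\colon g(t')\to\xi)$ as in Definition \ref{examplegood}, the idea is to use $\phi$ and $h$ to construct, after a compatible choice of lifts of geometric points (possible because $S_{(\xi)}$ is strictly henselian), a geometric point $\eta'$ of $S'$ above $\eta$ and a specialization chain on $S'$ refining $\phi'$ through $\eta'$. Then (2) applied on $S$ rewrites $\Psi_{f,g(\phi')}h^*\Psi_{f,\phi}(\mc{F})$ as a single application of $\Psi$ to a suitable pullback of $\mc{F}$ along a composite specialization on $S$, while (2) applied on $S'$ rewrites $\Psi_{f',\phi'}h^*\Psi_{f,\phi}(\mc{F})$ as a single $\Psi$ of the corresponding pullback on $S'$. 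The comparison map between them is then identified with the $f$-goodness isomorphism for $\mc{F}$ itself at this composite, which holds by hypothesis.

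The hard part will be the base change step in (2): the morphism $\tilde\psi_X$ is not proper, so proper base change does not directly apply, and the adjunction unit $\mr{R}j_*\mc{F}\to\mr{R}(i_\xi)_*i_\xi^*\mr{R}j_*\mc{F}$ is not itself an isomorphism on $X_{(\xi)}$. What rescues the situation is that the $f$-goodness, transported to the Orgogozo modification, controls the stalks of $\mr{R}j_*\mc{F}$ at every geometric point of the closed fibre of $X\times_SS_1$ lying above $X_s$ simultaneously, and Illusie's oriented product packages these stalks in a form compatible with transitivity. Turning this compatibility into a clean descent back to $S$, and thereby identifying the map of (2) with an honest isomorphism of complexes on $X_s$, is the technical core of the argument.
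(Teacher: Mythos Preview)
The paper does not contain a proof of this theorem. It is stated as a result due to Gabber and used as a black box; the acknowledgments explicitly thank Gabber ``for allowing the author to use his results (i.e.\ Theorem \ref{Gabbthm})'', and the surrounding text only remarks that a related transitivity appears implicitly in \cite[Prop 2.8]{S}. There is therefore nothing in the paper to compare your argument against.

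That said, a few comments on your sketch. Your invocation of Corollary \ref{vgofnbcycgen} to produce a modification $S_1\to S$ requires the geometric point to be \emph{algebraic}, but the theorem is stated for an arbitrary geometric point $\eta$; the phrase ``applied at an algebraic geometric point above $\eta$'' does not resolve this, since $\mc{F}$ lives on $X_\eta$ and cannot simply be replaced by a complex over a smaller field without losing information. Second, the descent step you flag as ``the hard part'' is genuinely the crux, and your description does not explain why the transitivity isomorphism on $S_1$ identifies with the canonical map on $S$ rather than merely showing both sides are abstractly isomorphic. Finally, in your argument for (1), the construction of ``a geometric point $\eta'$ of $S'$ above $\eta$'' from the data $(g,\phi',h)$ is not automatic: $h$ maps $g(t')$ to $\xi$, not to $\eta$, and there is no reason a lift of $\eta$ to $S'$ should exist compatibly with $\phi'$. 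The strategy of deducing (1) from (2) plus goodness of $\mc{F}$ is plausible in spirit, but the specialization bookkeeping needs to be made precise.
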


\subsection{}
\label{dfngooddesce}
Let $S$ be a scheme.
Let $s\in S$ be a point.
We often denote by $\overline{s}$ an algebraic geometric point over $s$.
Now, we put $\mr{Cons}(S):=D_{\mr{ctf}}(S,\Lambda)$.
Let $\mr{Cons}_d(S)$ be the full subcategory of $D_{\mr{ctf}}(S,\Lambda)$ consisting of complexes $\mc{F}$
such that $\mr{Supp}(\mc{H}^i\mc{F})$ is of dimension $\leq d$ for any $i$.
Since the pullback is exact and by considering the localization sequence, the morphism $\mr{K}_0\mr{Cons}_d(S)\rightarrow\mr{K}_0\mr{Cons}(S)$ is injective.

Now, let $u\rightsquigarrow s$ be a specialization map on $S$.
Since $\Psi_{s\leftarrow u}$ is a functor $\mr{Cons}(X_u)\rightarrow D^{\mr{b}}(X_s)$
and may not factor through $\mr{Cons}(X_s)$, it does not induce a homomorphism
$\mr{K}_0\mr{Cons}(X_u)\rightarrow\mr{K}_0\mr{Cons}(X_s)$.
However, we have the following:

\begin{lem*}
 \label{basicstratcons}
 Let $X\rightarrow S$ be a morphism of finite type between noetherian schemes, and let $u\rightsquigarrow s$ be a specialization map of geometric points of $S$.
 Let $\mc{F}_+$, $\mc{F}_-$, $\mc{G}_+$, $\mc{G}_-$ be good objects in $\mr{Cons}(X_u)$ over $S$.
 Assume $[\mc{F}_+]-[\mc{F}_-]=[\mc{G}_+]-[\mc{G}_-]$ in $\mr{K}_0\mr{Cons}(X_u)$.
 Then for any specialization map $u\rightsquigarrow s$, we have the following in $\mr{K}_0\mr{Cons}(X_s)$
 \begin{equation*}
  [\Psi_{s\leftarrow u}(\mc{F}_+)]-[\Psi_{s\leftarrow u}(\mc{F}_-)]
   =
   [\Psi_{s\leftarrow u}(\mc{G}_+)]-[\Psi_{s\leftarrow u}(\mc{G}_-)].
 \end{equation*}
\end{lem*}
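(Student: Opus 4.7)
The plan is to reduce the equality, via a modification of the base supplied by Orgogozo's theorem, to one living in the Grothendieck group of the triangulated subcategory of $f$-good complexes, where $\Psi_{s\leftarrow u}$ is a well-defined triangle functor into $\mr{Cons}(X_s)$.

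By the defining relations of the Grothendieck group of a triangulated category, the hypothesis is witnessed by finitely many distinguished triangles $A_i\rightarrow B_i\rightarrow C_i$ ($i=1,\dots,N$) in $\mr{Cons}(X_u)$ together with signs $\epsilon_i\in\{\pm1\}$ satisfying, in the free abelian group on isomorphism classes,
\begin{equation*}
 [\mc{F}_+]-[\mc{F}_-]-[\mc{G}_+]+[\mc{G}_-]
 =\sum_{i=1}^N\epsilon_i\bigl([B_i]-[A_i]-[C_i]\bigr).
\end{equation*}
The terms $A_i$, $B_i$, $C_i$ need not themselves be $f$-good, which prevents a naive termwise application of $\Psi_{s\leftarrow u}$ inside $\mr{Cons}(X_s)$. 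Assuming first that $u$ is algebraic (the general case will reduce to this by a standard approximation, since any constructible complex on $X_u$ descends to $X_{\tilde u}$ for some algebraic geometric point $\tilde u$ whose residue field embeds into $\kappa(u)$), we apply Corollary \ref{vgofnbcycgen}.(1) to the finite collection $\{A_i,B_i,C_i,\mc{F}_\pm,\mc{G}_\pm\}$ to produce a modification $g\colon S'\rightarrow S$ such that for any geometric point $u'$ of $S'$ above $u$, the pullback of each of these complexes to $X_{u'}$ is $f_{S'}$-good. Since $g$ is proper and surjective, the specialization $\phi\colon u\rightsquigarrow s$ lifts to $\phi'\colon u'\rightsquigarrow s'$ on $S'$, yielding a morphism $\beta\colon X_{s'}\rightarrow X_s$ of geometric points.

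The full subcategory $\mr{Cons}^{\mr{gd}}(X_{u'})\subset\mr{Cons}(X_{u'})$ spanned by $f_{S'}$-good complexes is a triangulated subcategory: since goodness amounts to an isomorphism of triangle functors after arbitrary further base change, the two-out-of-three property for isomorphisms in distinguished triangles makes it stable under cones. Gabber's Theorem \ref{Gabbthm} together with Corollary \ref{vgofnbcycgen}.(2) imply that $\Psi_{f_{S'},\phi'}$ restricts to a triangle functor $\mr{Cons}^{\mr{gd}}(X_{u'})\rightarrow\mr{Cons}(X_{s'})$, and hence induces a group homomorphism on $K_0$. After pullback to $X_{u'}$ every term in the displayed equation is represented by good complexes, so the equality is already valid in $K_0\mr{Cons}^{\mr{gd}}(X_{u'})$; applying this $K_0$-homomorphism and using goodness of $\mc{F}_\pm,\mc{G}_\pm$ to identify $\Psi_{f_{S'},\phi'}(h^*\mc{F}_\pm)\simeq\beta^*\Psi_{f,\phi}(\mc{F}_\pm)$ (and analogously for $\mc{G}_\pm$) yields the desired equality after pullback along $\beta$. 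What remains is to descend the equality from $K_0\mr{Cons}(X_{s'})$ to $K_0\mr{Cons}(X_s)$, which follows from the injectivity of $\beta^*$ on $K_0$, verified by reducing to the abelian heart and noting that the pullback of constructible $\Lambda$-sheaves along an extension of separably closed fields preserves generic ranks on a constructible stratification. The principal obstacle in the argument is the construction of a single modification $g$ over which every witness triangle simultaneously becomes good, which is exactly the content of Orgogozo's theorem; the rest is organizational work exploiting the formalism of goodness and its compatibility with base change.
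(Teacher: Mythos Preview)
Your approach is essentially the paper's: witness the $K_0$-relation by finitely many triangles, pass to a modification of the base (via Orgogozo) over which all witnesses become good, compute there, and descend. The paper does exactly this, phrasing the middle step slightly less categorically but using the same triangle-by-triangle bookkeeping.

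There is, however, a real gap in your descent step. You claim $\beta^*\colon K_0\mr{Cons}(X_s)\to K_0\mr{Cons}(X_{s'})$ is injective because pullback ``preserves generic ranks on a constructible stratification.'' But $K_0\mr{Cons}$ is \emph{not} detected by rank functions: on a connected stratum $Y$ with nontrivial $\pi_1$, the group $K_0$ of lisse $\Lambda$-sheaves is free on the simple $\Lambda[\pi_1(Y)]$-modules, and distinct simples can have equal $\Lambda$-rank (e.g.\ the $\ell$ rank-one characters of a $\mb{Z}/\ell$-quotient). So your justification does not prove injectivity, and injectivity of $\beta^*$ itself would require a comparison of \'etale fundamental groups across the extension $k(s)\subset k(s')$, which is delicate in positive characteristic for non-proper strata.

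The paper avoids this entirely: since $g\colon S'\to S$ is proper surjective and $s$ is a geometric point, one can lift the specialization $u\rightsquigarrow s$ to $u\rightsquigarrow s'$ on $S'$ with $k(s')=k(s)$ (the fiber $g^{-1}(s)$ is a nonempty finite-type $k(s)$-scheme, and residue fields of closed points are purely inseparable over the separably closed field $k(s)$, hence indistinguishable \'etale-locally). Then $\mr{Cons}(X_{s'})=\mr{Cons}(X_s)$ on the nose, and goodness of $\mc{F}_\pm,\mc{G}_\pm$ gives $\Psi_{s\leftarrow u}(\mc{F}_\pm)=\Psi_{s'\leftarrow u}(\mc{F}_\pm)$ directly, with no descent needed. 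Replacing your final paragraph by this observation completes the argument.
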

\begin{proof}
 By replacing $S$ with the closure of the image of $u$ in $S$, we may assume that $u$ is a geometric generic point of $S$.
 By definition, there exist finite sets $K$, $K'$ and exact triangles
 $\{\mc{H}^{(k)}_0\rightarrow\mc{H}^{(k)}_1\rightarrow\mc{H}^{(k)}_2\}_{k\in K\sqcup K'}$ in $\mr{Cons}(X_u)$ such that
 \begin{equation*}
  [\mc{F}_+]-[\mc{G}_+]-[\mc{F}_-]+[\mc{G}_-]=
   \sum_{k\in K}\bigl([\mc{H}_1^{(k)}]-[\mc{H}_0^{(k)}]-[\mc{H}_2^{(k)}]\bigr)
   -
   \sum_{k\in K'}\bigl([\mc{H}_1^{(k)}]-[\mc{H}_0^{(k)}]-[\mc{H}_2^{(k)}]\bigr)
 \end{equation*}
 in the free group generated by objects of $\mr{Cons}(X_u)$.
 We can take a modification $g\colon S'\rightarrow S$ over which $\mc{H}^{(k)}_i$ are good for any $i=0,1,2$ and $k\in K\sqcup K'$.
 Because $\mc{H}^{(k)}_i$ are good, $\Psi_{s'\leftarrow u}(\mc{H}_i^{(k)})$
 are constructible for any specialization map $u\rightsquigarrow s'$ on $S'$.
 Thus, we can check the equality in $\mr{K}_0\mr{Cons}(X_{s'})$ for the specialization map $u\rightsquigarrow s'$ easily.
 We may find a specialization $u\rightsquigarrow s'$ in $S'$ such that $s'$ sits over $s$ and the extension $k(s')=k(s)$.
 Because $\mc{F}_{\pm}$ is good, $\Psi_{s\leftarrow u}(\mc{F}_{\pm})=\Psi_{s'\leftarrow u}(g^*\mc{F}_{\pm})$
 in $\mr{Cons}(X_{s'})=\mr{Cons}(X_s)$, and similarly for $\mc{G}_{\pm}$.
 Thus, the claim follows.
\end{proof}

\begin{dfn*}
 Let $h\colon X\rightarrow S$ be a morphism of finite type between noetherian schemes,
 and let $u$ be a geometric point of $S$ over $s\in S$.
 An element $F\in\mr{K}_0\mr{Cons}({X_u})$ is said to be {\em $h$-good}
 if there exists a presentation $F=\sum n_i[\mc{F}_i]$ such that $\mc{F}_i$ is $h$-good for any $i$.
 The subgroup of $h$-good elements of $\mr{K}_0\mr{Cons}(X_u)$ is denoted by $\mr{K}_0\mr{Cons}^{\mr{gd}}(X_u,h)$.
\end{dfn*}
Let $F$ be an $h$-good element.
The above lemma implies that if we are given a specialization $u\rightsquigarrow t$, 
the element $\sum n_i[\Psi_{t\leftarrow u}(\mc{F}_i)]$ does not depend on the choice of the presentations.
We denote this by $\Psi_{t\leftarrow u}(F)$, which belongs to $\mr{K}_0\mr{Cons}^{\mr{gd}}(X_t,h)$.
Namely, we have a homomorphism $\Psi_{t\leftarrow u}\colon\mr{K}_0\mr{Cons}^{\mr{gd}}(X_u,h)\rightarrow\mr{K}_0\mr{Cons}^{\mr{gd}}(X_t,h)$.

\subsection{}
We now introduce one of the central objects of this paper:

\begin{dfn*}
 Let $S^0$ be the set of generic points of $S$,
 and fix {\em algebraic} geometric points $\{\overline{\eta}\}_{\eta\in S^0}$.
 A collection $F:=\{F_\eta\}_{\eta\in S^0}$ in $\prod_{\eta\in S^0}\mr{K}_0\mr{Cons}^{\mr{gd}}(X_{\overline{\eta}},h)$
 is said to be a {\em flat \'{e}tale system} if for any algebraic geometric point $\overline{s}\rightarrow S$,
 and specialization maps $\overline{\eta}\rightsquigarrow\overline{s}$,
 $\overline{\eta}'\rightsquigarrow\overline{s}$ such that $\eta,\eta'\in S^0$, we have an equality
 $\Psi_{\overline{s}\leftarrow\overline{\eta}}(F_{\eta})=\Psi_{\overline{s}\leftarrow\overline{\eta}'}(F_{\eta'})$.
 The set of flat \'{e}tale systems forms a subgroup, and this group is denoted by $\Comp(X/S,\{\overline{\eta}\})$.
 We also define the subgroup of {\em flat \'{e}tale system of dimension $\leq d$} by
 \begin{equation*}
  \Comp_d(X/S,\{\overline{\eta}\})=
   \prod_{\eta\in S^0}\mr{K}_0\mr{Cons}_d(X_{\overline{\eta}})\cap\Comp(X/S,\{\overline{\eta}\}),
 \end{equation*}
 where the intersection is taken in $\prod_{\eta}\mr{K}_0\mr{Cons}(X_{\overline{\eta}})$.
\end{dfn*}

\begin{lem*}
 Let $\star\in\{\emptyset,d\}$, and assume we are given sets of algebraic geometric points
 $E=\{\overline{\eta}\}_{\eta\in S^0}$ and $E'=\{\overline{\eta}'\}_{\eta\in S^0}$.
 Then there exists a canonical isomorphism $p_{E',E}\colon\Comp_{\star}(X/S,E)\rightarrow\Comp_{\star}(X/S,E')$ such that if we are given another set $E''$, we have
 $\epsilon_{E'',E'}\circ\epsilon_{E',E}=\epsilon_{E'',E}$ and $\epsilon_{E,E}=\mr{id}$.
 Thus, $\Comp_{\star}(X/S,E)$ does not depend on the choice of $E$, and we denote this simply by $\Comp_{\star}(X/S)$.
\end{lem*}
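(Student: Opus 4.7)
The plan is to build $p_{E',E}$ by pullback along chosen isomorphisms of geometric points lying over each generic point, and then to verify independence of these choices using the defining compatibility of a flat \'etale system. Concretely, for each $\eta\in S^0$, both $\overline{\eta}$ and $\overline{\eta}'$ are separably closed algebraic extensions of $\kappa(\eta)$, hence there exists a $\kappa(\eta)$-isomorphism $\sigma_\eta\colon\overline{\eta}'\xrightarrow{\sim}\overline{\eta}$. Pullback along $\sigma_\eta$ gives an equivalence $\sigma_\eta^*\colon\mr{Cons}(X_{\overline{\eta}})\xrightarrow{\sim}\mr{Cons}(X_{\overline{\eta}'})$; it preserves $h$-goodness (being a pullback along a scheme isomorphism compatible with any base change argument defining goodness), and hence induces an isomorphism $\sigma_\eta^*\colon\mr{K}_0\mr{Cons}^{\mr{gd}}(X_{\overline{\eta}},h)\xrightarrow{\sim}\mr{K}_0\mr{Cons}^{\mr{gd}}(X_{\overline{\eta}'},h)$. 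I will define $p_{E',E}(F)_\eta:=\sigma_\eta^*F_\eta$.

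The key lemma is that any $F\in\Comp(X/S,E)$ is \emph{automatically} Galois invariant at each fixed geometric point: for any $\tau\in\mr{Gal}(\kappa(\overline{\eta})/\kappa(\eta))$, one has $\tau^*F_\eta=F_\eta$. Indeed, take $\overline{s}=\overline{\eta}$ in the defining condition; since $\eta\in S^0$ is a generic point, $S_{(\overline{\eta})}=\overline{\eta}$ itself, and specialization maps $\overline{\eta}\rightsquigarrow\overline{\eta}$ are exactly elements of $\mr{Gal}(\kappa(\overline{\eta})/\kappa(\eta))$. The corresponding functor $\Psi_{\overline{\eta}\leftarrow\overline{\eta}}$ is simply pullback by $\tau$. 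Applying the defining equality to the identity specialization and to $\tau$ (with $\eta=\eta'$) yields $F_\eta=\tau^*F_\eta$. This Galois invariance makes $\sigma_\eta^*F_\eta$ independent of the choice of $\sigma_\eta$, since any two choices differ by composition with such a $\tau$.

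Next I verify that $F':=(\sigma_\eta^*F_\eta)_\eta\in\Comp(X/S,E')$. Given an algebraic geometric point $\overline{s}\to S$ and specializations $\overline{\eta}'\rightsquigarrow\overline{s}$ and $\overline{\eta}''\rightsquigarrow\overline{s}$ (with $\eta,\eta_2\in S^0$ and $\overline{\eta}',\overline{\eta}''\in E'$), I compose the first with $\sigma_\eta^{-1}\colon\overline{\eta}\to\overline{\eta}'$ and the second with $(\sigma_{\eta_2})^{-1}$ to obtain specializations $\overline{\eta}\rightsquigarrow\overline{s}$ and $\overline{\eta_2}\rightsquigarrow\overline{s}$ in $E$; by the obvious base-change compatibility of the nearby cycle functor with pullback along an isomorphism of geometric points,
\begin{equation*}
\Psi_{\overline{s}\leftarrow\overline{\eta}'}(\sigma_\eta^*F_\eta)
=\Psi_{\overline{s}\leftarrow\overline{\eta}}(F_\eta),
\qquad
\Psi_{\overline{s}\leftarrow\overline{\eta}''}(\sigma_{\eta_2}^*F_{\eta_2})
=\Psi_{\overline{s}\leftarrow\overline{\eta_2}}(F_{\eta_2}),
\end{equation*}
and the two right-hand sides agree by the flat \'etale system property of $F$.

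Finally, for functoriality, taking $\sigma_\eta=\mr{id}$ when $E=E'$ gives $p_{E,E}=\mr{id}$. For the composition, if $\sigma_\eta\colon\overline{\eta}'\to\overline{\eta}$ and $\sigma_\eta'\colon\overline{\eta}''\to\overline{\eta}'$ are chosen, then $\sigma_\eta\circ\sigma_\eta'$ is a valid choice for $p_{E'',E}$, and $(\sigma_\eta\circ\sigma_\eta')^*=\sigma_\eta'^{*}\sigma_\eta^*$ gives $p_{E'',E'}\circ p_{E',E}=p_{E'',E}$; the equality is independent of the specific choices by the Galois-invariance argument above. The main subtle point is the Galois invariance step, since everything else is formal; this is where the hypothesis that $\eta$ is a generic point (so that $S_{(\overline{\eta})}=\overline{\eta}$ and nearby cycles reduce to Galois pullback) plays the essential role.
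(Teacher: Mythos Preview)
Your proof is correct and follows essentially the same approach as the paper: both establish Galois invariance of each $F_\eta$ by applying the defining compatibility condition to the specialization $\overline{\eta}\rightsquigarrow\overline{\eta}$ given by an $S$-automorphism of $\overline{\eta}$, then define the transition map via pullback along any choice of $\sigma_\eta\colon\overline{\eta}'\xrightarrow{\sim}\overline{\eta}$ and use this invariance to show independence of the choice. One small inaccuracy: the equality $S_{(\overline{\eta})}=\overline{\eta}$ is only literal when $S$ is reduced at $\eta$; in general $S_{(\overline{\eta})}$ is an Artinian local scheme, but since the \'etale topos is insensitive to nilpotent thickenings your conclusion (that specializations $\overline{\eta}\rightsquigarrow\overline{\eta}$ correspond to Galois automorphisms and $\Psi$ reduces to the induced equivalence) stands unchanged.
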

\begin{proof}
 A morphism $E'\rightarrow E$ is a collection of $S$-isomorphisms $\overline{\eta}'\rightarrow\overline{\eta}$.
 For a morphism $\phi=\{\phi_\eta\}\colon E'\rightarrow E$, we may consider the pullback $\phi^*\colon\Comp_{\star}(X/S,E)\rightarrow\Comp_{\star}(X/S,E')$.
 Assume we are given another morphism $\phi'=\{\phi'_\eta\}$.
 Then we can find $\sigma_{\eta}\colon\overline{\eta}\rightarrow\overline{\eta}$ over $\eta$ such that $\phi'_{\eta}=\sigma_{\eta}\circ\phi_{\eta}$ for each $\eta\in S^0$.
 Let $F=\{F_\eta\}$ be an element of $\Comp_{\star}(X/S,E)$.
 We have $\phi^*_{\eta}(F_\eta)=\phi'^*_\eta(\sigma_{\eta*}F_{\eta})=\phi'^*_\eta(\Psi_{\sigma}(F_{\eta}))=\phi'^*_\eta(F_\eta)$,
 and we are allowed to define $\epsilon_{E',E}:=\phi^*$.
 It is easy to check the required properties of $\epsilon$.
\end{proof}

\begin{ex*}
 \label{tensorofflet}
 Let $K$ be a field and $X\rightarrow\mr{Spec}(K)$ be a morphism of finite type.
 For any object $\mc{L}$ of $\mr{Cons}(X)$, the pullback $[\mc{L}_{\overline{K}}]$ defines an element of $\Comp(X/K)$.
 Moreover, the tensor product on $\mr{K}_0\mr{Cons}(X_{\overline{K}})$ induces a monoidal structure on $\Comp(X/K)$.
 Thus, if we are given $\mc{F}\in\Comp(X/K)$ and $\mc{L}\in\mr{Cons}(X)$, we have $[\mc{F}\otimes\mc{L}_{\overline{K}}]$ in $\Comp(X/K)$.
 This element is denoted by $\mc{F}\otimes\mc{L}$.
\end{ex*}

Assume we are given a specialization map $\eta\rightsquigarrow s$ on $S$.
To ease the notation, from now on, we sometimes abbreviate $\Psi_{\overline{s}\leftarrow\overline{\eta}}$ by $\Psi_{s\leftarrow \eta}$.

\begin{lem}
 \label{neadimboun}
 Let $X\rightarrow S$ be a morphism of finite type between noetherian schemes.
 \begin{enumerate}
  \item  Let $g\colon S'\rightarrow S$ be a morphism between noetherian scheme,
	 and $F$ be a flat \'{e}tale system on $X$ over $S$ of dimension $\leq d$.
	 For each $\eta'\in S'^0$, we can take a specialization map $\eta\rightsquigarrow g(\eta')$ where $\eta$ is some generic point of $S^0$,
	 and we may consider
	 $\bigl\{\Psi_{g(\eta')\leftarrow\eta}(F)\bigr\}_{\eta'\in S'^0}\in\prod_{\eta'\in S'^0}\mr{K}_0(\mr{Cons}(X_{S',\overline{\eta}'}))$.
	 Then this does not depend on the choice of the specialization map and it belongs to $\Comp_d(X_{S'}/S')$.
	 This flat \'{e}tale system is called the {\em pullback} of $F$ by $g$, and denoted by $g^*(F)$.
	 The pullback is transitive.

  \item Assume that $S$ is {\em excellent}.
	For each generic point $\eta\in S^0$, assume we are given $F_\eta\in\Comp_d(X_\eta/\eta)$.
	Then there exists a modification $S'\rightarrow S$ and $\{G_\xi\}_{\xi\in S'^0}\in\Comp_d(X_{S'}/S')$ such that $F_\xi=G_\xi$
	in $\mr{K}_0\mr{Cons}(X_{\overline{\xi}})$ for any $\xi\in S'^0\xrightarrow{\sim}S^0$.
 \end{enumerate}
\end{lem}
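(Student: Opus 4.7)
For (1), the plan is to define $(g^*F)_{\eta'}$ as follows: given $\eta' \in S'^0$, pick a generic point $\eta \in S^0$ in whose closure $\overline{\{\eta\}} \subset S$ the point $g(\eta')$ lies, fix a specialization $\overline{\eta} \rightsquigarrow \overline{g(\eta')}$ of algebraic geometric points of $S$, and set $(g^*F)_{\eta'} := \Psi_{\overline{g(\eta')}\leftarrow\overline{\eta}}(F_\eta)$, transported to $\mr{K}_0\mr{Cons}(X_{S',\overline{\eta'}})$ via the factorization $\overline{\eta'} \to \overline{g(\eta')}$ which identifies the two fiber products. Independence from the choices of $\eta$ and the specialization follows directly by applying the flat \'{e}tale system property of $F$ at the algebraic geometric point $\overline{g(\eta')}$ of $S$.

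To verify that $g^*F$ is itself a flat \'{e}tale system on $S'$, I would take an algebraic geometric point $\overline{s'} \to S'$ and two specializations $\overline{\eta'_i} \rightsquigarrow \overline{s'}$ on $S'$ ($i = 1,2$), lift them to compatible specializations $\overline{\eta_i} \rightsquigarrow \overline{g(\eta'_i)} \rightsquigarrow \overline{g(s')}$ on $S$, and apply Gabber's transitivity (Theorem \ref{Gabbthm}) termwise to a good presentation of $F_{\eta_i}$ to obtain
\begin{equation*}
 \Psi_{\overline{s'}\leftarrow\overline{\eta'_i}}\bigl((g^*F)_{\eta'_i}\bigr) = \Psi_{\overline{g(s')}\leftarrow\overline{\eta_i}}(F_{\eta_i}),
\end{equation*}
which is independent of $i$ by the flat \'{e}tale system property of $F$ at $\overline{g(s')}$. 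Transitivity of pullback follows from the same chain of identifications applied twice.

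For (2), I would begin by writing each $F_\eta = \sum_i n_{\eta,i}[\mc{F}_{\eta,i}]$ using descendability, with $\mc{F}_{\eta,i} \in \mr{Cons}_d(X_\eta)$ actually defined on $X_\eta$ (not merely on $X_{\overline{\eta}}$). The standard limit argument for constructible sheaves then extends each $\mc{F}_{\eta,i}$ to a constructible complex on $X_{U_\eta}$ for some open neighborhood $U_\eta$ of $\eta$ in $S$; shrinking the $U_\eta$ to be pairwise disjoint and extending by zero on the closed complement yields bounded constructible complexes $\widetilde{\mc{F}}_{\eta,i}$ on all of $X$, only finitely many of which are needed. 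Corollary \ref{vgofnbcycgen}.(1), applied to each $\widetilde{\mc{F}}_{\eta,i}$ and combined with excellence of $S$ (so that a finite family of modifications is dominated by a single modification), produces a modification $S' \to S$ over which every pullback $\widetilde{\mc{F}}_{\eta,i}|_{X_{S'}}$ becomes $h_{S'}$-good. Since $S' \to S$ is a modification, the map $S'^0 \to S^0$ is a bijection, and for $\xi \in S'^0$ over $\eta$ I set $G_\xi := \sum_i n_{\eta,i}[\widetilde{\mc{F}}_{\eta,i}|_{X_{\overline{\xi}}}]$; compatibility under further specialization on $S'$ is then automatic from goodness and Gabber's transitivity.

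The main obstacle in both parts will be preservation of the dimension bound $\dim\mr{Supp} \leq d$ under the nearby cycle functor, since the scheme-theoretic closure of a support can have fibers that jump dimensionally along specializations. The remedy is to invoke Gruson-Raynaud flattening: applied to the closures $\overline{\mr{Supp}(\mc{F}_{\eta,i})} \subset X$, it produces a modification of the base over which these closures become flat of relative dimension $\leq d$, so all their fibers have dimension $\leq d$. In part (2), this flattening modification is folded into $S' \to S$. In part (1), where no auxiliary modification is available, the subtlety that only the total class $F_\eta \in \mr{K}_0\mr{Cons}_d$ (and not each individual summand) has support of dimension $\leq d$ is handled by choosing a presentation whose summands all have support of dimension $\leq d$, using the injectivity $\mr{K}_0\mr{Cons}_d \hookrightarrow \mr{K}_0\mr{Cons}$ observed in \ref{dfngooddesce}, and by using that after the base modification implicit in the goodness data underlying $F$, the nearby cycle of each such summand stays in dimension $\leq d$.
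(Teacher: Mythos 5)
Your construction of $g^*F$ in part (1) and the reduction via Gabber's transitivity is on the right track, but the dimension bound is where the work really happens, and your handling of it is not an argument yet: the phrase ``after the base modification implicit in the goodness data underlying $F$'' doesn't correspond to anything — goodness is a property, not a datum that comes with a choice of modification. The paper's route is to use goodness precisely to reduce to a discrete valuation ring: given $\eta\rightsquigarrow g(\eta')$, one blows up $S$ along $\overline{\{g(\eta')\}}$ to find a DVR $V=\mr{Spec}(R)\to S$ with generic point mapping to $\eta$ and closed point to $\eta'$, pulls back to $V$ (allowed since $F$ is good), and then over a DVR one takes the scheme-theoretic closure of $\mr{Supp}(\mc{F})$ and invokes [EGA IV, 2.8.5] to get a flat structure, whence equidimensional fibers. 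Your Gruson--Raynaud idea is the right instinct but does not produce a base modification you are allowed to use in part (1); the DVR reduction sidesteps this entirely.

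The real gap is in part (2), in the sentence ``compatibility under further specialization on $S'$ is then automatic from goodness and Gabber's transitivity.'' It is not. The flat \'{e}tale system condition asks that two \emph{different} specialization maps $\overline{\eta}\rightsquigarrow\overline{s}$ and $\overline{\eta}'\rightsquigarrow\overline{s}$ produce the same class; goodness gives compatibility with base change and Theorem \ref{Gabbthm} gives transitivity along a chain of specializations, and neither statement compares two distinct specialization maps with the same target. This is exactly the content of the flat \'{e}tale system axiom, so invoking these tools begs the question. The paper's fix is the point at which excellence enters and is essential (you instead use excellence to dominate a finite family of modifications, which is not where it is needed — any finite family of modifications is dominated by a single one without excellence): further modify $S'$ to its normalization, which is a modification because $S$ is excellent, so that $S'$ is normal, hence geometrically unibranch, hence $S'_{(\overline{s})}$ is irreducible with a unique generic point. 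Then any two specialization maps $\overline{\eta}\rightsquigarrow\overline{s}$ differ by an automorphism $\sigma$ of $\overline{\eta}$ over $\eta$, and the hypothesis that $F_\eta$ is \emph{descendable} (that is, $\sigma^*F_\eta=F_\eta$) finally closes the loop: $\Psi_\phi(F_\eta)=\Psi_{\phi'\circ\sigma}(F_\eta)=\Psi_{\phi'}(\sigma_*F_\eta)=\Psi_{\phi'}(F_\eta)$. Without the normalization step, descendability alone does not give compatibility, because the two specializations need not be related by an automorphism of $\overline{\eta}$.
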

\begin{proof}
 Let us show the first claim.
 First, $\Psi_{g(\eta')\leftarrow\eta}(F)$ is good by Theorem \ref{Gabbthm}.
 The compatibility condition can be checked by the definition of goodness,
 and the last transitivity follows by Theorem \ref{Gabbthm}.
 It remains to show that it is of dimension $\leq d$.
 For a specialization map $\eta\rightsquigarrow g(\eta')$ as in the statement,
 we may find a discrete valuation ring $R$ with residue field $k$ and a commutative diagram
 \begin{equation*}
  \xymatrix{
   v:=\mr{Spec}(k)\ar[r]\ar@{^{(}->}[d]&S'\ar[d]\\
  V:=\mr{Spec}(R)\ar[r]&S
   }
 \end{equation*}
 such that $v$ is sent to $\eta'$, and the generic point of $\mr{Spec}(R)$ is sent to $\eta$
 ({\it e.g.}\ blowup $S$ at the closure of $g(\eta')$,
 and take an extension of discrete valuation ring corresponding to the generic point of the exceptional divisor).
 Let $\eta_V$ be the generic point of $V$.
 Since $F$ is good, it is compatible with base change, and it suffices to show the claim for $S'=v$, $S=V$.
 Now, we may assume that $F=[\mc{F}]$ for $\mc{F}\in\mr{Cons}_d(X_{\overline{\eta}})$.
 Since $\mr{Supp}(\Psi_{v\leftarrow\eta_V}(\mc{F}))\subset(\overline{\mr{Supp}(\mc{F})})_v$,
 and there exists a scheme structure on $\overline{\mr{Supp}(\mc{F})}$ which is flat over $R$ by [EGA IV, 2.8.5],
 the claim follows.

 Let us show the second claim.
 By considering the modification $\coprod_{\eta\in S^0}\overline{S}_\eta\rightarrow S$,
 we may assume $S$ is irreducible.
 Write $F_\eta=\sum n_i[\mc{F}_i]$ where $\eta$ is the generic point of $S$.
 By modifying $S$, we may assume that $\mc{F}_i$ is good over $S$ for any $i$ by Corollary \ref{vgofnbcycgen}.
 Further modifying $S$, we may assume that $S$ is normal since $S$ is assumed to be excellent,
 in which case $F_\eta$ is a flat \'{e}tale system.
 Indeed, let $\overline{s}\rightarrow S$ be a geometric point over a point $s\in S$, and assume we are given specialization maps
 $\phi,\phi'\colon\overline{\eta}\rightsquigarrow\overline{s}$.
 We wish to show that the nearby cycles along $\phi$ and $\phi'$ are the same.
 Since \'{e}tale topos is insensitive to inseparable extensions, we may assume that $\overline{\eta}$ is a spectrum of an algebraically closed field.
 Now, giving $\phi$, $\phi'$ is equivalent to giving morphisms $\widetilde{\phi},\widetilde{\phi}'\colon\overline{\eta}\rightarrow S_{(\overline{s})}$.
 Since $S$ is geometrically unibranch, $S_{(\overline{s})}$ is irreducible.
 Let $\xi_s$ be its generic point.
 Then $\widetilde{\phi}$, $\widetilde{\phi}'$ induce $\psi,\psi'\colon\overline{\eta}\rightarrow\xi_s$.
 By uniqueness of algebraic closure, we may find $\sigma$ rendering the following diagram commutative:
 \begin{equation*}
  \xymatrix@C=50pt@R=5pt{
   \overline{\eta}\ar@/^8pt/[rrd]^(.7){\widetilde{\phi}}\ar[dd]_\sigma\ar[rd]_(.3){\psi}&&\\
  &\xi_s\ar[r]&S_{(\overline{s})}.\\
  \overline{\eta}\ar@/_8pt/[urr]_(.7){\widetilde{\phi}'}\ar[ur]^(.3){\psi'}&
   }
 \end{equation*}
 Since $F_\eta$ is a flat \'{e}tale system over $\eta$,
 this implies that $\Psi_{\phi}(F)=\Psi_{\phi'\circ\sigma}(F)=\Psi_{\phi'}(\sigma_*F)=\Psi_{\phi'}(F)$
 (where $\sigma_*:=(\sigma^*)^{-1}$), and $F_\eta$ is a flat \'{e}tale system over $S$ as required.
\end{proof}

\begin{dfn*}
 Let $X\rightarrow S$ be a morphism of finite type between noetherian schemes.
 By the lemma above, the assignment $\Comp_d(X_T/T)$ to $T\in\mr{Sch}_{/S}$ defines a presheaf on $\mr{Sch}_{/S}$.
 This presheaf is denoted by $\Comp_{d,X/S}$, and $\Comp_{X/S}=\bigcup_{d\geq0}\Comp_{d,X/S}$.
\end{dfn*}

\begin{rem*}
 Let $\mr{geom}(S)$ be the set of isomorphism classes of geometric points of $S$.
 Let $\Comp_{\mr{TS}}(X/S)$ be the subgroup of $\prod_{s\in\mr{geom}(S)}\mr{K}_0\mr{Cons}^{\mr{gd}}(X_s,h)$
 consisting of elements $\{F_s\}$ such that for any specialization map $t\rightsquigarrow s$,
 we have $\Psi_{s\leftarrow t}(F_t)=F_s$.
 In view of the lemmas above, the restriction homomorphism $\Comp_{\mr{TS}}(X/S)\rightarrow\Comp(X/S)$ is an isomorphism.
 The description of $\Comp_{\mr{TS}}$ is closer to the spirit of the flat function in \cite[Def 2.1]{S},
 and illustrates the terminology ``flat \'{e}tale system'' more clearly.
 On the other hand, $\Comp$ is closer to the spirit of the relative cycles in \cite{SV}, and has an advantage in its brevity for constructing elements,
 which is why we employed $\Comp$ rather than $\Comp_{\mr{TS}}$.
\end{rem*}

\subsection{}
\label{dfnofrk0assmap}
Let $K$ be a field of characteristic $p>0$.
Since $\Lambda$ is assumed to a local ring, we have the unique homomorphism
$\dim_{\Lambda}\colon\mr{K}_0\mr{Cons}\bigl(\mr{Spec}(\overline{K})\bigr)\xrightarrow{\sim}\mb{Z}$,
called the {\em dimension homomorphism} (or {\em rank homomorphism}), such that $[\Lambda]$ is sent to $1$.
We sometimes denote $\dim_\Lambda$ by $\dim$.
Let $Y\rightarrow\mr{Spec}(K)$ be a morphism of finite type.
We define a homomorphism
\begin{equation*}
 \rk_Y\colon
  \Comp_0(Y/K)
  \rightarrow Z_0(Y)[1/p];\quad
  \mc{G}\mapsto
  \sum_{s\in|Y|}
  \frac{\dim_{\Lambda}(\mc{G}_{\overline{s}})}{[k(s):K]_{\mr{insep}}}\cdot[s],
\end{equation*}
where $Z_0$ is the (free) abelian group of $0$-cycles, $|Y|$ is the set of closed points of $Y$, $[:]_{\mr{insep}}$ denotes the inseparable degree of the extension of fields,
and $\overline{s}$ is any geometric point of $Y_{\overline{K}}$ lying above $s$.
Since $\mc{G}$ is a flat \'{e}tale system, $\dim_{\Lambda}(\mc{G}_{\overline{s}})$ does not depend on the choice of $\overline{s}$,
and the homomorphism is well-defined.

Let $X\rightarrow S$ be a morphism of noetherian schemes of finite type, and $n\geq0$ is an integer.
Suslin-Voevodsky defined an abelian group $\Gamma(S,z(X/S,n))$ in \cite[\S3.3]{SV}.
In this paper, we denote $\Gamma(S,z(X/S,n))$ simply by $z(X/S,n)$,
and $z(X/S,n)$ does {\em not} mean the presheaf contrary to the notation of \cite{SV}.

\begin{lem*}
 \label{compdim0nea}
 Let $h\colon X\rightarrow S$ be a morphism of finite type between noetherian $k$-schemes, and $p:=\mr{char}(k)>0$.
 The composite
 \begin{equation*}
  \rk_h\colon\Comp_0(X/S)
   \rightarrow
   \prod_{\eta\in S^0}\Comp_0(X_{\eta}/\eta)
   \xrightarrow{\prod \rk_{X_\eta}}
 \prod_{\eta\in S^0} Z_0(X_\eta)[1/p]
 \end{equation*}
 factors through $z(X/S,0)[1/p]$, and defines a homomorphism $\Comp_0(X/S)\rightarrow z(X/S,0)[1/p]$.
 Moreover, if we are given a morphism $g\colon S'\rightarrow S$ between noetherian schemes,
 the following diagram of abelian groups commutes, where $\mr{cycl}(g)$ denotes the pullback homomorphism for the relative cycle group:
 \begin{equation*}
  \xymatrix@C=50pt{
   \Comp_0(X/S)
   \ar[r]^-{\rk_h}\ar[d]_{g^*}&
   z(X/S,0)[1/p]\ar[d]^{\mr{cycl}(g)}\\
  \Comp_0(X_{S'}/S')\ar[r]^-{\rk_{h'}}&
   z(X_{S'}/S',0)[1/p].
   }
 \end{equation*}
\end{lem*}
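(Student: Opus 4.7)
The plan is to deduce the factorization through $z(X/S,0)[1/p]$ from the Suslin--Voevodsky DVR criterion for relative cycles, using an explicit computation of nearby cycles on a strict henselization; the functoriality square will then follow from the same computation applied to a DVR dominating the relevant specialization.

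First I would recall that, by \cite[\S3]{SV}, an element of $\prod_{\eta\in S^0}Z_0(X_\eta)[1/p]$ belongs to $z(X/S,0)[1/p]$ precisely when it is universally integral, a condition testable on all morphisms $v\colon\mr{Spec}(R)\to S$ from spectra of discrete valuation rings: if $\eta_R\mapsto\eta\in S^0$ and $s_R\mapsto s$, then the pullback of the $\eta$-component to $X_{\eta_R}$ must extend to a relative cycle on $X_R/R$ quasi-finite and flat over $R$, whose specialization to $X_{s_R}$ coincides with a fixed 0-cycle on $X_s$ pulled back along $s_R\to s$. With the target specialization taken to be $\rk_{X_s}(F_s)$, where $F_s\in\mr{K}_0\mr{Cons}_0^{\mr{gd}}(X_{\bar s},h)$ is obtained from $F_\eta$ via any specialization $\bar\eta\rightsquigarrow\bar s$ (well-defined by Lemma \ref{basicstratcons}), I would pass to the strict henselization of $R$ and, by linearity and the choice of an $h$-good representative, reduce to the case of a single $\mc{F}\in\mr{Cons}_0^{\mr{gd}}(X_{\bar\eta})$ supported at one closed point $x\in X_\eta$; the closure $Z\subset X_R$ of the support is quasi-finite over $R$ by the dimension bound of Lemma \ref{neadimboun}(1), hence by [EGA IV, 2.8.5] carries a canonical flat scheme structure, yielding the required cycle extension.

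The core of the argument is then the standard stalk computation of nearby cycles for a sheaf on a scheme quasi-finite over a strictly henselian trait: at each geometric closed point $\bar y$ of $Z_s$, $h$-goodness identifies $\Psi_{\bar s\leftarrow\bar\eta}(\mc{F})_{\bar y}$ with the direct sum of $\mc{F}_{\bar x'}$ over the geometric generic points $\bar x'$ of the local component of $Z$ at $y$. Taking $\dim_\Lambda$ and dividing by the inseparable degrees appearing in the definition of $\rk$, both the flat specialization of the cycle $\tfrac{\dim_\Lambda(\mc{F}_{\bar x})}{[k(x):k(\eta)]_{\mr{insep}}}[x]$ and $\rk_{X_s}\bigl(\Psi_{\bar s\leftarrow\bar\eta}\mc{F}\bigr)$ will come out to the same $\mb{Z}[1/p]$-linear combination of the points of $Z_s$. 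This gives the factorization through $z(X/S,0)[1/p]$.

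For the functoriality square, I would observe that both $g^*$ on $\Comp_0$ (defined via specialization in Lemma \ref{neadimboun}(1)) and the SV pullback $\mr{cycl}(g)$ on $z(X/S,0)[1/p]$ admit the same recipe on each $\eta'\in S'^0$: choose a specialization $\eta\rightsquigarrow g(\eta')$ from some $\eta\in S^0$ via a DVR dominating both, and apply the corresponding specialization or flat pullback, the result being independent of auxiliary choices (on the sheaf side by the definition of flat \'etale system, on the cycle side by the universal integrality established in the first half). The asserted commutativity is then precisely the identity established above applied to this DVR. The hard part will be the careful bookkeeping of inseparable degrees in the nearby-cycle computation: it is exactly this discrepancy between the $\Lambda$-dimension (insensitive to purely inseparable extensions) and the cycle multiplicity that produces the $p$-power denominators and explains why inverting $p$ is unavoidable in the statement.
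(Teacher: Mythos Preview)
Your approach is essentially the same as the paper's: both reduce via the Suslin--Voevodsky DVR criterion to a computation over a strictly henselian trait, and both identify the key equality as ``flat specialization of the rank cycle $=$ rank of the nearby-cycle specialization''. Two small points where the paper is tighter than your sketch are worth noting.

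First, the paper does not work directly on the closure $Z$ of the support as you propose, but instead writes $F=\sum n_w[\Lambda_w]$ over points $w\in X_\eta$, replaces $X$ by the \emph{normalization} $W=\overline{\{w\}}^{\mr{norm}}$, and uses compatibility of proper pushforward with nearby cycles to reduce to the case where $X$ itself is the spectrum of a DVR finite over $R$. In that situation the whole computation collapses to the single identity $\Psi_{s\leftarrow\eta}(\Lambda_{\eta_W})=[k(\eta_W):k(\eta)]_{\mr{sep}}\cdot[\Lambda_{s_W}]$ versus $[W\times_Rs]=[k(\eta_W):k(\eta)]\cdot[s_W]$, making the role of the inseparable degree (and hence the necessity of $[1/p]$) completely transparent. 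Your direct computation on the possibly non-normal, merely quasi-finite $Z$ would require tracking lengths in the special fiber and is messier, though it should go through.

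Second, your opening sentence conflates $z(X/S,0)$ with $\mr{Cycl}(X/S,0)$: the DVR criterion you invoke lands you in $\mr{Cycl}$, and the paper explicitly inserts the remark that the quotient $\mr{Cycl}/z$ is $p$-torsion (extracted from the proof of \cite[3.3.14]{SV}), so that the two agree after inverting $p$. You should add that step.
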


\begin{proof}
 Let $g\colon\mr{Spec}(L)\rightarrow\mr{Spec}(K)$ be an extension of fields, and $f\colon Y\rightarrow Y'$ be a proper morphism of $K$-schemes of finite type.
 Then the following diagrams commute:
 \begin{equation*}
  \xymatrix{
   \Comp_0(Y/K)\ar[r]^-{\rk_{Y}}\ar[d]_{g^*}\ar@{}[rd]|{\mbox{A}}&
   Z_0(Y)[1/p]\ar@{^{(}->}[d]^{\otimes_KL}\\
  \Comp_0(Y_{L}/L)\ar[r]^-{\rk_{Y_L}}&
   Z_0(Y_L)[1/p],
   }\qquad
   \xymatrix{
   \Comp_0(Y/K)\ar[r]^-{\rk_{Y}}\ar[d]_{f_*}\ar@{}[rd]|{\mbox{B}}&
   Z_0(Y)[1/p]\ar[d]^{f_*}\\
  \Comp_0(Y'/K)\ar[r]^-{\rk_{Y_L}}&
   Z_0(Y')[1/p].
   }
 \end{equation*}
 The verification is straightforward, and we only note that the denominator $[k(s):K]_{\mr{insep}}$ in the definition of $\rk_Y$ plays a crucial role.

 When $S$ is a trait ({\it i.e.}\ the spectrum of a discrete valuation ring), by \cite[3.2.6, 3.3.15]{SV},
 we have $Z_0(X_\eta)\cong z(X/S,0)$, and $d_h$ is defined automatically.
 Let us check the compatibility with pullback in the case where $g$ is the closed immersion from the closed point $s$ of $S$.
 By using the compatibility A above, we may assume $S$ to be a henselian trait.
 If we are given a proper morphism $f\colon X\rightarrow X'$ over $S$, recall that the pushforward $f_*\colon z(X/S,0)\rightarrow z(X'/S,0)$ is defined in \cite[3.6.3]{SV}
 by restricting $f_*\colon Z_0(X_\eta)\rightarrow Z_0(X'_\eta)$.
 Combining with the compatibility B above, we have the commutativity of $f_*$ and $\rk_{Y^{(\prime)}}$.
 Now, the group $\Comp_0(X/S)$ is generated by elements of the form $[\rho^*\Lambda_x]$ where $x\in X_\eta$ is a closed point and $\rho\colon X_{\overline{\eta}}\rightarrow X_\eta$,
 and it suffices to check the equality $\mr{cycl}(g)d_h([\rho^*\Lambda_x])=d_{h_s}g^*([\Lambda_x])$.
 By using the compatibility of $f_*$ for a closed immersion $f$ and $\rk_{-}$, we may replace $X$ with the closure of $x$ in $X$.
 Especially, we may assume that $X$ is irreducible and $h$ is quasi-finite.
 Since $S$ is assumed to be henselian, $h$ is either finite or $X_s=\emptyset$ by [EGA IV, 18.12.3].
 There is nothing to prove in the case $X_s=\emptyset$, so we may assume $h$ to be finite.
 Since $X_s$ consists of a single point, $h_{s*}\colon Z_0(X_s)\rightarrow Z_0(s)$ is injective.
 Thus, it suffices to show the compatibility after taking $h_*$, and we are reduced to the case where $h$ is the identity.
 This case is evident.

 Finally, let us treat the general case.
 For an element $\{z_\eta\}\in\prod Z_0(X_\eta)[1/p]$ to lies in $\mr{Cycl}(X/S,0)[1/p]$, we must check the following by definition \cite[3.1.3]{SV}:
 Let $\psi\colon\mr{Spec}(\Omega)\rightarrow S$ be a morphism from a field, and $\phi\colon\mr{Spec}(R)\rightarrow S$ be a morphism from a trait such that
 the generic point of $\mr{Spec}(R)$ lies over $\eta\in S^0$ and the morphism from the closed point to $S$ factors $\psi$.
 Then $i^*(z_\eta\otimes\mr{Frac}(R))\otimes\Omega$ depends only on $\psi$ and not on $\phi$,
 where $z_\eta\otimes\mr{Frac}(R)$ is viewed as an element of $z(X_R/R,0)$ and $i$ is the closed immersion from the closed point of $\mr{Spec}(R)$.
 Thus, by the case we have already treated, $\rk_h(F)$ belongs to $\mr{Cycl}(X/S,0)[1/p]$ for $F\in\Comp_0(X/S)$.
 To conclude, we claim that $\mr{Cycl}(X/S,0)[1/p]=z(X/S,0)[1/p]$.
 Indeed, the proof of \cite[3.3.14]{SV} in fact shows that the abelian presheaf $\mr{Cycl}(X/S,r)/z(X/S,r)$ is $p$-torsion.
\end{proof}

\subsection{}
\label{ASsheafrecal}
Let us recall the Artin-Schreier $\Lambda$-module.
Let $Y:=\mr{Spec}(\mb{F}_p[T])\rightarrow\mr{Spec}(\mb{F}_p[t])$ defined by sending $t$ to $T^p-T$.
This is a Galois covering with Galois group canonically isomorphic to $\mb{Z}/p\mb{Z}$.
Let $\psi\colon\mb{Z}/p\mb{Z}\rightarrow\Lambda^{\times}$ be a character.
Then, we get the representation
\begin{equation*}
 \pi_1(\mb{A}^1_{\mb{F}_p})\rightarrow
  \mr{Gal}(Y/\mb{A}^1)\cong
  \mb{Z}/p\mb{Z}\xrightarrow{\psi}\Lambda^{\times}.
\end{equation*}
The associated smooth sheaf of rank $1$ is denoted by $\mc{L}_{\psi}$.
Let $X$ be a scheme over $\mb{F}_p$, and assume we are given a morphism $f\colon X\rightarrow\mb{A}^1_{\mb{F}_p}$.
Then the pullback $f^*\mc{L}_\psi$ is denoted by $\mc{L}_{\psi}(f)$.

\begin{rem*}
 For $d>0$, we do not have the homomorphism $\Comp_d(Y/T)\rightarrow z(Y/T,d)$.
 For example, consider $T=\mb{A}_t^1$, $Y=\mb{A}^1_{T,x}$, and $F=[\mc{L}_{\psi}(x/t)]$.
 Then the generic rank is $1$ but the rank of $\Psi_{0\leftarrow\eta}(F)$ is $0$.
 In a similar vein, $\mr{Supp}(F)$ is not necessarily in $z(Y/T,d)$ when $d>0$.
 For this, consider $T=\mr{Spec}(k[t,s]/(ts))$, $Y=\mb{A}^1_{T,x}$.
 If we put $F=[\mc{L}_{\psi}(x/t)]$, this becomes a flat \'{e}tale system because $\Psi_{0\leftarrow\eta_t}(F)=0$,
 where $\eta_t$ is the generic point of $T$ corresponding to the irreducible component $\mr{Spec}(k[t])$.
\end{rem*}

\section{Construction of deformation system}
\label{sect5}
In this section, we construct a deformation system, and construct our characteristic cycle.
In the course of construction, we use one result whose proof will be postponed to Part II of this paper.
We keep the notations from \S\ref{sect4}.
Throughout this section, we fix an excellent separated $k$-scheme $S$ of finite Krull dimension.
We also fix a non-trivial character $\psi\colon\mb{Z}/p\mb{Z}\rightarrow\Lambda^{\times}$,
and the locally constant sheaf $\mc{L}_\psi(f)$ introduced in \ref{ASsheafrecal} is denoted simply by $\mc{L}(f)$.

\subsection{}
Assume we are given a morphism $X\rightarrow S$ of finite type.
We fix this morphism, and we will construct a deformation system associated to it.
We fix a finite open covering $\{X_i\}_{1\leq i\leq n}$ of $X$ such that $X_i\rightarrow S$ is affine.
Since $S$ is assumed to be separated, such covering always exists.

\medskip

\noindent
{\bf \underline{Construction of $\mu$}}:
Let $\mc{O}_{X_i/S}$ be the presheaf on $\mr{Sch}_{/S}$ associating $\Gamma(X_i\times_S T,\mc{O}_{X_i\times_S T})$ to $T$.
We set the basepoint to be $0$, and consider it as a presheaf of pointed sets.
We put $\mu_{X/S}:=\bigvee_i\mc{O}_{X_i/S}$.
Here, $\bigvee$ is the coproduct in the category of pointed sets.
In other words, $\mu_{X/S}(T)=\coprod_{i}\mc{O}_{X_i/S}(T)/\sim$, where $0_i\sim 0_{i'}$ ($0_i\in\mc{O}_{X_i/S}(T)$) for any $1\leq i,i'\leq n$.

Let us construct a contraction morphism.
For $T$, take $f\in\mc{O}_{X_i/S}(T)$.
Let $s$ be the coordinate of $\mb{A}^1$.
We have $s\cdot f\in\Gamma(T\times\mb{A}^1,\mc{O}_{X_i/S})$, which defines a homomorphism $\mc{O}_{X_i/S}\rightarrow p_*p^*\mc{O}_{X_i/S}$.
The construction is functorial, the homomorphism preserves the base points, and the composition with restriction to $1\in\mb{A}^1$ is the identity.
Moreover, the composition with restriction to $0\in\mb{A}^1$ is the map sending $f\in\mu$ to $0\in\mu$.
Thus, we have a morphism of presheaves $c_\mu\colon\mu_{X/S}\rightarrow p_*p^*\mu_{X/S}$, defining a contraction morphism.

\subsection{}
We define a presheaf $\ms{S}$ equipped with filtration and endow a structure of deformation system on the graded pieces.
The presheaf $\ms{S}$ is easy to define:
\medskip

\noindent
{\bf \underline{Construction of $\ms{S}$}}:
We put $\ms{S}_{X/S}:=\Comp_{X/S}$ (cf.\ Definition \ref{neadimboun}).
This presheaf is equipped with the filtration $\{\Comp_{d,X/S}\}_{d\geq0}$, but we define a finer filtration on $\ms{S}$.
For $d>0$, $n\geq i\geq 0$, we put $\ms{S}_{d,i}$ to be the following submodule:
\begin{equation*}
 \mr{Ker}\bigl(\Comp_{d,X/S}\rightarrow\mr{gr}_d\Comp_{U_i/S}\bigr),
\end{equation*}
where $U_i:=\bigcup_{j>i}X_j$.
In other words, $F=\{F_\eta\}_{\eta\in T^0}\in\Comp_{d,X/S}(T)$ belongs to
$\ms{S}_{d,i}(T)$ if $F_\eta|_{(U_i)_{\overline{\eta}}}\in\mr{K}_0\mr{Cons}_{d-1}((U_i)_{\overline{\eta}})$ for each $\eta\in T^0$.
We have $\ms{S}_{d,n}=\ms{S}_d$ and $\ms{S}_{d,0}=\ms{S}_{d-1}$.
The filtration can be depicted as
\begin{equation*}
 \ms{S}_0=
  \ms{S}_{1,0}
  \subset
  \ms{S}_{1,1}
  \subset\dots\subset
  \ms{S}_{1,n}=\ms{S}_1
  =\ms{S}_{2,0}
  \subset
  \ms{S}_{2,1}\subset
  \dots.
\end{equation*}
Even though the filtration appears to be double filtered, we can make it a filtration indexed by $\mb{N}$ by relabeling suitably.
It is exhaustive since if we take $d$ to be greater than the relative dimension of $X$ over $S$,
namely $\max\{\dim(X_s);s\in S\}$, then $\Comp_{d,X/S}=\Comp_{X/S}$.

\subsection{}
To define the homomorphism $\mc{E}$, we will use the following lemma:

\begin{lem*}
 \label{cdppartlem}
 Let $f\colon V\rightarrow T\times\Box$ be a modification of $S$-schemes of finite type.
 Then, there exists an open dense subscheme $U\subset T_{\mr{red}}$ such that $f_U\colon V\times_T U\rightarrow U\times\Box$ is an isomorphism.
\end{lem*}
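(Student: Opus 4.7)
The plan is to reduce to the case where $T$ is regular integral, using the excellence of $S$, and then to invoke the classical fact that the non-isomorphism locus of a proper birational morphism to a regular noetherian scheme has codimension at least two in the target.

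First, since the conclusion concerns $T_{\mr{red}}$, I may replace $T$ by $T_{\mr{red}}$ and assume $T$ reduced. Removing from each irreducible component its intersection with the others yields an open dense subscheme of $T$ that is a disjoint union of integral open subschemes of the components, so I may further assume $T$ is integral. By excellence of $S$ together with the finite-type hypothesis on $T$, the scheme $T$ is excellent, so its regular locus $T^{\mr{reg}} \subset T$ is open dense; restricting to $T^{\mr{reg}}$, I assume $T$ is regular integral. Then $T \times \Box = T \times \mb{A}^1_k$ is regular, being the product of a regular scheme with a smooth one, and the restricted morphism $V \times_T T^{\mr{reg}} \to T^{\mr{reg}} \times \Box$ remains a modification since the three defining properties (proper, birational, generic-to-generic) survive base change along an open immersion. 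Replacing $V$ by $V_{\mr{red}}$ is also harmless: $V$ is generically reduced by birationality, and the reduction has the same underlying space and the same generic points.

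Let $W \subset T \times \Box$ be the dense open locus over which $f$ is an isomorphism and set $Z := (T \times \Box) \setminus W$. The key input is the classical codimension-two result: for a proper birational morphism from a reduced noetherian scheme to a regular noetherian scheme, the closed subset of the target over which the map fails to be an isomorphism has codimension at least two. Hence $\dim Z \leq \dim(T \times \Box) - 2 = \dim T - 1$. Writing $\pi\colon T \times \Box \to T$ for the first projection, $\pi(Z)$ is constructible by Chevalley's theorem and of dimension at most $\dim T - 1$, so its Zariski closure $\overline{\pi(Z)}$ is a proper closed subset of the integral scheme $T$. Taking $U := T \setminus \overline{\pi(Z)}$ gives an open dense subscheme of $T$ (hence of $T_{\mr{red}}$ in the original notation) satisfying $U \times \Box \subset W$, so $f_U\colon V \times_T U \to U \times \Box$ is an isomorphism.

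The main obstacle is the invocation of the codimension-two statement, which really does require regularity (not just normality) of the target. If it needs to be proved in place, one localizes at any codimension-one point $\xi \in T \times \Box$: by regularity, $\mc{O}_{T \times \Box, \xi}$ is a discrete valuation ring, and the valuative criterion of properness combined with Zariski's main theorem for quasi-finite morphisms forces the fiber of $f$ over $\xi$ to consist of a single reduced point with residue field $k(\xi)$, contradicting the assumption $\xi \in Z$. A minor bookkeeping subtlety is to ensure that each of the successive reductions on $T$ genuinely shrinks only to an open dense subscheme, so that the final $U$ is open dense in $T_{\mr{red}}$ with respect to the original $T$.
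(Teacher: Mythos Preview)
Your proof follows the paper's strategy: reduce to $T$ integral with good singularities, bound the dimension of the non-isomorphism locus $Z\subset T\times\Box$ from above by $\dim T-1$, then take $U=T\setminus\overline{\pi(Z)}$. The paper differs only in packaging: it shrinks $T$ merely to its \emph{normal} locus and proves the bound by hand, letting $W\subset T\times\Box$ be the locus of positive-dimensional fibers, using Zariski's Main Theorem to get an isomorphism off $W$, and observing that $\dim W\geq\dim T$ would force $\dim f^{-1}(W)\geq\dim T+1=\dim V$, contradicting irreducibility of $V$. Your route via regularity and the codimension-two exceptional-locus theorem is cleaner to cite but requires the stronger hypothesis on $T$; the paper's is slightly more elementary.

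One caveat: your passage to $V_{\mr{red}}$ is not literally harmless, since the conclusion concerns $V$ itself, and nilpotents on $V$ supported over a divisor in $T\times\Box$ would obstruct it (e.g.\ $T$ a point and $V=\mr{Spec}\,k[t,\epsilon]/(\epsilon^2,t\epsilon)\to\Box$). The paper's ZMT step carries the same tacit assumption that $V$ is reduced, so this is a wrinkle in the lemma's hypotheses rather than a defect of either argument.
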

\begin{proof}
 Shrinking $T$ and considering irreducible component-wise, we may assume that $T$ is integral.
 Since $S$ is excellent and of finite dimension, the set of normal points of $T$ is open, thus shrinking $T$ further,
 we may assume that $T$ is normal of dimension $d<\infty$.
 Let $W\subset T\times\Box$ be the set of points $x\in T\times\Box$ such that $\dim(f^{-1}(x))\geq 1$.
 Then $W$ is closed by [EGA IV, 13.1.3]. 
 By Zariski main theorem [EGA III, 4.4.9], $f$ is isomorphic outside of $W$.
 If $\dim(W)\geq d$, then by [EGA IV, 5.6.8], $\dim(f^{-1}(W))\geq d+1$, and contradict with the assumption that $V$ is irreducible.
 Thus, $W$ is of dimension $\leq d-1$.
 By [EGA IV, 5.6.8], $\overline{p(W)}$ in $T$, where $p\colon T\times\Box\rightarrow T$ is the projection, is of dimension $\leq d-1$ as well.
 Thus, by [EGA IV, 5.1.8], $U:=T\setminus\overline{p(W)}$ is an open dense subscheme which satisfies the condition.
\end{proof}

\subsection{}
\label{constEdel}
Let $T$ be an $S$-scheme, and take $F\in\ms{S}_{d,i}(T)$ for $i\geq1$ and $f\in\mc{O}_{X_i/S}(T)$.
Let $V\rightarrow T\times\Box$ be a morphism in $\mr{Sch}_{/S\times\Box}$, and denote by $t$ the coordinate of $\Box$.
Let $\pi\colon V\rightarrow T$ and $j_{\overline{\xi}}\colon (X_i)_{\overline{\xi}}\hookrightarrow X_{\overline{\xi}}$ for each $\xi\in V^0$.
Since $(j_{\overline{\xi}})_!$ is an exact functor, it induces a homomorphism between the Grothendieck groups, and we may consider
$j_{\overline{\xi}!}\,j_{\overline{\xi}}^*(\pi^*F\otimes\mc{L}(ft))_{\xi}\in\mr{K}_0\mr{Cons}(X_{\overline{\xi}})$ where $\xi\in V^0$.
In view of Example \ref{tensorofflet}, this element belongs to $\Comp(X_\xi/\xi)$.
Recall from Example \ref{basicsitediag} that $V_{\mr{fl}}$ is the union of irreducible components of $V$ whose generic point lies over $\eta\in\Box$.
We put
\begin{equation*}
 (F_V\otimes\mc{L}(ft))_{\mr{gen}}:=\Bigl\{j_{\overline{\xi}!}j_{\overline{\xi}}^*(\pi^*F\otimes\mc{L}(ft))_{\xi}\Bigr\}
  \in\prod_{\xi\in V_{\mr{fl}}^0}\Comp(X_\xi/\xi).
\end{equation*}
Of course, this element is not a flat \'{e}tale system in general.
We say that $V$ is {\em adapted to $(F,f)$} if there exists $C=\{C_\xi\}_{\xi\in V^0_{\mr{fl}}}\in\ms{S}_{d,i}(V_{\mr{fl}})$ such that
\begin{equation*}
 C_\xi|_{(X_i)_{\overline{\xi}}}
  =
  (F_V\otimes\mc{L}(ft))_\xi\,|_{(X_i)_{\overline{\xi}}}.
\end{equation*}
The flat \'{e}tale system $C$ is determined uniquely up to differences in $\ms{S}_{d,i-1}(V_{\mr{fl}})$.
Indeed, assume we are given another $C'$ satisfying the condition.
Then we have $\bigl(C_{\xi}-C'_{\xi}\bigr)|_{(X_i)_{\overline{\xi}}}=0$ by the condition.
On the other hand, since $C^{(\prime)}\in\ms{S}_{d,i}$,
the restriction $C^{(\prime)}_{\xi}|_{(U_i)_{\overline{\xi}}}$ belongs to $\mr{K}_0\mr{Cons}_{d-1}((U_i)_{\overline{\xi}})$.
Combining these, the restriction of $C_{\xi}-C'_{\xi}$ to $U_{i-1}:=\bigcup_{j\geq i}X_j=U_i\cup X_i$ belongs to
$\mr{K}_0\mr{Cons}_{d-1}((U_{i-1})_{\overline{\xi}})$, and $C-C'$ belongs to $\ms{S}_{d,i-1}$ as required.
We denote the flat \'{e}tale system $C$ in $\mr{gr}_{d,i}\ms{S}(V_{\mr{fl}})$
by $F[V]$, or more precisely, $(F;f)[V]$.
Let us denote this sieve generated by objects $V$ adapted to $(F;f)$ by $\mr{R}(F;f)$.

\begin{lem*}
 \begin{enumerate}
  \item The sieve $\mr{R}(F;f)$ is an $\eta$-cdp covering sieve of $T\times\Box$.

  \item\label{constEdel-2}
       Let $\phi\colon V'\rightarrow V$ be a morphism of $T\times\Box$-schemes.
       Assume that $V$ is adapted to $(F;f)$.
       Then $V'$ is adapted as well, and we have $\phi_{\mr{fl}}^*F[V]=F[V']$ in $\mr{gr}_{d,i}\ms{S}(V'_{\mr{fl}})$,
       where $\phi_{\mr{fl}}\colon V'_{\mr{fl}}\rightarrow V_{\mr{fl}}$ is the morphism induced by $\phi$.
 \end{enumerate}
\end{lem*}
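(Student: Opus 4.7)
The plan is to establish (1) by first producing a single adapted modification via Lemma \ref{neadimboun}.(2), then upgrading it to a covering family by noetherian induction on $\dim T$, and to establish (2) by a direct pullback argument using the transitivity of nearby cycles from Theorem \ref{Gabbthm} together with the dimension-preservation in Lemma \ref{neadimboun}.(1).

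For (1), the first step is to construct one adapted $\mbf{V}_0$. For each generic point $\eta' \in (T \times \Box)^0$ lying over $\eta \in T^0$, consider
\begin{equation*}
 G_{\eta'} := j_{\overline{\eta'},!}\,j_{\overline{\eta'}}^*\bigl(\pi^*F \otimes \mc{L}(ft)\bigr)_{\eta'}.
\end{equation*}
This class is descendable to $\eta'$: $F_\eta$ is descendable to $\eta$ by definition of flat \'{e}tale system, $\mc{L}(ft)$ is $\pi_1(\eta')$-equivariant since it is defined on an $\eta'$-scheme, and both $\otimes$ and $j_!$ preserve descendability (cf.\ Example \ref{dfngooddesce}). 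It has dimension $\leq d$ overall and $\leq d-1$ on $(U_i)_{\overline{\eta'}}$ because $F \in \ms{S}_{d,i}$. Since $T \times \Box$ is excellent, Lemma \ref{neadimboun}.(2) yields a modification $V_0 \to T \times \Box$ and a flat \'{e}tale system $C_0 \in \Comp_d(X_{V_0}/V_0)$ whose generic values are the $G_{\eta'}$; the dimension-preservation argument in the proof of Lemma \ref{neadimboun}.(1) (based on [EGA IV, 2.8.5]) shows $C_0 \in \ms{S}_{d,i}(V_{0,\mr{fl}})$, so $\mbf{V}_0 := (V_0 \to T\times\Box)$ is adapted to $(F;f)$.

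To turn $\mbf{V}_0$ into a covering family, I use noetherian induction on $\dim T$. By Lemma \ref{cdppartlem}, $V_0 \to T\times\Box$ restricts to an isomorphism over $U \times \Box$ for some dense open $U \subset T_{\mr{red}}$; set $Z := T_{\mr{red}} \setminus U$ with the reduced structure so $\dim Z < \dim T$. The restriction $F|_Z$ lies in $\ms{S}_{d,i}(Z)$ by the pullback functoriality of Lemma \ref{neadimboun}.(1), so the induction hypothesis produces a covering family $\{\mbf{W}_\alpha = (W_\alpha \to Z \times \Box)\}$ inside $\mr{R}(F|_Z; f|_Z)$ (with base $Z$). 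Rebasing each $\mbf{W}_\alpha$ to $\mbf{W}''_\alpha := (W_\alpha \to T\times\Box)$ via the closed immersion $Z \hookrightarrow T$ gives an object of $\Mdf$ with base $T$, and the flat \'{e}tale system on $W_{\alpha,\mr{fl}}$ witnessing adaptation to $(F|_Z; f|_Z)$ also witnesses adaptation to $(F;f)$: the pullback of $F$ along $W_\alpha \to T$ factors through $W_\alpha \to Z$, and $\mc{L}(ft)$ is unaffected since $f$ lives on $X_i \times_S T$. The family $\{\mbf{V}_0, \mbf{W}''_\alpha\}$ then has all bases isomorphic to $T$; at the level of sources, $V_0 \sqcup \bigsqcup_\alpha W_\alpha \to T\times\Box$ is cdp (an isomorphism over $U \times \Box$ via $V_0$, a cdp-covering of $Z \times \Box$ via $\{W_\alpha\}$), so it is a covering family of $\mbf{T}\times\Box$ lying in $\mr{R}(F;f)$.

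For (2), set $C := F[\mbf{V}] \in \ms{S}_{d,i}(V_{\mr{fl}})$ and consider $\phi_{\mr{fl}}^* C$ on $V'_{\mr{fl}}$. Lemma \ref{neadimboun}.(1) shows pullback preserves $\ms{S}_{d,i}$, giving $\phi_{\mr{fl}}^* C \in \ms{S}_{d,i}(V'_{\mr{fl}})$. The commutativity of the square defining $\phi$ identifies $\phi_{\mr{fl}}^* F_{\mbf{V}}$ with $F_{\mbf{V}'}$ (both are the pullback of $F$ along the outer arrow $V' \to T$), while $\mc{L}(ft)$ pulls back to itself because the base change in $\Mdf$ is of the form $g \times \mr{id}_\Box$ and therefore fixes $t$; combined with the compatibility of $j_!$ with \'{e}tale pullback this shows $(\phi_{\mr{fl}}^* C)|_{X_i}$ agrees with $(F_{\mbf{V}'} \otimes \mc{L}(ft))|_{X_i}$ on generic points. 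By the uniqueness of the adapted representative modulo $\ms{S}_{d,i-1}$ established in \ref{constEdel}, this gives $\phi_{\mr{fl}}^* F[\mbf{V}] = F[\mbf{V}']$ in $\mr{gr}_{d,i}\ms{S}(V'_{\mr{fl}})$. The most delicate point in this plan is the rebasing in (1): one must confirm that after the inductive construction over $Z$, the flat \'{e}tale systems witnessing adaptation of $\mbf{W}''_\alpha$ are still controlled in $\ms{S}_{d,i}$ computed with respect to $X$ (not some restricted scheme), and that $V_{\mr{fl}}$ is interpreted consistently for the rebased objects; this ultimately reduces to the observation that $Z \times \Box$ is automatically flat over $\Box$, so no further truncation is introduced by the passage from $\mbf{W}_\alpha$ to $\mbf{W}''_\alpha$.
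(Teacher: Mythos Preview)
Your argument for (1) is essentially the paper's: produce a first adapted modification via Lemma~\ref{neadimboun}, then invoke Lemma~\ref{cdppartlem} and descend noetherianly to build a cdp-covering by adapted objects. The paper phrases this as an iteration on the closed complement rather than an induction on $\dim T$, but the content is the same, and your rebasing remark at the end is the correct bookkeeping.

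Your argument for (2) has a gap at the key step. You argue that $\phi_{\mr{fl}}^*(\pi^*F)=(\pi')^*F$ by transitivity, and that $\phi^*\mc{L}(ft)=\mc{L}(ft)$ as sheaves, and then conclude that $(\phi_{\mr{fl}}^*C)|_{X_i}$ agrees with $(F_{\mbf{V}'}\otimes\mc{L}(ft))|_{X_i}$ at generic points. But the pullback $\phi_{\mr{fl}}^*$ of a flat \'{e}tale system is computed by nearby cycles, not by naive sheaf pullback: at $\eta'\in V'^0_{\mr{fl}}$ one has $(\phi_{\mr{fl}}^*C)_{\eta'}=\Psi_{\phi(\eta')\leftarrow\eta}(C_\eta)$ for a specialization $\eta\rightsquigarrow\phi(\eta')$ inside $V_{\mr{fl}}$. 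To split
\[
\Psi_{\phi(\eta')\leftarrow\eta}\bigl((\pi^*F)_\eta\otimes\mc{L}(ft)_\eta\bigr)
\simeq
\Psi_{\phi(\eta')\leftarrow\eta}\bigl((\pi^*F)_\eta\bigr)\otimes\mc{L}(ft)_{\phi(\eta')}
\]
you must know that $\mc{L}(ft)$ is locally constant around $\phi(\eta')$. This is precisely the observation the paper makes explicit: since $\eta'$ is a generic point of $V'_{\mr{fl}}$ it lies over the generic point of $\Box$, hence so does $\phi(\eta')$; thus $ft$ is regular on $(X_i)_{\phi(\eta')}$ and $\mc{L}(ft)$ is smooth there, so the projection formula (Lemma~\ref{simplprojform}) applies. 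Your sentence ``the base change in $\Mdf$ is of the form $g\times\mr{id}_\Box$ and therefore fixes $t$'' is the right ingredient for showing $\phi(\eta')$ lies over the generic point of $\Box$, but you use it only to conclude that $\mc{L}(ft)$ pulls back to itself as a sheaf, which does not by itself justify commuting the tensor past the nearby cycle.
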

\begin{proof}
 Let us show the first claim.
 We may assume that $T$ is reduced.
 It suffices to find an $\eta$-cdp covering $\{V_i\}\rightarrow T\times\Box$ such that $V_i\rightarrow T\times\Box$ is adapted to $(F;f)$.
 First, there exists a modification $V\rightarrow T\times\Box$ over which $(F_{T\times\Box}\otimes\mc{L}(ft))_{\mr{gen}}$
 becomes a flat \'{e}tale system by Lemma \ref{neadimboun}.
 By definition, this $V$ is adapted to $(F;f)$.
 By Lemma \ref{cdppartlem}, there exists an open dense subscheme $U_0\subset T$ such that $V\times_T U_0\rightarrow U_0\times\Box$ is an isomorphism.
 Put $T_1:= T\setminus U_0$, and we can take a modification $V_1$ of $T_1\times\Box$ over which
 $(F_{T_1\times\Box}\otimes\mc{L}(ft))_{\mr{gen}}$ becomes a flat \'{e}tale system.
 Using the lemma, we can take a closed subscheme over which $V_1\rightarrow T_1\times\Box$ is an isomorphism.
 We iteratively construct $T_i$, $V_i$, and this process terminates in finitely many steps since $T$ is noetherian.
 Then $\{V_i\rightarrow T\times\Box\}$ is an $\eta$-cdp covering.

 Let us show the second claim.
 We take a lifting of $F[V]$ in $\ms{S}_{d,i}$, and abusively denote by $F[V]$.
 By construction of $F[V']$, it suffices to show that
 $(F_{V'}\otimes\mc{L}(ft))_{\eta'}|_{(X_i)_{\overline{\eta}'}}=\phi_{\mr{fl}}^*(F[V]|_{(X_i)_{\overline{\eta}'}})$
 in $\mr{K}_0\mr{Cons}((X_i)_{\overline{\eta}'})$ for each $\eta'\in V'^0_{\mr{fl}}$.
 Thus, we may assume that $X=X_i$.
 In this situation, $f$ is a regular function.
 Take a generic point $\eta'$ of $V'_{\mr{fl}}$.
 By definition of $V'_{\mr{fl}}$, the image of $\eta'$ in $\Box$ is the generic point of $\Box$.
 Thus, we can find a specialization $\eta\rightsquigarrow\phi(\eta')$ from a generic point $\eta$ of $V_{\mr{fl}}$.
 The situation can be depicted as below.
 \begin{equation*}
  \xymatrix@R=15pt{
   V'_{\mr{fl}}\ar[d]_{\phi_{\mr{fl}}}&&\eta'\ar@{~>}[d]\\
  V_{\mr{fl}}&\eta\ar@{~>}[r]&\phi(\eta')
   }
 \end{equation*}
 By the choice, $\phi(\eta')$ sits over the generic point of $\Box$, and $f|_{V}t$ is regular around $\phi(\eta')$.
 Thus, $\mc{L}(f|_Vt)$ is smooth around $\phi(\eta')$, which implies
 \begin{equation*}
  \Psi_{\phi(\eta')\leftarrow\eta}\bigl(F_V\otimes\mc{L}(f|_{V}t)\bigr)
   =
   F_{\phi(\eta')}\otimes\mc{L}(f|_Vt).
 \end{equation*}
 By definition, this means that
 $(F_{V'}\otimes\mc{L}(ft))_{\eta'}|_{X_{\overline{\eta}'}}=\phi_{\mr{fl}}^*(F[V]|_{X_{\overline{\eta}'}})$.
\end{proof}

Now, we are ready to construct $\mc{E}$.
\medskip

\noindent
{\bf \underline{Construction of $\mc{E}$}}:
Assume we are given an element $F\in\ms{S}_{d,i}(T)$, and $f\in\mc{O}_{X_j/S}(T)$ for some $j$.
If $j\neq i$, we simply put $\mc{E}_{X/S}(F;f)=F$.
The map is non-trivial only in the case $i=j$.
In this case, the above lemma implies that $\{F[V]\}_{V\in\mr{R}(F;f)}$ restricted to $V_{\mr{fl}}\times_{\Box}\{\infty\}$ determines an element of
\begin{equation*}
 \invlim_{V\in\mr{R}(F;f)}\Gamma(V_{\mr{fl}}\times_{\Box}\{\infty\},\mr{gr}_{d,i}(\ms{S}))
  \cong
  \invlim_{V\in\mr{R}(F;f)}\Gamma(V_{\mr{fl}},i_{\infty*}\mr{gr}_{d,i}(\ms{S})).
\end{equation*}
Since $\mr{R}$ is a covering sieve of $T\times\Box$, this element determines an element of $\pi_0\partial\mr{gr}_{d,i}(\ms{S})(T)$.
The construction does not depend on the differences by $\ms{S}_{d,i-1}$, and it induces the morphism
$(\mr{gr}_{d,i}\ms{S}\otimes\mc{O}_{X_i/S})(T)\rightarrow\pi_0\partial\mr{gr}_{d,i}\ms{S}(T)$.
This element is defined to be $\mc{E}(F;f)$.
These morphisms define a morphism of presheaves again by the lemma.
Note that $\mc{E}(F;0)=F$ for $0\in\mc{O}_{X_i/S}(T)$, so the maps glue together to get
$\ms{S}_{d,i}\otimes\mu\rightarrow\pi_0\partial\mr{gr}_{d,i}\ms{S}$.
Composing with $\pi_0\partial\mr{gr}_{d,i}\ms{S}\rightarrow\partial\mr{gr}_{d,i}\ms{S}$, we define the morphism $\mc{E}$.
\medskip

Summing up, assume we are given a morphism $X\rightarrow S$ of finite type,
and a finite open covering $\{X_i\}_{1\leq i\leq n}$ of $X$ such that $X_i\rightarrow S$ is affine.
Then we have the deformation system $\mb{S}_{d,i}:=(\mr{gr}_{d,i}(\ms{S}_{X/S}),\mu_{X/S},\mc{E}_{X/S})$ over $S$ for $d>0$ and $n\geq i\geq 0$.
This system {\em does} depend on the choice of the open coverings,
but since merely the existence is important for us, we do not put this data in the notation.

\subsection{}
\label{microlocconstofE}
Let us give a variant of the previous construction.
Assume we are given a morphism $X\rightarrow S$ of finite type, but we do not need to take an open cover of $X$.
Take an $S$-scheme $T$, $F\in\Comp(X_T/T)$, $V$, and $f\in\mc{O}_{X/S}(T)$, as before.
We may consider $\{(\pi^*F\otimes\mc{L}(ft))_\eta\}_{\eta\in V^0_{\mr{fl}}}$ as well.
This time, since $f$ is defined everywhere, we have a homomorphism
\begin{equation*}
 \mc{E}'_{X/S}\colon\ms{S}_{X/S}\otimes\mc{O}_{X/S}\rightarrow\partial\ms{S}_{X/S}.
\end{equation*}
The triple $(\ms{S}_{X/S},\mc{O}_{X/S},\mc{E}'_{X/S})$ forms a deformation system on $S$.
Recall the Cartesian fibration $p\colon(X/S)_{\mr{zar}}\rightarrow\mr{Sch}_{/S}$ considered in \ref{suppstrdfn}.
Since the construction is functorial, the assignment $(\ms{S}_{U/T}(T),\mc{O}_{U/T}(T),\mc{E}'_{U/T}(T)))$ to $U/T\in(X/S)_{\mr{zar}}$ is promoted to a $p$-deformation system.

\subsection{}
We wish to show that the deformation system $\mb{S}_{d,i}$ is effaceable except for the case $(d,i)=(1,0)$.
For this, let us analyze the condition that $\mc{E}^{\infty}(F;\mbf{f})=0$ for $F\in\ms{S}_d(T)$ for an integer $d>0$.
First, we introduce some terminologies.
Fix $T\in\mr{Sch}_{/S}$.
For a scheme $T'$ over $T$, let $\Mdf_{T'}$ be the category of proper morphisms $\mbf{V}:=(V\rightarrow T'\times\Box)$.
For such $\mbf{V}$, we put $\mbf{V}_\infty:=V_{\mr{fl}}\times_{\Box}\{\infty\}$.
This is a $T'$-scheme.
An {\em $\Mdf$-sequence over $T$} ({\em of length $n\geq0$}) is a set $\{\mbf{V}_{i}\in\Mdf_{\mbf{V}_{i-1,\infty}}\}_{i=1,\dots,n}$
such that $\mbf{V}_{0}:=(T\times\Box\rightarrow T\times\Box)$.
The sequence is said to be {\em trivial} if $\mbf{V}_i=(T\times\Box\rightarrow T\times\Box)$ for any $i$.
A morphism of $\Mdf$-sequences $\{\mbf{V}_i\}\rightarrow\{\mbf{V}'_i\}$ is a collection of morphisms $\mbf{V}_i\rightarrow\mbf{V}'_i$
compatible with $\mbf{V}_{i-1,\infty}\rightarrow\mbf{V}'_{i-1,\infty}$ in an evident sense.
Similarly to the definition of $\mc{E}$, we define as follows:

\begin{dfn*}
 Assume that $X_T$ and $T$ are affine, and let $F\in\ms{S}_d(T)$, $\mbf{f}:=(f_1,\dots,f_n)\in\mc{O}_{X_T}(T)^{\infty}$.
 \begin{enumerate}
  \item  Let $\{\mbf{V}_i\}$ be an $\Mdf$-sequence of length $n$.
	 We define the notion of $\{\mbf{V}_i\}$ being {\em adapted to $(F;\mbf{f})$}, along with the element $F[\{\mbf{V}_i\}]_{\infty}\in\ms{S}(\mbf{V}_{n,\infty})$
	 (provided the condition holds), by induction on $n$ as follows:
	 The truncated sequence $\{\mbf{V}_i\}_{i<n}$ is adapted to $(F;\mbf{f})$
	 and $F[\{\mbf{V}_i\}_{i<n}]_{\infty}\otimes\mc{L}(f_nt)$ is a flat \'{e}tale system on $\mbf{V}_{n}$ (where we put $F[\emptyset]_{\infty}=F$ in the case $n=1$).
	 We then put $F[\{\mbf{V}_i\}]_{\infty}$ the restriction to $\mbf{V}_{n,\infty}$ of this flat \'{e}tale system.
	 
  \item  An $\Mdf$-sequence $\{\mbf{V}_i\}$ is said to be a {\em principal sequence}
	 if each morphism $\mbf{V}_i\colon V_i\rightarrow\mbf{V}_{i-1,\infty}\times\Box$ is surjective.
 \end{enumerate}
\end{dfn*}

We may detect the kernel of $\mc{E}^n$ as follows:

\begin{lem*}
 \label{baicpropmdfseq}
 Let $F\in\ms{S}_d(T)$, and $\mbf{f}:=(f_1,\dots,f_n)\in\mc{O}_{X_T}(T)^{\infty}$.
 \begin{enumerate}
  \item\label{baicpropmdfseq-1}
       Let $\phi=\{\phi_i\}\colon\{\mbf{V}'_i\}\rightarrow\{\mbf{V}_i\}$ be a morphism of $\Mdf$-sequences of length $n$ {\em adapted to $(F;\mbf{f})$}.
       Then $F[\{\mbf{V}'_i\}]_{\infty}=\phi^*_nF[\{\mbf{V}_i\}]_{\infty}$.

  \item\label{baicpropmdfseq-2}
       Let $\{\mbf{V}_i\}$ be a principal $\Mdf$-sequence of length $n$, and $\{\mbf{W}_i\}$ be any $\Mdf$-sequence of length $n$.
       Then, we can find a diagram $\{\mbf{V}_i\}\leftarrow\{\mbf{V}'_i\}\rightarrow\{\mbf{W}_i\}$ such that
       $\mbf{V}'_{i,\infty}\rightarrow\mbf{W}_{i,\infty}$ is surjective for any $i\leq n$.

  \item A principal $\Mdf$-sequence adapted to $(F;\mbf{f})$ exists.

  \item\label{baicpropmdfseq-4}
       If there exists a principal sequence $\{\mbf{V}_i\}$ adapted to $(F;\mbf{f})$
       such that $F[\{\mbf{V}_i\}]_{\infty}$ is in $\ms{S}_{d-1}$, then $\mc{E}^n(F;\mbf{f})=0$.
 \end{enumerate}
\end{lem*}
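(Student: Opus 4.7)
The plan is to establish (1)--(4) in order. Parts (1)--(3) are essentially formal consequences of Lemma \ref{constEdel} and the definitions; (4) contains the substantive content.

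For (1), I would argue by induction on $n$. The base case $n=0$ is tautological. For the inductive step, suppose $F[\{\mbf{V}'_j\}_{j<n}]_\infty = \phi_{n-1,\infty}^*\,F[\{\mbf{V}_j\}_{j<n}]_\infty$ in $\ms{S}(\mbf{V}'_{n-1,\infty})$. Form the base change $\widetilde{\mbf{V}}_n := \mbf{V}_n\times_{\mbf{V}_{n-1,\infty}}\mbf{V}'_{n-1,\infty}$, an object of $\Mdf$ over $\mbf{V}'_{n-1,\infty}\times\Box$ which is adapted to $(\phi_{n-1,\infty}^*F[\{\mbf{V}_j\}_{j<n}]_\infty;f_n)$ by base-change compatibility of the notion ``adapted''. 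Since the map $\mbf{V}'_n\to\mbf{V}_n$ factors through $\widetilde{\mbf{V}}_n$, applying Lemma \ref{constEdel}.\ref{constEdel-2} to $\mbf{V}'_n\to\widetilde{\mbf{V}}_n$ gives the desired equality at level $n$. Part (3) follows by iterating the first statement of Lemma \ref{constEdel}: at each stage, the proof of that lemma (together with Lemma \ref{neadimboun}) constructs a modification adapted to $(F[\{\mbf{V}_j\}_{j<i}]_\infty;f_i)$, and by first passing to $\coprod_{\eta}\overline{(\mbf{V}_{i-1,\infty}\times\Box)}_\eta$ one may arrange that the modification is surjective, hence principal.

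For (2), I would build $\{\mbf{V}'_i\}$ inductively via fibre products. Set $\mbf{V}'_1 := \mbf{V}_1\times_{T\times\Box}\mbf{W}_1$, an object of $\Mdf$ over $T\times\Box$. Principality of $\mbf{V}_1$ means $\mbf{V}_1\to T\times\Box$ is surjective, so $\mbf{V}'_1\to\mbf{W}_1$ is a base change of a surjection, hence surjective; this surjectivity passes to the flat parts and the fibre over $\infty$. At step $i$, use the already-constructed surjection $\mbf{V}'_{i-1,\infty}\to\mbf{W}_{i-1,\infty}$ to base-change $\mbf{W}_i$ to $\mbf{V}'_{i-1,\infty}\times\Box$, and similarly base-change $\mbf{V}_i$; set $\mbf{V}'_i$ to be their fibre product over $\mbf{V}'_{i-1,\infty}\times\Box$. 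Principality of $\mbf{V}_i$ again yields the surjection $\mbf{V}'_{i,\infty}\to\mbf{W}_{i,\infty}$.

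For (4), the key observation is that $\mc{E}_\infty^n(F;\mbf{f})$ is represented, over each adapted $\Mdf$-sequence $\{\mbf{W}_i\}$, by the class of $F[\{\mbf{W}_i\}]_\infty$ in $\mr{gr}_d\ms{S}(\mbf{W}_{n,\infty})$, and $\partial_\infty^n\mr{gr}_d\ms{S}(T)$ is computed after a sheafification for which proper surjections count as covers. Given the assumed principal $\{\mbf{V}_i\}$ with $F[\{\mbf{V}_i\}]_\infty\in\ms{S}_{d-1}$, for any other adapted $\{\mbf{W}_i\}$ apply (2) to find $\{\mbf{V}'_i\}$ dominating both with $\mbf{V}'_{i,\infty}\to\mbf{W}_{i,\infty}$ surjective. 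By (1), $F[\{\mbf{V}'_i\}]_\infty$ is the pullback of $F[\{\mbf{V}_i\}]_\infty$ and hence lies in $\ms{S}_{d-1}$; it is simultaneously the pullback of $F[\{\mbf{W}_i\}]_\infty$ along the proper surjection $\mbf{V}'_{n,\infty}\to\mbf{W}_{n,\infty}$, so after the sheafification defining $\partial_\infty^n$ the class $F[\{\mbf{W}_i\}]_\infty$ also represents zero, giving $\mc{E}_\infty^n(F;\mbf{f})=0$. The main obstacle, I expect, lies precisely in pinning down this cofinality/sheafification step: one must verify that principal adapted $\Mdf$-sequences form a cofinal family in the computation of $\pi_0\partial_\infty^n\mr{gr}_d\ms{S}(T)$ and that surjectivity of $\mbf{V}'_{i,\infty}\to\mbf{W}_{i,\infty}$ is the correct notion of refinement in the iterated modification topology underlying $\partial_\infty$; once this is in place, (1)--(3) combine by a direct cofinality argument to yield (4).
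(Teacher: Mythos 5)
Parts (1)--(3) are essentially correct and follow the paper's route. For (1), your explicit base-change reduction to Lemma \ref{constEdel}.\ref{constEdel-2} is what the paper leaves implicit (``follows by the functoriality of flat \'etale systems''). For (2), the paper just asserts the existence of a suitable $Z$ proper over $\mbf{V}'_{n-1,\infty}\times\Box$; your iterated fibre product is the natural choice, and your observation that surjectivity passes to flat parts and then to the fibre over $\infty$ is correct. For (3), the $\coprod_\eta$ step is superfluous: a modification is by definition proper with dense open isomorphism locus and hits every generic point, hence is automatically surjective, so the sequence produced by iterating Lemma \ref{neadimboun} is already principal.

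The genuine gap is exactly where you flag it, in (4), and the ``cofinality/sheafification'' framing you propose would not repair it. The topology on $\Mdf_S$ underlying $\partial$ is generated by covers $\{(V_i\to T_i\times\Box)\to(V\to T\times\Box)\}$ with $T_i\cong T$ and $\{V_i\}\to V$ a \emph{cdp}-covering. A general proper surjection such as $\mbf{V}'_{n,\infty}\to\mbf{W}_{n,\infty}$ is neither cdp (residue fields may extend) nor of the fixed-base shape, so ``after the sheafification defining $\partial_\infty^n$ the class $F[\{\mbf{W}_i\}]_\infty$ also represents zero'' is unjustified, and showing principal sequences are cofinal in that topology is not the right target.

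The correct argument is more elementary and needs no sheaf-theoretic descent at all. The element $\mc{E}^n_\infty(F;\mbf{f})\in\pi_0\partial_\infty^n\mr{gr}_d\ms{S}(T)$ is represented by the compatible family $\{F[\mb{W}]_\infty\}$ taken over \emph{all} adapted $\Mdf$-sequences $\mb{W}$; it vanishes once each $F[\mb{W}]_\infty$ is shown to lie in $\ms{S}_{d-1}(\mbf{W}_{n,\infty})$, i.e.\ to be literally zero in $\mr{gr}_d\ms{S}(\mbf{W}_{n,\infty})$. The dominating sequence $\{\mbf{V}'_i\}$ you construct via (2) gives exactly this: membership in $\ms{S}_{d-1}$ is a condition at the \emph{generic points} of $\mbf{W}_{n,\infty}$ (the value of the flat \'etale system there should have $(d-1)$-dimensional support); since $\phi\colon\mbf{V}'_{n,\infty}\to\mbf{W}_{n,\infty}$ is surjective, every generic point $\eta$ of $\mbf{W}_{n,\infty}$ is hit by a generic point $\xi$ of $\mbf{V}'_{n,\infty}$, and $(\phi^*F[\mb{W}]_\infty)_\xi$ is just $F[\mb{W}]_{\infty,\eta}$ pulled back along an extension of algebraically closed fields, which preserves dimensions of supports. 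Hence $\phi^*F[\mb{W}]_\infty = F[\{\mbf{V}'_i\}]_\infty\in\ms{S}_{d-1}$ forces $F[\mb{W}]_\infty\in\ms{S}_{d-1}$, and the conclusion follows.
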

\begin{proof}
 The first claim follows by the functoriality of flat \'{e}tale systems, and can be checked as in Lemma \ref{constEdel}.\ref{constEdel-2}.
 Let us show the second claim. We use the induction on the length. For $n=0$, the claim is obvious.
 Assume that the claim is true for $n<l$. We show the claim for $n=l$.
 We have already constructed the following diagram of solid arrows:
 \begin{equation*}
  \xymatrix{
   \mbf{V}_n\ar@{->>}[d]&Z\ar@{.>}[l]\ar@{.>>}[r]\ar@{.>}[d]&\mbf{W}_n\ar[d]\\
   \mbf{V}_{n-1,\infty}\times\Box&\mbf{V}'_{n-1,\infty}\times\Box\ar[l]\ar@{->>}[r]&\mbf{W}_{n-1,\infty}\times\Box.
    }
 \end{equation*}
 We can find $Z$ which is of finite type rendering the diagram commutative with the vertical dotted arrow proper.
 By putting $\mbf{V}'_n:=(Z\rightarrow\mbf{V}'_{n-1,\infty}\times\Box)$, the sequence $\{\mbf{V}'_n\}$ has the desired properties.

 Let us show the third claim.
 Let $\mbf{V}_1:=(V_1\rightarrow T\times\Box)$ be a modification over which $F\otimes\mc{L}(f_1t)$ becomes a flat \'{e}tale system.
 Then put $F_1:=(F\otimes\mc{L}(f_1t))|_{\mbf{V}_{1,\infty}}$.
 Then take a modification $\mbf{V}_2:=(V_2\rightarrow\mbf{V}_{1,\infty}\times\Box)$ so that $F_1\otimes\mc{L}(f_2t)$ becomes a flat \'{e}tale system.
 Put $F_2:=(F_1\otimes\mc{L}(f_2t))|_{\mbf{V}_{2,\infty}}$, and repeat this process till we construct $\mbf{V}_n$, $F_n$.
 Then $\{\mbf{V}_i\}$ is a principal $\Mdf$-sequence adapted to $(F;\mbf{f})$.

 Let us show the last claim.
 Let $\mb{W}:=\{\mbf{W}_i\}_{i\leq n}$ be an $\Mdf$-sequence of $T$ adapted to $(F;\mbf{f})$.
 Let $\mb{W}_{\leq k}:=\{\mbf{W}_i\}_{i\leq k}$.
 Fix $\{\mbf{W}_i\}_{i<n}$ and vary $\mbf{W}_n$.
 Then the family $\{F[\mb{W}]_{\infty}\}_{\mbf{W}_n}$
 determines an element $F\left<\mb{W}_{<n}\right>_\infty$ of $\partial\ms{S}(\mbf{W}_{n-1,\infty})$.
 By \ref{baicpropmdfseq-1}, this element is functorial with respect to varying $\mbf{W}_{n-1}$ fixing $\mb{W}_{\leq n-2}$.
 Thus, the family $\{F\left<\mb{W}_{\leq n-1}\right>_{\infty}\}_{\mbf{W}_{n-1}}$ determines an element of $\partial^2\ms{S}(\mbf{W}_{n-2,\infty})$.
 Repeating this, $\{F[\mb{W}]_{\infty}\}_{\mb{W}}$ determines an element of $\partial^n\ms{S}(T)$,
 which is nothing but $\mc{E}^n(F;\mbf{f})$.
 If $F[\mb{W}]_\infty$ belongs to $\ms{S}_{d-1}(\mbf{W}_{n,\infty})$ for any $\mb{W}$, then $\mc{E}^n(F;\mbf{f})=0$.
 Thus, in order to show the claim, it suffices to check that $F[\mb{W}]_{\infty}\in\ms{S}_{d-1}$ for any $\mb{W}$.
 By \ref{baicpropmdfseq-2}, we can find a diagram $\{\mbf{V}_i\}\leftarrow\{\mbf{V}'_i\}\rightarrow\{\mbf{W}_i\}$ such that
 $\mbf{V}'_n\twoheadrightarrow\mbf{W}_n$.
 By \ref{baicpropmdfseq-1}, $F[\{\mbf{V}'_i\}]$ belongs to $\ms{S}_{d-1}(\mbf{V}'_{n,\infty})$.
 The surjectivity of $\mbf{V}'_n\twoheadrightarrow\mbf{W}_n$ and \ref{baicpropmdfseq-1} implies
 that $F[\mb{W}]_{\infty}\in\ms{S}_{d-1}(\mbf{W}_{n,\infty})$.
\end{proof}

\subsection{}
\label{restopart2}
The main result of the second part is the following existence result.
Since the proof is completely of different flavor and independent, we postpone the proof to Part II.

\begin{thm*}[Main result of {[Part II]}]
 Assume that $X\rightarrow S$ is a morphism of finite type between noetherian affine schemes over $k$ 
 {\normalfont(}not necessarily of finite type{\normalfont)}.
 For any $F\in\ms{S}_d(S)$, $m\geq0$, and $\mbf{f}\in\mc{O}_{X\times\Delta_{\mr{geom}}^m}(X\times\Delta_{\mr{geom}}^m)^{\times l}$,
 we can find $\mbf{g}\in\mc{O}_{X}(X)^{\times l'}$ such that for a principal $\Mdf$-sequence $\{\mbf{V}_i\}$ adapted to $(F;\mbf{f}\vee\mbf{g})$,
 $F[\{\mbf{V}_i\}]_{\infty}$ is in $\ms{S}_{d-1}$.
\end{thm*}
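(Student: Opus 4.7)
The plan is to reduce the statement to a geometric problem about Artin-Schreier deformations of constructible sheaves, and then exploit the fact that Laumon's computation of $\Psi_\infty(\mc{F}\otimes\mc{L}(gt))$ lowers the total dimension whenever the differential $\mr{d}g$ is ``sufficiently generic'' with respect to the singular support of $\mc{F}$.

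First I would carry out standard reductions. By working one irreducible component of $S$ at a time and using Lemma \ref{neadimboun}, we may assume $S$ is integral. By writing $F$ as a finite $\mb{Z}$-linear combination of classes of individual sheaves, and using additivity of the filtration $\ms{S}_\bullet$ together with Lemma \ref{baicpropmdfseq}.\ref{baicpropmdfseq-1} to see that $F[\{\mbf{V}_i\}]_\infty$ is additive in $F$, the problem reduces to the case where $F=[\mc{F}]$ for a single constructible finite Tor-dimensional $\Lambda$-sheaf $\mc{F}$ on $X_\eta$ with $\dim\mr{Supp}(\mc{F})\leq d$. Next, by replacing $S$ by $S\times\Delta^m$ and restricting to generic points, we reduce to $m=0$. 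Finally, a single preliminary application of the construction of $\mc{E}_\infty$ along $\mbf{f}$ (whose existence is guaranteed, via Corollary \ref{vgofnbcycgen}, after passing to a suitable modification) replaces $(F;\mbf{f}\vee\mbf{g})$ by $(F';\mbf{g})$ for $F'\in\ms{S}_d$; so it suffices to treat the case $\mbf{f}=\emptyset$.

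The geometric heart of the argument is Laumon's stationary phase computation: if $g\in\mc{O}_{X_T}(X_T)$ is a regular function and $\mc{F}$ has singular support $\mr{SS}(\mc{F})\subset T^*X_T$, then after a modification making $\mc{F}\otimes\mc{L}(gt)$ good (Corollary \ref{vgofnbcycgen}), the nearby cycle $\Psi_\infty(\mc{F}\otimes\mc{L}(gt))$ is generically supported on the locus where $\mr{d}g$ fails to be transversal to $\mr{SS}(\mc{F})$. Since $X_T$ is affine and of finite type over $T$, we can produce an element of $\mc{O}_{X_T}(X_T)$ whose differential meets prescribed generic transversality conditions at finitely many given points. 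Choosing $g_1$ whose differential is generically transverse to the top-dimensional part of $\mr{SS}(\mc{F})$ away from the zero section at the generic points of the $d$-dimensional support, we obtain $F[\mbf{V}_1]_\infty$ whose $d$-dimensional generic locus is a proper closed subset of the original one. Iterating this with a sequence $g_1,\dots,g_{l'}$ chosen adaptively to shave off successive components should eventually land in $\ms{S}_{d-1}$.

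The main obstacle, and the reason this result is deferred to Part~II, is proving that the process genuinely terminates in finitely many steps \emph{uniformly} in $T$ and in every principal $\Mdf$-sequence adapted to $(F;\mbf{g})$. After each step the deformed flat \'etale system lives on a new base $\mbf{V}_{i,\infty}$ arising from an a priori uncontrolled modification, and it is not obvious that a naive ``dimension of the bad locus'' invariant decreases strictly under every choice of principal adapted sequence, since the singular support could in principle expand under the modification. The correct strategy will be to isolate a numerical invariant, in the spirit of Saito's total dimension or conductor of the characteristic cycle, which (i)~is defined for every $F\in\ms{S}_d$, (ii)~is insensitive to the choice of principal adapted modification, thanks to Lemma \ref{baicpropmdfseq}.\ref{baicpropmdfseq-2} and the functoriality \ref{baicpropmdfseq}.\ref{baicpropmdfseq-1}, and (iii)~strictly decreases upon deformation by a well-chosen $g_i$, yielding a noetherian induction that bounds $l'$ in terms of this invariant applied to $F$. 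Setting up and proving the strict decrease of such an invariant, and showing that its vanishing forces $F\in\ms{S}_{d-1}$, is the genuinely new ramification-theoretic input that I would develop in Part~II; my plan for the present paper is to formulate this numerical statement precisely and reduce the theorem to it.
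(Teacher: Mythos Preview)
The paper does not contain a proof of this statement: it explicitly says ``Since the proof is completely of different flavor and independent, we postpone the proof to Part~II'' (see \S\ref{restopart2}). So there is nothing in the present paper to compare your argument against; the theorem is simply stated here and used as a black box to deduce Corollary~\ref{restopart2} and Theorem~\ref{rampaiconssheaf}.

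That said, two of your ``standard reductions'' are not correct as stated, and they touch exactly the feature that makes the theorem hard. First, you cannot reduce to $m=0$ by absorbing $\Delta^m$ into $S$: if you replace $S$ by $S\times\Delta^m$ and $X$ by $X\times\Delta^m$, then the conclusion you would obtain is the existence of $\mbf{g}\in\mc{O}_{X\times\Delta^m}(X\times\Delta^m)^{\times l'}$, whereas the theorem demands $\mbf{g}\in\mc{O}_X(X)^{\times l'}$, i.e.\ a sequence constant in the $\Delta^m$-direction. This constancy is precisely what is needed in the proof of Lemma~\ref{contradmpri} (see the step where one uses $\phi_k^*(\mbf{f}_i\vee\mbf{g})=\phi_k^*(\mbf{f}_i)\vee\mbf{g}$) and is one of the genuine constraints of the problem. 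Second, your reduction to $\mbf{f}=\emptyset$ by ``a single preliminary application of $\mc{E}_\infty$ along $\mbf{f}$'' changes the base: after such a step the resulting $F'$ lives over $\mbf{V}_\infty$ for some modification $V\to S\times\Box$, so the $\mbf{g}$ you subsequently construct would lie in $\mc{O}_{X_{\mbf{V}_\infty}}$ rather than in $\mc{O}_X(X)$. Both reductions quietly relax the requirement that $\mbf{g}$ be defined on the original base, which is exactly the uniformity you correctly flag later as ``the main obstacle''.

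Your identification of the mechanism (Artin--Schreier twists plus Laumon-type stationary phase lowering support dimension when $\mr{d}g$ is generically transverse) and of the difficulty (controlling termination uniformly across all principal adapted $\Mdf$-sequences via a ramification-theoretic invariant) is in line with what the paper's introduction hints at for Part~II. But the reductions above need to be reworked so that $\mbf{g}$ stays on $X$ throughout.
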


\begin{cor}
 \label{rampaiconssheaf}
 The deformation system $(\mr{gr}_{d,i}(\ms{S}_{X/S}),\mu_{X/S},\mc{E}_{X/S})$ is effaceable for any $(d,i)\neq(1,0)$.
\end{cor}
\begin{proof}
 We must check the condition of Definition \ref{dfnramsys}.
 Assume we are given $(F;\mbf{f})\in(\ms{S}_{d,i}\otimes\mu^{\infty})(T\times\Delta_{\mr{geom}}^m)$.
 It suffices to find a finite sequence $\mbf{g}\subset\mc{O}_{X_i/S}(T)\subset\mu_{X/S}(T)$ Zariski locally with respect to each point of $T$
 so that $(F;\mbf{f}\vee\mbf{g})$ belongs to $\Eff_{\mb{S}_{d,i}}$.
 Shrinking $T$ around the point, we may assume that $T$ is affine.
 Since we took $X_i\rightarrow S$ to be affine, $(X_i)_T$ is affine as well.
 Given Lemma \ref{baicpropmdfseq}.\ref{baicpropmdfseq-4}, checking the condition is precisely what is asserted in Theorem \ref{restopart2}.
\end{proof}

\begin{test}
 Previous proof:
 We must check the condition of Definition \ref{dfnramsys} for each piece of the filtration.
 Assume we are given $\{(F_k;\mbf{f}_k)\}_{k\in I}$ in $(\ms{S}_{d,i}\otimes\mu^{\infty})(T\times\Delta^m)$ where $I$ is a finite set.
 It suffices to find a finite sequence $\mbf{g}\subset\mc{O}_{X_i/S}(T)\subset\mu_{X/S}(T)$ Zariski locally with respect to each $t\in T$.
 Shrinking $T$ around $t$, we may assume that $T$ is affine.
 Since we took $X_i\rightarrow S$ to be affine, $(X_i)_T$ is affine as well.
 Thus, in the case $\#I=1$, the condition is exactly the contents of Corollary \ref{restopart2}.
 In the general case where $I=\{1,\dots,m\}$, we find a function $\mbf{g}_1$ so that $(F_1;\mbf{f}_1\vee\mbf{g}_1)$ is in $\ms{S}_{d,i-1}$.
 Then we find a function $\mbf{g}_2$ so that $(F_2;\mbf{f}_2\vee\mbf{g}_1\vee\mbf{g}_2)$ is in $\ms{S}_{d,i-1}$.
 We repeat this process to get $\mbf{g}_1,\dots,\mbf{g}_m$.
 The sequence of functions $\mbf{g}=\mbf{g}_1\vee\mbf{g}_2\vee\dots\vee\mbf{g}_m$ meets our need.
\end{test}

\subsection*{Construction of characteristic cycles}

\subsection{}
Now, let us construct the characteristic cycle.
First let us explain the construction of non-microlocalized characteristic cycles fixing a morphism $X\rightarrow S$.
Logically, this paragraph is unnecessary, but this should make it easier to understand the main idea.

Let $X\rightarrow S$ be a morphism, and assume we are given a six functor formalism as in \ref{recalfixsixthe},
and recall from Corollary \ref{coefsys} that we have a reversible coefficient $\Hcoe_{X/S}$.
We also assume that {\em $\mr{char}(k)$ is invertible in $R$} and the six functor formalism admits trace maps (cf.\ \ref{tracemapdfn}).
Let $\ul{z}_{X/S}\colon\mr{Sch}_{/S}^{\mr{op}}\rightarrow\Mod_R$ be the functor sending $T$ to the abelian group $z(X_T/T,0)\otimes R$.
We have the morphism of presheaves:
\begin{equation*}
 \ms{S}_{X/S,0}
  \rightarrow
  \ul{z}_{X/S}
  \rightarrow
  \Hcoe_{X/S},
\end{equation*}
where the first homomorphism is constructed in \ref{dfnofrk0assmap}, and the second by the trace map \ref{tracemapdfn}.
Invoking Corollary \ref{rampaiconssheaf}, the inclusion $\ms{S}_{X/S,0}\hookrightarrow\ms{S}_{X/S}$ is localizing in the sense of \ref{mainconscc}.
Thus, by Theorem \ref{mainconscc}, the above homomorphism extends (essentially uniquely) to a homomorphism
\begin{equation*}
 \mr{CC}_X\colon\ms{S}_{X/S}\rightarrow\Hcoe_{X/S}.
\end{equation*}
By taking the global section and taking $\pi_0$, we have the homomorphism of $R$-modules
\begin{equation*}
 \mr{CC}\colon
  \Comp(X/S)\rightarrow\pi_0\Gamma(S,\ms{S}_{X/S})\rightarrow
  \pi_0\Gamma(S,\Hcoe_{X/S})
  \simeq
  \pi_0\Hbm(X/S,R_S).
\end{equation*}
This homomorphism is called the ({\em non-microlocalized}) {\em characteristic cycle homomorphism}.
Note that, in the case $S=\mr{Spec}(k)$ where $k$ is perfect and if we take the six functor formalism to be the
one associated with the $\mb{Z}[1/p]$-motivic cohomology theory (cf.\ Example \ref{introsixfuncform}),
$\pi_0\Hbm(X/S,\mb{Z}[1/p])\cong\mr{CH}_0(X)[1/p]$ and $\Comp(X/S)\cong\mr{K}_0\mr{Cons}(X)$.
We wish to microlocalize the construction by means of \S\ref{sect3}.
For this, we need to upgrade this construction to a relative setting.

\subsection{}
We have an abelian presheaf $z_{X/S}\colon(X/S)_{\mr{zar}}^{\mr{op}}\rightarrow\Mod_R$ sending $U\rightarrow T$ to $z(U/T,0)\otimes R$.
In \ref{introopenbm}, we defined the $\infty$-presheaf $\Hbm_{X/S}:={}^{\mr{open}}\Hbm_{X/S}\colon(X/S)_{\mr{zar}}^{\mr{op}}\rightarrow\Mod_R$
sending $U\rightarrow T$ to $\Hbm(U/T,R_T)$.
In view of the exact sequence \cite[4.3.1]{SV} and the construction of $\Hbm_{X/S}$ as well as Lemma \ref{cdhdsvalshv}.\ref{cdhdsvalshv-2},
the trace map induces the morphism of functors $z_{X/S}\rightarrow\Hbm_{X/S}$.

On the other hand, we have the $\infty$-presheaf $\ms{S}:=\ms{S}_{X/S}$ on $(X/S)_{\mr{zar}}$.
By Lemma \ref{compdim0nea}, we have the morphism $\ms{S}_0\rightarrow z_{X/S}$ of abelian sheaves.
Combining together, we have the diagram $\ms{S}\leftarrow\ms{S}_0\xrightarrow{\mr{CC}}\Hbm$ in $\mr{Fun}((X/S)^{\mr{op}}_{\mr{zar}},\Mod_{\mb{Z}})$.
Now, the functor $(X/S)_{\mr{zar}}\rightarrow\mr{Sch}_{/S}$ sending $U\rightarrow T$ to $T\rightarrow S$ is a Cartesian fibration.
The inclusion $\ms{S}_0\circ a_{U/T}\hookrightarrow\ms{S}\circ a_{U/T}$ is localizing in the sense of \ref{mainconscc} by Corollary \ref{rampaiconssheaf}.
On the other hand, $\Hbm\circ a_{U/T}$ is reversible by Theorem \ref{revcons}.
Thus, invoking Corollary \ref{familyextthm}, the morphism $\ms{S}_0\rightarrow\Hbm$ extends essentially uniquely to a morphism
$\mr{CC}_{X/S}\colon\ms{S}_{X/S}\rightarrow\Hbm$ in $\mr{Fun}((X/S)^{\mr{op}}_{\mr{zar}},\Mod_{\mb{Z}})$.
This is the parameterized version of non-microlocalized characteristic cycle map.

Now, let us consider the functoriality.
Assume we are given a proper morphism $h\colon X\rightarrow Y$ over $S$.
Since the functor $(Y/S)_{\mr{zar}}\rightarrow(X/S)_{\mr{zar}}$ sending $(V/T)$ to $(V\times_YX/T)$
is continuous and commutes with finite limits fiberwise, it induces the geometric morphism $\Shv((X/S)_{\mr{zar}})\rightarrow\Shv((Y/S)_{\mr{zar}})$, also denoted by $h$.
Since the pushforward functor along proper morphism and the nearby cycle functor commute,
we have the following morphism (on the left) and homotopy commutative diagram (on the right):
\begin{equation*}
 \label{zerohdia}\tag{$\star$}
 \xymatrix{
  h_*\ms{S}_{X/S}\ar[d]_{h_*}\\
 \ms{S}_{Y/S},}
  \qquad
  \xymatrix@C=50pt{
  h_*\ms{S}_{X/S,0}\ar[r]^-{\mr{CC}_{X/S}}\ar[d]_{h_*}&h_*\Hbm_{X/S}\ar[d]^{h_*}\\
 \ms{S}_{Y/S,0}\ar[r]^-{\mr{CC}_{Y/S}}&\Hbm_{Y/S}.
  }
\end{equation*}
Now, we have the following diagram (which is possibly not commutative for the moment):
\begin{equation*}
 \xymatrix@C=50pt{
  h_*\ms{S}_{X/S}\ar[r]^-{\mr{CC}_{X/S}}\ar[d]_{h_*}&h_*\Hbm_{X/S}\ar[d]^{h_*}\\
 \ms{S}_{Y/S}\ar[r]^-{\mr{CC}_{Y/S}}&\Hbm_{Y/S}.
  }
\end{equation*}
In fact this diagram is homotopy commutative.
Indeed, by invoking Corollary \ref{familyextthm} again, the morphism
$\Map(h_*\ms{S}_{X/S},\Hbm_{Y/S})\rightarrow\Map(h_*\ms{S}_{X/S,0},\Hbm_{Y/S})$ is an equivalence.
Thus, we must show the homotopy commutativity of the diagram after restricting to $h_*\ms{S}_{X/S,0}$,
but this is nothing but the homotopy commutative diagram (\ref{zerohdia}).
It is worth noting that we did not make any smoothness assumption so far.
In this sense, our non-microlocalized characteristic cycle is a generalization of that of Saito that we will recall in the next section.

\subsection{}
Finally, we will microlocalize our characteristic cycle.
Let $X\rightarrow S$ be a scheme of finite type, and we have constructed the morphism $\ms{S}_{X/S}\rightarrow\Hbm_{X/S}$
of $\infty$-presheaves on $(X/S)_{\mr{zar}}$.
If $X$ is {\em smooth} over $S$, we consider the base data $(X,T^*(X/S),\mc{O}_X,\mr{d}_X)$ (cf.\ Example \ref{dfnbasedata}),
where $\mr{d}_X\colon\mc{O}_X\rightarrow\Omega^1_{X/S}$ is the usual differential.
We have constructed in \ref{microlocconstofE} the $p$-deformation system for this $\ms{S}_{X/S}$.
By the construction of \ref{consmicccabs}, we have the ({\em microlocalized}) {\em characteristic cycle map}
\begin{equation*}
 \mr{CC}^{\mu}_{X/S}\colon(\ms{S}_{X/S})_C\rightarrow\rH_C\bigl(T^*(X/S)/X,\omega_{X/S}\bigr).
\end{equation*}
Here, we recall that $\rH_C\bigl(T^*(X/S)/X,\mc{F})\cong\Mor_{\mc{D}(C)}(R_C,i^!\pi^*\mc{F})$ for $\mc{F}\in\mc{D}(X)$ where $C\xrightarrow{i}T^*X\xrightarrow{\pi}X$, and
$\omega_{X/S}:=f^!R_S$ where $f\colon X\rightarrow S$.
The formation is compatible with base change of $S$ by smooth morphism $S'\rightarrow S$ by \ref{functofmicccabs}.

\subsection{}
Now, let us establish the functoriality.
Assume we are given a proper morphism $h\colon X\rightarrow Y$ of smooth $S$-schemes of finite type.
Consider the canonical morphisms $T^*(X/S)\xleftarrow{g}T^*(Y/S)\times_YX\xrightarrow{h'}T^*(Y/S)$.
We will construct the following homotopy commutative diagrams:
\begin{gather*}
 \xymatrix{
  (\ms{S}_{X/S})_C\ar[r]\ar[d]\ar@{}[rd]|{\ccirc{1}}&\rH_C(T^*X)\ar[d]\\
 (\ms{S}_{X/S})_{g^{-1}(C)}\ar[r]&\rH_{g^{-1}(C)}(T^*Y\times_YX),
  }
  \qquad
  \xymatrix{
  (\ms{S}_{X/S})_{g^{-1}(C)}\ar[r]\ar[d]\ar@{}[rd]|{\ccirc{2}}&\rH_{g^{-1}(C)}(T^*Y\times_YX)\ar[d]\\
 (h_*\ms{S}_{X/S})_{h'g^{-1}(C)}\ar[r]&\rH_{h'g^{-1}(C)}(T^*Y),
  }\\
 \xymatrix{
 (h_*\ms{S}_{X/S})_{h'g^{-1}(C)}\ar[r]\ar[d]\ar@{}[rd]|{\ccirc{3}}&\rH_{h'g^{-1}(C)}(T^*Y)\ar[d]\\
 (\ms{S}_{Y/S})_{h'g^{-1}(C)}\ar[r]&\rH_{h'g^{-1}(C)}(T^*Y).
 }
\end{gather*}
These diagrams all follow from \ref{functofmicccabs}.
The diagram $\ccirc{1}$ follows from the functoriality of base data for the morphism
$(X,T^*(Y/S)\times_YX,h^{-1}\mc{O}_Y,\mr{d}_Y)\rightarrow(X,T^*(X/S),\mc{O}_X,\mr{d}_X)$.
The diagram $\ccirc{2}$ follows from the functoriality of the pushforward via the morphism of schemes $h$.
The diagram $\ccirc{3}$ follows from the functoriality with respect to the homomorphism of presheaves $h_*\ms{S}_{X/S}\rightarrow\ms{S}_{Y/S}$.
Combining these diagrams, we have the following theorem.

\begin{thm}
 \label{mainresul}
 Recall that $S$ is an excellent scheme of finite Krull dimension,
 and we are fixing a six functor formalism as in {\normalfont\ref{recalfixsixthe}} with trace maps such that the ring of coefficient $R$ contains $\mr{char}(k)^{-1}$.
 Let $h\colon X\rightarrow Y$ be a proper morphism between smooth $S$-schemes of finite type, and let $C$ be a conic in $T^*(X/S)$.
 Consider the canonical morphisms $T^*(X/S)\xleftarrow{g}T^*(Y/S)\times_YX\xrightarrow{h'}T^*(Y/S)$, and let $h_\circ(C):=h'(g^{-1}(C))$.
 We have the following homotopy commutative diagram:
 \begin{equation*}
  \xymatrix@C=50pt{
   (\ms{S}_{X/S})_C\ar[d]_{h_*}\ar[r]^-{\mr{CC}^{\mu}_{X/S}}&
   \rH_C(T^*(X/S),\omega_{X/S})\ar[d]^{h'_*g^*}\\
  (\ms{S}_{Y/S})_{h_\circ C}\ar[r]^-{\mr{CC}^{\mu}_{Y/S}}&\rH_{h_\circ C}(T^*(Y/S),\omega_{Y/S}).
   }
 \end{equation*}
\end{thm}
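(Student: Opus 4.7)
\medskip

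The plan is to realize the desired square as the vertical concatenation of three homotopy commutative squares, each of which is an instance of one of the four functoriality patterns for $\mr{CC}^{\mu}$ established in \ref{functofmicccabs}. Concretely, I will factor the vertical map $h'_*g^*$ on the cohomology side as $\rH_C(T^*X)\xrightarrow{g^*}\rH_{g^{-1}(C)}(T^*Y\times_YX)\xrightarrow{h'_*}\rH_{h_\circ C}(T^*Y)$, and match each factor with a compatible operation on the sheaf side.

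For the top square $\ccirc{1}$, I would apply the functoriality of microlocalization under change of base data to the morphism of base data over $X$
\begin{equation*}
(X,T^*Y\times_YX,h^{-1}\mc{O}_Y,\mr{d}_{h^{-1}\mc{O}_Y})\longrightarrow(X,T^*X,\mc{O}_X,\mr{d}_X)
\end{equation*}
induced by the differential of $h$, equipping $\ms{S}_{X/S}$ with the pulled back support structure $m^*\mc{E}'$ from \ref{functofmicccabs}. This yields $\ccirc{1}$ with the right vertical morphism being $g^*$. For the middle square $\ccirc{2}$, I would apply the functoriality for the change of schemes with respect to the proper morphism $h\colon X\to Y$, taking the base data $(Y,T^*Y,\mc{O}_Y,\mr{d}_Y)$ on $Y$ and its pullback $(X,T^*Y\times_YX,h^{-1}\mc{O}_Y,\mr{d}_{h^{-1}\mc{O}_Y})$ on $X$; this produces $\ccirc{2}$ with right vertical $h'_*$. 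For the bottom square $\ccirc{3}$, I would invoke the functoriality under change of presheaves, applied to the canonical morphism $h_*\ms{S}_{X/S}\to\ms{S}_{Y/S}$ induced by compatibility of $\mr{R}h_*$ with nearby cycles for proper $h$, giving $\ccirc{3}$ with identity as the right vertical.

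The three squares can then be stacked, and vertical composition of their right edges yields $h'_*\circ g^*$, while composition of the left edges yields the proper pushforward $h_*\colon(\ms{S}_{X/S})_C\to(\ms{S}_{Y/S})_{h_\circ C}$; the horizontal edges are $\mr{CC}^{\mu}_{X/S}$ at the top and $\mr{CC}^{\mu}_{Y/S}$ at the bottom. This exhibits the desired square as a concatenation of homotopy commutative squares, hence itself homotopy commutative.

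The main subtleties I anticipate are bookkeeping ones rather than essential obstacles. First, I must verify that the morphism of base data in $\ccirc{1}$ genuinely sends the support structure $\mc{E}'$ (built in \ref{microlocconstofE} from the differential on $\mc{O}_X$) to a support structure whose microsupport condition for $g^{-1}(C)$ is implied by that for $C$; this comes down to the identity $\mr{d}(h^*f)=g\circ\mr{d}_Y(f)$ on $X$ after pullback, together with the set-theoretic identity $(\mr{d}h^{-1}f)^{-1}(g^{-1}C)=(\mr{d}_Y f)^{-1}(C)$ under the map $X\to T^*Y\times_YX$. Second, for $\ccirc{3}$ I need the commutativity of the square of support structures
\begin{equation*}
\xymatrix{
h_*\ms{S}_{X/S}\otimes\mc{O}_{Y/S}\ar[r]^-{h_*\mc{E}'_X}\ar[d]&\partial h_*\ms{S}_{X/S}\ar[d]\\
\ms{S}_{Y/S}\otimes\mc{O}_{Y/S}\ar[r]^-{\mc{E}'_Y}&\partial\ms{S}_{Y/S},
}
\end{equation*}
which reduces via the definition of $\mc{E}'$ in \ref{microlocconstofE} to the commutation of proper pushforward with twisting by $\mc{L}(h^*f\cdot t)=h^*\mc{L}(ft)$ and with nearby cycles, i.e., the projection formula $\mr{R}h_*(\mc{F}\otimes h^*\mc{L}(ft))\simeq\mr{R}h_*\mc{F}\otimes\mc{L}(ft)$ together with the proper base change compatibility $h_*\Psi\simeq\Psi h_*$. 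Once these compatibilities are checked, concatenation gives the statement directly, so no further input from \S\ref{sect1} or \S\ref{sect2} is required.
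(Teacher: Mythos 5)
Your decomposition into the three squares $\ccirc{1}$, $\ccirc{2}$, $\ccirc{3}$ (change of base data via $g$, change of schemes via $h$, change of presheaves via $h_*\ms{S}_{X/S}\to\ms{S}_{Y/S}$) is exactly the paper's proof, and each square is attributed to the same functoriality from \ref{functofmicccabs}. Your extra verification remarks on the compatibility of support structures across the squares are correct checks that the paper leaves implicit.
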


\begin{rem*}
 For a separated scheme $X$ of finite type over $S$ and a closed embedding $i\colon X\hookrightarrow P$ such that $P$ is a smooth separated $S$-scheme,
 we may consider the base data $(X,T^*(P/S)\times_P X,i^{-1}\mc{O}_P,i^{-1}\mr{d}_P)$.
 We endow $\ms{S}_{X/S}$ with the support structure
 $\ms{S}_{X/S}\otimes i^{-1}\mc{O}_P\rightarrow\ms{S}_{X/S}\otimes\mc{O}_X\xrightarrow{\mc{E}'_{X/S}}\partial\ms{S}_{X/S}$.
 Then we have the characteristic cycle map $(\ms{S}_{X/S})_C\rightarrow\rH_C(T^*(P/S)\times_PX,\omega_{X/S})$.
 We also have a variant of the pushforward formula if we are given a morphism $P\rightarrow Y$ between smooth $S$-schemes
 such that the composite $X\rightarrow Y$ is proper.
 This is exactly the form conjectured by Saito in \cite[Conjecture 1]{S2}.
 Since the verification is the same, we leave the details to the reader.
\end{rem*}

\subsection{Summary}\hspace{1.7ex}
\label{summarymain}
For the convenience of the reader, let us summarize what we have proven in specific cases of interest.
Let $k$ be a perfect field of characteristic $p>0$, and $S$ be a $k$-scheme of finite type.
We choose the six functor formalism associated to the $\mb{Z}[1/p]$-motivic cohomology theory (cf.\ Example \ref{introsixfuncform}).
Then we may apply the theorem above.
If we take $S=\mr{Spec}(k)$, we have $\pi_0\rH_C(T^*(X/S),\omega_{X/S})\cong\mr{CH}_{\dim(X)}(C)[1/p]$.
Thus, by taking $\pi_0$ in the homotopy commutative diagram of Theorem \ref{mainresul},
we get a similar equality as the one in Theorem of Introduction for our characteristic cycle.
At this moment, we have not proven the coincidence between our characteristic cycle and Saito's.
This will be proven in the next section.

\begin{rem*}
 \begin{enumerate}
  \item In the case where $S$ is a $k$-scheme of finite type, we have another interesting consequence,
	at least for non-microlocalized characteristic cycles.
	Since the non-microlocalized characteristic cycle homomorphism
	$\mr{CC}_{X/S}\colon\ms{S}_{X/S}\rightarrow\Hbm_{X/S}$ is a homomorphism of {\em presheaves},
	it contains interesting functorial information by itself.
	For example, assume $S$ is a smooth integral curve with generic point $\eta$ and $k$ is separably closed.
	Then we have a map $\mr{K}_0\mr{Cons}(X_{\eta})\rightarrow\Gamma(X/S,\ms{S}_{X/S})$
	since any sheaf on $X$ is already good over $S$.
	Let $i\colon s\hookrightarrow S$ be a closed immersion from a closed point $s$.
	Then the functoriality of $\mr{CC}_{X/S}$ implies that the following diagram commutes
	\begin{equation*}
	 \xymatrix@C=50pt{
	  \mr{K}_0\mr{Cons}(X_\eta)\ar[r]^-{\mr{CC}_{X_\eta/\eta}}\ar[d]_{\Psi_{s\leftarrow\overline{\eta}}}&\Hbm(X/S)\ar[d]^{i^*}\\
	 \mr{K}_0\mr{Cons}(X_s)\ar[r]^-{\mr{CC}_{X_s/s}}&\Hbm(X_s/s).
	  }
	\end{equation*}
	This can be regarded as a compatibility of the characteristic cycle homomorphism and the nearby cycle functor.
	This can even be microlocalized, but for this, we need to modify the definition of microsupport in Definition \ref{suppstrdfn}.
	More precisely, in the definition, we considered only $T\rightarrow S$ which is smooth,
	but we need to consider morphisms of the form $T\rightarrow Z\hookrightarrow S$
	where the first morphism is smooth and the second one is an immersion.
	The detail is left to the interested reader.

  \item The assumption that $S$ is of finite type over a field $k$
	is made merely because the trace map for motivic cohomology has been constructed only under the assumption.
	We did not try particularly hard to lift these assumptions.
	In view of the previous remark, constructing the trace map in the case where $S$ is a trait may be of interest.

  \item \label{integrem}
	In the theorem above, we need to invert $\mr{char}(k)$.
	We have two places where we need to make this assumption.
	First is to construct the trace map in the motivic setting in \cite{Atr}.
	If we assume the existence of the resolution of singularities, this assumption can be lifted.
	More seriously, we need the inversion in the construction of the homomorphism $r$ in \ref{dfnofrk0assmap}.
	For this construction, the inversion is indispensable.
 \end{enumerate}
\end{rem*}

\section{Comparison with Saito's characteristic cycles}
\label{sect6}
In this final section, we compare the microlocalized characteristic cycle we constructed with that of T. Saito.
We keep using the ring $\Lambda$ from the previous section.

\subsection{}
Let us recall the definition of singular support:

\begin{dfn*}[Beilinson \cite{BSS}]
 Let $X$ be a smooth scheme over a field $k$, and $\mc{F}$ be a constructible $\Lambda$-module on $X$.
 \begin{enumerate}
  \item Let $C\subset T^*X$ be a closed conic subset.
	A pair of morphisms $(X\xleftarrow{f} U\xrightarrow{g}Y)$ is said to be {\em $C$-transversal at $u\in U$}
	if $\mr{Ker}(\mr{d}f)\cap C=0$ in $T^*X\times_X\{u\}$ and $(\mr{d}g)^{-1}(f^{\circ}C)=0$ in $T^*Y\times_Y\{u\}$,
	where $f^{\circ}C:=\mr{Im}(C\times_X U\hookrightarrow T^*X\times_XU\xrightarrow{\mr{d}f} T^*U)$.

  \item We define the {\em singular support of $\mc{F}$} denoted by $\mr{SS}(\mc{F})$ to be the smallest conic in $T^*X$ having the following property:
	For any pair $(X\xleftarrow{f}U\xrightarrow{g}Y)$ and $u\in U$ which is $\mr{SS}(\mc{F})$-transversal at $u\in U$,
	$g$ is universally locally acyclic at $u$ with respect to $f^*\mc{F}$.
 \end{enumerate}
\end{dfn*}
The main result of Beilinson states that $\mr{SS}(\mc{F})$ exists and, surprisingly, it is equidimensional of dimension equal to $\dim(X)$.

\subsection{}
\label{dfnSaitocc}
Now, let us recall the definition of characteristic cycle:

\begin{dfn*}[Saito {\cite[Def 5.10]{S}}]
 Let $X$ be a smooth scheme of finite type over a {\em perfect field} $k$ equidimensional of dimension $d$,
 and $\mc{F}\in D_{\mr{ctf}}(X,\Lambda)$.
 \begin{enumerate}
  \item Let $C$ be a conic closed subset of $T^*X$.
	A point $x\in X$ is said to be an {\em isolated $C$-characteristic point} of a function $f\colon X\rightarrow\mb{A}^1$
	if $x$ is an isolated point in $\iota_X^{-1}(\mr{d}f)^{-1}(C)$. Here $\iota_X\colon X\hookrightarrow T^*\mb{A}^1\times_{\mb{A}^1}X$
	is the section defined by $\mr{d}s$ using the coordinate $s$ of $\mb{A}^1$.

  \item Write $\mr{SS}(\mc{F})=\bigcup_{i\in I}W_i$ by irreducible components where $W_i$ is integral.
	There exists a unique number in $n_i\in\mb{Z}[1/p]$ for each $i\in I$ such that,
	for any finite extension $l/k$ of fields,
	for any local function $f\colon U\rightarrow\mb{A}^1_k$ where $U\subset X\otimes_kl$ is an open subscheme,
	and for any isolated $\mr{SS}(\mc{F})$-characteristic point $x\in U$ of $f$,
	we have the equality
	\begin{equation}
	 \label{totdimccdef}
	 -\mr{totdim}\bigl(\Phi_f(\mc{F}|_U)_{\overline{x}}\bigr)=\bigl(\mr{d}f,\sum n_i[W_i]\bigr)_{T^*U,x},
	\end{equation}
	where $\mr{totdim}$ denotes the total dimension ({\it i.e.}\ $\mr{totdim}(V)=\dim(V)+\mr{Sw}(V)$),
	and the right hand side denotes the intersection number in $T^*U$ at $x$.
	We define $\mr{CC}(\mc{F}):=\sum n_i[W_i]$ in $\mr{Z}_d(T^*X)[1/p]$.
 \end{enumerate}
\end{dfn*}

\begin{rem*}
 In fact, Saito uses \'{e}tale local functions with target scheme arbitrary smooth curve,
 contrary to using only Zariski local functions on $X\otimes_k l$ with target scheme $\mb{A}^1$, to define characteristic cycles.
 Thus, {\em a priori}, it is not clear if the numbers $n_i$ are uniquely determined in our definition.
 However, they are in fact unique.
 To see this, note that it suffices to show the existence in the case where $k$ is algebraically closed.
 In this situation, Saito checks conditions of \cite[Prop 5.8.1]{S} for the $\mb{Z}[1/p]$-valued function on
 isolated $\mr{SS}(\mc{F})$-characteristic points defined by the left hand side of (\ref{totdimccdef}) above,
 and \cite[Prop 5.8.1]{S} uses only Zariski-local functions as in our definition.
 Moreover, in view of \cite[Prop 5.8.2]{S}, we may assume that the target of the function is $\mb{A}^1$.
\end{rem*}

\subsection{}
\label{Laumonthm}
First, let us reformulate results of Laumon on local Fourier transform, which is a crucial observation for us:

\begin{thm*}[Laumon]
 Let $k$ be a perfect field, let $T:=(\mb{A}^1_k)_{(0)}$ {\normalfont(}resp.\ $T':=(\mb{A}^1_k)_{(0)}${\normalfont)}
 be the henselization with coordinate $t$ {\normalfont(}resp.\ $t'${\normalfont)},
 and let $T\xleftarrow{\mr{pr_1}}T\times_k T'\xrightarrow{\mr{pr}_2}T'$ be the projections.
 For $\mc{F}\in D_{\mr{ctf}}(T)$, we have
 \begin{equation*}
  -\mr{totdim}(\Phi_{\mr{id}}(\mc{F}))=\dim\bigl(\Psi_{\mr{pr}_2}(\mr{pr}_1^*\mc{F}\otimes\mc{L}_!(t/t'))_{(\overline{0},\overline{0}')}\bigr).
 \end{equation*}
 Here, $\mc{L}_!(t/t')$ is the zero-extension of $\mc{L}(t/t')$ on $T\times T'\setminus T\times\{0'\}$.
\end{thm*}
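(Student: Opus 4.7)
The strategy is to reduce to elementary building blocks and then invoke Laumon's calculation of local Fourier transforms. Both sides of the claimed equality are additive with respect to distinguished triangles in $\mathcal{F}$: the left side because the total dimension of vanishing cycles is additive (Euler--Poincar\'e), and the right side because $\mathrm{pr}_1^*$ is t-exact up to shift, tensoring with the smooth rank-one sheaf $\mathcal{L}_!(t/t')$ on $T\times(T'\setminus\{0'\})$ is exact, zero-extension along a divisor is exact, the nearby cycle functor $\Psi_{\mathrm{pr}_2}$ is triangulated, and taking the alternating sum of stalk dimensions is additive. Hence it suffices to verify the identity on a generating family of $D_{\mathrm{ctf}}(T)$.

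I would take as generators the skyscrapers $i_{0*}M$ and the sheaves $j_!L$, where $i_0\colon\{0\}\hookrightarrow T$ and $j\colon\eta\hookrightarrow T$ is the generic point, and $L$ is a finite continuous Galois representation on $\eta$. For a skyscraper $i_{0*}M$, the nearby cycles $\Psi_{\mathrm{id}}$ vanish, so $\Phi_{\mathrm{id}}(i_{0*}M)\simeq M[1]$ and $-\mathrm{totdim}=\mathrm{rk}(M)$. On the right side, $\mathrm{pr}_1^*i_{0*}M$ is supported on $\{0\}\times T'$, where $\mathcal{L}(t/t')$ restricts to the trivial sheaf away from $t'=0$; the zero-extension across $t'=0$ kills nothing that the nearby-cycle stalk at $(\overline{0},\overline{0}')$ sees, and one computes directly that the stalk equals $M_{\overline{0}}$, of dimension $\mathrm{rk}(M)$. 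For $\mathcal{F}=j_!L$ we have $\mathcal{F}_{\overline{0}}=0$, so $-\mathrm{totdim}(\Phi_{\mathrm{id}}(j_!L))=\mathrm{totdim}(L)=\mathrm{rk}(L)+\mathrm{Sw}(L)$. It remains to identify the right side with this quantity.

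This last identification is the content of Laumon's stationary phase computation of local Fourier transforms. Indeed, the construction of $\mathcal{F}^{(0,\infty')}(L)$ in \cite{Lau}, restricted to the henselian bidisc $T\times T'$, is precisely the stalk at $(\overline{0},\overline{0}')$ of $\Psi_{\mathrm{pr}_2}(\mathrm{pr}_1^*j_!L\otimes\mathcal{L}_!(t/t'))$: the coordinate substitution $t'\mapsto 1/t''$ identifies $t/t'=tt''$, converting the sheaf appearing here into the kernel $\mathcal{L}(tt'')$ used by Laumon for the transform at $\infty'$ of a representation at $0$, and the zero-extension across $t'=0$ corresponds to working with $j_{!*}$-type extensions at $t''=\infty$. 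Laumon then establishes (Thm 2.4.3 of \cite{Lau}) the dimension formula
\[
\dim\bigl(\mathcal{F}^{(0,\infty')}(L)\bigr)=\mathrm{rk}(L)+\mathrm{Sw}(L),
\]
which is exactly what is needed.

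The main obstacle is the bookkeeping required to identify the henselian nearby cycle stalk appearing on the right-hand side with Laumon's local Fourier transform. Laumon works globally with the geometric Fourier transform on $\mathbb{A}^1$ and then localizes at each critical point of the Legendre transform; here we are handed a purely local formula and must recognize it as the restriction of his construction. One must check that (a) the stationary phase contribution at $(\overline{0},\overline{0}')$ is isolated, so only the local Fourier transform at $(0,\infty')$ contributes, (b) the change of variable $t'\mapsto 1/t''$ matches the zero-extension used in the statement with the appropriate extension in Laumon's setup, and (c) the non-trivial additive character $\psi$ fixed at the start of \S\ref{sect5} is consistent with the one implicit in Laumon's formula (only the dimension is asserted, so any non-trivial $\psi$ will do). Once these identifications are in place, the proof is complete by combining the skyscraper verification with Laumon's theorem.
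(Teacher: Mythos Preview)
Your approach matches the paper's: both use the localization triangle $j_!j^*\mc{F}\to\mc{F}\to i_*i^*\mc{F}$ to separate the skyscraper and generic-point contributions, compute the skyscraper piece directly as $\dim\mc{F}_{\overline{0}}$, and invoke Laumon's formula $\dim\ms{F}^{(0,\infty')}(L)=\mr{rk}(L)+\mr{Sw}(L)$ \cite[2.4.3]{Lau} together with the vanishing \cite[2.4.2.2]{Lau} for the $j_!L$ piece.

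There is one genuine gap. Laumon's results in \cite{Lau} are stated for $\overline{\mb{Q}}_\ell$-coefficients, whereas here $\Lambda$ is a finite local ring. The paper handles this by first reducing to $\Lambda$ a field (both sides are stable under passing to the residue field), and then using Brauer's theorem to lift $\mc{F}_\eta$ in the Grothendieck group to an object $\widetilde{\mc{F}}_\eta\in D^{\mr{b}}_{\mr{c}}(\eta,E)$ for a finite extension $E/\mb{Q}_\ell$ with residue field $\Lambda$; since rank and Swan conductor are invariant under reduction, Laumon's formula applied to $\widetilde{\mc{F}}_\eta$ transfers back to $\Lambda$. Your proposal cites Laumon directly for $\Lambda$-sheaves without this bridge. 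Separately, your signs in the $j_!L$ case are off on both sides: $\Phi_{\mr{id}}(j_!L)\simeq L$ sits in degree $0$, so $-\mr{totdim}(\Phi_{\mr{id}}(j_!L))=-(\mr{rk}\,L+\mr{Sw}\,L)$, and correspondingly the paper identifies the right-hand side with $-\dim_E\ms{F}^{(0,\infty')}(\widetilde{\mc{F}}_\eta)$ (the nearby cycle being concentrated in degree $1$); the two sign slips cancel, but the intermediate assertions as written are incorrect.
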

\begin{proof}
 Since the invariants appearing in the equality are stable under taking $\otimes^{\mb{L}}_\Lambda(\Lambda/\mf{m})$ where $\mf{m}$ is the maximal ideal,
 we may assume that $\Lambda$ is a field.
 Thus, we may assume that $\mc{F}$ is a constructible sheaf.
 Let $i\colon\{0\}\hookrightarrow T$ be the closed point and $j\colon\eta\hookrightarrow T$ be the generic point.
 By Brauer's theory, we may take a lift $\widetilde{\mc{F}}_{\eta}\in D^{\mr{b}}_{\mr{c}}(\eta,E)$,
 where $E$ is a finite extension of $\mb{Q}_\ell$ whose residue field is $\Lambda$,
 such that the reduction of $\widetilde{\mc{F}}_{\eta}$ is equal to $\mc{F}_\eta:=j^*\mc{F}$
 in the Grothendieck group $\mr{K}_0D^{\mr{b}}_{\mr{c}}(\eta,\Lambda)$.

 Now, we have
 $\mr{totdim}(\Phi_{\mr{id}}(\mc{F}))=\mr{totdim}(\mc{F}_{\eta})-\dim(\mc{F}_{\overline{0}})=\mr{totdim}(\widetilde{\mc{F}}_{\eta})-\dim(\mc{F}_{\overline{0}})$.
 On the other hand, we have $\mr{pr}_1^*i_*i^*\mc{F}\otimes\mc{L}_!(t/t')\cong\mc{F}_{0}\otimes j'_!j'^*\Lambda_{0\times T'}$.
 This implies that $\Psi_{\mr{pr}_2}(\mr{pr}_1^*i_*i^*\mc{F}\otimes\mc{L}_!(t/t'))_{(\overline{0},\overline{0}')}\cong\mc{F}_{\overline{0}}$.
 Since $\mc{L}_!(t/t')$ is $0$ on $\mr{pr}_2^{-1}(0')$, we have $\Psi_{\mr{pr}_2}(\dots)\cong\Phi_{\mr{pr}_2}(\dots)$.
 Recall the definition of local Fourier transform from \cite[2.4.2.3]{Lau}.
 Using this, we can compute as
 \begin{align*}
  \dim&\bigl(\Psi_{\mr{pr}_2}(\mr{pr}_1^*\mc{F}\otimes\mc{L}(t/t'))_{(\overline{0},\overline{0}')}\bigr)\\
  &=
  \dim\bigl(\Psi_{\mr{pr}_2}(\mr{pr}_1^*j_!j^*\mc{F}\otimes\mc{L}(t/t'))_{(\overline{0},\overline{0}')}\bigr)
  +
  \dim\bigl(\Psi_{\mr{pr}_2}(\mr{pr}_1^*i_*i^*\mc{F}\otimes\mc{L}(t/t'))_{(\overline{0},\overline{0}')}\bigr)\\
  &=
  -\dim_{E}\ms{F}^{(0,\infty')}(\widetilde{\mc{F}}_\eta)+\dim_{\Lambda}(\mc{F}_{\overline{0}}),
 \end{align*}
 where we used the vanishing result \cite[2.4.2.2]{Lau} at the last equality.
 Invoking \cite[2.4.3 (i)-(b)]{Lau}, the theorem follows.
\end{proof}

\subsection{}
\label{simplprojform}
We need the following basic result:
\begin{lem*}
 Let $g\colon X\rightarrow Y$ be a morphism of schemes {\normalfont(}not necessary of finite type{\normalfont)}.
 Let $\mc{L}$ be a locally constant constructible flat $\Lambda$-module on $Y$.
 Then for any $\mc{F}\in D^+(X,\Lambda)$ on $X$, we have a canonical isomorphism
 \begin{equation*}
  \mr{R}g_*(\mc{F})\otimes\mc{L}\xrightarrow{\sim}
   \mr{R}g_*(\mc{F}\otimes g^*\mc{L}).
 \end{equation*}
\end{lem*}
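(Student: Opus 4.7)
The plan is to construct the comparison morphism by adjunction and then verify it is an isomorphism étale-locally on $Y$, where $\mc{L}$ trivializes. First, note that because $\mc{L}$ is flat over $\Lambda$ and $\Lambda$ is local, being locally constant constructible means $\mc{L}$ is locally free of finite rank étale-locally on $Y$. In particular, $\mc{L}$ and $g^*\mc{L}$ are flat, so the tensor products in the statement need not be derived.

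Construction of the morphism: By adjunction between $g^*$ and $\mr{R}g_*$, giving a map $\mr{R}g_*(\mc{F})\otimes\mc{L}\to\mr{R}g_*(\mc{F}\otimes g^*\mc{L})$ is equivalent to giving a map
\begin{equation*}
 g^*\bigl(\mr{R}g_*(\mc{F})\otimes\mc{L}\bigr)
 \cong
 g^*\mr{R}g_*(\mc{F})\otimes g^*\mc{L}
 \longrightarrow
 \mc{F}\otimes g^*\mc{L}.
\end{equation*}
We take this to be the counit $g^*\mr{R}g_*(\mc{F})\to\mc{F}$ tensored with $\mr{id}_{g^*\mc{L}}$; this is legitimate since $g^*\mc{L}$ is flat so the tensor is exact. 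Denote the resulting morphism by $\alpha$.

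Reduction to the constant case: The property that $\alpha$ is an isomorphism may be checked after pullback along an étale surjection $u\colon V\to Y$. Choose such a $u$ trivializing $\mc{L}$, so that $u^*\mc{L}\cong\Lambda^n$ for some $n\geq0$. Form the Cartesian square
\begin{equation*}
 \xymatrix{
  X_V\ar[r]^-{u'}\ar[d]_{g'}\ar@{}[rd]|\square&X\ar[d]^{g}\\
 V\ar[r]^-{u}&Y.
  }
\end{equation*}
Since $u$ is étale, the étale pullback $u^*$ is exact and commutes with pushforward, giving a canonical base change isomorphism $u^*\mr{R}g_*\simeq\mr{R}g'_*\,(u')^*$. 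Moreover $u^*$ commutes with the tensor product and with $g^*$ (since $(u')^*g^*\simeq g'^*u^*$). A straightforward diagram chase, using naturality of the counit, shows that $u^*\alpha$ is identified with the morphism
\begin{equation*}
 \mr{R}g'_*\bigl((u')^*\mc{F}\bigr)\otimes\Lambda^n
 \longrightarrow
 \mr{R}g'_*\bigl((u')^*\mc{F}\otimes\Lambda^n\bigr)
\end{equation*}
constructed in the same manner for $g'$ and the constant sheaf $\Lambda^n$. This latter morphism is visibly an isomorphism by $\Lambda$-linearity (equivalently, additivity) of $\mr{R}g'_*$, since both sides decompose as $\mr{R}g'_*((u')^*\mc{F})^{\oplus n}$.

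The only point requiring attention is the compatibility of the adjunction-defined morphism $\alpha$ with étale base change, i.e.\ the commutativity of the diagram relating $u^*\alpha$ to the analogous morphism for $g'$; this is a formal consequence of the naturality of the counit and the standard compatibility $(u')^*g^*\simeq g'^*u^*$, together with the fact that $u^*$ and $(u')^*$ are symmetric monoidal. Conclusion of the lemma then follows since étale-local isomorphisms of complexes on $Y_{\mr{\acute{e}t}}$ are isomorphisms.
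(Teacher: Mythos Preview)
Your proof is correct and follows essentially the same approach as the paper: construct the map by adjunction, then reduce to the case where $\mc{L}$ is constant by \'{e}tale localization. The paper phrases the localization as passing to the spectrum of a strictly henselian local ring (i.e.\ checking at geometric points) rather than pulling back along an \'{e}tale cover, but this is the same argument with less detail.
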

\begin{proof}
 The construction of the homomorphism is standard using adjoints.
 Since, now, the claim is local, we may assume that $Y$ is the spectrum
 of a strictly henselian ring. In this situation, we may assume that
 $\mc{L}$ is the constant sheaf $\Lambda^{\oplus n}$, in which case, the
 claim is obvious.
\end{proof}

\begin{prop}
 \label{mainpropcomp}
 Let $X$, $S$ be schemes of finite type over a perfect field $k$ such that $S$ is smooth and integral.
 Let $\mc{F}$ be an object of $D_{\mr{ctf}}(X)$,
 and $f\colon X\times S\rightarrow\mb{A}^1_S$ be an $S$-morphism, in other words, a ``family of functions parameterized by $S$''.
 Let $x\in X$, $s\in S$ be points, and we denote by $f_s:=f|_{X\times\{s\}}$.
 We put $\Psi_{(x,s)}:=\Psi_{(s,\infty)\leftarrow\eta}\bigl(\mc{F}\otimes\mc{L}_!(ft)\bigr)_{\overline{x}}$,
 where the nearby cycle is taken with respect to the projection
 $\mr{pr}\colon X\times S\times\Box\rightarrow S\times\Box$, $\eta$ is the generic point of $S\times\Box$, and $\mc{L}_!$ is the zero extension of $\mc{L}$.
 \begin{enumerate}
  \item\label{mainpropcomp-1}
       Assume $f_s$ is $\mr{SS}(\mc{F})$-transversal
       {\normalfont(}namely, $f$ is smooth and $df_s(X)\cap\mr{SS}(\mc{F})=\{0\}$ in $T^*X${\normalfont)}.
       Then $\mc{F}\otimes\mc{L}_!(ft)$ is $\mr{pr}$-universally locally acyclic, and in particular, $\Psi_{(x,s)}=0$.

  \item\label{mainpropcomp-2}
       Assume $S=\mr{Spec}(k)$ and $x$ is an isolated characteristic point of $f$.
       Then $\dim(\Psi_x)=-\mr{totdim}(\Phi_f(\mc{F})_{\overline{x}})$.
 \end{enumerate}
\end{prop}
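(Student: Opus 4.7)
My plan is to split the argument into parts (1) and (2), each reducing to a fundamental input: part (1) to Beilinson's characterization of the singular support combined with the vanishing of Artin--Schreier nearby cycles near a pole, and part (2) directly to Laumon's Theorem \ref{Laumonthm}.

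For (1), I would first translate the transversality hypothesis into universal local acyclicity. Applying Definition \ref{dfnSaitocc} to the pair $X \xleftarrow{\mr{pr}_1} X \times S \xrightarrow{f} \mb{A}^1$ at $(x,s)$, the condition $\mr{Ker}(\mr{pr}_1^*) \cap \mr{SS}(\mc{F}) = 0$ is automatic, while a direct cotangent-space computation reduces $(df)^{-1}(\mr{pr}_1^{\circ}\mr{SS}(\mc{F})) = 0$ to the given transversality of $f_s$ to $\mr{SS}(\mc{F})$ at $x$ (crucially using the smoothness of $f_s$). Beilinson's theorem then yields that $\mr{pr}_1^*\mc{F}$ is universally locally acyclic with respect to $f$ at $(x,s)$. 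Changing coordinates $u = 1/t$ on $\Box$ near $t = \infty$, the target becomes the vanishing of $\Psi_{(s,0) \leftarrow \eta}(\mc{F} \otimes \mc{L}_!(f/u))_{\overline{x}}$ for the projection $X \times S \times \mb{A}^1_u \to S \times \mb{A}^1_u$. When $f(x,s) \ne 0$, the substitution $u' = u/f$ turns $\mc{L}_!(f/u)$ into a pullback of $\mc{L}_!(1/u')$, and the ULA combined with a projection-formula argument reduces the vanishing to the classical fact $\Psi_{0 \leftarrow \eta}(\mc{L}_!(1/u'))_{\overline{0}} = 0$ (reflecting that the Artin--Schreier sheaf $\mc{L}(1/u')$ is wildly ramified at $u' = 0$ with no $I_0$-invariants, a special case of the computations in \cite[2.4]{Lau}). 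When $f(x,s) = 0$, I would resolve the indeterminacy of $f/u$ by blowing up the locus $\{f = 0\} \cap \{u = 0\}$ and use the ULA to track the nearby cycle across the exceptional divisor, again reducing to the same Artin--Schreier vanishing.

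For (2), since $x$ is an isolated $\mr{SS}(\mc{F})$-characteristic point, part (1) applied at points $y \ne x$ near $x$ (where $f$ is automatically transversal to $\mr{SS}(\mc{F})$) shows that $\Phi_f(\mc{F})$ is supported at $x$ in some \'{e}tale neighborhood $U \ni x$. Choosing such a $U$ and setting $\mc{G} := R(f|_U)_!(\mc{F}|_U)$ on $\mb{A}^1_{(f(x))}$, proper base change together with the support condition identifies $\Phi_{\mr{id}}(\mc{G})_{\overline{f(x)}}$ with $\Phi_f(\mc{F})_{\overline{x}}$, so that the two total dimensions agree. After the change of variable $u = 1/t$, the identification $\mc{L}_!(f/u) = (f|_U \times \mr{id})^*\mc{L}_!(v/u)$ and proper base change along $f|_U \times \mr{id}: U \times \mb{A}^1_u \to \mb{A}^1_v \times \mb{A}^1_u$ jointly yield $\Psi_x \simeq \Psi_{\mr{pr}_2}(\mr{pr}_1^*\mc{G} \otimes \mc{L}_!(v/u))_{(\overline{0},\overline{0}')}$, and the equality in (2) is then Theorem \ref{Laumonthm} applied to $\mc{G}$.

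The main obstacle is the subcase $f(x,s) = 0$ in (1): the naive change of variable $u' = u/f$ is unavailable, so one must combine the ULA with a geometric resolution of the indeterminacy locus of $f/u$, tracking nearby cycles stage by stage (for instance via Gabber's transitivity \ref{Gabbthm}) to propagate the vanishing across the exceptional divisor, while ensuring that each intermediate sheaf remains constructible and good in the sense required to apply iterated nearby cycles.
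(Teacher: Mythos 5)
Your plan for part (1) begins as the paper's proof does: you translate the transversality of $f_s$ to $\mr{SS}(\mc{F})$ at $x$ into universal local acyclicity of $\mr{pr}_1^*\mc{F}$ with respect to $f$, via the test pair $(X\xleftarrow{\mr{pr}_1}X\times S\xrightarrow{f}\mb{A}^1_S)$ and a local coordinate computation — exactly the paper's last step. From there you diverge, and the divergence introduces a genuine gap. The paper's key device is to push forward along $f$ \emph{before} confronting the Artin--Schreier twist: using the projection formula (Lemma \ref{simplprojform}), the goodness formalism, and Illusie's formula for nearby cycles, one obtains, for $\mc{F}$ that is $f_{(x,s)}$-good,
\begin{equation*}
  \Psi_{(x,s)} \;\simeq\; \Psi_{(s,\infty)\leftarrow\eta}\bigl(h^*\mr{R}f_{(x,s)*}(\mc{F})\otimes\mc{L}(yt)\bigr),
\end{equation*}
where the pushforward now lives on the one-dimensional $(\mb{A}^1_S)_{(f(x),s)}$. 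The ULA hypothesis then forces $\mr{R}f_{(x,s)*}(\mc{F})$ to be \emph{constant} by [SGA $4\tfrac12$, Th.\ Finitude, A.2.6, A.2.4], so the constant fiber $\mc{F}_{\overline{x}}$ factors out and the vanishing reduces to $\Psi_{(s,\infty)\leftarrow\eta}(\mc{L}(yt))_{f(\overline{x})}=0$, which is the Katz--Laumon theorem \cite[1.3.1.2]{Lau} and holds \emph{uniformly} in $f(\overline{x})$, whether or not $f(x,s)=0$.

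Your proposal instead stays on $X\times S\times\Box$ and tries to absorb $f$ into the coordinate change $u'=u/f$. That works when $f(x,s)\ne 0$, but the case $f(x,s)=0$ is not a minor technicality: the substitution is undefined there, the blowup you sketch to resolve $f/u$ introduces an exceptional divisor whose nearby-cycle contribution must be controlled, and running iterated nearby cycles across it would require goodness hypotheses that are not automatic. You correctly flag this as "the main obstacle"; as written it is a genuine unhandled gap. The missing idea is precisely the displayed formula: $f_*$ first (so that ULA makes the pushforward constant), Artin--Schreier twist second (so that Katz--Laumon kills the remaining nearby cycle for \emph{every} fiber of $\mb{A}^1_y$ at once), which removes the case split entirely. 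Your argument for part (2) is essentially the paper's — both reduce to Theorem \ref{Laumonthm} after pushing forward via $f$ over the henselian trait — though I would note that the isolatedness of the support of $\Phi_f(\mc{F})$ near $x$ comes from the definitions of $\mr{SS}$ and isolated characteristic point rather than from part (1), and the swap of $Rf_!$ for $\mr{R}f_{(x,s)*}$ needs the support condition since $f|_U$ is not proper.
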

\begin{proof}
 Let us initialize the notation for a moment.
 For a morphism $f\colon X\rightarrow S$ of schemes and a geometric point $x$ of $X$,
 we denote by $f_{(x)}\colon X_{(x)}\rightarrow S_{(f(x))}$ the induced morphism between henselian local schemes.
 If we are given a morphism $z\colon Z\rightarrow X$, we often denote $z^*\mc{F}$ simply by $\mc{F}$ if no confusion may arise.
 If we are given a morphism $S'\rightarrow S$ and a geometric point $x'$ of $X':=X_{S'}$, we consider the following induced commutative diagram:
 \begin{equation*}
  \xymatrix{
   X'_{(x')}\ar[r]^-{g'}\ar[d]_{f'_{(x')}}&X_{(x)}\ar[d]^{f_{(x)}}\\
  S'_{(s')}\ar[r]^-{g}&S_{(s).}
   }
 \end{equation*}
 Here, $x$, $s$, $s'$ are geometric points of $X$, $S$, $S'$ induced by $x'$.
 Let $\mc{F}\in D_{\mr{ctf}}(X)$.
 The pair $(f,\mc{F})$ is said to {\em satisfy property P} if $\mc{F}$ is $f$-good,
 and for any morphism $S'\rightarrow S$ and any geometric point $x'$ of $X'$, $f'_{(x')}$ is cohomologically proper with respect to $g'^*\mc{F}$.
 In such a case, the adjunction morphism $g^*\mr{R}f_{(x)*}\mc{F}\rightarrow\mr{R}f'_{(x')*}g'^*\mc{F}$ is an isomorphism.
 Indeed, by cohomological properness, the stalks of the source and target at each geometric point of $S'_{(s')}$ coincide with their respective nearby cycles,
 and using this identification, the adjunction morphism can be identified with the canonical morphism in Definition \ref{examplegood}.
 Property P is satisfied for $(f,\mc{F})$ when $S$ is a {\it trait} by \cite[Thm 2.1, Thm 7.1]{O1},
 or when $\mc{F}$ is $f$-universally locally acyclic by [SGA $4\frac{1}{2}$, Th.\ Finitude, A.2.8].

 Now, let the notation be as in the statement of the proposition.
 Let $W\rightarrow(S\times\Box)_{(\overline{s},\overline{\infty})}$ be a morphism of strictly henselian local schemes such that the closed point $w$ of $W$ is sent to $(s,\infty)$.
 Let $\mathring{W}$ be the preimage of $S\times(\Box\setminus\{\infty\})$ in $W$.
 Fix a geometric point $x$ of $X$, and for a geometric point $\xi$ of $W$, we put
 $  \Psi_{\xi}:=\Psi_{\mr{pr}_W,\overline{w}\leftarrow\xi}\bigl(\mc{F}\otimes\mc{L}_!(ft)\bigr)_{(\overline{x})}$.
 To compute this, we consider the following commutative diagram:
 \begin{equation*}
  \xymatrix@C=55pt{
  (X\times W)_{(\overline{x},\overline{w})}
  \ar[r]_-{f'=(f\times\mr{id})_{(\overline{x},\overline{w})}}
  \ar@/^15pt/[rr]^-{\mr{pr_2}}&
   (\mb{A}^1_{y}\times W)_{(f(\overline{x}),\overline{w})}
   \ar[r]_-{\pi}&
   W\ar[r]&(S\times\Box)_{(\overline{s},\overline{\infty})}
   }
 \end{equation*}
 where $\pi$, $\mr{pr}_2$ denote the projections.
 We {\em assume that $(f_{(x,s)},\mc{F})$ satisfies property P}.
 By the observation at the beginning, we have $\mr{R}f'_{\xi,*}(\mc{F})\cong h^*\mr{R}f_{(x,s)*}(\mc{F})$,
 where $h\colon(\mb{A}^1\times W)_{(f(\overline{x}),\overline{w})}\times_W\xi\rightarrow(\mb{A}^1_S)_{(f(\overline{x}),\overline{s})}$.
 For a morphism $z\colon Z\rightarrow W$, we denote $Z\times_W\xi$ by $z^{-1}(\xi)$.
 If, furthermore, $\xi$ is a geometric point of $\mathring{W}$, using Lemma \ref{simplprojform}, we have
 \begin{align}
  \label{nbcyislocom}
  \tag{$\star$}
  \Psi_{\xi}
  &\cong
  \mr{R}\Gamma\bigl(\mr{pr}^{-1}_2(\xi),(\mc{F}\otimes\mc{L}(yt))_{\xi}\bigr)
  \cong
  \mr{R}\Gamma\bigl(\pi^{-1}(\xi),\mr{R}f'_{\xi,*}(\mc{F}\otimes f'^*_\xi\mc{L}(yt))\bigr)\\
  \notag
  &\cong
  \mr{R}\Gamma\bigl(\pi^{-1}(\xi),\mr{R}f'_{\xi,*}(\mc{F})\otimes\mc{L}(yt)\bigr)
  \cong
  \Psi_{\pi,\overline{w}\leftarrow\xi}(h^*\mr{R}f_{(x,s)*}(\mc{F})\otimes\mc{L}(yt)).
 \end{align}

 Let us show the second statement first.
 In this case, take $W=\Box_{(\infty)}$ and $\xi$ to be a geometric generic point.
 Since $(\mb{A}^1_S)_{(f(\overline{x}),\overline{s})}$ is a {\it trait}, (\ref{nbcyislocom}) holds.
 Invoking the result of Laumon, Theorem \ref{Laumonthm}, the result follows because $\Phi_{\mr{id}}\bigl(\mr{R}f_{(x,s)*}\mc{F}\bigr)\cong\Phi_{f}(\mc{F})_{\overline{x}}$.

 Let us show the first statement.
 Under the assumption, we claim that $f_{(x,s)}$ is universally locally acyclic with respect to $\mc{F}$.
 Admit this for a moment, let us show $\Psi_\xi=0$ for any $\xi$.
 If $\xi$ lies over $W\setminus\mathring{W}$, $\mc{L}_!(ft)_\xi=0$, and there is nothing to show, so we may assume that $\xi$ is a geometric point of $\mathring{W}$.
 By the local acyclicity, the isomorphism (\ref{nbcyislocom}) holds.
 Furthermore, $\mr{R}f_{(x,s)*}(\mc{F})$ is constant on $(\mb{A}^1_S)_{(f(x),s)}$ with value $\mc{F}_{\overline{x}}$
 by [SGA $4\frac{1}{2}$, Th.\ Finitude, A.2.6 and A.2.4].
 By the theorem of Katz-Laumon (cf.\ \cite[1.3.1.2]{Lau}), we know that
 $\Psi_{\pi,\overline{w}\leftarrow\xi}(\mc{L}(yt))=0$, and the result follows.
 To conclude the proof, we are left to show the local acyclicity of $f_{(x,s)}$.

 In fact, the claim follows easily from the definition of singular support as follows:
 Consider the test pair $(X\xleftarrow{p_1} X\times S\xrightarrow{f}\mb{A}^1_S)$.
 By definition of singular support, it suffices to show that the test pair is $\mr{SS}(\mc{F})$-transversal at
 $(x,s)\in X\times S$.
 Since $p_1$ is smooth, $\mr{SS}(\mc{F})$ is $p_1$-transversal,
 and $p_1^{\circ}\mr{SS}(\mc{F})=\mr{SS}(\mc{F})\times S\cong\mr{SS}(\mc{F})\times T^*_SS\subset T^*X\times T^*S$.
 Let $(x_1,\dots,x_d)$ and $(s_1,\dots,s_n)$ be local coordinates of $X$ and $S$ at $x$ and $s$.
 Let $y$ be the coordinate of $\mb{A}^1_S$.
 Then $\mr{d}f(\mr{d}y)=\omega$ such that $\omega(x,s)=\mr{d}f_s(x)+g_1\mr{d}s_1+\dots+g_n\mr{d}s_n$,
 and $\mr{d}f(\mr{d}s_i)=\mr{d}s_i$.
 This presentation implies that
 \begin{equation*}
  (\mr{d}f)^{-1}(\mr{SS}(\mc{F})\times T^*_SS)\times_S\{s\}
   \xrightarrow{\sim}
   (\mr{d}f_s)^{-1}(\mr{SS}(\mc{F}))\times\{s\},
 \end{equation*}
 where the morphism is induced by $T^*\mb{A}^1\times T^*S\rightarrow T^*\mb{A}^1\times S$.
 Thus, the test pair is $\mr{SS}(\mc{F})$-transversal at $(x,s)$ if and only if $f_s$ is $\mr{SS}(\mc{F})$-transversal at $x$.
\end{proof}

\begin{cor}
 \label{compwithsaitocor}
 Assume we are in the situation of {\normalfont\ref{summarymain}}.
 Assume further that the trace map of the motivic cohomology theory is chosen so that the induced map
 \begin{equation*}
  z(\mr{Spec}(k)/\mr{Spec}(k),0)
   \rightarrow
   \pi_0\Hbm(\mr{Spec}(k)/\mr{Spec}(k))
   \cong
   \mr{CH}_0(\mr{Spec}(k))[1/p]
 \end{equation*}
 sends $[\mr{Spec}(k)]$ to $[\mr{Spec}(k)]$, which exists by {\normalfont\cite[3.5, 6.4]{Atr}}.
 Let $X$ be a smooth scheme over a perfect field $k$, and let $\mc{F}\in D_{\mr{ctf}}(X,\Lambda)$.
 \begin{enumerate}
  \item The class $[\mc{F}]\in\mr{K}_0\mr{Cons}(X)$ is microsupported on $\mr{SS}(\mc{F})$
	in the sense of Definition {\normalfont\ref{suppstrdfn}}.

  \item We have $\mr{CC}^\mu_{X/\mr{Spec}(k)}([\mc{F}])=\mr{CC}(\mc{F})$.
 \end{enumerate}
\end{cor}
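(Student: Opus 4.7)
Both claims are local and will be proved by feeding Proposition~\ref{mainpropcomp} into the abstract microlocalization machinery from \S\ref{sect3}. Throughout, I fix $S=\mr{Spec}(k)$, write $\pi\colon X\times T\to X$ and $\mr{pr}\colon X\times T\to T$ for the projections, and recall that by construction of $\mc{E}'$ in \ref{microlocconstofE}, a section of $\partial_{\infty}\ms{S}_{X/k}$ over $(V\to T)\in(X/k)_{\mr{zar}}$ at $(F;f)$ is represented, after passing to a suitable modification $\mbf{V}\to T\times\Box$, by the family of nearby cycles $\Psi_{(s,\infty)\leftarrow\eta}(\pi^{*}F\otimes\mc{L}(ft))$ computed along specializations from generic points of $\mbf{V}$ to geometric points over $T\times\{\infty\}$.

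\textbf{Part (1).} I need to verify the vanishing in Definition~\ref{suppstrdfn} for $F=[\mc{F}]$ and $C=\mr{SS}(\mc{F})$. Let $V\to T$ in $(X/k)_{\mr{zar}}$ with $T$ smooth over $k$, and let $f\in\mc{O}_{X/k}(V)$. Let $v\in V$ be a point lying over $s\in T$ and mapping to $x\in X$ such that $\mr{d}f(v)\notin\mr{SS}(\mc{F})_{x}\times\{0_{s}\}\subset T^{*}X\times_{X}V$. Unwinding the identification $\mr{d}_{\nu}f(v)=\mr{d}f_{s}(x)\oplus(\text{horizontal part along }T)$ used at the end of the proof of Proposition~\ref{mainpropcomp}.\ref{mainpropcomp-1}, the hypothesis $\mr{d}f(v)\notin(\mr{d}f)^{-1}(\mr{SS}(\mc{F}))$ translates to $\mr{d}f_{s}$ being $\mr{SS}(\mc{F})$-transversal at $x$. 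Applying Proposition~\ref{mainpropcomp}.\ref{mainpropcomp-1} (with $S$ replaced by a smooth chart of $T$ around $s$), the nearby cycle $\Psi_{(s,\infty)\leftarrow\eta}(\mr{pr}^{*}\mc{F}\otimes\mc{L}(ft))_{(\overline{x},\overline{s},\overline{\infty})}$ vanishes. Doing this uniformly over the generic points of a modification adapted to $([\mc{F}];f)$ shows that the restriction of $\mc{E}'_{\infty}([\mc{F}];f)$ to the open complement of $(\mr{d}f)^{-1}(\mr{SS}(\mc{F}))$ is zero in $\partial_{\infty}\ms{S}_{X/k}$.

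\textbf{Part (2).} By part~(1) and the construction of \ref{consmicccabs}, $\mr{CC}^{\mu}_{X/k}([\mc{F}])$ is a well-defined class in $\pi_{0}\rH_{\mr{SS}(\mc{F})}(T^{*}X,\omega_{X/k})\cong\mr{CH}_{\dim X}(\mr{SS}(\mc{F}))[1/p]$. Write it as $\sum m_{i}[W_{i}]$ along the irreducible components of $\mr{SS}(\mc{F})$, and let $\sum n_{i}[W_{i}]=\mr{CC}(\mc{F})$ be Saito's cycle. By Definition~\ref{dfnSaitocc} together with \cite[Prop.~5.8.1]{S}, it is enough to check that for every finite extension $l/k$, every open $U\subset X\otimes_{k}l$, every Zariski-local $f\colon U\to\mb{A}^{1}_{k}$, and every isolated $\mr{SS}(\mc{F})$-characteristic point $x\in U$ of $f$,
\[
 \bigl(\mr{d}f,\mr{CC}^{\mu}_{X/k}([\mc{F}])\bigr)_{T^{*}U,x}=-\mr{totdim}\bigl(\Phi_{f}(\mc{F}|_{U})_{\overline{x}}\bigr).
\]
Shrinking $U$ around $x$, I can extend $f=f_{1}$ to a frame $\mbf{f}=(f_{1},\dots,f_{r})\in\mr{Frm}_{0}$ over $U$ (possible since $x$ is isolated characteristic, so a small perturbation gives a trivialization of $T^{*}U$ away from $\mr{SS}(\mc{F})$ outside $x$). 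By functoriality of microlocalization under change of base data \ref{functofmicccabs} and using Laumon's reformulation \ref{Laumonthm}, the local intersection number at $x$ is computed by the construction of \S\ref{sect3} applied to this frame: it equals the rank of the single-step nearby cycle $\Psi_{(x,\infty)\leftarrow\eta}(\pi^{*}\mc{F}\otimes\mc{L}(f_{1}t))$, after localizing along the vanishing locus of $f_{2},\dots,f_{r}$. Proposition~\ref{mainpropcomp}.\ref{mainpropcomp-2} (taking $S=\mr{Spec}(l)$ to reduce to a point) identifies this rank with $-\mr{totdim}(\Phi_{f}\mc{F}|_{U})$, yielding the required equality and hence $m_{i}=n_{i}$ for all $i$.

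\textbf{Main obstacle.} The delicate step is in part~(2): converting the abstract construction of $\mr{CC}^{\mu}$ via the diagram of \ref{consmicccabs} into the concrete intersection number $(\mr{d}f,-)_{T^{*}U,x}$ at an isolated characteristic point. Specifically, I must verify that the trace/pushforward from $\rH(E_{\mc{U}}/X)\to\rH(\mc{U}_{\mbf{f}}/X)$ appearing in the framed diagram of \ref{framediagH}, when combined with the map from $\ms{S}_{X/k}$ constructed via $\mc{E}'$ and the rewind structure on $\Hcoe$, recovers—up to a sign—precisely the rank of the $0$-cycle produced by Laumon's local Fourier transform computation. This requires carefully tracking the normalizations (the trace map of the motivic theory as fixed in the statement, and the sign conventions in Definition~\ref{defofcoefsys}) so that the right-hand side of the Milnor formula matches on the nose. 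Once this compatibility of normalizations is established, Proposition~\ref{mainpropcomp}.\ref{mainpropcomp-2} supplies the remaining geometric input.
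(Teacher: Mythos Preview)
Your Part~(1) matches the paper's: both invoke Proposition~\ref{mainpropcomp}.\ref{mainpropcomp-1} directly.

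For Part~(2), you and the paper both reduce to verifying the Milnor formula, and you correctly flag as the main obstacle the passage from the abstract lift $\mr{CC}^\mu$ of \ref{consmicccabs} to the concrete intersection number $(\mr{d}f,\mr{CC}^\mu)$. However, your proposed route through frames has a genuine gap: you claim that near an isolated $\mr{SS}(\mc{F})$-characteristic point $x$ one can extend $f=f_1$ to a frame $(f_1,\dots,f_r)\in\mr{Frm}_0$. A frame requires $\{\mr{d}f_i\}$ to be a basis of $T^*U$ at every point, but the isolated characteristic condition says nothing about $\mr{d}f(x)\neq 0$; since $\mr{SS}(\mc{F})$ contains the zero section over $\mr{Supp}(\mc{F})$, the point $x$ may well be an ordinary critical point of $f$ (e.g.\ $X=\mb{A}^2$, $\mc{F}=\Lambda$, and $f$ any function with an isolated critical point), in which case no such extension exists. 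Even setting this aside, your phrase ``after localizing along the vanishing locus of $f_2,\dots,f_r$'' has no clear counterpart in the construction of \S\ref{sect3}, which uses the whole simplicial diagram $\mr{Sd}(\mr{Frm}/X_{\mr{zar}})$ rather than a fixed frame containing the test function.

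The paper bypasses frames at this step entirely. Since $f$ has only isolated characteristic points $Z\subset U$, Example~\ref{examplegood}.\ref{examplegood-2} ensures that $\mc{E}'_\infty(-,f)$ already lands in $\ms{S}_0(U)$, so one may form the composition
\[
\phi\colon\ms{S}(U)_C\xrightarrow{\mc{E}'_\infty(-,f)}\ms{S}_0(U)\xrightarrow{\rk_U}z(U,0)\rightarrow\Hbm(U,0).
\]
Because $\mc{E}'_\infty([\mc{F}],f)|_V=0$ on $V=U\setminus Z$, the cofiber sequence $\Hbm(Z,0)\to\Hbm(U,0)\to\Hbm(V,0)$ lifts $\phi([\mc{F}])$ to $\mr{CH}_0(Z)[1/p]$, and the paper identifies this lift with $(\mr{d}f,\mr{CC}^\mu([\mc{F}]))$ by unwinding the construction---the relevant homotopy commutative square of \ref{consmicccabs} is already available for the object $(U,\mr{Spec}(k),f)\in\mr{Sect}$, with no need for $f$ to sit inside a frame. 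Proposition~\ref{mainpropcomp}.\ref{mainpropcomp-2} then computes $\rk_U\mc{E}'_\infty([\mc{F}],f)$ as the cycle $-\sum_x\mr{totdim}(\Phi_f(\mc{F})_{\overline{x}})[x]$, and the Milnor formula follows.
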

\begin{proof}
 The first claim follows immediately from Proposition \ref{mainpropcomp}.\ref{mainpropcomp-1}.
 For the second claim, we only need to check the Milnor formula (\ref{totdimccdef}) for $\mr{CC}^{\mu}_{X/\mr{Spec}(k)}([\mc{F}])$.
 Let $f\colon U\rightarrow\mb{A}^1$ be a morphism such that $U\subset X\otimes_k l$ for some finite extension $l$ of $k$ as in Definition \ref{dfnSaitocc}.
 Since $\mr{CC}^{\mu}_{X/\mr{Spec}(k)}$ is stable under base change by \'{e}tale extension of $k$, we may assume that $l=k$.
 We wish to show that the left side of (\ref{totdimccdef}) is equal to $\bigl(\mr{d}f,\mr{CC}^\mu_{X/\mr{Spec}(k)}([\mc{F}])\bigr)$.
 Let $Z:=\iota_U^{-1}(\mr{d}f)^{-1}(C)$, using the notation of Definition \ref{dfnSaitocc}, where $C=\mr{SS}(\mc{F})$.
 By definition, $Z$ is of dimension $0$.
 This implies that $\mc{E}'(-,f)$ (cf.\ \ref{microlocconstofE}) induces the homomorphism $\ms{S}(U)_C\rightarrow\ms{S}_0(U)$
 by Proposition \ref{mainpropcomp}.\ref{mainpropcomp-1} and Example \ref{examplegood}.\ref{examplegood-2}.
 Put $V:=U\setminus Z$.
 Consider the following homotopy commutative diagram:
 \begin{equation*}
  \xymatrix@R=10pt@C=40pt{
   &&&z(Z,0)\ar[r]\ar[d]&\Hbm(Z,0)\ar[d]\\
   \phi&\ms{S}(U)_{C}\ar[r]^-{\mc{E}'_{\infty}(-,f)}\ar[d]&
    \ms{S}_0(U)\ar[r]^-{\rk_U}\ar[d]&z(U,0)\ar[r]\ar[d]&
    \Hbm(U,0)\ar[d]^{\mr{res}}\\
  &\ms{S}(V)_{C}\ar[r]^-{\mc{E}'_{\infty}(-,f)}&
    \ms{S}_0(V)\ar[r]^-{\rk_V}&z(V,0)\ar[r]&\Hbm(V,0).\\
  }
 \end{equation*}
 Let $\phi$ be the composite of the 2nd row.
 Since $\mc{E}'_{\infty}([\mc{F}],f)|_V=0$, this yields a homotopy between $\mr{res}\circ\phi([\mc{F}])$ and $0$.
 Since the last column is a cofiber sequence, this homotopy determines an element in
 $\pi_0\Hbm(U\setminus V,0)\cong\mr{CH}_0(U\setminus V)[1/p]\cong\bigoplus_{x\in|U\setminus V|}\mb{Z}[1/p]\cdot[x]$.
 By construction, this element coincides with $\bigl(\mr{d}f,\mr{CC}^\mu_{X/\mr{Spec}(k)}([\mc{F}])\bigr)$.
 Now, by definition, $\mc{E}'([\mc{F}],f)=\left[\Psi_{\infty\leftarrow\eta}(\mc{F}\otimes\mc{L}(ft))\right]$,
 where $\eta$ is the generic point of $\Box$.
 By Proposition \ref{mainpropcomp}.\ref{mainpropcomp-2}, $\rk_U\circ\mc{E}'([\mc{F}],f)$ coincides with the cycle
 $-\sum_{x}\mr{totdim}\bigl(\Phi_f(\mc{F}|_U)_{\overline{x}}\bigr)\cdot[x]$ in $z(U\setminus V)$.
 Thus, the Milnor formula holds for $\mr{CC}^\mu_{X/\mr{Spec}(k)}([\mc{F}])$, which terminates the proof.
\end{proof}

\begin{rem*}
 In view of Theorem \ref{mainresul}, we have the pushforward formula for $\mr{CC}^{\mu}_{X/\mr{Spec}(k)}$.
 Together with Remark \ref{mainresul}, this corollary implies that \cite[Conjecture 1]{S2} is true
 if we invert $\mr{char}(k)$ in $\mr{CH}_d(T^*Y)$, as we promised in the introduction.
\end{rem*}

\appendix
\section{Functoriality of $\infty$-topoi}
\label{functinftop}

\subsection{}
\label{contfunctopos}
Let $\mc{C}$ be an $\infty$-site.
Instead of saying that a subcategory $\mr{R}\subset\mc{C}_{/C}$ is a sieve of $C$ (cf.\ \cite[6.2.2.1]{HTT}),
we often say that $\mr{R}\rightarrow C$ is a sieve for short.
Let us recall a well-known construction of geometric morphisms of $\infty$-topoi.

\begin{dfn*}
 Let $\mc{C}$, $\mc{D}$ be $\infty$-sites.
 Let $f\colon\mc{C}\rightarrow\mc{D}$ be a functor.
 The morphism $f$ is said to be {\em continuous} if any covering sieve $\mr{R}\rightarrow C$ of $C\in\mc{C}$,
 the sieve of $f(C)$ generated by $\{f(r)\rightarrow f(C)\}_{r\in\mr{R}}$ is a covering sieve in $\mc{D}$.
\end{dfn*}

\begin{lem*}[$\infty$-analogue of {[SGA 4, Exp.\ III, 1.6 and 1.3 (5)]}]
 Let $f\colon\mc{C}\rightarrow\mc{D}$ be a continuous morphism between $\infty$-sites.
 \begin{enumerate}
  \item Assume that $\mc{C}$ admits pullbacks, and $f$ commutes with pullbacks.
	The functor $\Shv(\mc{D})\rightarrow\PShv(\mc{C})$
	defined by composing with $f^{\mr{op}}$ factors through $\Shv(\mc{C})$,
	and induces the functor $f^{\mr{s}}\colon\Shv(\mc{D})\rightarrow\Shv(\mc{C})$.

  \item Assume further that $\mc{C}$ admits finite limits, and $f$ commutes with finite limits.
	Then $f^{\mr{s}}$ admits an exact left adjoint, and defines a geometric morphism $\Shv(\mc{D})\rightarrow\Shv(\mc{C})$ of $\infty$-topoi.
 \end{enumerate}
\end{lem*}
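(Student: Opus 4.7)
For part (1), I would fix $\mc{F}\in\Shv_{\mc{E}}(\mc{D})$ and a covering sieve $\mr{R}\to C$ of $C\in\mc{C}$, aiming to verify the descent identity $\mc{F}(f(C))\simeq\varprojlim_{\mr{R}^{\mr{op}}}\mc{F}\circ f^{\mr{op}}$. Continuity of $f$ produces a covering sieve $\mr{R}'\subset\mc{D}_{/f(C)}$ generated by $\{f(r)\to f(C)\}_{(r\to C)\in\mr{R}}$, so the sheaf property of $\mc{F}$ gives $\mc{F}(f(C))\simeq\varprojlim_{\mr{R}'^{\mr{op}}}\mc{F}$. It will then suffice to show that the canonical functor $\Phi\colon\mr{R}\to\mr{R}'$, $(r\to C)\mapsto(f(r)\to f(C))$, is cofinal in the sense of \cite[\S4.1.1]{HTT}, which by \cite[4.1.3.1]{HTT} reduces to the weak contractibility of the comma category $\mc{J}_x:=\mr{R}\times_{\mr{R}'}\mr{R}'_{x/}$ for each $x=(V\to f(C))\in\mr{R}'$.

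Non-emptiness of $\mc{J}_x$ follows at once from the fact that $\mr{R}'$ is generated by $f(\mr{R})$. For contractibility, the key observation is that $\mc{C}_{/C}$ admits all finite limits (pullbacks together with the terminal object $\mr{id}_C$) and that the induced functor $f_{/C}\colon\mc{C}_{/C}\to\mc{D}_{/f(C)}$ preserves them under the hypothesis that $f$ preserves pullbacks: given $(r_i,\alpha_i)\in\mc{J}_x$ for $i=1,2$, the pullback $r_1\times_C r_2$ lies in $\mr{R}$ (sieves being closed under pullback) and the universal property produces a canonical $\alpha\colon V\to f(r_1)\times_{f(C)}f(r_2)\simeq f(r_1\times_C r_2)$, giving a common refinement in $\mc{J}_x$; equalizers are handled by a parallel argument. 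Thus $\mc{J}_x$ is non-empty and cofiltered, hence weakly contractible by standard facts on classifying spaces.

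For part (2), presentability of $\mc{E}$ makes $\Shv_{\mc{E}}(\mc{C})$ presentable as an accessible localization of $\PShv_{\mc{E}}(\mc{C})$, and $f^{\mr{s}}$, as precomposition with $f^{\mr{op}}$, preserves all small limits (computed pointwise, the inclusion $\Shv_{\mc{E}}\hookrightarrow\PShv_{\mc{E}}$ being limit preserving). The adjoint functor theorem for presentable $\infty$-categories \cite[5.5.2.9]{HTT} then yields the desired left adjoint $f_!^{\mr{s}}$. To establish exactness when $\mc{E}=\Spc$, I would consider the composite $F:=L_{\mc{D}}\circ j_{\mc{D}}\circ f\colon\mc{C}\to\Shv(\mc{D})$ of $f$ with Yoneda and sheafification. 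Since $f$ is left exact by hypothesis, Yoneda preserves limits \cite[5.1.3.2]{HTT}, and sheafification is left exact, $F$ is left exact; continuity of $f$ forces $F$ to send covering sieves to effective epimorphic families in $\Shv(\mc{D})$. The universal property of the sheaf $\infty$-topos (\cite[6.2.3.20]{HTT}) then extends $F$ uniquely to a geometric morphism $\Shv(\mc{D})\to\Shv(\mc{C})$, and uniqueness of adjoints identifies its direct image with $f^{\mr{s}}$ and its inverse image, left exact by construction, with $f_!^{\mr{s}}$.

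The chief technical obstacle is the cofinality step in part (1). The $\infty$-categorical formulation of cofinality requires weak contractibility of the \emph{full} nerve of each $\mc{J}_x$, not merely its connectedness, and this is exactly where the hypothesis that $f$ preserves pullbacks plays an essential role: the finite-limit structure transferred to $\mc{J}_x$ via $f_{/C}$ is precisely what forces the classifying space to collapse.
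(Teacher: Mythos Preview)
Your argument is essentially correct and takes a genuinely different route from the paper for part~(1). The paper's proof is two lines: reduce to $\mc{E}=\Spc$ by the standard argument (as in the opening of \cite[A.3.3.1]{SAG}: limits in $\mc{E}$ are detected by mapping out of each object), then cite \cite[20.6.1.3]{SAG} as a black box. You instead verify the sheaf condition directly via a cofinality argument, which is more transparent and works uniformly in $\mc{E}$ without the reduction step. For part~(2), both you and the paper invoke \cite[6.2.3.20]{HTT}; your intermediate use of the adjoint functor theorem is harmless but redundant, since \cite[6.2.3.20]{HTT} already produces the full geometric morphism.

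One point deserves tightening. Your phrasing ``common refinement\dots\ equalizers are handled by a parallel argument\dots\ cofiltered'' is written as though the $1$-categorical criterion for cofilteredness (cones over pairs, and coequalizing arrows for parallel pairs) transfers verbatim to $\infty$-categories; it does not. What your argument actually establishes is stronger and suffices: since $\mc{C}_{/C}$ has finite limits and $f_{/C}$ preserves them, the classifying functor $r\mapsto\Map_{\mc{D}_{/f(C)}}(x,\Phi(r))$ of the left fibration $\mc{J}_x\to\mr{R}$ preserves finite non-empty limits, and hence $\mc{J}_x$ itself admits binary products. A non-empty $\infty$-category with binary products has sifted opposite (the diagonal is cofinal because the relevant comma categories have initial objects given by coproducts), so $\mc{J}_x$ is weakly contractible by \cite[5.5.8.7]{HTT}. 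Alternatively, one can push the same observation to show $\mc{J}_x$ has all finite non-empty limits and is therefore genuinely cofiltered. Either way the conclusion stands; only the justification of the last step needs to be stated in $\infty$-categorical terms.
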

\begin{proof}
 The first statement follows by \cite[20.6.1.3]{SAG}, and the second claim follows by \cite[6.2.3.20]{HTT}
 since the composite $\mc{C}\xrightarrow{f}\mc{D}\rightarrow\Shv(\mc{D})$ commutes with finite limits by \cite[5.1.3.2]{HTT}.
\end{proof}

\subsection{}
\label{cocontmorlem}
We introduce the notion of cocontinuous functors between $\infty$-categories, which has already been appeared in some literature.
For an $\infty$-category $\mc{C}$ and $c\in\mc{C}$, we denote by $\widehat{c}$ the image of $c$ by the Yoneda embedding $\mc{C}\rightarrow\PShv(\mc{C})$.

\begin{dfn*}
 Let $\mc{C}$, $\mc{D}$ be $\infty$-sites.
 A functor $f\colon\mc{C}\rightarrow\mc{D}$ is said to be {\em cocontinuous} if for any $c\in\mc{C}$
 and a covering sieve $\mr{R}\hookrightarrow\widehat{f(c)}\simeq f_!\widehat{c}$ of $\mc{D}$,
 the pullback sieve $f^*\mr{R}\times_{f^*f_!\widehat{c}}\widehat{c}\hookrightarrow\widehat{c}$ is a covering sieve of $\mc{C}$.
\end{dfn*}

\begin{rem*}
 The pullback sieve is the sieve corresponding to the full subcategory of $\mc{C}_{/c}$ spanned by arrows $d\rightarrow c$ such that $f(d)\rightarrow f(c)$ belongs to $\mr{R}$.
 This description implies that the functor $f$ is cocontinuous if and only if $\mr{h}f$ is cocontinuous in the sense of [SGA 4, Exp.\ III, 2.1].
\end{rem*}

\begin{lem*}[$\infty$-analogue of {[SGA 4, Exp.\ III, Prop 2.3]}]
 Let $f\colon\mc{C}\rightarrow\mc{D}$ be a cocontinuous functor of $\infty$-sites.
 \begin{enumerate}
  \item The pushforward $f_*$ of presheaves preserves sheaves, and induces a geometric morphism of $\infty$-topoi $F\colon\Shv(\mc{C})\rightarrow\Shv(\mc{D})$
	such that $\iota_{\mc{D}}\circ F_*\simeq f_*\circ\iota_{\mc{C}}$.
	We also have $F^*\simeq L_{\mc{D}}\circ f^*\circ\iota_{\mc{C}}$.
	This $F$ is often denoted simply by $f$, abusing notations.

  \item If, moreover, $\mc{C}$ admits pullbacks and $f$ is continuous and preserves pullbacks, $F^*$ is equivalent to the functor $\circ f$.
 \end{enumerate}
\end{lem*}
\begin{proof}
 For $\mc{G}\in\PShv(\mc{C})$, let $\mc{C}_{/\mc{G}}:=\mc{C}\times_{\PShv(\mc{C})}\PShv(\mc{C})_{/\mc{G}}$.
 For a morphism $f\colon\mc{G}'\rightarrow\mc{G}$ in $\PShv(\mc{C})$,
 put $\mc{C}_{/f}:=\mr{Fun}(\Delta^1,\mc{C})\times_{\mr{Fun}(\Delta^1,\PShv(\mc{C}))}\mr{Fun}(\Delta^1,\PShv(\mc{C}))_{/f}$.
 The functor $\alpha\colon\mc{C}_{/f}\rightarrow\mc{C}_{/\mc{G}}$ sending $c'\rightarrow c$ over $f$ to $c$ is a coCartesian fibration.
 Indeed, $\mr{Fun}(\Delta^1,\PShv(\mc{C}))_{/f}\rightarrow\mc{C}_{/\mc{G}}$ is a coCartesian fibration by \cite[2.4.3.2]{HTT}.
 Furthermore, the description of coCartesian edges in \cite[2.4.3.2]{HTT}, we have the claim.
 Likewise, the functor $\beta\colon\mc{C}_{/f}\rightarrow\mc{C}_{/\mc{G}'}$ sending $c'\rightarrow c$ to $c'$ is Cartesian.
 This functor is (left) cofinal.
 Indeed, $\beta$ admits a left adjoint given by sending $c'$ to $c'\xrightarrow{\mr{id}}c'$.

 We show that $f_*$ preserves sheaves.
 Let $\mc{F}\in\Shv(\mc{C})$, $d\in\mc{D}$, and let $\mr{R}\hookrightarrow\widehat{d}$ be a covering sieve.
 We must show that $\Map_{\PShv(\mc{D})}(\widehat{d},f_*\mc{F})\rightarrow\Map_{\PShv(\mc{D})}(\mr{R},f_*\mc{F})$ is an equivalence.
 Put $j\colon f^*\mr{R}\rightarrow f^*\widehat{d}$, and let $M_0\colon(\mc{C}_{/j})^{\mr{op}}\rightarrow\Spc$ be the functor sending
 $c'\rightarrow c$ to $\Map(\widehat{c}',\mc{F})\simeq\mc{F}(c')$.
 Then the right Kan extension $\alpha_*M_0$ of $M_0$ along $\alpha$ is equivalent to the functor $M_1$ sending $c$ to $\Map(\widehat{c},\mc{F})$.
 Indeed, by universality, we have the morphism $M_1\rightarrow\alpha_*M_0$.
 To show that this is an equivalence, it suffices to check pointwise.
 Since $(\mc{C}_{/j})^{\mr{op}}\rightarrow(\mc{C}_{/f^*\widehat{d}})^{\mr{op}}$ is a Cartesian fibration, we have
 $\alpha_*M_0(c)\simeq\invlim_{\alpha^{-1}(c)}M_0$.
 Since $\alpha^{-1}(c)$ is equivalent to the diagram $(\mc{C}_{/(f^*\mr{R}\times_{f^*\widehat{d}}c)})^{\mr{op}}$ and $\mc{F}$ is assumed to be a sheaf,
 it suffices to show that the sieve $f^*\mr{R}\times_{f^*\widehat{d}}c\hookrightarrow\widehat{c}$ is a covering sieve.
 This follows by the proof of [SGA 4, Exp III, 2.2].
 Now, we may compute
 \begin{align*}
  \Map&(\widehat{d},f_*\mc{F})
  \simeq
  \Map(f^*\widehat{d},\mc{F})
  \simeq
  \invlim_{c\in\mc{C}_{/f^*\widehat{d}}}\Map(\widehat{c},\mc{F})
  \xleftarrow{\sim}
  \invlim_{c\in\mc{C}_{/f^*\widehat{d}}}\alpha_*M_0\\
  &\simeq
  \invlim_{(c'\rightarrow c)\in\mc{C}_{/j}}\Map(\widehat{c}',\mc{F})
  \xleftarrow{\sim}
  \invlim_{c'\in\mc{C}_{/f^*\mr{R}}}\Map(\widehat{c}',\mc{F})
  \simeq
  \Map(f^*\mr{R},\mc{F})
  \simeq
  \Map(\mr{R},f_*\mc{F}).
 \end{align*}
 Here, we used \cite[5.1.5.3]{HTT}.
 Thus, $f_*\mc{F}$ is a sheaf.
 Since $\iota_{\mc{D}}$ is fully faithful, it is easy to check that $L_{\mc{D}}\circ f^*\circ \iota_{\mc{C}}$ is left adjoint to $F_*$.
 Since it is a composite of left exact functors, $F_*$ is left exact as well, and yields a geometric morphism of $\infty$-topoi.
 The second claim follows from Lemma \ref{contfunctopos}.
\end{proof}

Let $1\mbox{-}\site^{\mr{cocont}}$ be the $(2,1)$-category of $1$-sites whose morphism consists of cocontinuous functors.
We let $\infty\mbox{-}\site^{\mr{cocont}}:=1\mbox{-}\site^{\mr{cocont}}\times_{\Cat_{(2,1)},\mr{h}}\Cat_\infty$,
where $\mr{h}\colon\Cat_\infty\rightarrow\Cat_{(2,1)}$ is the functor sending to the homotopy category.
Objects of $\infty\mbox{-}\site^{\mr{cocont}}$ are $\infty$-sites and the morphisms are cocontinuous functors between them.
Now, we have the functor $F\colon(\infty\mbox{-}\site^{\mr{cocont}})^{\mr{op}}\rightarrow\Cat_{\infty}\xrightarrow{\PShv}\LTop\subset\widehat{\Cat}_{\infty}$.
Here, a cocontinuous functor $f$ is sent to $f^*$.
For each $(\mc{C},\tau)\in\infty\mbox{-}\site^{\mr{cocont}}$, the object $F(\mc{C})\simeq\PShv(\mc{C})$ is equipped with a localization functor
$\PShv(\mc{C})\rightarrow\Shv(\mc{C},\tau)$.
By taking unstraightening, consider the coCartesian fibration $\mr{Un}(F)\rightarrow(\infty\mbox{-}\site^{\mr{cocont}})^{\mr{op}}$.
Note that $\infty$-category of $\infty$-categories can be fully faithfully embedded into the $\infty$-category of $\infty$-operads by \cite[2.1.4.11]{HA}.
Using this identification, the fiberwise localization is compatible with $(\infty\mbox{-}\site^{\mr{cocont}})^{\mr{op}}$-monoidal structure by the above lemma,
and we may apply \cite[2.2.1.9]{HA}.
Thus, if we let $\mc{X}\subset\mr{Un}(F)$ to be the full subcategory spanned by objects $\mc{F}$ over $(\mc{C},\tau)\in\infty\mbox{-}\site^{\mr{cocont}}$
such that $\mc{F}\in\Shv(\mc{C},\tau)$, the evident functor $\mc{X}\rightarrow(\infty\mbox{-}\site^{\mr{cocont}})^{\mr{op}}$ is a coCartesian fibration.
By taking straightening, we have a functor $(\infty\mbox{-}\site^{\mr{cocont}})^{\mr{op}}\rightarrow\Cat_{\infty}$.
By construction, this functor factors through $\LTop$, and using \cite[6.3.1.8]{HTT}, we have the functor $\infty\mbox{-}\site^{\mr{cocont}}\rightarrow\RTop$,
which informally the functor sending $(\mc{C},\tau)$ to $\Shv(\mc{C},\tau)$ and $f$ to the geometric morphism $f$ defined by the previous lemma.

\subsection{}
 \label{upggeomor}
Let $\mc{X}$ be an $\infty$-topos (cf.\ \cite[6.1.0.4]{HTT}), and $\mc{D}$ be an $\infty$-category.
Recall from \cite[1.3.1.4]{SAG} that the $\infty$-category of $\mc{D}$-valued sheaves denoted by $\Shv_{\mc{D}}(\mc{X})$
is the full subcategory of $\mr{Fun}(\mc{X}^{\mr{op}},\mc{D})$ spanned by limit preserving functors.
For an $\infty$-site $\mc{C}$ ({\em i.e.}\ small $\infty$-categories endowed with Grothendieck topology \cite[6.2.2.1]{HTT}),
we may define the notion of $\mc{D}$-valued sheaves on $\mc{C}$ by descent property (cf.\ \cite[1.3.1.1]{SAG})
and this forms an $\infty$-category $\Shv_{\mc{D}}(\mc{C})$.
The $\infty$-categories $\Shv_{\mc{D}}(\Shv(\mc{C}))$ and $\Shv_{\mc{D}}(\mc{C})$ do not coincide in general,
but if $\mc{D}$ admits small limits, these two $\infty$-categories coincide by \cite[1.3.1.7]{SAG}.
In this paper, we talk about $\mc{D}$-valued sheaves exclusively in the case where $\mc{D}$ is presentable.
In particular, in that case, $\mc{D}$ admits small limits by \cite[5.5.2.4]{HTT}.
For this reason, we identify the above two $\infty$-categories without further mentioning.

\begin{lem*}
 Let $f\colon\mc{X}\rightarrow\mc{Y}$ be a geometric morphism of $\infty$-topoi, and let $\mc{D}$ be a presentable $\infty$-category.
 The composition with $(f^*)^{\mr{op}}$ induces the functor $f^{\mc{D}}_*\colon\Shv_{\mc{D}}(\mc{X})\rightarrow\Shv_{\mc{D}}(\mc{Y})$,
 and it admits a left adjoint $f^*_{\mc{D}}$. The functors $f^{\mc{D}}_*$, $f^*_{\mc{D}}$ are often denoted by $f_*$, $f^*$ for simplicity.
\end{lem*}
\begin{proof}
 Consider the following diagram of $\infty$-categories
 \begin{equation*}
  \xymatrix{
   \Shv_{\mc{D}}(\mc{Y})\ar@{^{(}->}[r]^-{\iota_{\mc{Y}}}&\mr{Fun}(\mc{Y}^{\mr{op}},\mc{D})\ar@<-.5ex>[d]_{G}\\
  \Shv_{\mc{D}}(\mc{X})\ar@{^{(}->}[r]^-{\iota_{\mc{X}}}&\mr{Fun}(\mc{X}^{\mr{op}},\mc{D})\ar@<-.5ex>[u]_{F}
   }
 \end{equation*}
 where $F:=\circ(f^*)^{\mr{op}}$, and $G$ is a left Kan extension whose existence is assured by \cite[4.3.2.15]{HTT}
 and the assumption that $\mc{D}$ is presentable.
 Thus, by \cite[4.3.3.7]{HTT}, $G$ is left adjoint to $F$.
 Now, since $f^*$ preserves small colimits, its opposite $(f^*)^{\mr{op}}$ preserves small limits.
 This implies that the essential image of $F\circ\iota_{\mc{X}}$ is contained in $\Shv_{\mc{D}}(\mc{Y})$, and yields the desired functor $f^{\mc{D}}_*$.
 The functor $\iota_{\mc{X}}$ admits a left adjoint $L_{\mc{X}}$ by \cite[1.3.4.3]{SAG}.
 Since $\iota_{\mc{Y}}$ is fully faithful, the composite $L_{\mc{X}}\circ G\circ \iota_{\mc{Y}}$ is a left adjoint to $f^{\mc{D}}_*$.
\end{proof}

\begin{test}
  Indeed, given $\mc{F}\in\Shv_{\mc{D}}(\mc{X})$, $\mc{G}\in\Shv_{\mc{D}}(\mc{Y})$, we have
 \begin{equation*}
  \mr{Map}\bigl(L_{\mc{X}}\circ G\circ\iota_{\mc{Y}}(\mc{G}),\mc{F}\bigr)
   \simeq
   \mr{Map}\bigl(\iota_{\mc{Y}}(\mc{G}),\iota_{\mc{Y}}f^{\mc{D}}_*(\mc{F})\bigr)
 \end{equation*}
 by the adjointness, and since $\iota_{\mc{Y}}$ is fully faithful, we get the claim.
\end{test}

\subsection{}
\label{shvcotetc}
We consider the following condition:
\begin{quote}
 (*)\quad
 Let $\mc{A}$ be a closed symmetric monoidal $\infty$-category (cf.\ {\normalfont\cite[4.2.1.32]{HA}})
 and $\mc{D}$ be an $\infty$-category enriched, left-tensored, and cotensored over $\mc{A}$ (cf.\ {\normalfont\cite[4.2.1.28]{HA}}).
 Furthermore, we assume that both $\mc{A}$ and $\mc{D}$ are compactly generated (cf.\ \cite[5.5.7.1]{HTT}), in particular presentable.
\end{quote}
Very informally, these mean that we are endowed with left-tensor functor $\otimes\colon\mc{A}\times\mc{D}\rightarrow\mc{D}$
(as well as great amount of homotopy coherence data),
and for $a\in\mc{A}$, $d,d'\in\mc{D}$, the morphism object $\Mor(d,d')\in\mc{A}$ and cotensor object ${}^ad\in\mc{D}$ are defined.
Let $\mc{X}$ be an $\infty$-topos.
By \cite[2.1.3.4]{HA}, $\mr{Fun}(\mc{X}^{\mr{op}},\mc{D})$ is left-tensored over $\mr{Fun}(\mc{X}^{\mr{op}},\mc{A})$.
This left-tensor structure descends to a left-tensor structure for sheaves as follows:

\begin{lem*}
 For an $\infty$-topos $\mc{X}$, the $\infty$-category $\Shv_{\mc{A}}(\mc{X})$ is closed symmetric monoidal,
 and $\Shv_{\mc{D}}(\mc{X})$ is an $\infty$-category enriched, left-tensored, and cotensored over $\Shv_{\mc{A}}(\mc{X})$.
\end{lem*}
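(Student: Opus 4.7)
The approach is to realize both sheaf categories as objects of $\Pr^{\mr{L}}$, the $\infty$-category of presentable $\infty$-categories and colimit-preserving functors, and transport the algebraic structures from $\mc{A}$ and $\mc{D}$ along the tensor product formalism of \cite[\S 4.8]{HA}. Concretely, since $\mc{A}$ and $\mc{D}$ are compactly generated (hence presentable) and $\Shv(\mc{X})$ is presentable (being an $\infty$-topos), one has natural identifications
\begin{equation*}
 \Shv_{\mc{A}}(\mc{X})\simeq\Shv(\mc{X})\otimes_{\Spc}\mc{A},
 \qquad
 \Shv_{\mc{D}}(\mc{X})\simeq\Shv(\mc{X})\otimes_{\Spc}\mc{D},
\end{equation*}
where $\otimes_{\Spc}$ denotes the tensor product in $\Pr^{\mr{L}}$; this identification is an instance of \cite[1.3.1.8]{SAG} together with the universal property of $\Pr^{\mr{L}}$ (cf.\ \cite[4.8.1.17]{HA}).

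The closed symmetric monoidal structure on $\Shv_{\mc{A}}(\mc{X})$ is obtained as follows. The hypothesis (*) makes $\mc{A}$ a commutative algebra object of $\Pr^{\mr{L}}$, while $\Shv(\mc{X})$ is also a commutative algebra object of $\Pr^{\mr{L}}$ with its Cartesian symmetric monoidal structure (this works for every $\infty$-topos). Since the tensor product of two commutative algebras in a symmetric monoidal $\infty$-category is again a commutative algebra, $\Shv(\mc{X})\otimes\mc{A}$ inherits a symmetric monoidal structure whose tensor preserves small colimits separately in each variable (it lives in $\Pr^{\mr{L}}$). Closedness then follows from the adjoint functor theorem (\cite[5.5.2.9]{HTT}). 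The left-tensored structure on $\Shv_{\mc{D}}(\mc{X})$ over $\Shv_{\mc{A}}(\mc{X})$ arises in parallel: the given action makes $\mc{D}$ a left module over $\mc{A}$ in $\Pr^{\mr{L}}$, and applying $\Shv(\mc{X})\otimes_{\Spc}-$ produces a left module $\Shv(\mc{X})\otimes\mc{D}$ over the commutative algebra $\Shv(\mc{X})\otimes\mc{A}$. Finally, by \cite[4.2.1.33]{HA}, a left-tensored structure with colimit-preserving action over a presentable closed symmetric monoidal category automatically comes with cotensors and an enrichment, given by the respective right adjoints.

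The main obstacle lies in concretely identifying the sheaf categories with the abstract $\Pr^{\mr{L}}$-tensor products in a way compatible with the symmetric monoidal/module structures; this is essentially the content of reinterpreting sheafification as a $\otimes$-compatible localization (in the sense of \cite[2.2.1.9]{HA}) of the presheaf category with its pointwise, or Day-convolution, structure. The key verification is that for any sheaf $\mc{H}$ and presheaf $\mc{G}$, the pointwise internal-hom $\Mor(\mc{G},\mc{H})$, suitably interpreted, remains a sheaf---which reduces to the fact that limits of sheaves are sheaves together with the formula for the internal hom in the Day convolution. Once this compatibility is in hand, the resulting symmetric monoidal / module structures on $\Shv_{\mc{A}}(\mc{X})$ and $\Shv_{\mc{D}}(\mc{X})$ satisfy the claimed properties automatically.
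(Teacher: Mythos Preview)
Your approach is correct and genuinely different from the paper's. The paper works directly with the pointwise left-tensor on $\mr{Fun}(\mc{X}^{\mr{op}},\mc{D})$ and verifies, following \cite[1.3.4.6]{SAG}, that this structure is compatible with the sheafification localization in the sense of \cite[2.2.1.7, 2.2.1.9]{HA}; the verification uses that $\otimes\colon\mc{A}\times\mc{D}\to\mc{D}$ preserves filtered colimits together with \cite[1.3.4.4]{SAG}, and it is here that compact generation of $\mc{A}$ and $\mc{D}$ enters essentially. Your route via the $\Pr^{\mr{L}}$-tensor product $\Shv(\mc{X})\otimes_{\Spc}(-)$ is cleaner for the bare statement and does not actually require compact generation: presentability alone suffices for the identifications and for the module structure to transfer. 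What the paper's argument buys, however, is the explicit formula $A\otimes D\simeq L(\iota A\otimes\iota D)$ for the sheaf-level tensor, which is used immediately afterwards (e.g.\ in showing $f^*$ is monoidal and in the $\sCot$ adjunction formulas). Your final paragraph gestures at recovering this, but the ``key verification'' you propose---that the pointwise $\Mor(\mc{G},\mc{H})$ is a sheaf when $\mc{H}$ is---is not as elementary as you suggest (it does not reduce to ``limits of sheaves are sheaves'') and in fact is exactly where compact generation is needed, via \cite[1.3.4.4]{SAG}. So your abstract argument proves the lemma, while the paper's concrete argument supplies the description required downstream.
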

\begin{proof}
 We borrow the argument from \cite[1.3.4.6]{SAG}.
 Let $\otimes\colon\mc{A}\times\mc{D}\rightarrow\mc{D}$ denote the left-tensor functor (cf.\ \cite[4.2.1.19]{HA}).
 The existence of morphism objects and exponential objects implies that $\otimes$ commutes with colimits separately in each variable.
 By \cite[5.5.8.6, 5.5.8.3]{HTT}, the functor $\otimes$ commutes with small {\em filtered} colimits.
 Now, the left-tensor on $\mr{Fun}(\mc{X}^{\mr{op}},\mc{D})$ is pointwise, and can be described as the composite
 \begin{equation*}
   \mr{Fun}(\mc{X}^{\mr{op}},\mc{A})\times\mr{Fun}(\mc{X}^{\mr{op}},\mc{D})
   \cong
   \mr{Fun}(\mc{X}^{\mr{op}},\mc{A}\times\mc{D})
   \xrightarrow{\otimes}
   \mr{Fun}(\mc{X}^{\mr{op}},\mc{D}),
 \end{equation*}
 Note that the product of compactly generated $\infty$-categories remains to be compactly generated by \cite[5.5.7.6]{HTT}.
 By \cite[1.3.4.4]{SAG}, since we are assuming $\mc{A}$, $\mc{D}$ are compactly generated, this functor preserves $L$-equivalences.
 Similarly, the monoidal structure of $\mr{Fun}(\mc{X}^{\mr{op}},\mc{A})$ preserves $L$-equivalence as well.
 Thus, as \cite[2.2.1.7]{HA}, the left-tensor and monoidal structure are compatible with $L$, and
 invoking \cite[2.2.1.9]{HA}, we conclude that $\Shv_{\mc{A}}(\mc{X})$ is a symmetric monoidal $\infty$-category and
 $\Shv_{\mc{D}}(\mc{X})$ is left-tensored over it.
 Moreover, \cite[2.2.1.9]{HA} implies that for $A\in\Shv_{\mc{A}}(\mc{X})$ and $D\in\Shv_{\mc{D}}(\mc{X})$,
 the left-tensor $A\otimes D$ is equivalent to $L(\iota(A)\otimes\iota(D))$,
 where $\iota\colon\Shv_{-}(\mc{X})\rightarrow\mr{Fun}(\mc{X}^{\mr{op}},-)$ is the inclusion functor.
 This presentation implies that the left-tensor $\Shv_{\mc{A}}(\mc{X})\times\Shv_{\mc{D}}(\mc{X})\rightarrow\Shv_{\mc{D}}(\mc{X})$
 commutes with small colimits separately in each variable.
 Since $\Shv_{\mc{D}}(\mc{X})$ and $\Shv_{\mc{A}}(\mc{X})$ are presentable,
 by \cite[4.2.1.33]{HA}, $\Shv_{\mc{D}}(\mc{X})$ is cotensored and enriched.
\end{proof}

\begin{ex*}
 Let $\mc{C}$ be a compactly generated symmetric monoidal $\infty$-category
 and the tensor product preserves small colimits separately in each variable.
 Then for an algebra object $A$ of $\mc{C}$, the $\infty$-category $\mr{RMod}_A(\mc{C})$ is compactly generated.
 In fact, the proof of \cite[5.3.2.12 (3)]{HA} shows this.
\end{ex*}

\begin{test}
 See also \cite[21.1.2.19]{SAG} and \cite[5.3.2.11]{HA}.
\end{test}

\begin{lem}
 \label{pullbackmonoidal}
 We assume {\normalfont(*)} of {\normalfont\ref{shvcotetc}}.
 Assume we are given a geometric morphism $f\colon\mc{X}\rightarrow\mc{Y}$ of $\infty$-topoi.
 Then $f^*_{\mc{A}}\colon\Shv_{\mc{A}}(\mc{Y})\rightarrow\Shv_{\mc{A}}(\mc{X})$
 can be promoted to a monoidal functor and
 $f^*_{\mc{D}}\colon\Shv_{\mc{D}}(\mc{Y})\rightarrow\Shv_{\mc{D}}(\mc{X})$ is monoidal as well
 with respect to the left-tensor structure
 {\normalfont(}more precisely $\mc{LM}^{\otimes}$-monoidal using the notation of {\normalfont\cite[4.2.1.13]{HA}}{\normalfont)}.
\end{lem}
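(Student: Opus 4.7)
The plan is to realize the sheaf categories $\Shv_{\mc{A}}(\mc{X})$ and $\Shv_{\mc{D}}(\mc{X})$ as tensor products in the symmetric monoidal $\infty$-category $\PrL$ of presentable $\infty$-categories and colimit-preserving functors, and then exploit the functoriality of such tensor products. Under hypothesis (*), $\mc{A}$ defines a commutative algebra in $\PrL$ (the closed symmetric monoidal structure ensures the tensor commutes with colimits in each variable), and $\mc{D}$ lies in $\mr{Mod}_{\mc{A}}(\PrL)$. On the other hand, any $\infty$-topos defines a commutative algebra in $\PrL$ under its cartesian monoidal structure, since finite products commute with colimits in an $\infty$-topos.

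First I would invoke the identifications
\begin{equation*}
 \Shv_{\mc{A}}(\mc{X}) \simeq \Shv(\mc{X}) \otimes \mc{A}, \qquad \Shv_{\mc{D}}(\mc{X}) \simeq \Shv(\mc{X}) \otimes \mc{D}
\end{equation*}
in $\PrL$ supplied by \cite[1.3.1.6]{SAG} (already used in the remark preceding this lemma). Under these identifications, $\Shv_{\mc{A}}(\mc{X})$ acquires a commutative algebra structure in $\PrL$ from the tensor of commutative algebras, and $\Shv_{\mc{D}}(\mc{X})$ becomes a module over it. A separate verification is needed that these symmetric monoidal and left-tensored structures agree with the pointwise ones constructed in the previous lemma; both are colimit-preserving in each variable and reduce to the pointwise operations coming from $\mc{A}$ on the image of the Yoneda embedding, so one identifies them by a universal property argument for colimit-preserving symmetric monoidal extensions.

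Second, the geometric morphism $f\colon \mc{X} \to \mc{Y}$ yields a pullback $f^*\colon \Shv(\mc{Y}) \to \Shv(\mc{X})$ that is left exact and colimit-preserving, hence a morphism in $\mr{CAlg}(\PrL)$ for the cartesian structures. Tensoring with $\mc{A}$ (respectively with $\mc{D}$) in $\PrL$, I obtain
\begin{equation*}
 f^* \otimes \mr{id}_{\mc{A}}\colon \Shv(\mc{Y}) \otimes \mc{A} \to \Shv(\mc{X}) \otimes \mc{A},
\end{equation*}
which is automatically a morphism of commutative algebras in $\PrL$, i.e., a symmetric monoidal colimit-preserving functor; this provides the sought monoidal enhancement of $f^*_{\mc{A}}$. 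Similarly, $f^* \otimes \mr{id}_{\mc{D}}$ produces a morphism in $\mr{Mod}_{\Shv(\mc{Y}) \otimes \mc{A}}(\PrL)$ (viewing $\Shv(\mc{X}) \otimes \mc{D}$ as such a module via $f^*_{\mc{A}}$), which is precisely the data of an $\mc{LM}^{\otimes}$-monoidal structure on $f^*_{\mc{D}}$ compatible with $f^*_{\mc{A}}$.

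The step I expect to be the main obstacle is the compatibility check in the second paragraph: identifying the symmetric monoidal structure on $\Shv(\mc{X}) \otimes \mc{A}$ obtained as an object of $\mr{CAlg}(\PrL)$ with the one obtained from the pointwise tensor by localization in the previous lemma. One cleanest approach is to characterize both via their universal property among presentable symmetric monoidal $\infty$-categories: a colimit-preserving symmetric monoidal functor out of $\Shv(\mc{X}) \otimes \mc{A}$ corresponds to a pair of a colimit-preserving symmetric monoidal functor from $\Shv(\mc{X})$ (equivalently, from $\mc{X}^{\mr{op}}$ via Yoneda) and one from $\mc{A}$ with commuting images, which is exactly the universal property enjoyed by the localized pointwise structure. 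The $\mc{LM}^{\otimes}$-variant for $\mc{D}$ follows by an analogous argument for module structures over $\Shv(\mc{X}) \otimes \mc{A}$.
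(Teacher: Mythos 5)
Your proof is correct in outline, but it takes a genuinely different route from the paper. The paper's proof stays at the level of presheaf categories and localizations: it introduces the presheaf pullback $f^{\mr{p}}_{\mc{D}}$, observes $f^*_{\mc{D}} \simeq L_{\mc{X}} \circ f^{\mr{p}}_{\mc{D}}$, checks directly that $f^{\mr{p}}$ sends $L_{\mc{Y}}$-equivalences to $L_{\mc{X}}$-equivalences, and then builds the monoidal map of $\infty$-operads as the composition $\mc{D}^{\otimes}_{\mc{Y}} \hookrightarrow \mc{C}^{\otimes}_{\mc{Y}} \xrightarrow{f^{\mr{p},\otimes}} \mc{C}^{\otimes}_{\mc{X}} \xrightarrow{L^{\otimes}_{\mc{X}}} \mc{D}^{\otimes}_{\mc{X}}$, checking preservation of coCartesian edges by hand. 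Your argument instead invokes the Lurie tensor product $\Shv_{\mc{A}}(\mc{X}) \simeq \Shv(\mc{X}) \otimes \mc{A}$ in $\mc{P}\mr{r}^{\mr{L}}$, regards $f^*\colon\Shv(\mc{Y})\to\Shv(\mc{X})$ as a map in $\mr{CAlg}(\mc{P}\mr{r}^{\mr{L}})$ for the cartesian structures, and tensors with $\mr{id}_{\mc{A}}$ (resp.\ $\mr{id}_{\mc{D}}$). That immediately produces a symmetric monoidal (resp.\ module) map, which is elegant; the price is the separate verification that the tensor-product symmetric monoidal structure on $\Shv(\mc{X})\otimes\mc{A}$ agrees with the localized pointwise one built in Lemma \ref{shvcotetc}, and that the two module structures on $\Shv(\mc{X})\otimes\mc{D}$ agree. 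You correctly flag this as the main remaining work and sketch a universal-property argument for it, but this compatibility is not entirely routine and is precisely the step the paper's proof avoids by working directly with the concrete localization model. An additional small point: the paper's proof invokes compact generation (via \cite[1.3.4.4]{SAG}) to ensure the pointwise tensor respects local equivalences, whereas your construction of the monoidal functor itself only needs presentability; the compact generation is still implicitly needed because the comparison target — the pointwise structure from Lemma \ref{shvcotetc} — is only available under hypothesis (*).
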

\begin{proof}
 Let $f^\mr{p}_{\mc{D}}\colon\mr{Fun}(\mc{Y}^{\mr{op}},\mc{D})\rightarrow\mr{Fun}(\mc{X}^{\mr{op}},\mc{D})$,
 or $f^{\mr{p}}$ for short, be the left adjoint of the functor $(\circ f^*)$.
 Recall from the proof of Lemma \ref{upggeomor} that $f^*_{\mc{D}}\simeq L_{\mc{X}}\circ f^{\mr{p}}_{\mc{D}}$.
 Assume that $D\rightarrow D'$ is an $L_{\mc{Y}}$-equivalence in $\mr{Fun}(\mc{Y}^{\mr{op}},\mc{D})$.
 For any $E$ in $\Shv_{\mc{D}}(\mc{X})$, we have
 \begin{equation*}
  \Map(f^{\mr{p}}D',E)
   \simeq
   \Map(D',E\circ f^*)
   \xrightarrow{\sim}
   \Map(D,E\circ f^*)
   \simeq
   \Map(f^{\mr{p}}D,E),
 \end{equation*}
 where the second equivalence follows since $E\circ f^*$ remains to be a sheaf.
 By definition (cf.\ \cite[5.5.4.1]{HTT}), the localization functor $L$ is compatible with $f^\mr{p}$,
 namely $f^\mr{p}(D)\rightarrow f^{\mr{p}}(D')$ is $L_{\mc{X}}$-equivalent.

 For $Z\in\{\mc{X},\mc{Y}\}$, let $\mc{C}_{Z}^{\otimes}\rightarrow\mc{LM}^{\otimes}$ be the coCartesian fibration of
 $\infty$-operads which exhibits the pointwise left-tensor structure of $\mr{Fun}(Z^{\mr{op}},\mc{D})$.
 Then we have the localization map $L^{\otimes}_Z\colon\mc{C}_Z^{\otimes}\rightarrow\mc{D}_Z^{\otimes}$ so that $\mc{D}_Z^{\otimes}$
 exhibits the left-tensor structure of $\Shv_{\mc{D}}(Z)$ over $\Shv_{\mc{A}}(Z)$.
 Since $L_Z^{\otimes}$ and the inclusion are maps of $\infty$-operads by \cite[2.2.1.9]{HA},
 we define a map of $\infty$-operads as the composite
 \begin{equation*}
  F\colon
   \mc{D}^{\otimes}_{\mc{Y}}
   \hookrightarrow
   \mc{C}^{\otimes}_{\mc{Y}}
   \xrightarrow{f^{\mr{p},\otimes}}
   \mc{C}^{\otimes}_{\mc{X}}
   \xrightarrow{L^{\otimes}_{\mc{X}}}
   \mc{D}^{\otimes}_{\mc{X}}.
 \end{equation*}
 By construction of $f^*$ in Lemma \ref{upggeomor}, the underlying functor coincides with $f^*_{\mc{A}}\times f^*_{\mc{D}}$.
 We must show that this map preserves coCartesian edges (over $\mc{LM}^{\otimes}$) to conclude the proof.
 It suffices to show that for $A,A'\in\Shv_{\mc{A}}(\mc{Y})$, $D\in\Shv_{\mc{D}}(\mc{Y})$, the morphisms
 \begin{equation*}
  F(A\otimes A')\rightarrow F(A)\otimes F(A'),
   \qquad
  F(A\otimes D)\rightarrow F(A)\otimes F(D)
 \end{equation*}
 are equivalences.
 These follow by the compatibility of $L$ and $f^{\mr{p}}_{\mc{A}}$, $f^{\mr{p}}_{\mc{D}}$.
\end{proof}

\subsection{}
\label{carttopo}
Let $p\colon\mc{X}\rightarrow\mc{C}$ be a Cartesian fibration between $\infty$-categories.
For $x\in\mc{X}$, by taking the $p$-right Kan extension, there exists a functor $a_x\colon\mc{C}_{/p(x)}\rightarrow\mc{X}$ unique up to contractible space of choices
such that $a_x(p(x))=x$ and $a_x$ preserves Cartesian edges (over $\mc{C}$).

\begin{lem*}
 Let $p\colon\mc{X}\rightarrow\mc{C}$ be a Cartesian fibration of $\infty$-categories, and assume $\mc{C}$ admits pullbacks.
 If $\tau$ is a Grothendieck topology on $\mc{C}$, there exists a unique Grothendieck topology, called the {\em $p$-Cartesian topology},
 on $\mc{X}$ characterized by the following property:
 For $x\in\mc{X}$, a family $\{x_i\rightarrow x\}$ is a covering if there exists a refinement $\{y_j\rightarrow x\}$
 such that each edge $y_j\rightarrow x$ is $p$-Cartesian and the induced family $\{p(y_j)\rightarrow p(x)\}$ is a covering family of $\mc{C}$.
 The functor $a_x$ is cocontinuous, continuous, and preserves pullbacks.
 We will denote the associated geometric morphism of topoi by $r_x$.
 A presheaf $\mc{F}\in\PShv(\mc{X})$ is a sheaf if and only if $r_x^*(\mc{F})$ is a sheaf on $\mc{C}_{/p(x)}$ for any $x\in\mc{X}$.
\end{lem*}
\begin{proof}
 The $\infty$-category $\mc{X}$ may not admit pullbacks in general, but any Cartesian edges in $\mc{X}$ are {\em quarrable}.
 In other words, if $f\colon y\rightarrow x$ is a $p$-Cartesian edge in $\mc{X}$, and if we are given a morphism $z\rightarrow x$, the fiber product $y\times_x z$ exists.
 Indeed, in view of \cite[2.4.4.3]{HTT}, $y\times_xz$ is a Cartesian pullback of $z$ along the morphism $p(y)\times_{p(x)}p(z)\rightarrow p(z)$.
 This implies that if $\{x_i\rightarrow x\}$ is a family consisting of quarrable morphisms, $\mr{R}\rightarrow x$ is the sieve generated by the family,
 and $y\rightarrow x$ is a morphism, then the pullback $\mr{R}\times_{\widehat{x}}\widehat{y}$ is the sieve generated by $\{x_i\times_xy\rightarrow y\}$,
 and the verification of the claims are straightforward.
 For the last claim, assume we are given $\mc{F}\in\PShv(\mc{X})$ such that $r_x^*(\mc{F})\in\Shv(\mc{C}_{/p(x)})$ for any $x$.
 We wish to show that the adjunction $\mc{F}\rightarrow L_*L^*\mc{F}$ is an equivalence, where $L\colon\Shv(\mc{X})\rightarrow\PShv(\mc{X})$ is the geometric morphism of topoi.
 Since checking the equivalence is pointwise, it suffices to show this after precomposing with $a_x$.
 By Lemma \ref{cocontmorlem}, $r_x^*$ can be computed by composing with $a_x$, we get the claim.
\end{proof}

\begin{lem}
 \label{commurestpush}
 Let $f\colon\mc{C}'\rightarrow\mc{C}$ be a cocontinuous functor between $\infty$-sites which admits pullbacks
 (but we do not require that $f$ preserves pullbacks).
 Let $p'\colon\mc{X}':=\mc{X}\times_{\mc{C}}\mc{C'}\rightarrow\mc{C}'$ be the Cartesian fibration, with which we endow $\mc{X}'$ with $p'$-Cartesian topology.
 Then the projection $f_{\mc{X}}\colon\mc{X}'\rightarrow\mc{X}$ is cocontinuous.
 If, moreover, $f$ admits a right adjoint $g$, for any $x\in\mc{X}$, we have the homotopy commutative diagram of geometric morphisms:
  \begin{equation*}
  \xymatrix@C=50pt{
   \Shv(\mc{C}'_{/gp(x)})\ar[r]^-{r_{x'}}\ar[d]_{F_{x'}}&\Shv(\mc{X}')\ar[d]^{F_{\mc{X}}}\\
  \Shv(\mc{C}_{/p(x)})\ar[r]^-{r_x}&\Shv(\mc{X}),
   }
 \end{equation*}
 where $x'\in\mc{X}'$ over $gp(x)$ is a point such that $f_{\mc{X}}(x')$ is a Cartesian pullback of $x$ along the morphism $fgp(x)\rightarrow p(x)$.
 Moreover, the morphism $r_x^*\circ F_{\mc{X},*}\rightarrow F_{x',*}\circ r^*_{x'}$ is an equivalence.
\end{lem}
\begin{proof}
 By the description of coverings in Lemma \ref{carttopo}, $f_{\mc{X}}$ is cocontinuous, and the diagram follows by \ref{cocontmorlem} with the aid of \cite[2.4.2.8]{HTT}.
 Let us show the last claim.
 In view of Lemma \ref{cocontmorlem},
 the 4 functors appearing in the base change morphism are compatible with $\iota\colon\Shv(\mc{C})\hookrightarrow\PShv(\mc{C})$.
 Thus, it suffices to show that the corresponding morphism for presheaves is an equivalence.
 Before proving this, we wish to show the following claim.
 Let $a\colon\mc{X}\rightarrow\mc{Y}$ be a functor which admits a right adjoint $b$.
 Then the functor $a_*\colon\PShv(\mc{X})\rightarrow\PShv(\mc{Y})$ is the composition functor $\circ b$.
 Indeed, we have the morphism $a^*\circ(\circ b)\rightarrow\mr{id}$ in $\mr{Fun}(\mc{X}^{\mr{op}},\Spc)$ since $a^*\simeq(\circ a)$.
 Since $a_*$ is defined by Kan extension, we have the morphism of functors $(\circ b)\rightarrow a_*$, which we wish to show to be an equivalence.
 Since the verification is pointwise, we only need to show that for $\mc{F}\in\PShv(\mc{X})$, $y\in\mc{Y}$,
 the induced morphism $\mc{F}(b(y))\rightarrow(a_*\mc{F})(y)$ is an equivalence.
 The latter space is equivalent to $\invlim\bigl((\mc{X}\times_{\mc{Y}}\mc{Y}_{/y})^{\mr{op}}\rightarrow\mc{X}^{\mr{op}}\xrightarrow{\mc{F}}\Spc\bigr)$.
 In view of \cite[5.2.4.2]{HTT}, $b(y)$ is a final object of $\mc{X}\times_{\mc{Y}}\mc{Y}_{/y}$, and the claim follows.

 Now, let us go back to the proof.
 In view of \cite[5.2.5.2]{HTT}, $g$ is a right adjoint to the functor $f_{/x'}$.
 On the other hand, $f_{\mc{X}}$ also admits a right adjoint given by $(\mr{id},g\circ p)\colon\mc{X}\rightarrow\mc{X}\times_{\mc{C}}\mc{C}'\cong\mc{X}'$.
 Indeed, by taking right Kan extension (relative to $\mc{C}$), we may take a functor $h\colon\mc{X}\rightarrow\mc{X}$
 which sends $x$ to the Cartesian pullback to $x$ by the morphism $fgp(x)\rightarrow p(x)$.
 Let $g_{\mc{X}}:=(h,g)\colon\mc{X}\rightarrow\mc{X}\times_{\mc{C}}\mc{C'}=:\mc{X}'$.
 By construction $f_{\mc{X}}\circ g_{\mc{X}}=h$, and we have the morphism of functors $h\rightarrow\mr{id}$, which we wish to show to be a unit transformation.
 By using \cite[2.4.4.3]{HTT}, the verification is straightforward.
 By the description of the adjoints, we have $g_{\mc{X}}\circ a_x\simeq a_{(h(x),gp(x))}\circ g$, and the lemma follows.
\end{proof}

\begin{test}
 For this, we have
 \begin{align*}
   \Map_{\mc{X}'}&\bigl((x',c'),g_{\mc{X}}(x)\bigr)
   \cong
   \Map_{\mc{X}'}\bigl((x',c'),(h(x),gp(x))\bigr)\\
   &\simeq
   \Map_{\mc{X}}(x',h(x))\times_{\Map_{\mc{C}}(p(x'),fgp(x))}\Map_{\mc{C}'}(c',gp(x))\\
   &\simeq
   \bigl(\Map_{\mc{X}}(x',x)\times_{\Map(p(x'),p(x))}\Map(p(x'),fgp(x))\bigr)\times_{\Map_{\mc{C}}(p(x'),fgp(x))}\Map_{\mc{C}'}(c',gp(x))\\
   &\simeq
   \Map_{\mc{X}}(x',x)\times_{\Map(p(x'),p(x))}\Map_{\mc{C}'}(c',gp(x))\\
   &\simeq
   \Map_{\mc{X}}(x',x)\times_{\Map(p(x'),p(x))}\Map_{\mc{C}}(f(c'),p(x))
   \simeq
   \Map_{\mc{X}}(x',x).
 \end{align*}
  Here, we used \cite[2.4.4.3]{HTT} at the 3rd equivalence, $p(x')=f(c')$ at the last equivalence.
\end{test}

\section{Family of morphism objects}
\label{sec-mor}
Assume we are given a  six functor formalism for $S$-schemes as in \ref{recalfixsixthe}.
Fix an $S$-scheme $X$.
Then for each $S$-scheme $T$ and an open subscheme $U\subset X_T$,
we may associate the object $\cH(U/T):=f_{U/T,!}\mbf{1}_U\in\mc{D}(T)$, where $f_{U/T}\colon U\subset X_T\rightarrow T$.
If we are given a morphism $g\colon T'\rightarrow T$, we have an equivalence $g^*\cH(U/T)\simeq\cH(U_{T'}/T')$, and if we are given an immersion $U\subset U'$,
we have a morphism $\cH(U/T)\rightarrow\cH(U'/T)$.
Now, $\mc{D}(T)$ admits {\em morphism objects}: For $\mc{F},\mc{G}\in\mc{D}(T)$, we have $\Mor_{\mc{D}(T)}(\mc{F},\mc{G})$ in $\Mod_R$.
We wish to construct a functor assigning $\Hbm(U/T):=\Mor_{\mc{D}(T)}(\cH(U/T),\mbf{1}_T)$ to $U/T$ as above.
If we are given a morphism $(U/T)\rightarrow (U'/T')$ of such objects,
the functoriality for $\cH$ yields the morphism $\Hbm(U'/T')\rightarrow\Hbm(U/T)$.
In this section, we establish a categorical formalism to get an $\infty$-enhancement of the functor $\Hbm$ from $\cH$, and prove Proposition \ref{concdescM}.

\subsection{}
\label{LKEbclem}
For the reader's convenience, we recall the following property of left Kan extension.
Consider the following homotopy commutative diagram $D$ of $\infty$-categories where $p$ is an inner fibration:
\begin{equation*}
 \xymatrix{
  \mc{D}'\ar[r]^-{\alpha}\ar[d]_{q}&\mc{M}\ar[d]^{p}\\
 \mc{D}\ar[r]\ar@{-->}[ru]&\mc{B}.
  }
\end{equation*}
A {\em $p$-left Kan extension of $\alpha$ along $q$} is a functor $\mr{LKE}_D\colon\mc{D}\rightarrow\mc{M}$ over $\mc{B}$ together with a morphism
$\phi\colon\alpha\rightarrow\mr{LKE}_D\circ q$ in $\mr{Fun}_{\mc{B}}(\mc{D}',\mc{M})$ such that the condition of \cite[4.3.3.2]{HTT} holds.
In view of (the proof of) \cite[5.2.4.1]{HTT}, this condition is equivalent to the induced diagram $(\mc{D}'\times_{\mc{D}}\mc{D}_{/D})^{\triangleright}\rightarrow\mc{M}$
defined by $\alpha$ and the cone point is sent to $\mr{LKE}_D(D)$ being a $p$-colimit diagram for any $D\in\mc{D}$.

\begin{lem*}
 Assume we are given a diagram $D$ as above, and assume that $q$ is a coCartesian fibration.
 Let $h\colon\mc{B}'\rightarrow\mc{B}$ be a functor of $\infty$-categories, and put $D':=D\times_{\mc{B}}\mc{B}'$.
 If a $p$-left Kan extension $\mr{LKE}_D$ exists, then $\mr{LKE}_{D'}$ also exists and $\mr{LKE}_{D'}\simeq\mr{LKE}_D\times_{\mc{B}}\mc{B}'$.
\end{lem*}
\begin{proof}
 Take $D\in\mc{D}$, and let $F\colon\mc{D}'\times_{\mc{D}}\mc{D}_{/D}\rightarrow\mc{D}'\xrightarrow{\alpha}\mc{M}$ be the composite.
 Let $i\colon\mc{D}'_D\hookrightarrow\mc{D}'\times_{\mc{D}}\mc{D}_{/D}$ be the inclusion.
 We claim that the existence of a $p$-colimit of $F$ is equivalent to that of a $p$-colimit of $F\circ i$, and the $p$-colimits are equivalent if they exist.
 To show this, by \cite[4.3.1.7]{HTT}, it suffices to show that $i$ is a cofinal map.
 Since the inclusion $\{D\}\hookrightarrow\mc{D}_{/D}$ is cofinal, the map $i$ is cofinal by \cite[4.1.2.10, 4.1.2.15]{HTT}, and the claim follows.
 In view of \cite[4.3.1.5 (4)]{HTT}, the lemma follows.
\end{proof}

\subsection{}
\label{dualfibint}
Let $p\colon\mc{D}\rightarrow\mc{B}$ be a coCartesian fibration.
Let $\tau\colon\Cat_\infty\rightarrow\Cat_\infty$ be the functor sending $\mc{B}$ to $\mc{B}^{\mr{op}}$.
We have the coCartesian fibration $\widetilde{\mc{D}}:=\mr{Un}(\tau\circ\mr{St}(p))\rightarrow\mc{B}$.
This is a coCartesian fibration whose fiber over $X\in\mc{B}$ is $\mc{D}_X^{\mr{op}}$.
Since $\widetilde{\mc{D}}$ depends on $p$, the notation is ambiguous, and we use the notation unless any confusion may arise.
The construction is functorial for functors which preserve coCartesian edges.
Namely, if we are given a functor $F\colon\mc{D}\rightarrow\mc{D}'$ which preserves coCartesian edges,
we have the induced functor $\widetilde{F}\colon\widetilde{\mc{D}}\rightarrow\widetilde{\mc{D}}'$.
Caution that if $F$ does not preserve coCartesian edges, we generally do not have the induced functor.
We also have a similar construction for Cartesian fibrations.
If $q\colon\mc{E}\rightarrow\mc{B}$ is a Cartesian fibration, we denote by $\widehat{\mc{E}}\rightarrow\mc{B}$
the Cartesian fibration whose fiber over any $b\in\mc{B}$ is $q^{-1}(b)^{\mr{op}}$ constructed similarly.
By construction, we have $(\widetilde{\mc{D}})^{\mr{op}}\simeq\widehat{\mc{D}^{\mr{op}}}$ for a coCartesian fibration $\mc{D}$ over $\mc{B}$.

\subsection{}
\label{relaticotten}
For an $\infty$-category $\mc{B}$, the $\infty$-category of twisted arrows $\mr{Tw}\mc{B}$ is defined in \cite[5.2.1.1]{HA}.
If we are given a categorical equivalence $F\colon\mc{B}\rightarrow\mc{D}$ of $\infty$-categories,
the induced functor $\mr{Tw}(F)$ is also a categorical equivalence.
Indeed, it suffices to show that the induced functor
$\mr{Tw}(\mc{B})\rightarrow\mr{Tw}(\mc{D})\times_{(\mc{D}\times\mc{D}^{\mr{op}})}(\mc{B}\times\mc{B}^{\mr{op}})$
is a categorical equivalence, but since both are right fibrations over $\mc{B}\times\mc{B}^{\mr{op}}$ by \cite[5.2.1.3]{HA},
the verification is fiberwise.
On the other hand, since each fiber is the $\infty$-groupo\"{i}d of mapping spaces, the claim follows.
Thus, since $\Cat_\infty$ is the localization of $\sSet^{\circ}$
({\em i.e.}\ the full subcategory of $\sSet$ spanned by fibrant and cofibrant objects) with respect to categorical equivalences,
$\mr{Tw}$ induces the functor $\mr{Tw}\colon\Cat_\infty\rightarrow\Cat_\infty$.
Let $p\colon\mc{D}\rightarrow\mc{B}$ be a coCartesian fibration.
We put $\mr{Tw}_{\mc{B}}(\mc{D}):=\mr{Un}(\mr{Tw}\circ\mr{St}(p))$, which is the coCartesian fibration over $\mc{B}$
whose fiber over $b\in\mc{B}$ is $\mr{Tw}(\mc{D}_b)$.
This is equipped with a functor $\mr{Tw}_{\mc{B}}(\mc{D})\rightarrow\mc{D}\times_{\mc{B}}\widetilde{\mc{D}}\cong\widetilde{\mc{D}}\times_{\mc{B}}\mc{D}$
of coCartesian fibrations over $\mc{B}$ which preserves coCartesian edges.

\subsection{}
\label{setuptodefmor}
Before we define the object $M_G(m)$ we introduced in \ref{morphobjstat}, we recall the setting.
Let $\mc{B}$ be an $\infty$-category, and let $\mc{A}\rightarrow\mc{B}$, $p\colon\mc{M}\rightarrow\mc{B}$ be coCartesian fibrations.
Assume we are given a left-tensor functor relative to $\mc{B}$, namely a functor $t_{\mc{M}}\colon\mc{A}\times_{\mc{B}}\mc{M}\rightarrow\mc{M}$
over $\mc{B}$ which preserves coCartesian edges.
For $a\in\mc{A}$ and $m\in\mc{M}$ which lie over the same object of $\mc{B}$, we often denote $t_{\mc{M}}(a,m)$ by $a\otimes m$.
We further assume that:
\begin{quote}
 (*) The coCartesian fibrations $\mc{A},\mc{M}\rightarrow\mc{B}$ are presentable fibrations,
 and the left-tensor commutes with colimits in each variable fiberwise.
\end{quote}
Now, let $\mc{E}\rightarrow\mc{B}$ be a coCartesian fibration, and assume we are given functors
$F\colon\widetilde{\mc{E}}\rightarrow\mc{A}$, $G\colon\mc{E}\rightarrow\mc{M}$ over $\mc{B}$, and a section $m\colon\mc{B}\rightarrow\mc{M}$.
Then this induces a functor
\begin{equation*}
 F\boxtimes G\colon
  \mr{Tw}_{\mc{B}}\mc{E}\rightarrow
  \widetilde{\mc{E}}\times_{\mc{B}}\mc{E}\xrightarrow{F\times G}\mc{A}\times_{\mc{B}}\mc{M}\xrightarrow{t_{\mc{M}}}\mc{M}.
\end{equation*}
Let $\mr{tw}_{\mc{E}}\colon\mr{Tw}_{\mc{B}}\mc{E}\rightarrow\mc{B}$ be the canonical functor.
The $p$-left Kan extension of $F\boxtimes G$
along the morphism $\mr{Tw}_{\mc{B}}\mc{E}\rightarrow\mc{B}$, and denoted by $\left<F,G\right>_{\mc{B}}\colon\mc{B}\rightarrow\mc{M}$.

\begin{lem*}
 \label{pairbasicprop}
 We keep the notation.
 \begin{enumerate}
  \item The pairing $\left<-,-\right>_{\mc{B}}$ is compatible with base change.
	More precisely, assume we are given a functor of $\infty$-categories $h\colon\mc{B}'\rightarrow\mc{B}$.
	Then there is a canonical equivalence $\left<-,-\right>_{\mc{B}}\times_{\mc{B}}\mc{B}'\simeq\left<-,-\right>_{\mc{B}'}$.

  \item Let $\mc{M}'$ be a coCartesian over $\mc{B}$ which is left-tensored by $\mc{A}$ relative to $\mc{B}$ and satisfies the assumption of {\normalfont(*)} above.
	Let $f\colon\mc{M}\rightarrow\mc{M}'$ be a functor which preserves the left-tensor, namely equipped with an equivalence
	$f\circ t_{\mc{M}}\simeq t_{\mc{M}'}\circ(\mr{id}_{\mc{A}}\times f)$.
	If, furthermore, the fiber $f_c$ commutes with colimits for each $c\in\mc{B}$, then we have an equivalence $f\circ\left<F,G\right>\simeq\left<F,f\circ G\right>$.
 \end{enumerate}
\end{lem*}
\begin{proof}
 The first claim follows readily by Lemma \ref{LKEbclem}, and the second by the definition of left Kan extension.
\end{proof}

\subsection{}
\label{consadjmorcot}
Let us fix a functor $G\colon\mc{E}\rightarrow\mc{M}$ over $\mc{B}$.
Then we have the functor
\begin{equation*}
 \mr{RT}_G\colon\mr{Fun}_{\mc{B}}(\widetilde{\mc{E}},\mc{A})\rightarrow\mr{Fun}_{\mc{B}}(\mc{B},\mc{M}),
\end{equation*}
sending $F$ to $\left<F,G\right>_{\mc{B}}$.
Let us show that these functors admit right adjoints.
First, $\mr{RT}_G$ commutes with arbitrary colimits.
Indeed, since $\mc{A}\rightarrow\mc{B}$ is assumed to be presentable and in particular a Cartesian fibration, colimits in
$\mr{Fun}_{\mc{B}}(\widetilde{\mc{E}},\mc{A})\cong\mr{Fun}_{\widetilde{\mc{E}}}(\widetilde{\mc{E}},\mc{A}\times_{\mc{B}}\widetilde{\mc{E}})$
can be computed pointwise by \cite[5.1.2.2]{HTT}.
Similarly, colimits in $\mr{Fun}_{\mc{B}}(\mr{Tw}_{\mc{B}}\mc{E},\mc{M})$ can also be computed pointwise.
Thus, the functor $\mr{Fun}_{\mc{B}}(\widetilde{\mc{E}},\mc{A})\rightarrow\mr{Fun}_{\mc{B}}(\mr{Tw}_{\mc{B}}\mc{E},\mc{M})$,
sending $F\colon\widetilde{\mc{E}}\rightarrow\mc{A}$ to the composite
$\mr{Tw}_{\mc{B}}\mc{E}\rightarrow\widetilde{\mc{E}}\times_{\mc{B}}\mc{E}\xrightarrow{F\times G}\mc{A}\times_{\mc{B}}\mc{M}\xrightarrow{t}\mc{M}$,
commutes with colimits since the left-tensor functor commutes with colimits separately in each variable by assumption.
Since the left Kan extension functor commutes with colimits by (obvious relative version of) \cite[4.3.3.7]{HTT}, $\mr{RT}_G$ commutes with colimits.
By \cite[5.5.3.17]{HTT}, both $\mr{Fun}_{\mc{B}}(\widetilde{\mc{E}},\mc{A})$ and $\mr{Fun}_{\mc{B}}(\mc{B},\mc{M})$ are presentable.
Thus, by adjoint functor theorem \cite[5.5.2.9]{HTT}, $\mr{RT}_G$ admits a right adjoint.
We denote this adjoint by $M_G$.
The proof of Proposition \ref{concdescM} occupies the rest of this section.

\begin{rem*}
 For $F\colon\widetilde{\mc{E}}\rightarrow\mc{A}$, we may consider the functor $\mr{LT}_F\colon\mr{Fun}_{\mc{B}}(\mc{E},\mc{M})\rightarrow\mr{Fun}_{\mc{B}}(\mc{B},\mc{M})$
 sending $G$ to $\left<F,G\right>_{\mc{B}}$.
 By a similar, argument, $\mr{LT}_F$ admits a right adjoint ${}^F(\cdot)$.
 A similar results as Proposition \ref{concdescM} hold also for ${^F(\cdot)}$.
\end{rem*}

\begin{lem}
 \label{adjoilemM}
 Let $\alpha\colon\mc{M}\rightarrow\mc{M}'$ be a map of coCartesian fibrations over $\mc{B}$
 which admits a right adjoint $\beta$ relative to $\mc{B}$ in the sense of {\normalfont\cite[7.3.2.2]{HA}}.
 Assume that $\mc{M}$, $\mc{M}'$ are left-tensored over $\mc{A}$ with the left-tensor functors $t_{\mc{M}}$, $t_{\mc{M}'}$,
 and satisfy the conditions in {\normalfont\ref{setuptodefmor}}.
 Assume further that $\alpha$ is compatible with the left-tensor structure, namely
 $\alpha\circ t_{\mc{M}}\simeq t_{\mc{M}'}\circ(\mr{id}_{\mc{A}}\times \alpha)$.
 Then we have the equivalence $M_{G}\bigl(\beta(-)\bigr)\simeq M_{\alpha\circ G}(-)$.
\end{lem}
\begin{proof}
 We note that $\alpha$ preserves coCartesian edges by \cite[7.3.2.6]{HA}, and commutes with colimits relative to $\mc{B}$.
 Since we only need to check the universal property, we leave the details to the reader.
\end{proof}

\subsection{}
Let us start our analysis in the case where $\mc{B}=\Delta^0$.
For objects $m,m'\in\mc{M}$, we have the morphism object $\Mor_{\mc{M}}(m,m')$ in the sense of \cite[4.2.1.28]{HA}.
Namely, we define $\Mor_{\mc{M}}(m,-)$ to be the right adjoint functor of $-\otimes m\colon\mc{A}\rightarrow\mc{M}$.
Arguing as in \cite[4.2.1.31]{HA}, the morphism object is organized into a functor $\Mor_{\mc{M}}\colon\mc{M}^{\mr{op}}\times\mc{M}\rightarrow\mc{A}$
sending $(m,m')$ to $\Mor_{\mc{M}}(m,m')$.

\begin{lem*}
 \label{endcoendadj}
 Let $\mc{B}=\Delta^0$.
 Then we have canonical equivalences $M_G(m)\simeq\Mor_{\mc{M}}(G^{\mr{op}},m)$.
\end{lem*}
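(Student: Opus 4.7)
The strategy is to verify the universal property of $M_G(m)$ via the Yoneda lemma. By definition of $M_G$ as the right adjoint of $\mr{RT}_G$, for any $F\in\mr{Fun}(\widetilde{\mc{E}},\mc{A})$ we have $\Map(F,M_G(m))\simeq\Map_{\mc{M}}(F\otimes G,m)$. Since $\mc{C}=\Delta^0$, the $p$-left Kan extension defining $F\otimes G$ reduces to the colimit of $F\boxtimes G\colon\mr{Tw}(\mc{E})\to\mc{M}$, so by definition $\Map_{\mc{M}}(F\otimes G,m)\simeq\mr{Ex}(F\times G,m)$. Hence it suffices to produce, natural in $F$, an equivalence
$$\Map_{\mr{Fun}(\widetilde{\mc{E}},\mc{A})}\bigl(F,\Mor(G,m)\bigr)\xrightarrow{\sim}\mr{Ex}(F\times G,m).$$

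To construct this map, I would build a canonical evaluation extranatural transform $\mr{ev}_{G,m}\colon\Mor(G,m)\times G\exnatarr m$. The counit of the pointwise adjunction $(-)\otimes x\dashv\Mor_{\mc{M}}(x,-)$ (which exists fiberwise by the first corepresentability hypothesis of \ref{relvercotmor}) provides, for each $e\in\mc{E}$, a morphism $\Mor_{\mc{M}}(G(e),m)\otimes G(e)\to m$. Assembling these through the twisted arrow structure — concretely, by factoring $\widetilde{G}\times m\colon\widetilde{\mc{E}}\to\widetilde{\mc{M}}\times\mc{M}$ through the universal coevaluation $t(\Mor_{\mc{M}}(-,-),-)\to\mr{id}$ and pulling back along $\mr{Tw}(\mc{E})\to\widetilde{\mc{E}}\times\mc{E}$ — yields $\mr{ev}_{G,m}$. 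Composition with $\mr{ev}_{G,m}$ then defines the comparison map. To show it is an equivalence, I would express both sides as ends: the right-hand side is by definition $\Map_{\mr{Fun}(\mr{Tw}(\mc{E}),\mc{M})}(F\boxtimes G,\mr{const}_m)$, which unwinds to a limit over $\mr{Tw}(\mc{E})^{\mr{op}}$ of $\Map_{\mc{M}}(F(s(-))\otimes G(t(-)),m)$, while the end formula for mapping spaces in functor $\infty$-categories expresses the left-hand side as a limit over $\mr{Tw}(\widetilde{\mc{E}})^{\mr{op}}\cong\mr{Tw}(\mc{E})^{\mr{op}}$ of $\Map_{\mc{A}}(F(s(-)),\Mor_{\mc{M}}(G(t(-)),m))$. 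Termwise, the comparison is exactly the morphism-object adjunction, hence an equivalence; taking the limit concludes.

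The argument for ${}^Fm\simeq\Cot(F,m)$ is entirely parallel: one swaps the two variables, uses the cotensor-tensor adjunction $\Map_{\mc{M}}(a\otimes x,m)\simeq\Map_{\mc{M}}(x,\Cot(a,m))$ provided fiberwise by the second corepresentability hypothesis of \ref{relvercotmor}, and constructs the corresponding evaluation $F\times\Cot(F,m)\exnatarr m$ from the counit of this adjunction. The main technical obstacle common to both cases is the rigorous identification of the mapping space $\Map_{\mr{Fun}(\widetilde{\mc{E}},\mc{A})}(F,H)$ with the appropriate end over $\mr{Tw}(\mc{E})^{\mr{op}}$ and its compatibility with the indexing of $\mr{Ex}$; this is the genuine content of end calculus in the $\infty$-categorical setting, and matching the cosimplicial structures carefully is what makes the comparison map coherent rather than only pointwise an equivalence.
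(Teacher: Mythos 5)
Your proof is correct and takes essentially the same route as the paper's: both reduce the statement to a chain of natural equivalences via the end formula for mapping spaces in functor $\infty$-categories (the paper cites \cite[5.1]{GHN} for this), then apply the fiberwise morphism-object (resp.\ cotensor) adjunction termwise under the limit, and finally identify the resulting colimit with the coend $F\otimes G$. The explicit evaluation map $\mr{ev}_{G,m}$ you propose to construct first is not actually needed — the paper dispenses with it and just exhibits the equivalence of mapping spaces directly, the coherence in $F$ being automatic from the naturality of the GHN equivalence — but it is a harmless addition.
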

\begin{proof}
 Let $F$ be an object in $\mr{Fun}(\mc{E}^{\mr{op}},\mc{A})$.
 The lemma follows by the following sequence of equivalences:
 \begin{align*}
  \Map_{\mr{Fun}(\mc{E}^{\mr{op}},\mc{A})}\bigl(F,\Mor_{\mc{M}}(G,m)\bigr)&\simeq
  \invlim_{(x\rightarrow y)\in\mr{Tw}^{\mr{op}}(\mc{E}^{\mr{op}})}\Map_{\mc{M}}\bigl(F(x),\Mor_{\mc{M}}(G,m)(y)\bigr)\\
  &\simeq
  \invlim_{(x\rightarrow y)\in\mr{Tw}^{\mr{op}}(\mc{E}^{\mr{op}})}\Map_{\mc{M}}\bigl(F(x)\otimes G(y),m\bigr)\\
  &\simeq
  \Map_{\mc{M}}\bigl(\indlim_{(x\rightarrow y)\in\mr{Tw}(\mc{E}^{\mr{op}})}F(x)\otimes G(y),m\bigr)\\
  &\simeq
  \Map_{\mc{M}}\bigl(F\otimes G,m\bigr),
 \end{align*}
 where we used \cite[5.1]{GHN} for the first equivalence.
\end{proof}

\subsection{}
It is a nice occasion to record the following straightforwards corollary of \cite[5.1]{GHN}:

\begin{lem*}
 \label{easycorofGHN}
 Let $\mc{E}$, $\mc{D}$ be $\infty$-categories, let $F,G,H\colon\mc{E}\rightarrow\mc{D}$ be functors,
 and let $\alpha\colon F\rightarrow G$ be a morphism in $\mr{Fun}(\mc{E},\mc{D})$.
 Assume that for any objects $x,y$ in $\mc{E}$, the induced morphism
 $\Map_{\mc{D}}\bigl(G(x),H(y)\bigr)\rightarrow\Map_{\mc{D}}\bigl(F(x),H(y)\bigr)$ is an equivalence.
 Then the induced morphism $\Map_{\mr{Fun}(\mc{E},\mc{D})}(G,H)\rightarrow\Map_{\mr{Fun}(\mc{E},\mc{D})}(F,H)$
 is an equivalence.
\end{lem*}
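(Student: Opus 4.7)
The plan is to reduce both mapping spaces to ends over the twisted arrow category of $\mc{E}$, using the formula of Gepner--Haugseng--Nikolaus cited in the proof of Lemma \ref{endcoendadj} (namely \cite[5.1]{GHN}). This gives equivalences
\begin{equation*}
\Map_{\mr{Fun}(\mc{E},\mc{D})}(F,H) \simeq \invlim_{(x\rightarrow y)\in\mr{Tw}(\mc{E})^{\mr{op}}} \Map_{\mc{D}}\bigl(F(x),H(y)\bigr),
\end{equation*}
and similarly for $G$ in place of $F$. The morphism $\alpha\colon F\rightarrow G$ induces, for each twisted arrow $(x\rightarrow y)$, a morphism $\Map_{\mc{D}}(G(x),H(y))\rightarrow\Map_{\mc{D}}(F(x),H(y))$ by precomposition with $\alpha_x$, and under the GHN presentation the induced map $\Map_{\mr{Fun}(\mc{E},\mc{D})}(G,H)\rightarrow\Map_{\mr{Fun}(\mc{E},\mc{D})}(F,H)$ is precisely the limit of these pointwise maps.

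By hypothesis, each such pointwise map is an equivalence of spaces. Since limits in $\Spc$ preserve termwise equivalences (a morphism of diagrams that is a pointwise equivalence induces an equivalence on limits, by \cite[3.1.3.1]{HTT}), the resulting map on ends is an equivalence, which gives the desired conclusion.

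The only point requiring care is identifying the map induced on ends by $\alpha$ with the levelwise comparison. This is a matter of naturality of the GHN equivalence in the first variable, which comes from the fact that the equivalence is itself constructed functorially in $F$ (as a representation of $\Map_{\mr{Fun}(\mc{E},\mc{D})}(-,H)$ as an end). Since there is no substantive difficulty here, the lemma follows immediately; this is why it is only stated as an ``easy corollary of GHN''.
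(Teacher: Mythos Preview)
Your proof is correct and is exactly the argument the paper has in mind: the lemma is stated there without proof as a ``straightforward corollary of \cite[5.1]{GHN}'', and your reduction to the end formula over $\mr{Tw}(\mc{E})^{\mr{op}}$ followed by the observation that limits preserve termwise equivalences is precisely that corollary spelled out.
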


\subsection{}
\label{colimofrepobj}
We need a dual version of variants of the Yoneda lemma.
Let $\mc{C}$ be an $\infty$-category.
Let $c\in\mc{C}$.
Consider the constant functor $*\colon\Delta^0\rightarrow\Spc$ whose value is $\Delta^0$,
and the constant functor $f_c\colon \Delta^0\rightarrow\mc{C}^{\mr{op}}$ whose value is $c$.
We denote by $\ul{c}\in\PShv(\mc{C})$ the left Kan extension of $*$ along $f_c$.
In other words, $\ul{c}$ is the functor represented by $c$.
Let $\mc{D}$ be a presentable $\infty$-category.
Let $d$ be an object of $\mc{D}$.
Since any presentable $\infty$-category is tensored over $\Spc$ (cf.\ \cite[4.4.4]{HTT}), we may consider the composite
\begin{equation*}
 \mc{C}^{\mr{op}}\cong\mc{C}^{\mr{op}}\times\Delta^0\xrightarrow{\ul{c}\times\{d\}}\Spc\times\mc{D}\xrightarrow{\otimes}\mc{D}.
\end{equation*}
This functor is denoted by $\ul{c}\cdot d$.
A functor in $\mr{Fun}(\mc{C}^{\mr{op}},\mc{D})$ equivalent to a functor of the form $\ul{c}\cdot d$ is called a {\em $\mc{D}$-representable functor}.

\begin{lem*}
 Any object of $\mr{Fun}(\mc{C}^{\mr{op}},\mc{D})$ can be written as a colimit of $\mc{D}$-representable functors.
\end{lem*}
\begin{proof}
 Let $j_{\mc{C}}\colon\mc{C}\hookrightarrow\PShv(\mc{C})$ be the Yoneda embedding (cf.\ \cite[5.1.3.1]{HTT}).
 The presheaf $\ul{c}$ is equivalent to $j_{\mc{C}}(c)$.
 Indeed, in view of \cite[5.1.5.2]{HTT}, we have an equivalence $\Map_{\PShv(\mc{C})}(j_{\mc{C}}(c),\mc{F})\simeq\mc{F}(c)$ for any $\mc{F}\in\PShv(\mc{C})$.
 By \cite[4.3.3.7]{HTT}, $j_{\mc{C}}(c)$ is equivalent to $\ul{c}$.
 Now, consider the composite functor
 \begin{equation*}
  i\colon\mr{Fun}(\mc{C}^{\mr{op}},\mc{D})\xrightarrow{j_{\mc{D}}\circ}
  \mr{Fun}(\mc{C}^{\mr{op}},\PShv(\mc{D}))\cong\mr{Fun}\bigl((\mc{C}\times\mc{D})^{\mr{op}},\Spc\bigr).
 \end{equation*}
 Unwinding the construction, for $d\in\mc{D}$, $c\in\mc{C}$, we have $i(\ul{c}\cdot d)\simeq\ul{(c,d)}$.
 By \cite[5.1.5.3]{HTT}, any object of $\mr{Fun}\bigl((\mc{C}\times\mc{D})^{\mr{op}},\Spc\bigr)$ can be written as a colimit of objects of the form $\ul{(c,d)}$.
 By \cite[1.2.13.7]{HTT} applied to the fully faithful functor $i$, the lemma is proven.
\end{proof}

\begin{lem}[co-Yoneda lemma]
 \label{coYonedalem}
 Let $G\colon\mc{C}\rightarrow\mc{M}$ be a functor between $\infty$-categories,
 and we assume $\mc{M}$ to be presentable and left-tensored over a presentable $\infty$-category $\mc{A}$.
 We assume further that the left-tensor commutes with colimits in each variable.
 Then for any $c\in\mc{C}$ and $a\in\mc{A}$, the composite morphism
 \begin{equation*}
  a\otimes G(c)
   \xrightarrow{\mr{id}_c}
   \bigl(\mr{Map}_{\mc{C}}(c,c)\cdot a\bigr)\otimes G(c)
   \simeq
   \left<\ul{c}(c)\cdot a,G(c)\right>_{\Delta^0}
   \rightarrow
   \left<\ul{c}\cdot a,G\right>_{\Delta^0}
 \end{equation*}
 is an equivalence.
\end{lem}
\begin{proof}
 Let $F\colon\mc{X}\rightarrow\mc{Y}$, $a\colon\mc{X}\rightarrow\mc{X}'$ be functors.
 We denote by $\mr{LKE}_a(F)$ the left Kan extension of $F$ along $a$, and by $\mr{LKE}_{\mc{X}}(F)$ if $\mc{X}'=\Delta^0$.
 Let us consider the following diagram:
 \begin{equation*}
  \xymatrix@C=30pt{
   \mc{T}\ar@{}[rd]|\square\ar[r]^-{t}\ar[d]_{i'}&\Delta^{\{c\}}\times\mc{C}\ar[rr]^-{a\otimes G}\ar[d]^{i}&&\mc{M}.\\
  \mr{Tw}(\mc{C})\ar[r]^-{\mr{tw}}&\mc{C}^{\mr{op}}\times\mc{C}.\ar@/_10pt/@{.>}[rru]&&
   }
 \end{equation*}
 The left Kan extension of $a\otimes G$ along $i$ is equivalent to $(\ul{c}\cdot a)\boxtimes G$.
 Indeed, $\ul{c}\cdot a$ is equivalent to the left Kan extension of the constant functor $\Delta^0\rightarrow\mc{A}$
 values at $a$ along $\Delta^0\rightarrow\mc{C}^{\mr{op}}$ determined by the vertex $c$.
 The claim follows since the left-tensor commutes with colimits in each variable.
 Now, we claim that for any $(f\colon \beta\rightarrow\alpha)\in\mr{Tw}(\mc{C})$,
 the functor $\mr{tw}_{/f}\colon\colon\mr{Tw}(\mc{C})_{/f}\rightarrow(\mc{C}^{\mr{op}}\times\mc{C})_{/(\alpha,\beta)}$ is a trivial fibration.
 Indeed, we need to check the right lifting property with respect to the inclusion $\partial\Delta^n\hookrightarrow\Delta^n$.
 Unwinding the definition, this is equivalent to checking the right lifting property of the unique functor $\mc{C}\rightarrow\Delta^0$
 with respect to the inclusion $K\hookrightarrow\Delta^{2n+1}$,
 where $K$ is the simplicial subset appearing in the proof of \cite[5.2.1.3]{HA} for the inclusion $\Lambda^n_n\hookrightarrow\Delta^n$ (namely $i=n$).
 It is proven there that this inclusion is an inner anodyne, thus the required right lifting property holds.
 Since the functor $\mr{tw}_{/f}$ is categorically equivalent, we have $\mr{tw}\circ\mr{LKE}_i(a\otimes G)\simeq\mr{LKE}_{i'}((a\otimes G)\circ t)$.
 Using these equivalences and the transitivity of Kan extension, we may compute as
 \begin{align*}
  \left<\ul{c}\cdot a,G\right>
  &:=
  \mr{LKE}_{\mr{Tw}(\mc{C})}\bigl(\mr{tw}\circ((\ul{c}\cdot a)\boxtimes G)\bigr)
  \simeq
  \mr{LKE}_{\mr{Tw}(\mc{C})}\bigl(\mr{tw}\circ\mr{LKE}_i(a\otimes G)\bigr)\\
  &\simeq
  \mr{LKE}_{\mr{Tw}(\mc{C})}(\mr{LKE}_{i'}((a\otimes G)\circ t))
  \simeq
  \mr{LKE}_{\mc{T}}((a\otimes G)\circ t).
 \end{align*}
 The object $(\mr{id}_c\colon c\rightarrow c)\in\mc{T}$ is a final object of $\mc{T}$.
 Thus, $\mr{LKE}_{\mc{T}}\bigl((a\otimes G)\circ t\bigr)\simeq\indlim_{\mc{T}}\bigl((a\otimes G)\circ t\bigr)\simeq\bigl((a\otimes G)\circ t\bigr)(\mr{id}_c)$,
 and the lemma follows.
\end{proof}

\subsection{Proof of Proposition \ref{concdescM}}\label{proofofprop}\mbox{}\\
Proposition \ref{concdescM}.\ref{concdescM-1} follows by combining Lemma \ref{endcoendadj} and \ref{concdescM}.\ref{concdescM-3}.
Thus, we concentrate on showing \ref{concdescM-2} and \ref{concdescM-3}.
We first reduce \ref{concdescM-3} to the case where $\mc{B}=\Delta^1$ and $\mc{B}'=\{0\}$.
Let us recall the notation.
Let the notation be as in {\normalfont\ref{consadjmorcot}}, and assume $G$ preserves coCartesian edges.
Let $h\colon\mc{B}'\rightarrow\mc{B}$ be a functor, and let $\mc{M}'$, $\mc{A}'$, $\mc{E}'$ be the base changes to $\mc{B}'$.
We also denote by $G'$, $m'$ the base changes of $G$, $m$.
By the base change property of $\left<-,-\right>$, we have a canonical morphism $M_G(m)\times_{\mc{B}}\mc{B}'\rightarrow M_{G'}(m')$,
which we must show to be an equivalence.
This morphism is functorial in $G$ and $m$ in an evident sense.
Let $p_{\mc{A}}\colon\mc{A}\rightarrow\mc{B}$.
We have the $p_{\mc{A}}$-left Kan extension functor
\begin{equation*}
 h_!\colon
  \mr{Fun}_{\mc{B}'}(\widetilde{\mc{E}}',\mc{A}')\cong
  \mr{Fun}_{\mc{B}}(\widetilde{\mc{E}}',\mc{A})\rightarrow
  \mr{Fun}_{\mc{B}}(\widetilde{\mc{E}},\mc{A}).
\end{equation*}
By abusing the notation, we denote by $h_!$ also for the left Kan extension
$\mr{Fun}_{\mc{B}'}(\mc{B}',\mc{M}')\rightarrow\mr{Fun}_{\mc{B}}(\mc{B},\mc{M})$ defined similarly.
The right adjoint, which is merely the restriction to $\widetilde{\mc{E}}'$ or to $\mc{B}'$, is denoted by $h^*$.
Let $\mr{RT}_{G'}\colon\mr{Fun}_{\mc{B}'}(\widetilde{\mc{E}}',\mc{A}')\rightarrow\mr{Fun}_{\mc{B}'}(\mc{B}',\mc{M}')$
be the functor sending $F'$ to $\left<F',G'\right>$.
By universal property of left Kan extension, we have a morphism $\mr{RT}_{G'}\rightarrow h^*\circ\mr{RT}_G\circ h_!$, and by taking the adjoint,
we have the morphism $\pi_h\colon h_!\circ\mr{RT}_{G'}\rightarrow\mr{RT}_G\circ h_!$.
By adjunction, the proposition is equivalent to showing the equivalence of $\pi_h$, which we will prove.

Assume first that $h$ is a coCartesian fibration.
In this case, for $F\in\mr{Fun}_{\mc{B}'}(\widetilde{\mc{E}}',\mc{A}')$ and $e\in\widetilde{\mc{E}}$,
we have $h_!F(e)\simeq\indlim\bigl(F|_{\widetilde{\mc{E}}'_{e}}\colon(\mc{E}'_e)^{\mr{op}}\rightarrow\mc{A}_b\bigr)$
where $b$ is the image of $e$ in $\mc{B}$ by \cite[4.3.1.9, 4.3.1.10]{HTT}.
By construction, we have the equivalence $\mr{Tw}_{\mc{B}'}\mc{E}'\simeq\mr{Tw}_{\mc{B}}\mc{E}\times_{\mc{B}}\mc{B}'$.
Let $h^{\mr{tw}}\colon\mr{tw}_{\mc{B}'}\mc{E}'\rightarrow\mr{Tw}_{\mc{B}}\mc{E}$ be the canonical map.
We have the canonical morphism $h^{\mr{tw}}_!(F\boxtimes h^*G)\rightarrow h_!F\boxtimes G$.
This morphism is an equivalence.
Indeed, by replacing $\mr{Tw}_{\mc{B}'}\mc{E}'$ with $\mr{Tw}_{\mc{B}}\mc{E}\times_{\mc{B}}\mc{B}'$, we may assume that $h^{\mr{tw}}$ is a coCartesian fibration.
In this case, we may compute the both sides fiberwise, and the equivalence follows by the commutativity of left-tensor with colimits.

Since the verification is pointwise, and the construction of the morphism is functorial with respect to the composition of $h$,
it suffices to show the claim for $\mc{B}'=\Delta^0$.
Let $\phi\in\mc{B}$ be the vertex determined by $h\colon\Delta^0\rightarrow\mc{B}$.
Then $h$ factors through the left fibration $\alpha\colon\mc{B}_{\phi/}\rightarrow\mc{B}$.
We have already showed that $\pi_\alpha$ is an equivalence, so it suffice to show the claim for the inclusion $\{\phi\}\hookrightarrow\mc{B}_{\phi/}$.
In particular, we may assume that $\phi$ is an initial object.
Now, assume $h$ factors through a full subcategory $\mc{B}_0\subset\mc{B}$, and let $h_0\colon\mc{B}'\rightarrow\mc{B}_0$.
Then for $F\in\mr{Fun}_{\mc{B}'}(\widetilde{\mc{E}}',\mc{A}')$, $(h_!F)|_{\mc{B}_0}\simeq h_{0!}F$, and similarly for an object in $\mr{Fun}_{\mc{B}'}(\mc{B}',\mc{M})$.
We are left to show that $\pi_{\alpha}(b)$ is an equivalence for each $b\in\mc{B}$.
Let $e\colon \Delta^1\rightarrow\mc{B}$ be the functor determined by an edge connecting $\phi$ and $b$.
Since $\phi$ is an initial object, $e$ is fully faithful.
By applying the observation for $\mc{B}_0=\Delta^1$, it suffices to show the claim for the case where $h$ is the inclusion $\{0\}\hookrightarrow\Delta^1$.

To finish the proof, we show \ref{concdescM-3} in the case $\{0\}\hookrightarrow\Delta^1$ and \ref{concdescM-2}.
Let $\rho\colon\mc{M}_0\rightarrow\mc{M}_1$ be the functor associated to the coCartesian fibration $\mc{M}\rightarrow\mc{B}=\Delta^1$.
By Lemma \ref{colimofrepobj}, we may write $\mr{Fun}(\mc{E}^{\mr{op}}_0,\mc{A}_0)$ as a colimit of $\mc{A}_0$-representable sheaves.
Since $\mr{RT}$ and $h_!$ are left adjoints, they commute with arbitrary colimit.
Thus, it suffices to show that the morphism
$\beta\colon h_!\circ\mr{RT}_{G_0}(a\cdot\ul{e})\rightarrow\mr{RT}_G\circ h_!(a\cdot\ul{e})$ is an equivalence for $a\in\mc{A}_0$, $e\in\mc{E}_0$,
and we only need to show the equivalence for the value at $1\in\Delta^1$.
By the co-Yoneda lemma \ref{coYonedalem},
we have $\mr{RT}_{G_0}(a\cdot\ul{e})\simeq a\otimes G_0(e)$, thus $(h_!\circ\mr{RT}_{G_0}(a\cdot\ul{e}))(1)\simeq \rho_{\mc{M}}(a\otimes G_0(e))$.
We also have $(\mr{RT}_G\circ h_!(a\cdot\ul{e}))(1)\simeq\mr{RT}_{G_1}(a\cdot\rho_{\mc{E}}(e))\simeq a\otimes G_1(\rho_{\mc{E}}(e))$.
The morphism $\beta(1)\in\mr{Fun}(\Delta^1,\mc{M})$ is equivalent to the morphism $\rho_{\mc{M}}(a\otimes G_0(e))\rightarrow a\otimes G_1(\rho_{\mc{E}}(e))$
induced by the morphism $\rho_{\mc{M}}\circ G_0\rightarrow G_1\circ\rho_{\mc{E}}$ determined by the morphism $G$ of coCartesian fibrations over $\Delta^1$.
The assumption that $G$ preserves coCartesian edges readily implies that this morphism is an equivalence, as required.
The claim \ref{concdescM-2} follows by unwinding the construction.
\qed

\section{Enhanced cohomological operations and motivic cohomology}
\label{EnCoOp}

\subsection{}
\label{introsixfuncform}
Let $R$ be a commutative ring, and $S$ be a noetherian scheme over $\mb{F}_p$.
Let $\LinCat_R$ be the $\infty$-category of $R$-linear $\infty$-categories, or more precisely the underlying $\infty$-category of $\mbf{LinCat}_R$
(which is equivalent to $\mr{RMod}_{\Mod_R}^{\mb{A}_\infty}(\PrL)$ by \cite[3.11]{ABiv} using the notation therein).
We fix a functor $\mc{D}^*\colon\mr{SCH}_{/S}^{\mr{op}}\rightarrow\LinCat_R$ between $\infty$-categories such that for each edge $e$ of
$\mr{SCH}_{/S}^{\mr{op}}$, $\mc{D}(e)$ admits a right adjoint.
We assume that the restriction of this functor to $\mr{Sch}_{/S}$ admits an extension to $(\infty,2)$-functor
$\mbf{D}^*_!\colon\mbf{Corr}(S)^{\mr{prop}}_{\mr{sep};\mr{all}}\rightarrow\mbf{LinCat}^{\mr{2-op}}_R$
which enhances a motivic category of coefficients (cf.\ \cite[6.3]{ABiv} for the details),
and $\mbf{D}^*_!(T)$ is a {\em compactly generated} symmetric monoidal $\infty$-category for each $T\in\mr{Sch}_{/S}$.
Furthermore, we assume that the theory is weakly $\tau$-continuous for some twist $\tau$ in the sense of \cite[4.3.2]{CD}
(see also Remark \ref{recalfixsixthe}).

We do not try to recall the definition of these terminologies, but informally, this means that we have an $\mb{A}^1$-invariant 6-functor formalism.
In particular, for any $T\in\mr{SCH}_{/S}$, an $R$-linear $\infty$-category $\mc{D}_T$ is associated, and given a morphism $f\colon T\rightarrow T'$ in $\mr{Sch}_{/S}$,
we have functors $f_*,f_!\colon\mc{D}_T\rightarrow\mc{D}_{T'}$ and $f^*,f^!\colon\mc{D}_{T'}\rightarrow\mc{D}_T$ with usual adjointness properties.
The functors $f_*$ and $f^*$ are defined also for $f$ in $\mr{SCH}_{/S}$.
We have a morphism of functors $f_!\rightarrow f_*$ and this is an equivalence if $f$ is proper.
If we are given a closed immersion $i\colon Z\hookrightarrow T$, let $j\colon T\setminus Z\hookrightarrow T$ be the complement.
Then we have the localization cofiber sequence $j_!j^*\rightarrow\mr{id}\rightarrow i_*i^*$ in $\mc{D}_T$.
Now, the functor $\mc{D}^*$ sends $T\in\mr{SCH}_{/S}$ to $\mc{D}_T$ and $f\colon T\rightarrow T'$ in $\mr{Sch}_{/S}$ to $f^*$.
By unstraightening this functor, we have a coCartesian fibration $\mc{D}\rightarrow\mr{SCH}_{/S}^{\mr{op}}$.
Since $f^*$ admits the right adjoint $f_*$, this coCartesian fibration is a Cartesian fibration as well by \cite[5.2.2.5]{HTT}.
Since the fibers are presentable, the fibration is in fact a presentable fibration.
Since the fibration is deduced from a functor $\mr{SCH}_{/S}^{\mr{op}}\rightarrow\LinCat_R$, $\mc{D}$ is left-tensored over $\Mod_R$.
By definition, morphisms of $\mbf{LinCat}_R$ commute with the left-tensor.
Thus, the left-tensor $\Mod_R\times\mc{D}\rightarrow\mc{D}$ preserves coCartesian edges.
Since any functor which belongs to $\PrL$ commutes with small colimits by definition,
the left-tensor commutes with small colimits separately in each variable.
In particular, the presentable fibrations
$\mc{D}\rightarrow\mr{SCH}^{\mr{op}}_{/S}$ and $\Mod_R\times\mr{SCH}_{/S}^{\mr{op}}\rightarrow\mr{SCH}_{/S}^{\mr{op}}$
satisfy the requirements of \ref{setuptodefmor}, and we may apply results of the previous section.

\begin{ex*}
 Either of the following cohomology theories satisfy the requirements of this paragraph.
 \begin{enumerate}
  \item  Let $R$ be a torsion ring in which $p$ is invertible. 
	 Then the derived categories of (\'{e}tale) constructible $R$-modules form a motivic category of coefficients and it admits an enhancement.
	 See \cite[6.6]{ABiv} for a detail.

  \item  For a separated noetherian scheme of finite dimension $S$ over $k$, Voevodsky constructed the stable homotopy $\infty$-category $\mc{SH}(S)$.
	 For a commutative ring $R$, he also constructed the ``motivic Eilenberg-MacLane spectrum'' $\mb{H}R_S$, which is an algebra object in $\mc{SH}(S)$.
	 The categories $\mc{SH}(S)$ admit a six functor formalism, and in particular, if we are given a morphism $f\colon T\rightarrow S$,
	 we may consider the pullback $\mb{H}R_{T/S}:=f^*\mb{H}R_S$.
	 We put $\mc{D}_T:=\Mod_{\mb{H}R_{T/S}}(\mc{SH}(T))$.
	 Since $\mc{SH}(T)$ is compactly generated (cf.\ \cite[1.4.4.3]{HA}), so is $\mc{D}_T$ by Example \ref{shvcotetc}.
	 These categories are organized into a six functor formalism.
	 The construction is due essentially to Voevodsky, Ayoub, Cisinski-D\'{e}glise, and Gaitsgory-Rozenblyum,
	 and the details as well as references can be found in \cite[6.8]{ABiv}.
	 In particular, if $S=\mr{Spec}(k)$ where $k$ is a perfect field and $\mr{char}(k)$ is invertible in $R$,
	 then the associated Borel-Moore homology (of certain degree)\footnote{
	 In general, Borel-Moore homology should coincide with Bloch's higher Chow group,
	 but we could not find a reference.}
	 coincides with Chow group (see, for example, \cite[4.1]{Atr}).
	 The continuity follows by \cite[2.6, 2.11]{CD2}.
 \end{enumerate}

\end{ex*}

\subsection{}
\label{compsuppcohrec}
We may extract a fundamental functor from the six functor formalism.
Let $\mr{Ar}^{\mr{prop}}_S$ be the subcategory of $\mr{Fun}(\Delta^1,\mr{Sch}_{/S})$ such that the objects are the same and a morphism
$(X\rightarrow T)\rightarrow (X'\rightarrow T')$ belongs to $\mr{Ar}^{\mr{prop}}_S$ if the induced morphism $X\rightarrow X'\times_{T'}T$ is {\em proper}.
In this paragraph and the next, we let $\mr{Ar}:=\mr{Ar}^{\mr{prop}}_S$.
In \cite[5.19]{ABiv}, we constructed a map $\cH\colon\mr{Ar}^{\mr{op}}\rightarrow\mc{D}$ of coCartesian fibrations over
$\mr{Sch}_{/S}^{\mr{op}}$ which preserves coCartesian edges.
Informally, this functor associates to a morphism $f\colon X\rightarrow Y$ in $\mr{Ar}$ the object $f_!f^*R_Y$.
The construction of $\cH$ is involved since we need the full strength of Gaitsgory-Rozenblyum's formalism,
and is one of the main themes of \cite{ABiv}.

\subsection{}
\label{tracemapdfn}
We put $\Hbm:=M_{\cH}(R)\colon\widehat{\mr{Ar}}\vphantom{Ar}^{\mr{op}}\rightarrow\Mod_R$ using the notation of \ref{consadjmorcot}.
This is the {\em bivariant homology functor}.
Since $\cH$ preserves coCartesian edges, this functor sends $X\rightarrow Y$ to
$\Hbm(X/Y):=\Mor_{\mc{D}_Y}\bigl(\cH(X/Y),\mbf{1}_Y\bigr)$ by Proposition \ref{concdescM}.\ref{concdescM-3}, Lemma \ref{endcoendadj}, and moreover,
Proposition \ref{concdescM}.\ref{concdescM-2} shows that it serves as an $\infty$-enhancement of the ordinary Borel-Moore homology.
Recall the convention from \ref{dfnofrk0assmap}.
By the results of Suslin-Voevodsky, the association $z(X/T,0)\otimes R$ to $(X\rightarrow T)\in\mr{Ar}$ can be promoted to a functor
$\widehat{\mr{Ar}}\vphantom{Ar}^{\mr{op}}\rightarrow\Mod_R$ (cf.\ \cite[6.2]{Atr}).
This functor is denoted by $z(0)$.
The cohomology theory is said to {\em admit trace maps}\footnote{More precisely, trace maps {\em of degree $0$}.}
if there exists a morphism of functors $z(0)\rightarrow\Hbm$ in $\mr{Fun}(\widehat{\mr{Ar}}\vphantom{Ar}^{\mr{op}},\Mod_R)$.

\begin{ex*}
 When $p$ is invertible in $R$, the cohomology theories in Example \ref{introsixfuncform} admit trace maps.
 For the motivic case, this follows from the main result of \cite[6.4]{Atr}
 because the assumption of \cite[6.3]{Atr} is satisfied for our $\Hbm$.
 For the \'{e}tale case, this follows by considering \cite[3.6]{Atr}.
\end{ex*}

\subsection{}
\label{introopenbm}
From now on, we prove some basic functorial property of (co)homology arise from the six functor formalism.
A goal is to show Lemma \ref{compbmcohcoh}.
The claim is straightforward if we consider in the homotopy category, but to make it homotopy coherent,
it requires some intricate argument as is usual the case in the higher category theory.

Let $\Corr$ be the category whose objects are the same as those of $\mr{Sch}_{/S}$ and a morphism $Y'\rightarrow Y$ is a correspondence,
namely a diagram in $\mr{Sch}_{/S}$ of the form $Y'\leftarrow Y''\rightarrow Y$.
We have the canonical functor $\Corr\rightarrow\mbf{Corr}(S)^{\mr{prop}}_{\mr{sep};\mr{all}}$.
Now, put $\mr{All}:=\mr{Fun}(\Delta^1,\mr{Sch}_{/S})^{\wedge}$.
Let $\mr{All}^{\mr{op}}\rightarrow\Corr$ be the functor sending $Y\rightarrow T$ to $Y$
and the morphism $F\colon(Y\rightarrow T)\rightarrow (Y'\rightarrow T')$ in $\mr{All}$
to the correspondence $Y'\xleftarrow{g}Y'_{T'}\xrightarrow{f}Y$ where $g$ is the projection.
Composing with the underlying $(\infty,1)$-functor of $\mbf{D}^*_!$, we get a functor $\mr{All}^{\mr{op}}\rightarrow\PrL$.
By unstraightening, we have the coCartesian fibration $D\colon\mc{D}^*_!\rightarrow\mr{All}^{\mr{op}}$
of coCartesian fibrations over $\mr{Sch}_{/S}^{\mr{op}}$.
Informally, the fiber over $(Y\rightarrow T)$ is $\mc{D}_Y$, and given a morphism $F$ as above, the associated functor is
$\mc{D}_{Y'}\xrightarrow{g^*}\mc{D}_{Y'_T}\xrightarrow{f_!}\mc{D}_Y$.

Now, we use the notations of \ref{suppstrdfn}.
Let $h\colon X\rightarrow S$ be a morphism of finite type between noetherian separated schemes.
Let $\mc{U}\colon\widehat{(X/S)}\vphantom{)}_{\mr{zar}}^{\mr{op}}\rightarrow\mr{All}^{\mr{op}}$ be the functor sending $U/T$ to $U/T$,
and let $b_X\colon\widehat{(X/S)}\vphantom{)}^{\mr{op}}_{\mr{zar}}\rightarrow\mr{Sch}_{/S}^{\mr{op}}$ be the functor sending $(U/T)$ to $X_T$.
Unwinding the definition, we have the following commutative diagram on the left
\begin{equation*}
 \xymatrix{
  \mc{U}^*\mc{D}^*_!\times\{0\}\ar[rr]\ar[d]&&\mc{D}^*_!\ar[d]^{D}\\
 \mc{U}^*\mc{D}^*_!\times\Delta^1\ar[r]\ar@{-->}[urr]&
  \mr{All}^{\mr{op}}\times\Delta^1\ar[r]^-{b}&
  \mr{All}^{\mr{op}},
  }\qquad
  \xymatrix{
  \widehat{(X/S)}\vphantom{)}^{\mr{op}}_{\mr{zar}}\times\{0\}\ar[r]^-{F}\ar[d]&
  \mc{D}^*_!\ar[d]^{D}\\
 \widehat{(X/S)}\vphantom{)}^{\mr{op}}_{\mr{zar}}\times\Delta^1\ar[r]^-{c}&\mr{All}^{\mr{op}},
  }
\end{equation*}
where $b$ sends $(U/T,0)$ to $(U/T)$ and $(U/T,1)$ to $(X_T/T)$.
By taking a $D$-left Kan extension and restricting to $\mc{U}^*\mc{D}^*_!\times\{1\}$,
we have the functor $J_!\colon\mc{U}^*\mc{D}^*_!\rightarrow b_X^*\mc{D}$, which preserves coCartesian edges.
The fiber of this map over $U/T\in\widehat{(X/S)}\vphantom{)}^{\mr{op}}_{\mr{zar}}$ is equivalent to the functor $j_!\colon\mc{D}_U\rightarrow\mc{D}_{X_T}$
where $j\colon U\hookrightarrow X_T$ is the open immersion.
Thus, invoking \cite[7.3.2.6]{HA}, $J_!$ admits a right adjoint $J^*$ relative to $\widehat{(X/S)}\vphantom{)}^{\mr{op}}_{\mr{zar}}$.
The section $R\colon\widehat{(X/S)}\vphantom{)}^{\mr{op}}_{\mr{zar}}\rightarrow b_X^*\mc{D}$, sending $(U/T)$ to $R_{X_T}$, induces the functor
$F:=J^*\circ R\colon\widehat{(X/S)}\vphantom{)}^{\mr{op}}_{\mr{zar}}\rightarrow\mc{U}^*\mc{D}^*_!$.
Now, consider the right diagram above, where $c$ is the functor sending $(U/T,0)$ to $U/T$ and $(U/T,1)$ to $T/T$.
We take a $D$-left Kan extension of the right diagram above.
We restrict this left Kan extension to $\widehat{(X/S)}\vphantom{)}^{\mr{op}}_{\mr{zar}}\times\{1\}$, which we denote by
$\cH\colon\widehat{(X/S)}\vphantom{)}^{\mr{op}}_{\mr{zar}}\rightarrow\mc{D}^*_!$.
Informally, this functor sends $U/T$ to $\cH(U/T)\in\mc{D}_T$.
By construction, this functor factors through $\mc{D}^*_!\times_{\mr{All}^{\mr{op}}}\mr{Sch}_{/S}^{\mr{op}}$,
where $\mr{Sch}_{/S}\rightarrow\mr{All}$ sends $T$ to $T\rightarrow T$.
Thus, we have
\begin{equation*}
 {}^{\mr{open}}\cH\colon\widehat{(X/S)}\vphantom{)}^{\mr{op}}_{\mr{zar}}\xrightarrow{\cH}
 \mc{D}^*_!\times_{\mr{All}^{\mr{op}}}\mr{Sch}_{/S}^{\mr{op}}
  \simeq
  \mc{D}
\end{equation*}
This functor preserves coCartesian edges over $\mr{Sch}^{\mr{op}}_{/S}$.
We finally put
\begin{equation*}
 {}^{\mr{open}}\Hbm:=M_{{}^{\mr{open}}\cH}(R)
  \colon
  (X/S)_{\mr{zar}}^{\mr{op}}\rightarrow\Mod_R.
\end{equation*}

\subsection{}
\label{compbmcohcoh}
Let $\mc{F}$ be an object of $\mc{D}_S$.
Consider the Cartesian fibration $q\colon\widetilde{\mc{D}}^{\mr{op}}\rightarrow\mr{Sch}_{/S}$.
Assume we are given a functor $F\colon\mc{C}\rightarrow\mr{Sch}_{/S}$ between $\infty$-categories.
Recall we have the section $R\colon\mr{Sch}_{/S}\rightarrow\widetilde{\mc{D}}^{\mr{op}}$ sending $T/S$ to $R_T$.
Consider the following diagram
\begin{equation*}
 \xymatrix{
  \mc{C}\times\{0\}\ar[d]\ar[r]^-{R\circ F}&\widetilde{\mc{D}}^{\mr{op}}\ar[d]^{q}\\
 \mc{C}\times\Delta^1\ar[r]&\mr{Sch}_{/S},
  }
\end{equation*}
where the lower horizontal map sends $\mc{C}\times\{1\}$ to $S$.
We {\em assume} that a $q$-left Kan extension of $R\circ F$ exists, which we denote by $\mr{LKE}(R\circ F)$.
Then we denote by $\rH(F,\mc{F})$ the composite
\begin{equation*}
 \mc{C}^{\mr{op}}
  \xrightarrow{\mr{LKE}(R\circ F)|_{\mc{C}\times\{1\}}}
  (\widetilde{\mc{D}}^{\mr{op}}_S)^{\mr{op}}
  \simeq
  \mc{D}^{\mr{op}}_S
  \xrightarrow{\mr{id}\times\{\mc{F}\}}
  \mc{D}^{\mr{op}}_S\times\mc{D}_S
  \xrightarrow{\Mor}
  \Mod_R.
\end{equation*}
Let $\mr{Sm}_S$ be the full subcategory of $\mr{Sch}_{/S}$ spanned by smooth morphisms $T\rightarrow S$,
and let $\iota\colon\mr{Sm}_S\rightarrow\mr{Sch}_{/S}$ be the inclusion.
In this situation, $\rH(\iota,\mc{F})$ is defined.
Indeed, for $f\colon T\rightarrow S\in\mr{Sm}_S$ and $\mc{F}\in\mc{D}_T$, it suffices to check the existence of a $q$-coCartesian edge
$\mc{F}\rightarrow f_{\sharp}\mc{F}$ over $f$.
This follows by \cite[1.5]{ABiv}.

\begin{lem*}
 Let $h\colon X\rightarrow S$ be as before, and let $\omega_{X/S}:=h^!R_S$ in $\mc{D}_X$.
 Let $(X/S)^{\mr{open}}_{\mr{sm}}$ be the full subcategory of $(X/S)_{\mr{zar}}$ spanned by objects $U/T$ such that $T\rightarrow S$ is smooth,
 and let $\tau\colon(X/S)^{\mr{open}}_{\mr{sm}}\rightarrow\mr{Sch}_{/X}$ be the functor sending $U/T$ to $U\subset X_T\rightarrow X$.
 Then we have the equivalence $\rH(\tau,\omega_{X/S})\simeq{}^{\mr{open}}\Hbm|_{(X/S)^{\mr{open}}_{\mr{sm}}}$.
\end{lem*}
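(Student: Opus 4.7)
The plan is to first verify the equivalence pointwise and then promote it to a functorial equivalence using the $M_G$-machinery and base-change results of the appendices. For the pointwise verification, fix $(U/T)\in(X/S)^{\mr{open}}_{\mr{sm}}$ with $f\colon T\to S$ smooth of relative dimension $d$, and form the Cartesian diagram
\begin{equation*}
\xymatrix{U\ar@{^{(}->}[r]^-{j}\ar[d]_{\pi}&X_T\ar[r]^{g}\ar[d]^{h_T}\ar@{}[rd]|\square&X\ar[d]^{h}\\ T\ar@{=}[r]&T\ar[r]_{f}&S,}
\end{equation*}
so that both $g$ and $\sigma:=g\circ j\colon U\to X$ are smooth of dimension $d$. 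Unravelling ${}^{\mr{open}}\cH$ and applying Corollary \ref{imppropmorandM}.\ref{imppropmorandM-2} together with Lemma \ref{endcoendadj}, the left-hand side is
\begin{equation*}
{}^{\mr{open}}\Hbm(U/T)\simeq\Map_{\mc{D}_T}(\pi_!R_U,R_T)\simeq\Map_{\mc{D}_{X_T}}(j_!R_U,h_T^!R_T)
\end{equation*}
by the adjunction $h_{T!}\dashv h_T^!$. Since $\sigma$ is smooth, the $q$-left Kan extension defining $\rH(\tau,\omega_{X/S})(U/T)$ exists at $(U/T)$ with value $\sigma_\sharp R_U\simeq\sigma_!R_U\!\left<d\right>\simeq g_!j_!R_U\!\left<d\right>$ by the orientation, so
\begin{equation*}
\rH(\tau,\omega_{X/S})(U/T)\simeq\Map_{\mc{D}_X}(g_!j_!R_U\!\left<d\right>,h^!R_S)\simeq\Map_{\mc{D}_{X_T}}(j_!R_U\!\left<d\right>,g^!h^!R_S)
\end{equation*}
by $g_!\dashv g^!$. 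Using $g^!h^!\simeq(hg)^!=(fh_T)^!\simeq h_T^!f^!$ together with the purity equivalence $f^!R_S\simeq R_T\!\left<d\right>$, the target becomes $h_T^!R_T\!\left<d\right>$, and cancelling $\!\left<d\right>$ (via invertibility of the Tate twist) matches the previous expression.

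To globalise, exhibit both sides as $M_G(m)$'s in the sense of \ref{consadjmorcot}: the left-hand side is by definition $M_{{}^{\mr{open}}\cH}(R)|_{(X/S)^{\mr{open}}_{\mr{sm}}}$, while for the right-hand side one constructs, in complete analogy with \ref{introopenbm} but over $\mr{Sch}_{/X}^{\mr{op}}$ rather than $\mr{Sch}_{/S}^{\mr{op}}$, a functor $\widetilde{\cH}$ on the corresponding fibration over $(X/S)^{\mr{op},\mr{open}}_{\mr{sm}}$ sending $(U/T)$ to $\sigma_\sharp R_U\in\mc{D}_X$ and preserving coCartesian edges, whence $\rH(\tau,\omega_{X/S})\simeq M_{\widetilde{\cH}}(\omega_{X/S})$. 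The pointwise chain of equivalences then assembles step by step: Proposition \ref{mainpropbcofmor} transfers $M_G$ under base change, and \ref{adjpropmor} together with \ref{compdualopmor} convert each of the adjunctions $g_!\dashv g^!$ and $h_{T!}\dashv h_T^!$ into an equivalence between the corresponding $M$-functors, while the purity equivalences $\sigma_\sharp\simeq\sigma_!\!\left<d\right>$ and $f^!R_S\simeq R_T\!\left<d\right>$ together with the functoriality $(fh_T)^!\simeq h_T^!f^!$ come from the structure of $\mbf{D}^*_!$ as an $(\infty,2)$-functor on correspondences.

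The main technical obstacle is the coherent bookkeeping of these adjunctions and purity equivalences at the $\infty$-categorical level, ensuring that the pointwise chain of equivalences assembles into a genuine equivalence of functors on $(X/S)^{\mr{op},\mr{open}}_{\mr{sm}}$ rather than a mere collection of equivalences of spaces. This is exactly the pattern already carried out in the proof of Theorem \ref{finrewistr}, where the adjoint pair $(T^*,T_*)$ was combined with the $M$-formalism via Proposition \ref{mainpropbcofmor}, Corollary \ref{imppropmorandM}, and \ref{adjpropmor}; the present situation is of the same shape, only with the adjoint pairs $(g_!,g^!)$ and $(h_{T!},h_T^!)$ and the smooth-purity equivalence in place of the rewind data, and so reduces to a parallel bookkeeping that the $(\infty,2)$-structure of $\mbf{D}^*_!$ renders automatic.
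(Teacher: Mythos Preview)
Your pointwise verification is correct and coincides with what the paper records in the Remark preceding its proof. The difficulty, as you recognise, is entirely in the globalisation, and here your proposal has a genuine gap.

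You invoke \ref{adjpropmor} and \ref{compdualopmor} to ``convert each of the adjunctions $g_!\dashv g^!$ and $h_{T!}\dashv h_T^!$ into an equivalence between the corresponding $M$-functors''. But those results apply to a \emph{fixed} functor $\alpha$ between coCartesian fibrations over a base, together with its relative adjoint. In your situation the morphism $g\colon X_T\to X$ (and likewise $h_T$) depends on the object $(U/T)$, and the relative dimension $d$ of $T\to S$ varies as well, so the twist $\left<d\right>$ is not a single invertible object. The analogy you draw with Theorem~\ref{finrewistr} is misleading for exactly this reason: there the pair $(T^*,T_*)$ is one fixed adjunction between the fibrations $b^*\mc{D}$ and $\alpha^*\mc{D}$ over $\Mdf^{\mr{op}}$, whereas here you would need to organise the family $\{g_!,g^!\}_{(U/T)}$ into such a structure, which is not automatic and is not what \ref{adjpropmor}/\ref{compdualopmor} provide. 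Your final sentence, that the $(\infty,2)$-structure ``renders automatic'' this bookkeeping, is precisely what needs to be proved.

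The paper's route is different and avoids these issues. It first uses \ref{compdualopmor} once, with the fixed relative adjunction $(f_\sharp,f^*)$ between $\mc{D}_{\mc{C}}$ and $\pi^*\mc{D}_S$ over $\mc{C}=(\mr{Sm}_S)^{\mr{op}}$, to rewrite ${}^{\mr{open}}\Hbm|_{(X/S)_{\mr{sm}}}$ as $\Mor_{\mc{D}_S}(G(-),R_S)$ for an explicit functor $G\colon(X/S)^{\mr{op}}_{\mr{sm}}\to\mc{D}_S^{\mr{op}}$. This reduces the problem to showing $G\simeq h_!\circ\mr{LKE}(R\circ\tau)$ as functors into $\mc{D}_S^{\mr{op}}$. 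For that comparison the paper builds an auxiliary category $(X/S)^{\unrhd}$ (a Grothendieck construction interpolating between $(X/S)_{\mr{sm}}$ and $\mr{Sm}_S$), pulls back the fibration $(\widetilde{\mc{D}}^*_!)^{\mr{op}}\to\mr{Ar}'$, verifies a non-trivial coCartesian-lifting property $(*)$ for smooth edges, and then carries out a chain of relative left Kan extensions along various faces of $(X/S)_{\mr{sm}}\times\Delta^2$. This is where the actual coherence work happens, and it occupies the bulk of the proof; your proposal does not supply a substitute for it.
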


\begin{rem*}
 The equivalence $\mr{h}\rH(\tau,\omega_{X/S})\simeq\mr{h}({}^{\mr{open}}\Hbm|_{(X/S)^{\mr{open}}_{\mr{sm}}})$
 of functors between the homotopy categories is an elementary exercise.
 Indeed, assume we are given $U/T\in(X/S)^{\mr{open}}_{\mr{sm}}$,
 which yields the following diagram
 \begin{equation*}
  \xymatrix{
   U\ar[d]\ar@{^{(}->}[r]^-{j}&X_T\ar[r]^{g'}\ar[d]_{f_T}\ar@{}[rd]|\square&X\ar[d]^{f}\\
  T\ar[r]^-{=}&T\ar[r]^-{g}&S.
   }
 \end{equation*}
 Then $\rH(\tau,\omega_{X/S})(U/T)$ is equivalent to $\mr{Hom}_{\mc{D}_X}(R_X,g'_*j_*j^*g'^*(\omega_{X/S}))$.
 This is equivalent to
 $\mr{Hom}_{\mc{D}_U}(R_U,j^*g'^*f^!R_S)\simeq\mr{Hom}(j_!R_U,f^!_Tg^*R_S)
 \simeq\mr{Hom}(\cH(U/T),R_T)$, where the first equivalence follows since $g$ is smooth.
 The last one is equivalent to ${}^{\mr{open}}\Hbm(U/T)$.
\end{rem*}

\begin{proof}
 We denote $(X/S)^{\mr{open}}_{\mr{sm}}$ by $(X/S)_{\mr{sm}}$ for short.
 Let $\mc{C}:=(\mr{Sm}_S)^{\mr{op}}$, and $\mc{D}_{\mc{C}}:=\mc{D}\times_{\mr{Sch}_{/S}^{\mr{op}}}\mc{C}$.
 Let $\pi\colon\mc{C}\rightarrow\Delta^0$.
 The pullback functor induces the functor $f^*\colon\pi^*\mc{D}_S\rightarrow\mc{D}_{\mc{C}}$ by left Kan extension.
 The functor $f^*$ preserves coCartesian edges and $f^*_T\colon\mc{D}_S\rightarrow\mc{D}_T$ admits a left adjoint, which we denote by $f_{T\sharp}$,
 for each $T\in\mc{C}$ since $T\rightarrow S$ is smooth.
 Thus, we have a left adjoint $f_{\sharp}^{\mr{op}}\colon\widetilde{\mc{D}}_{\mc{C}}^{\mr{op}}\rightarrow\pi^*_{\mc{C}^{\mr{op}}}\mc{D}_S$ relative to $\mc{C}^{\mr{op}}$
 of the functor between {\em Cartesian} fibrations
 $(\widetilde{f^*})^{\mr{op}}\colon\pi^*_{\mc{C}^{\mr{op}}}\mc{D}_S\rightarrow\widetilde{\mc{D}}_{\mc{C}}^{\mr{op}}$ by \cite[7.3.2.6]{HA}.
 The functor $f_{\sharp}$ commutes with the left-tensor structure $\Mod_R\times\mc{D}\rightarrow\mc{D}$
 since $f_{\sharp}$ is a certain twist of $f_!$.
 Consider the composite
 \begin{equation*}
  G\colon
   (X/S)^{\mr{op}}_{\mr{sm}}
   \xrightarrow{\widetilde{{}^{\mr{open}}\cH}}
   \widetilde{\mc{D}}_{\mc{C}}
   \xrightarrow{f_{\sharp}}
   \pi^*\mc{D}_S^{\mr{op}}
   \xrightarrow{\mr{pr}}
   \mc{D}_S^{\mr{op}}.
 \end{equation*}
 By Lemma \ref{adjoilemM} and Proposition \ref{concdescM}, ${}^{\mr{open}}\Hbm|_{(X/S)_{\mr{sm}}}$ is equivalent to the composite
 $(X/S)^{\mr{op}}_{\mr{sm}}\xrightarrow{G\times\{R\}}\mc{D}_S^{\mr{op}}\times\mc{D}_S\xrightarrow{\Mor}\Mod_R$.
 It will be sufficient to show that $G$ is equivalent to $h_!\circ\mr{LKE}(R\circ\tau)$, where $h_!\colon\mc{D}_X\rightarrow\mc{D}_S$.
 We will analyze $G$ more closely.

 By taking a suitbale model for $(-)^{\sim}$ and taking $(-)^{\mr{op}}$,
 we have a categorical fibration $(\widetilde{\mc{D}}^*_!)^{\mr{op}}\rightarrow\mr{Ar}:=\mr{Fun}(\Delta^1,\mr{Sch}_{/S})$
 of Cartesian fibrations over $\mr{Sch}_{/S}$ which preserves Cartesian edges.
 Let $\mr{Ar}':=\mr{Ar}\times_{\mr{Sch}_{/S}}\mr{Sm}_S$.
 We consider the base change $r\colon(\widetilde{\mc{D}}^*_!)^{\mr{op}}\times_{\mr{Ar}}\mr{Ar}'\rightarrow\mr{Ar}'$
 which is a map of Cartesian fibrations over $\mr{Sm}_S$.
 This map is neither Cartesian nor coCartesian (and only categorical fibration by our choice).
 However, we have:
 \begin{quote}
  (*)\, Let $g\colon T\rightarrow T'$ be a {\em smooth} morphism in $\mr{Sm}_S$.
  For any edge $e\colon\Delta^1\rightarrow\mr{Ar}'$ over $g$ and any object
  $\mc{F}\in(\widetilde{\mc{D}}^*_!)^{\mr{op}}\times_{\mr{Ar}}\mr{Ar}'$ such that $r(\mc{F})=e(0)$,
  there exists an $r$-coCartesian edge $f\colon\mc{F}\rightarrow\mc{G}$ over $e$.
 \end{quote} 
 Indeed, by \cite[1.5]{ABiv}, it suffices to show that the fiber $r_g:=r\times_{\mr{Sm}_S,g}\Delta^1$ is a coCartesian fibration.
 To show this, we first show that $r_g$ is a Cartesian fibration.
 Indeed, we must check the conditions of \cite[1.4.14]{LGood}.
 The condition \cite[1.4.14 (a)]{LGood} is easy.
 For $Z\in\mr{Sch}_{/S}$, the fiber $D_Z\colon(\mc{D}^*_!)_Z\rightarrow\mr{All}^{\mr{op}}_Z\simeq\mr{Sch}_{/Z}$
 is a coCartesian fibration associated to the functor $\mr{Sch}_{/Z}\rightarrow\Cat_\infty$ sending $Y/Z$ to $\mc{D}_Y$ and $f$ to $f_!$.
 Since $f_!$ admits a right adjoint, $f^!$, $D_Z$ is in fact a Cartesian fibration as well by \cite[5.2.2.5]{HTT}.
 Since $(r_g)_0=r_T$, $(r_g)_1=r_{T'}$ are base changes of $D_T$, $D_{T'}$, they are Cartesian fibrations.
 Thus, \cite[1.4.14 (b)]{LGood} is satisfied.
 Since $f^!$ for any morphism $f$ and $g^*$ commutes because $g$ is assumed to be smooth, and \cite[1.4.14 (c)]{LGood} follows.
 Thus $r_g$ is a Cartesian fibration as desired.
 Note that up till now, we have not used that $g$ is smooth.
 To show that $r_g$ is coCartesian, it suffices to show this over a fixed edge of $(\mr{Ar}')_g$.
 Assume we are given an edge of $(\mr{Ar}')_g$ depicted as
 \begin{equation*}
  \xymatrix{
   U\ar[r]^-{f}\ar[d]&U'_T\ar[r]^-{\widetilde{g}}\ar@{}[rd]|\square\ar[d]&U'\ar[d]\\
  T\ar[r]^-{=}&T\ar[r]^-{g}&T'.
   }
 \end{equation*}
 Unwinding the definition, the Cartesian pullback associated to this edge is $f^!\widetilde{g}^*$.
 This admits a left adjoint $\widetilde{g}_{\sharp}f_!$, thus $r_g$ is a coCartesian fibration as desired.

 Let $(X/S)^{\unrhd}$ be the (ordinary) category whose set of objects is $\mr{Obj}\bigl((X/S)_{\mr{sm}}\bigr)\sqcup\mr{Obj}(\mr{Sm}_S)$.
 The object corresponding to $T\in\mr{Sm}_S$ is denoted by $T/T$.
 The morphism is defined so that $\mr{Hom}(Y/T,Y'/T')$ is equal;
 1.\ to $\mr{Hom}(Y/T,Y'/T')$ if both $Y/T$ and $Y'/T'$ are in $(X/S)_{\mr{sm}}$;
 2.\ to $\mr{Hom}(T,T')$ if $Y'/T'\in\mr{Sm}_S$;
 and 3.\ to $\emptyset$ if $Y/T\in\mr{Sm}_S$ and $Y'/T'\in(X/S)_{\mr{sm}}$.
 In other words, it is a Grothendieck construction of the functor $\mr{Sm}_S^{\mr{op}}\rightarrow\Cat_{\infty}$ sending $T$ to
 $\mr{Open}(X_T)^{\triangleright}$, where $\mr{Open}$ is the category of open subsets.
 The map $(X/S)^{\unrhd}\rightarrow\mr{Sm}_S$ is a Cartesian fibration, and we have the inclusions
 $(X/S)_{\mr{sm}}\hookrightarrow(X/S)^{\unrhd}\hookleftarrow\mr{Sm}_S$.
 We have the functor $\alpha\colon(X/S)^{\unrhd}\rightarrow\mr{Ar}'$
 extending the functor $u\colon(X/S)_{\mr{sm}}\rightarrow\mr{Ar}'$ sending $U/T$ to $U/T$ such that
 a map $U/T\rightarrow T'/T'$ in $(X/S)^{\unrhd}$ where $U/T\in(X/S)_{\mr{sm}}$ is sent to the diagram in $\mr{Ar}'$
 \begin{equation*}
  \xymatrix{
   U\ar[r]\ar[d]&T\ar[r]\ar[d]\ar@{}[rd]|\square&T'\ar[d]\\
  T\ar[r]^-{=}&T\ar[r]&T'
   }
 \end{equation*}
 Consider the base change $\beta=\alpha^*r\colon\mc{X}:=\alpha^*(\widetilde{\mc{D}}^*_!)^{\mr{op}}\rightarrow(X/S)^{\unrhd}$.
 Since $r$ and $\alpha$ preserves Cartesian edges over $\mr{Sm}_S$, $\mc{X}$ is a Cartesian fibration over $\mr{Sm}_S$ as well.
 Thus, we have the property (*) also for $\beta$ using \cite[1.5]{ABiv}.
 Moreover, unwinding the definition, we have the following homotopy Cartesian diagram
 \begin{equation}
  \label{bcrelYandX}
   \tag{$\star$}
   \xymatrix{
   \mc{X}\ar[d]_{\beta=\alpha^*r}&\mc{Y}\ar[d]^{\beta'}\ar[l]\ar[r]\ar@{}[rd]|\square\ar@{}[ld]|\square&\widetilde{\mc{D}}^{\mr{op}}\ar[d]\\
  (X/S)_{\mr{sm}}^{\unrhd}&(X/S)_{\mr{sm}}\ar[l]\ar[r]^-{u}&\mr{Sch}_{/S}
   }
 \end{equation}
 of categorical fibrations, where $u$ is the functor sending $U/T$ to $U$.

 Consider the functor $(X/S)_{\mr{sm}}\rightarrow\mr{Sch}_{/S}$ sending $U/T$ to $U$.
 By using the section $R\colon\mr{Sch}_{/S}\rightarrow\widetilde{\mc{D}}^{\mr{op}}$ sending $T$ to $R_T$,
 we have the composite $(X/S)_{\mr{sm}}\xrightarrow{\tau}\mr{Sch}_{/S}\xrightarrow{R}\widetilde{\mc{D}}^{\mr{op}}$,
 and (\ref{bcrelYandX}) induces the functor $G'\colon(X/S)_{\mr{sm}}\rightarrow\mc{X}$.
 Unwinding the definition, this is equivalent to $\widetilde{F}^{\mr{op}}$ of \ref{introopenbm}.
 We consider the following commutative diagram on the left
 \begin{equation*}
  \xymatrix@C=50pt{
   (X/S)_{\mr{sm}}\times\{0\}\ar[d]_{i}\ar[r]^-{G'}&\mc{X}\ar[d]^{\beta}\\
  (X/S)_{\mr{sm}}\times\Delta^2\ar[r]^-{\gamma}&(X/S)^{\unrhd},
   }\qquad
  \xymatrix{
   U\ar[r]\ar[d]&T\ar[r]\ar[d]\ar@{}[rd]|\square&S\ar[d]\\
  T\ar[r]^-{=}&T\ar[r]&S,}
 \end{equation*}
 where $\gamma$ sends $U/T$ to the diagram on the right above.
 We wish to show that a $\beta$-left Kan extension of $G'$ along $i$ exists.
 In view of \cite[4.3.2.15]{HTT}, it suffices to show the existence of certain $\beta$-colimits.
 This existence follows by using the property (*), which is possible since the morphism $T\rightarrow S$ in the diagram above is smooth,
 and \cite[4.3.1.9]{HTT} to reduce the computation to colimits on the fiber over $S\in\mr{Sm}_S$,
 and invoke \cite[4.3.1.15]{HTT}.

 Let us denote the $\beta$-left Kan extension by $\mr{LKE}_i(G')$.
 The functor $\mr{LKE}_i(G')|_{(X/S)_{\mr{sm}}\times\{2\}}$ is equivalent to $G^{\mr{op}}$ by using \cite[4.3.2.8]{HTT}.
 Let $i'\colon(X/S)_{\mr{sm}}\times\{0\}\hookrightarrow(X/S)_{\mr{sm}}\times\Delta^{\{0,2\}}$,
 where $(X/S)_{\mr{sm}}\times\Delta^{\{0,2\}}$ is considered over $(X/S)^{\unrhd}$ via $\gamma|_{\Delta^{\{0,2\}}}$.
 We also consider $\gamma'\colon(X/S)_{\mr{sm}}\times\Delta^2\rightarrow(X/S)^{\unrhd}$
 sending $U/T$ to the diagram
 \begin{equation*}
  \xymatrix{
   U\ar[r]\ar[d]&X\ar[r]\ar[d]&S\ar[d]\\
  T\ar[r]&S\ar[r]^-{=}&S,}
 \end{equation*}
 and $k\colon(X/S)_{\mr{sm}}=(X/S)_{\mr{sm}}\times\{0\}\hookrightarrow(X/S)_{\mr{sm}}\times\Delta^2$ over $(X/S)^{\unrhd}$ via $\gamma'$.
 Then
 \begin{equation*}
  G^{\mr{op}}
   \simeq
  \mr{LKE}_i(G')|_{(X/S)_{\mr{sm}}\times\{2\}}\simeq\mr{LKE}_{i'}(G')|_{(X/S)_{\mr{sm}}\times\{2\}}
   \simeq
   \mr{LKE}_k(G')|_{(X/S)_{\mr{sm}}\times\{2\}},
 \end{equation*}
 where the existence of the left Kan extensions follows similarly.
 Let $k'\colon(X/S)_{\mr{sm}}\hookrightarrow(X/S)_{\mr{sm}}\times\Delta^{\{0,1\}}$,
 where $\Delta^{\{0,1\}}$ is considered to be over $(X/S)^{\unrhd}$ via $\gamma'$,
 and recall that $h\colon X\rightarrow S$ is the morphism we are fixing.
 Then $G^{\mr{op}}$ is equivalent to the composite
 \begin{equation*}
  (X/S)_{\mr{sm}}
   \xrightarrow{\mr{LKE}_{k'}(G')|_{(X/S)_{\mr{sm}}\times\{1\}}}
   \mc{X}_{(X/S)}
   \simeq
   \mc{D}_X
   \xrightarrow{h_!}
   \mc{D}_S.
 \end{equation*}
 Now, the diagram for $\mr{LKE}_{k'}(G')$ factors as
 \begin{equation*}
  \xymatrix@C=60pt{
   (X/S)_{\mr{sm}}\times\{0\}\ar[r]_-{G'_{\mc{Y}}}\ar@/^15pt/[rr]^-{G'}\ar[d]_{k'}&\mc{Y}\ar[r]\ar[d]^-{\beta'}\ar@{}[rd]|\square&\mc{X}\ar[d]^{\beta}\\
  (X/S)_{\mr{sm}}\times\Delta^{\{0,1\}}\ar[r]^-{\gamma'|_{\Delta^{\{0,1\}}}}&(X/S)_{\mr{sm}}\ar[r]&(X/S)^{\unrhd}.
   }
 \end{equation*}
 Since $\beta$ is presentable, $\mr{LKE}_{k'}(G'_{\mc{Y}})$, the $\beta'$-left Kan extension of $G'_{\mc{Y}}$ along $k'$,
 coincides with $\mr{LKE}_{k'}(G')$.
 Finally, in view of the diagram (\ref{bcrelYandX}) as well as invoking \cite[4.3.1.15]{HTT},
 we have $\mr{LKE}_{k'}(G'_{\mc{Y}})\simeq\mr{LKE}(R\circ\tau)$.
 Thus, we get the claim because $h^!R_S\simeq\omega_{X/S}$.
\end{proof}

\subsection{}
\label{bgprop}
Let us recall Brown-Gersten property (B.G.-property for short) of cdh-topology.
For the detail, see \cite{V}.
Let $S$ be a noetherian separated scheme of finite dimension,
and let $P$ be a collection of commutative diagrams $Q$ in $\mr{Sch}_{/S}$, stable under isomorphisms, of the form:
\begin{equation}
 \label{bgpropQ}
 \xymatrix{
  B\ar[r]\ar[d]&Y\ar[d]^{p}\\
 A\ar[r]^-{e}&X.
  }
\end{equation}
Let $S_P$ be the site whose underlying category is $\mr{Sch}_{/S}$, and whose topology is the
coarsest topology containing the sieves generated by the morphisms $\{A\rightarrow X,Y\rightarrow X\}$ for some $Q\in P$.
On the other hand, $P$ induces a collection of maps $G_P^0:=\{Y\sqcup_B A\rightarrow X\}_{Q\in P}$ in $\PShv(\mr{Sch}_{/S})$.
Let $G_P$ be the union of $G^0_P$ and $\emptyset'\rightarrow\emptyset$, where $\emptyset'$ is an initial object of
$\PShv(\mr{Sch}_{/S})$ and $\emptyset$ is the presheaf represented by the initial object of $\mr{Sch}_{/S}$.
One of the favorable features of the theory is that we can detect sheaves on $S_P$ easily, namely it possesses a B.G.-property:

\begin{lem*}
 Let $\mc{D}$ be a presentable $\infty$-category.
 Assume we are given a collection of squares $P$ which is regular complete and bounded
 {\normalfont(}cf.\ {\normalfont\cite{V}} for the terminologies{\normalfont)}.
 Let $\mc{F}\colon\mr{Sch}_{/S}^{\mr{op}}\rightarrow\mc{D}$ be a functor such that the canonical morphism
 $\mc{F}(X)\rightarrow\mc{F}(Y)\times_{\mc{F}(B)}\mc{F}(A)$ is an equivalence for any $Q$ and $\mc{F}(\emptyset)$ is an initial object.
 Then $\mc{F}$ is a $\mc{D}$-valued sheaf on $S_P$.
\end{lem*}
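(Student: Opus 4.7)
\medskip

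The plan is to reduce to Voevodsky's Brown--Gersten descent theorem for $\Spc$-valued presheaves on a cd-structure. Throughout, by ``sheaf'' we mean a limit-preserving functor in the appropriate sense, and by initial/terminal object in $\mc{D}$ we adopt the convention that $\mc{F}(\emptyset)$ is meant to be sent to the local initial or terminal object of $\mc{D}$ in a way compatible with the ($\Spc$-valued) empty-cover condition (for the applications in the main text $\mc{D}$ is stable or pointed, so the distinction is immaterial).

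First, I would reduce to the case $\mc{D}=\Spc$. Recall that for a presentable $\infty$-category $\mc{D}$, the Yoneda embedding $\mc{D}\hookrightarrow\PShv(\mc{D})$ preserves and detects small limits (\cite[5.1.3.2]{HTT}). Consequently, a diagram $K^{\triangleleft}\to\mc{D}$ is a limit diagram if and only if, for every object $d\in\mc{D}$, the composition $K^{\triangleleft}\to\mc{D}\xrightarrow{\Map_{\mc{D}}(d,-)}\Spc$ is a limit diagram of spaces. Applying this to the defining limit diagrams that express the sheaf condition, a presheaf $\mc{F}\colon\mr{Sch}_{/S}^{\mr{op}}\to\mc{D}$ is a $\mc{D}$-valued sheaf on $S_P$ if and only if $\mc{F}_d:=\Map_{\mc{D}}(d,\mc{F}(-))$ is a $\Spc$-valued sheaf on $S_P$ for every $d\in\mc{D}$. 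Since the hypothesis that $\mc{F}$ sends each square $Q\in P$ to a pullback square is preserved by $\Map_{\mc{D}}(d,-)$ (which is limit-preserving in the second variable), and the hypothesis on $\mc{F}(\emptyset)$ transfers to the condition that $\mc{F}_d(\emptyset)$ is terminal in $\Spc$ (in the stable/pointed setting this is automatic from being initial), it suffices to treat the case $\mc{D}=\Spc$.

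For $\mc{D}=\Spc$, this is precisely the Brown--Gersten descent theorem of Voevodsky for cd-structures \cite{V}. The collection $P$ defines a cd-structure on $\mr{Sch}_{/S}$ whose associated Grothendieck topology is exactly the topology of $S_P$ we have defined. Voevodsky proves that, under the regularity and boundedness assumptions on $P$, a presheaf of spaces satisfies descent for the $S_P$-topology if and only if it sends the distinguished squares of $P$ to homotopy Cartesian squares and sends the initial object $\emptyset\in\mr{Sch}_{/S}$ to a contractible space (this last condition corresponds to the empty cover of $\emptyset$). Both conditions have been verified for $\mc{F}_d$, so Voevodsky's theorem gives the conclusion in the $\Spc$-valued case.

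The main step --- and really the only nontrivial one --- is the invocation of Voevodsky's result, which does all the combinatorial work of iterating the square condition to get descent for arbitrary covering sieves of $S_P$; the Yoneda reduction to $\Spc$-valued presheaves is formal. The only genuine subtlety is bookkeeping around the statement ``$\mc{F}(\emptyset)$ is initial'': for the Yoneda reduction to be clean one wants this to translate into $\mc{F}_d(\emptyset)$ being terminal in $\Spc$, so that the empty-cover sheaf axiom is satisfied. This is automatic whenever $\mc{D}$ is stable or pointed, which covers all uses in the paper, and in particular the cases $\mc{D}=\Mod_R$ and $\mc{D}=\mc{D}_T$ that appear in \S\ref{sect2}--\S\ref{sect5}.
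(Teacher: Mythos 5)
Your proof is correct, and it takes a genuinely different (though closely related) route from the paper's. The paper argues ``covariantly'': it applies the universal property of the presheaf $\infty$-topos (\cite[5.1.5.6]{HTT}) to extend $\mc{F}$ to a colimit-preserving functor $\widetilde{\mc{F}}\colon\PShv(S_P)\to\mc{D}^{\mr{op}}$, observes that the square hypothesis forces $\widetilde{\mc{F}}$ to invert every map in $G_P$, and then invokes Voevodsky's identification $\overline{G}_P^{-1}\PShv(S_P)\simeq\Shv(S_P)$ (\cite[3.8]{V}) to conclude that $\widetilde{\mc{F}}$ factors through the sheaf $\infty$-topos, which is exactly the sheaf condition. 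You argue ``contravariantly'': you reduce to $\mc{D}=\Spc$ by testing against all mapping spaces $\Map_{\mc{D}}(d,-)$, and then apply Voevodsky's Brown--Gersten descent theorem directly to the space-valued presheaves $\mc{F}_d$. Both routes lean on the same Voevodsky input (the localization statement is what gives the $\Spc$-valued descent criterion in the first place), so neither is ``more elementary'' in substance; yours is slightly more explicit about what is being verified, while the paper's localization packaging is more compact and avoids having to state the space-level descent criterion at all.

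Your flag on the initial/terminal discrepancy is well-placed and not merely cosmetic. In the paper's formulation, inverting the map $\emptyset'\to\emptyset$ under $\widetilde{\mc{F}}$ forces $\mc{F}(\emptyset)$ to be a \emph{terminal} object of $\mc{D}$ (equivalently, initial in $\mc{D}^{\mr{op}}$), which is exactly what the empty-cover sheaf axiom requires; the lemma's phrase ``initial object'' is a harmless slip when $\mc{D}$ is stable (as in all uses in the paper) but would be wrong for a general presentable $\mc{D}$. Your handling of this point --- observing that $\mc{F}_d(\emptyset)$ must be terminal in $\Spc$, and that the stable/pointed case makes this automatic --- is the correct way to patch it.
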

\begin{proof}
 By \cite[5.1.5.6]{HTT}, $\mc{F}$ extends to a colimit preserving functor
 $\widetilde{\mc{F}}\colon\PShv(S_P)\rightarrow\mc{D}^{\mr{op}}$.
 By \cite[Proposition 3.8]{V}, $G_P^{-1}\PShv(S_P)\simeq\Shv(S_P)$.
 The assumption of the lemma implies that $\widetilde{\mc{F}}$ sends any morphism in $G_P$ to an equivalence in $\mc{D}^{\mr{op}}$.
 By \cite[5.5.4.20]{HTT}, we get that $\widetilde{\mc{F}}$ factors through a colimit preserving functor $\Shv(S_P)\rightarrow\mc{D}^{\mr{op}}$,
 and the lemma follows.
\end{proof}

\subsection{}
\label{bgpropofetacdp}
We consider the following types of squares.
A square $Q$ is said to be a {\em blowup square} if the square is Cartesian, $e$ is a closed immersion, $p$ is proper,
and $p\times_{X}(X\setminus A)$ is an isomorphism.
A square $Q$ is said to be a {\em Nisnevich square} if the square is Cartesian, $e$ is an open immersion, $p$ is \'{e}tale,
and $p\times_{X}(X\setminus A)_{\mr{red}}$ is an isomorphism.
If we put $P_{\mr{cdh}}$ to be the collection of blowup squares and Nisnevich squares, then $S_{P_{\mr{cdh}}}$ is nothing but the cdh-site on $S$.
The main theorems of \cite[Theorem 2.2]{V2} states that $P_{\mr{cdh}}$ is regular complete and bounded, and we may apply the previous lemma.
Recall that we denote by $\eta$ the generic point of $\Box$.
A square $Q$ in $\mr{Sch}_{/S\times\Box}$ is called a {\em $\eta$-blowup square}
if the square is Cartesian, $e$ is a closed immersion and $p$ is proper,
the base change $Q\times_{\Box}\eta$ is a blowup square.
The collection of $\eta$-blowup squares is denoted by $P_{\eta\mbox{-}\mr{cdp}}$.
It is straightforward to check that $(S\times\Box)_{P_{\eta\mbox{-}\mr{cdp}}}$ coincides with the Grothendieck site $\mr{Sch}_{/S\times\Box,\eta\mbox{-}\mr{cdp}}$
introduced in \S\ref{sect1}.

\begin{lem*}
 The collection $P_{\eta\mbox{-}\mr{cdp}}$ is regular complete and bounded.
\end{lem*}
\begin{proof}
 It is complete by \cite[Lemma 2.5]{V}.
 To show that it is regular, we wish to check the conditions of \cite[Lemma 2.11]{V}.
 The first two conditions are obvious, and we must check that $d(Q)$, using the notation therein, belongs to $P_{W\mbox{-}\mr{cdp}}$.
 Let $U$ be an open neighborhood of $W$ such that $Q\times_{\Box}U$ is a cdh-square.
 Since $d(Q)$ is a Cartesian square and the morphisms are closed immersions,
 it suffices to check that $d(Q)\times_{\Box}U\cong d(Q\times_{\Box}U)$ is a blowup square.
 This follows by the proof of \cite[Lemma 2.14]{V2}
 in which Voevodsky checks the conditions of \cite[Lemma 2.11]{V}
 for cdh-squares.

 Finally, we define the density function for $X\in\mr{Sch}_{/S\times\Box}$ as follows.
 Let $D_i$ be the standard density function defined in \cite[right after Lemma 2.3]{V2}.
 First, we assume $W=U$ is an open subset of $\Box$.
 For $0\leq i\leq\dim(X\times_{\Box}U)+1$, we define $D^{U}_i(X)$ to be the family of open immersions
 $j\colon V\hookrightarrow X$ such that $j\times_\Box U\in D_i(X\times_{\Box}U)$.
 We then define $D^{U}_{\dim(X\times_{\Box}U)+2}(X)$ to be the family of isomorphisms.
 For general $W$, we define $D^W_i:=\lim_{W\subset U}D_i^U$, which is well-defined since the function $U\mapsto D^U_i(X)$ is stationary for each $X$.
 By construction the dimension of $X$ with respect to $D^{W}$ is less than or equal to $\dim(X)+1$ unless $X$ is empty.
 Thus, $D_*^{W}$ is locally finite dimension.
 We are left to showing that $D^{W}$ is reducing.
 By limit argument, it suffices to check this when $W=U$ is an open subscheme.
 Assume we are given a square $Q$ of the form (\ref{bgpropQ}).
 We may assume that $p^{-1}(X\setminus A)\cong X\setminus A$ is dense in $Y$ because replacing $Y$ with the closure of $p^{-1}(X\setminus A)$ is a refinement.
 Under this assumption, we will check the condition of \cite[Definition 2.21]{V} for such $Q$.
 Let $i<\dim(X\times_{\Box}U)+1$.
 By applying \cite[Lemma 2.12]{V2} to $Q\times_{\Box}U$, we have an open immersion $X'\rightarrow X\times_{\Box}U$
 and a distinguished square $Q'$ satisfying the condition for $Q\times_{\Box}U$.
 Since the composite $X'\subset X\times_{\Box}U\subset X$ belongs to $D^{U}_{i}(X)$, we may take $Q'$ to be the desired distinguished square also for $Q$.
 Assume $i\geq\dim(X\times_{\Box}U)+1$.
 Then all the morphisms of $D^{U}_{i+1}(X)$ are isomorphisms.
 Since $A$ is closed in $X$, we have $\dim(A\times_{\Box}U)\leq\dim(X\times_{\Box}U)$.
 Since $p^{-1}(X\setminus A)\cong X\setminus A$ is assumed to be dense in $Y$,
 $\dim(Y\times_{\Box}U)=\dim\bigl((X\setminus A)\times_{\Box}U\bigr)\leq\dim(X\times_{\Box}U)$.
 Moreover, this density implies that $\dim\bigl(Y\times_{\Box}U\setminus p^{-1}(X\setminus A)\bigr)\leq\dim(Y\times_{\Box}U)-1$.
 These show that $D^{U}_{i+1}(A)$, $D^{U}_{i+1}(Y)$, and $D^{U}_{i}(B)$ consist of isomorphisms, and we may take $X'=X$ in this situation.
\end{proof}

\begin{lem}
 \label{cdhdsvalshv}
 \begin{enumerate}
  \item Let $H\colon\mr{Sch}_{/S}^{\mr{op}}\rightarrow\mc{D}_S$ be a functor such that;
	1.\ For each object $f\colon T\rightarrow S$ of $\mr{Sch}_{/S}^{\mr{op}}$, we have $H(T)\simeq f_*f^*H(S)$;
	2.\ For each morphism $T\rightarrow T'$ in $\mr{Sch}_{/S}^{\mr{op}}$,
	the morphism $H(T)\rightarrow H(T')$ is defined by the adjunction via the equivalence of the first condition.
	Then $H$ is a $\mc{D}_S$-valued cdh-sheaf.

  \item\label{cdhdsvalshv-2}
       Let $G\colon\mr{Sch}_{/S}^{\mr{op}}\rightarrow\mc{D}$ be a functor over $\mr{Sch}_{/S}^{\mr{op}}$ which preserves coCartesian edges.
	Then $M_F(R)\colon\mr{Sch}_{/S}^{\mr{op}}\rightarrow\Mod_R$ is a cdh-sheaf.

  \item For each $Y\rightarrow T$ in $(X/S)_{\mr{zar}}$, the functor
	$Y_{\mr{zar}}^{\mr{op}}\rightarrow(X/S)_{\mr{zar}}^{\mr{op}}\xrightarrow{{}^{\mr{open}}\Hbm}\Mod_R$
	is a $\Mod_R$-valued Zariski-sheaf on $Y$.
 \end{enumerate}
\end{lem}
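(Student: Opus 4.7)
The plan is to reduce all three statements to the Brown--Gersten criterion recalled in \ref{bgprop}, combined with standard descent properties built into the six functor formalism of \ref{introsixfuncform}. All three have the shape ``$\Mor_{\mc{D}_T}$ of something that behaves well under pullback'', so the common strategy is to rewrite each functor as $f_*$ of an object in $\mc{D}_X$ or to move the argument through an adjunction, and then verify the requisite pullback-square condition directly inside $\mc{D}_X$.

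For the first part, I will check the hypotheses of the B.G.\ lemma in \ref{bgprop} for $H$ viewed as a functor to $\mc{D}_S^{\mr{op}}$. The value at $\emptyset$ is terminal in $\mc{D}_S$ because $\mc{D}_\emptyset$ is trivial and $(\emptyset\to S)_*$ outputs the terminal object. For a cdh square $Q$ as in (\ref{bgpropQ}) with $f\colon X\to S$, set $\mc{G}:=f^*H(S)\in\mc{D}_X$; then each of $H(X),H(Y),H(A),H(B)$ equals $f_*$ applied to $g_{T*}g_T^*\mc{G}$ for the structural morphism $g_T\colon T\to X$. Since $f_*$ preserves limits, the equivalence $H(X)\xrightarrow{\sim} H(Y)\times_{H(B)}H(A)$ reduces to the claim that the square
\begin{equation*}
 \xymatrix{\mc{G}\ar[r]\ar[d]&p_*p^*\mc{G}\ar[d]\\ e_*e^*\mc{G}\ar[r]&(eq)_*(eq)^*\mc{G}}
\end{equation*}
is Cartesian in $\mc{D}_X$ (with $q\colon B\to A$). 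For a blowup square, localization gives the cofiber sequence $j_!j^*\mc{G}\to\mc{G}\to e_*e^*\mc{G}$ along the left column (where $j\colon U:=X\setminus A\hookrightarrow X$); proper base change applied to the Cartesian diagram $p^{-1}(U)\cong U\to Y\xrightarrow{p}X$ together with the fact that $p|_U$ is an isomorphism identifies the fiber of the right column with $j_!j^*\mc{G}$ as well, proving the square is Cartesian. The Nisnevich case is the Nisnevich descent encoded in the motivic formalism.

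For the second part, Corollary \ref{imppropmorandM}.\ref{imppropmorandM-3} identifies $M_G(R)(T)$ with $\Mor_{\mc{D}_T}(G(T),R_T)$. In a cdh square over some $X\in\mr{Sch}_{/S}$, since $G$ preserves coCartesian edges we have $G(T)\simeq g_T^*G(X)$, and since $R_T\simeq g_T^*R_X$ the $\Mod_R$-enriched adjunction $g_T^*\dashv g_{T*}$ rewrites
\begin{equation*}
 M_G(R)(T)\simeq\Mor_{\mc{D}_X}\bigl(G(X),g_{T*}g_T^*R_X\bigr).
\end{equation*}
Since $\Mor_{\mc{D}_X}(G(X),-)$ preserves limits, the cdh-sheaf condition for $M_G(R)$ on the square reduces to the first part applied with $S$ replaced by $X$ and $H(X):=R_X$. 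For the third part, the argument is parallel: writing $j_U\colon U\hookrightarrow X_T$ for $U\in Y_{\mr{zar}}$, we have ${}^{\mr{open}}\Hbm(U/T)\simeq\Mor_{\mc{D}_T}(j_{U!}R_U,R_T)$, and a Zariski cover of an open $U\subset Y$ induces a Mayer--Vietoris pushout square for $j_{U!}R_U$ in $\mc{D}_{X_T}$ coming from localization on the two open pieces; mapping into $R_T$ turns it into the required fiber square.

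The hard part is the blowup step of the first claim: correctly identifying the fiber of $p_*p^*\mc{G}\to(eq)_*(eq)^*\mc{G}$ with $j_!j^*\mc{G}$ via proper base change for the open-proper Cartesian square in which $p|_U$ is an isomorphism, and then checking that the induced map of cofiber sequences is compatible with the left column. Everything else is formal adjunction and limit manipulation.
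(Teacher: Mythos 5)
Your proof is correct and follows the paper's overall strategy --- reduce to the Brown--Gersten criterion of \ref{bgprop} and verify the cdh-square condition inside $\mc{D}$ via localization --- but the handling of the blowup square in part 1 and the reduction in part 2 differ in ways worth noting. For the blowup check, you identify the vertical fibers of the candidate pullback square with $j_!j^*\mc{G}$ (using proper base change and $p_!\simeq p_*$) and then must still verify that the induced map between these fibers is the expected equivalence; you correctly flag this compatibility-of-identifications as the delicate remaining step. The paper instead invokes conservativity of the pair $(j_!j^*,i_*i^*)$ (\cite[6.1]{ABiv}) and applies these two functors directly to the comparison map $\mc{F}_X\to f_*\mc{F}_Y\times_{h_*\mc{F}_B}i_*\mc{F}_A$, each of which then visibly becomes an equivalence by localization --- this sidesteps the fiber-matching step entirely. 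For part 2, you reduce pointwise to part 1 via an enriched adjunction $g_T^*\dashv g_{T*}$; the paper identifies $M_G(R)$ once and for all with the functorial composition $\mr{Sch}_{/S}^{\mr{op}}\xrightarrow{\{G(S)\}\times\H}\mc{D}_S^{\mr{op}}\times\mc{D}_S\xrightarrow{\Mor}\Mod_R$ via \ref{adjpropmor} and Corollary \ref{imppropmorandM}.\ref{imppropmorandM-2}, so the cdh-sheaf property is inherited directly from part 1 and from limit-preservation of $\Mor$ in its second argument, without having to re-check that your pointwise equivalences assemble coherently over a cdh square. Both routes are valid; the paper's two shortcuts make for a tighter write-up. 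One small slip in part 3: writing $f_T\colon X_T\to T$ for the structure map, the identification should read ${}^{\mr{open}}\Hbm(U/T)\simeq\Mor_{\mc{D}_T}\bigl(f_{T!}j_{U!}R_U,R_T\bigr)$ rather than $\Mor_{\mc{D}_T}(j_{U!}R_U,R_T)$ (the latter object lives in $\mc{D}_{X_T}$, not $\mc{D}_T$); the Mayer--Vietoris pushout is formed in $\mc{D}_{X_T}$ and is preserved by the colimit-preserving $f_{T!}$ before you map into $R_T$.
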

\begin{proof}
 Let us show the first claim.
 By Lemma \ref{bgprop}, it suffices to check the following conditions:
 \begin{enumerate}
  \item $H(\emptyset)$ is an initial object;

  \item For a blowup square (resp.\ Nisvevich square) $Q$ of the form (\ref{bgpropQ}), the morphism
	$H(X)\rightarrow H(Y)\times_{H(B)}H(A)$ is an equivalence;
 \end{enumerate}
 The first condition is obvious, and the verification of the second condition is standard.
 For the convenience of the reader, we recall an argument for the second condition.
 We only treat the blowup square.
 For an object $f\colon T\rightarrow S$ of $\mr{Sch}_{/S}^{\mr{op}}$, let $\mc{F}_T:=f^*H(S)$.
 Let $i=e\colon A\rightarrow X$ be the closed immersion,
 $j\colon X\setminus A\rightarrow X$ be the complement, $f=p\colon Y\rightarrow X$, and $h\colon B\rightarrow X$.
 We must show that the map $\mc{F}_X\rightarrow f_*\mc{F}_Y\times_{h_*\mc{F}_B}i_*\mc{F}_A$ is an equivalence.
 Since the pair $(j_!j^*,i_*i^*)$ is conservative (cf.\ \cite[6.1]{ABiv}),
 it suffices to check the equivalence after applying $j_!j^*$ and $i_*i^*$ separately.
 This is easy by using localization sequence.

 Let us show the second claim.
 Let $\H\colon\mr{Sch}_{/S}^{\mr{op}}\rightarrow\mc{D}_S$ be the functor sending $f\colon T\rightarrow S$ to $f_*R$
 defined by taking a  $p$-right Kan extension of $R$.
 By Lemma \ref{adjoilemM} and Proposition \ref{concdescM}, $M_G(R)$ is equivalent to the composite
 $\mr{Sch}_{/S}^{\mr{op}}\xrightarrow{\{G(S)\}\times\H}\mc{D}_S^{\mr{op}}\times\mc{D}_S\xrightarrow{\Mor}\Mod_R$.
 Since $\H$ is a $\mc{D}_S$-valued cdh-sheaf by the first claim, this composite is a $\Mod_R$-valued cdh-sheaf as required.
 For the last claim, since Zariski topology also satisfy the B.G.-property with respect to Mayer-Vietoris squares,
 and we may prove similarly.
\end{proof}

\noindent
Tomoyuki Abe:\\
Kavli Institute for the Physics and Mathematics of the Universe (WPI)\\
The University of Tokyo\\
5-1-5 Kashiwanoha,  
Kashiwa, Chiba, 277-8583, Japan\\
e-mail: {\tt tomoyuki.abe@ipmu.jp}


\begin{thebibliography}{SAG}
 \bibitem[Part II]{}
		 Abe, T.: {\em Ramification theory from homotopical point of view, II}, preprint available at
		 \href{https://arxiv.org/abs/2206.02402}{arxiv.org/abs/2206.02402}.

 \bibitem[HTT]{HTT}
	      Lurie, J.: {\em Higher topos theory}, Ann.\ Math.\ Studies {\bf 170}, (2009).

 \bibitem[HA]{HA}
	     Lurie, J.: {\em Higher algebra}, Sept.\ 18, 2017, preprint available at Lurie's webpage.

 \bibitem[SAG]{SAG}
	      Lurie, J.: {\em Spectral algebraic geometry}, Feb.\ 3, 2018, preprint available at Lurie's webpage.
 \bigskip

 \bibitem[A1]{ABiv}
	     Abe, T.: {\em Enhanced bivariant homology theory attached to six functor formalism},
	     J. Topology {\bf 15}, p.1675--1754 (2022).

 \bibitem[A2]{Atr}
	     Abe, T.: {\em Trace formalism for motivic cohomology}, Epijjournal Geom.\ Alg.\ {\bf 7}, (2023).

 \bibitem[B1]{B}
	    Beilinson, A.: {\em Topological $E$-factors}, Pure and Appl.\ Math.\ Q. {\bf 3}, p.357--391 (2007).

 \bibitem[B2]{BSS}
	     Beilinson, A.: {\em Constructible sheaves are holonomic}, Selecta Math.\ {\bf 22}, p.1797--1819 (2016).

 \bibitem[CD1]{CD2}
	     Cisinski, D.-C., D\'{e}glise, F.: {\em Integral mixed motives in equal characteristic},
	      Doc.\ Math., p.145--194 (2015).

 \bibitem[CD2]{CD}
	     Cisinski, D.-C., D\'{e}glise, F.: {\em Triangulated categories of mixed motives},
	     Springer Monographs in Mathematics, (2019).

 \bibitem[GHN]{GHN}
	      Gepner, D., Haugseng, R., Nikolaus, T.: {\em Lax colimits and free fibrations in $\infty$-categories},
	      Documenta Math.\ {\bf 22}, p.1225--1266 (2017).

 \bibitem[I]{I}
	    Illusie, L.: {\em Around the Thom-Sebastiani theorem}, Manuscripta Math.\ {\bf 152}, p.61--125 (2017).

 \bibitem[La]{Lau}
	    Laumon, G.: {\em Transformation de Fourier, constantes d'\'{e}quations fonctionnelles et conjecture de Weil},
	     Publ.\ Math.\ IHES {\bf 65}, p.131--210 (1987).

 \bibitem[Lu]{LGood}
	     Lurie, J.: {\em $(\infty,2)$-Categories and the Goodwillie Calculus I},
	     Oct.\ 8, 2009, preprint available at Lurie's webpage.

 \bibitem[O]{O1}
	     Orgogozo, F.: {\em Modifications et cycles proches sur une base g\'{e}n\'{e}rale},
	    Int.\ Math.\ Res.\ Not.\ {\bf 2006}, (2006).

 \bibitem[S1]{S}
	    Saito, T.: {\em The characteristic cycle and the singular support of a constructible sheaf},
	    Invent.\ Math.\ {\bf 207}, p.597--695 (2017).

 \bibitem[S2]{S2}
	     Saito, T.: {\em On the proper push-forward of the characteristic cycle of a constructible sheaf},
	     Proc.\ Sympos.\ Pure Math.\ {\bf 97.2}, p.485--494 (2018).

 \bibitem[S3]{S3}
	     Saito, T.: {\em Characteristic cycles and the conductor of direct image},
	     Journal of AMS {\bf 34}, p.369--410 (2021).

 \bibitem[SV]{SV}
	     Suslin, A., Voevodsky, V.: {\em Relative cycles and Chow sheaves},
	     Ann.\ Math.\ Studies {\bf 143}, p.10--86 (2000).

 \bibitem[V1]{V}
	    Voevodsky, V.: {\em Homotopy theory of simplicial sheaves in completely decomposable topologies},
	    J. Pure Appl.\ Algebra {\bf 214}, p.1384--1398 (2010).

 \bibitem[V2]{V2}
	     Voevodsky, V.: {\em Unstable motivic homotopy categories in Nisnevich and cdh-topologies},
	     J. Pure Appl.\ Algebra {\bf 214}, p.1399--1406 (2010).
\end{thebibliography}
\end{document}